\setlist[itemize]{noitemsep}   
\titleformat{\paragraph}[runin]
  {\normalfont\normalsize\bfseries}
  {}
  {0pt}
  {}
\titlespacing*{\paragraph}{0pt}{6pt}{1em}
\numberwithin{equation}{section}
\DeclareMathOperator{\rank}{rk}
\DeclareMathOperator{\Dim}{dim} 
\DeclareMathOperator{\Span}{Span}
\DeclareMathOperator{\CC}{\mathbb{C}}
\newcommand{\size}[1]{\left| #1 \right|}
\newcommand{\vs}{\vspace*{.5em}}
\newcommand\restr[2]{{%
  \left.\kern-\nulldelimiterspace 
  #1 \vphantom{\small|} 
  \right|_{#2} 
  }}
\def\mathcenterto#1#2{\mathclap{\phantom{#1}\mathclap{#2}}\phantom{#1}}
\let\old@widetilde\widetilde
\def\widetildeto#1#2{\mathcenterto{#2}{\old@widetilde{\mathcenterto{#1}{#2\,}}}}
\let\old@widehat\widehat
\def\widehatto#1#2{\mathcenterto{#2}{\old@widehat{\mathcenterto{#1}{#2\,}}}}
\newcommand*\closure[1]{\overline{#1}}
\theoremstyle{plain}
\newtheorem{theorem}{Theorem}[section]
\newtheorem{proposition}[theorem]{Proposition}
\newtheorem{lemma}[theorem]{Lemma}
\newtheorem{corollary}[theorem]{Corollary}
\theoremstyle{definition}
\newtheorem{definition}[theorem]{Definition}
\newtheorem{example}[theorem]{Example}
\newtheorem{remark}[theorem]{Remark}
\newtheorem{notation}[theorem]{Notation}
\newtheorem{question}{Question}
\newtheorem*{question*}{Question}
\newtheorem{theoremA}{Theorem}
\newtheorem{theoremB}{Theorem}
\newtheorem{theoremC}{Theorem}
\title{
Decomposing Determinantal Varieties from Statistics via \\Matroid Theory
}
\author{Per Alexandersson, Yulia Alexandr, Emiliano Liwski, \\Fatemeh Mohammadi, Pardis Semnani}
\date{}
\begin{document}

\maketitle

\begin{abstract}
We study determinantal varieties from conditional independence models with hidden variables, focusing on their irreducible decompositions, dimensions, degrees, and Gröbner bases. 
Each variety encodes a collection of matroids, whose flats capture algebraic dependencies among variables. 
Using this approach, we provide a systematic description of the components, their dimensions, and defining equations, and introduce a combinatorial framework for computing the degree of the determinantal variety. 
Our approach highlights the central role of matroidal structures in the study of determinantal varieties and extends beyond the reach of current computational techniques.
\end{abstract}

\noindent
\small{\textbf{Keywords:} Determinantal varieties, Conditional independence, Matroids, Lattice of flats, Irreducible decomposition, Gröbner basis, Hypergraph varieties} 

\noindent
\small{\textbf{2020 Mathematics Subject Classification:} 14M12, 13P10, 14A10, 05B35, 62R01}

\section{Introduction}\label{sec:intro}

\paragraph{Motivation.}
Determinantal varieties play a central role in algebraic geometry and combinatorial algebra, arising naturally in tensor models, hypergraph and matroid theory, and the study of structured matrices~\cite{bruns2003determinantal}. 
They provide a unifying framework for understanding algebraic dependencies among collections of variables and the combinatorial structures that underlie them. 
A fundamental problem is to describe the \emph{irreducible components} of such varieties and to characterize algebraic invariants such as dimension, degree, and Gr\"obner bases. 
While these questions are well-understood for determinantal ideals defined by minors of generic matrices, the situation becomes considerably more intricate when additional combinatorial constraints are imposed~\cite{clarke2020conditional, herzog2010binomial,  HS04, pfister2019primary, Rauh}.
In these structured cases, especially those induced by hypergraphs or matroids, the corresponding varieties often exhibit decompositions that are challenging to describe with standard algebraic methods.

\paragraph{From conditional independence to determinantal ideals.}
This paper studies a family of determinantal varieties defined by structured minors that emerge in the algebraic formulation of \emph{conditional independence (CI) models with hidden variables}; see, for instance,~\cite{alexandr2025decomposing, clarke2024liftable, clarke2022conditional, clarke2020conditional, pfister2019primary}. 
Conditional independence is a cornerstone of statistical modeling~\cite{maathuis2018handbook, studeny2006probabilistic}, and its algebraic form has been extensively investigated in algebraic statistics~\cite{drton2008lectures, sullivant2023algebraic} and combinatorics~\cite{andersson1993lattice, caines2022lattice}. 
In this framework, probability distributions satisfying a given collection of CI statements correspond to algebraic varieties defined by polynomial constraints, known as \emph{CI ideals}. 
When some variables are unobserved (hidden), the resulting ideals are typically generated by minors of different sizes, and their decompositions encode algebraic analogues of statistical properties such as the intersection property~\cite{pearl1989conditional, peters2015intersection, Steudel-Ay}.

\paragraph{The model.}
We focus on the case where one hidden variable has state space of size $t-1$. 
The corresponding CI model is
\begin{equation}\label{model}
\mathcal{C} = \{\, X \mathrel{\perp\!\!\!\perp} Y_{1} \mid Y_{2},\quad
X \mathrel{\perp\!\!\!\perp} Y_{2} \mid \{ Y_{1}, H \} \,\}.
\end{equation}
This family gives rise to determinantal ideals generated by $2$- and $t$-minors, depending on the role of the hidden variable~$H$. 
The associated varieties admit a combinatorial description in terms of grids and hypergraphs, and their decomposition yields new families of prime mixed determinantal ideals whose Gr\"obner bases consist precisely of their generating minors; see also~\cite{HS04}. 
Our results extend and unify previous decompositions of such varieties within a general matroidal framework.

\subsection{Definitions}\label{sub_intro_def}

\subsubsection{Algebraic formulation}
We translate the conditional independence statements in~\eqref{model} into their algebraic counterparts. 
Each independence condition corresponds to a rank constraint on a submatrix of the joint probability tensor of the involved random variables, leading to a family of mixed determinantal ideals whose geometric properties reflect the structure of the underlying model.

\begin{definition}[CI ideal]\label{setup}
Let $X$, $Y_{1}$, and $Y_{2}$ be observed random variables, and let $H$ be a hidden variable taking values in $\mathcal{H}$, with $|\mathcal{H}| = t - 1$. 
Set $\lvert \mathcal{X} \rvert = d$, $\lvert \mathcal{Y}_{1} \rvert = k$, and $\lvert \mathcal{Y}_{2} \rvert = \ell$. 
Define the $k\times \ell$ matrix
\begin{eqnarray}\label{matrix}
\mathcal{Y} =
\begin{pmatrix}
1 & k+1 & \cdots & (\ell-1)k+1 \\
2 & k+2 & \cdots & (\ell-1)k+2 \\
\vdots & \vdots & \ddots & \vdots \\
k & 2k & \cdots & \ell k
\end{pmatrix}.
\end{eqnarray}
For $i \in [k]$ and $j \in [\ell]$, let
\[
R_{i} = \{\, i,\, k+i,\, \ldots,\, (\ell-1)k+i \,\}, 
\quad 
C_{j} = \{\, (j-1)k+1,\, \ldots,\, jk\,\},
\] 
denote the individual rows and columns of $\mathcal{Y}$ and set
\[
\Delta = \bigcup_{i \in [k]} \binom{R_i}{t} \cup \bigcup_{j \in [\ell]} \binom{C_j}{2}.
\]
For a $d\times k\ell$ matrix $X=(x_{i,j})$ of indeterminates, define
\[
I_{\Delta} = \big\langle [A \mid B]_X : A \subset [d],\, B \in \Delta,\, |A| = |B| \big\rangle \subset \CC[X],
\]
where $[A \mid B]_X$ denotes the minor of $X$ obtained by selecting the rows indexed by $A$ and the columns indexed by $B$, and 
\[
V_{\Delta} = \{\, Y \in \CC^{d\times k\ell} : \operatorname{rank}(Y_F) < |F| \text{ for all } F\in\Delta \,\},
\]
where $Y_F$ denotes the submatrix of $Y$ consisting of the columns indexed by $F$.
The ideal $I_{\Delta}$ corresponds to the model in~\eqref{model}; 
see~\cite{clarke2022conditional, clarke2020conditional}.
\end{definition}

\subsubsection{Matroid definitions}\label{sec:preliminaries}

We briefly recall the basic notions of matroid theory and refer to~\cite{Oxley} for further details.  
\begin{definition}
A \emph{matroid} $M$ consists of a ground set~$[n]$ together with a collection $\mathcal{I}(M)$ of subsets of~$[n]$, called \emph{independent sets}, satisfying:
\begin{enumerate}[label=(\roman*)]
\item $\emptyset \in \mathcal{I}$,
\item if $I \in \mathcal{I}$ and $I' \subseteq I$, then $I' \in \mathcal{I}$,
\item if $I_{1}, I_{2} \in \mathcal{I}$ with $|I_{1}| < |I_{2}|$, then there exists $e \in I_{2} \setminus I_{1}$ such that $I_{1} \cup \{e\} \in \mathcal{I}$.
\end{enumerate}
\end{definition}

Several equivalent descriptions of matroids exist (via circuits, rank, or bases). We recall them below.

\begin{definition}
Let $M$ be a matroid on $[n]$.
\vspace*{-.5em}

\begin{itemize}[label=$\blacktriangleright$]
\item A subset of $[n]$ that is not independent is \emph{dependent}. The set of all dependent subsets is $\mathcal{D}(M)$.\vs
\item A \emph{circuit} is a minimally dependent subset of $[n]$. The set of circuits is $\mathcal{C}(M)$.\vs
\item For $F \subseteq [n]$, its \emph{rank}, denoted $\rank(F)$, is the size of the largest independent subset of $F$. The \emph{rank of $M$} is $\rank([n])$.\vs
\item A \emph{basis} is a maximal independent subset of $[n]$. The set of all bases is $\mathcal{B}(M)$.\vs
\item An element $x \in [n]$ lies in the \emph{closure} of $F \subseteq [n]$, written $x \in \closure{F}$, if $\rank(F \cup \{x\}) = \rank(F)$. A set $F$ is a \emph{flat} if $F = \closure{F}$.\vs
\item An element $x \in [n]$ is a \emph{loop} if $\rank(\{x\}) = 0$.
\end{itemize}
\end{definition}

\subsection{Contributions}

This work focuses on the following question.

\begin{question}\label{question decomposition}
Determine the irreducible decomposition of the variety $V_{\Delta}$.
\end{question}

The irreducible decomposition of $V_{\Delta}$ has been obtained only for special parameter values, for example $t=\ell$ in~\cite{clarke2020conditional}, $t=3$ in~\cite{clarke2022conditional}, 
and $k=2$, $t=d$ in~\cite{alexandr2025decomposing}; see also~\cite{clarke2021matroid,  clarke2024liftable, pfister2019primary}. 
For general $(t,k,\ell,d)$, the problem remains computationally intractable.

\paragraph{A matroid-theoretic approach.}
To overcome these computational difficulties, we adopt a \emph{matroid-theoretic framework}, which provides a combinatorial description of the decomposition of $V_{\Delta}$ and an explicit characterization in the case $k=2$. 
This extends~\cite{alexandr2025decomposing} and highlights the fundamental role of matroidal structures in the geometry of conditional independence varieties.

More generally, CI ideals can be defined with an additional parameter $s \geq 2$; see Section~\ref{sec:quasi_product}. In this work we restrict to the case $s=2$. Matroids also arise in a different setting, when $d\leq s+t-3$, where the associated CI variety is the circuit variety of a matroid; see Theorem~\ref{thm:dleq s+t-3} and \cite{clarke2021matroid}. While this manuscript was being finalized, the third author used similar matroid-theoretic ideas for the special~case~$s= t= n$~in~\cite{liwski2025decomposing}.

\paragraph{Outline of the strategy.}
Our analysis proceeds as follows.

First, we introduce auxiliary varieties $U_S$ for subsets $S\subset[k\ell]$, defined using matroidal rank conditions in Definition~\ref{def: variety US}, which are  subvarieties of $V_{\Delta}$ with ambient space $(\CC^{d})^{k\ell}$. 
The varieties $U_{S}$ are {\em quasi-affine}; that is, they are the intersection of a Zariski-closed subset with a Zariski-open subset. 
We then define $V_S$ as the Zariski closure of $U_S$ in $(\CC^{d})^{k\ell}$ and show that our variety $V_\Delta$ decomposes into a union of such varieties $V_S$. In general, the varieties $V_{S}$ may not be irreducible, so we introduce a further decomposition of $V_S$ into irreducible components $V_{S,i}$. As a result, we obtain an irredundant irreducible decomposition of $V_\Delta$ in terms of the smaller components $V_{S,i}$, where the subsets $S$ appearing in the decomposition are the \emph{admissible subsets} introduced in Definition~\ref{admissible}.

To explicitly describe this decomposition of $V_\Delta$, we need to focus on the varieties $V_S$. To decompose the varieties $V_S$, it suffices to decompose the quasi-affine varieties $U_S$ (see Theorem~\ref{irreducible}). To this end, we further reduce our problem by introducing quasi-affine varieties $F_S$, defined for each admissible subset $S \subset [k\ell]$. These varieties live in the lower-dimensional space $(\CC^d)^{\ell}$ and also have a matroid-theoretic interpretation. In Theorem~\ref{corresp} we establish an explicit one-to-one correspondence between the irreducible components of $F_S$ and $U_S$. This multi-step reduction enables a tractable analysis of $V_\Delta$ via the geometry of the smaller varieties $F_S$, leading to an explicit decomposition in the case when $k=2$ and $\ell \geq t$ are arbitrary. Moreover, this approach provides a general framework for answering Question~\ref{question decomposition}, substantially simplifying the problem and offering new insight into the geometry of the~components~of~$V_{\Delta}$.

\medskip
\noindent
\textbf{\large Our contributions.}  
We now summarize the main contributions of this paper. All results concern the irreducible decomposition, dimension, and degree of the variety $V_{\Delta}$.

\begin{theoremA}\label{theoremA}
The variety $V_{\Delta}$ admits the following irredundant irreducible decomposition:
\begin{equation*}
V_{\Delta} = \bigcup_{S} \bigcup_{i=1}^{a_{S}} V_{S,i}, \tag{Theorem~\ref{irreducible}}
\end{equation*}
where the union runs over all admissible subsets $S \subset [k\ell]$ such that $V_{S} \neq \emptyset$, and where $V_{S,i}$ denote the irreducible components of $V_{S}$. Moreover:
\begin{itemize}
\item[1.] We establish a one-to-one correspondence between the irreducible components of $V_{S}$ and those of~$F_{S}$, made explicit in the proof. \hfill (Theorem~\ref{corresp})\vs
\item[2.] Under this correspondence,
\begin{equation*}
\Dim(V_{S,i}) = \Dim(F_{S,i}) + \ell(k-1) - \lvert S \rvert, \tag{Corollary~\ref{dimensions}}
\end{equation*}
where $F_{S,i}$ maps to $V_{S,i}$ under the stated correspondence.\vs
\item[3.] We explicitly 
show how any finite set of generators of the ideal $I(F_{S,i})$ 
yields a complete set of defining equations of $V_{S,i}$. \hfill (Proposition~\ref{equations})
\end{itemize}
\end{theoremA}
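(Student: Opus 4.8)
The plan is to carry out the multi-step reduction $V_{\Delta}\rightsquigarrow\{V_S\}\rightsquigarrow\{U_S\}\rightsquigarrow\{F_S\}$ sketched in the outline, so that Theorem~\ref{irreducible}, Theorem~\ref{corresp}, Corollary~\ref{dimensions}, and Proposition~\ref{equations} each come out of one stage. First I would analyze $V_{\Delta}$ set-theoretically: reading the $k\ell$ columns of a matrix $Y$ as vectors $v_{ij}\in\CC^d$ placed on the $k\times\ell$ grid of~\eqref{matrix}, one has $Y\in V_{\Delta}$ exactly when the vectors in each grid-column $C_j$ are pairwise parallel and the vectors in each grid-row $R_i$ span a subspace of dimension $<t$. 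Stratifying by the combinatorial type of such a configuration --- concretely, by the zero-pattern $S=\{(i,j):v_{ij}=0\}\subseteq[k\ell]$ together with the matroid induced on the nonzero columns --- exhibits $V_{\Delta}$ as a finite union of the quasi-affine loci $U_S$ of Definition~\ref{def: variety US}. Passing to Zariski closures gives $V_{\Delta}=\bigcup_S V_S$, and discarding the pieces that are not inclusion-maximal leaves exactly the admissible subsets of Definition~\ref{admissible}. Here the only elementary ingredient is that a quasi-affine variety and its Zariski closure have canonically matched irreducible components, so Theorem~\ref{irreducible} follows once one can decompose each $U_S$, hence --- by the next stage --- each $F_S$.

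For the reduction $U_S\rightsquigarrow F_S$ of Theorem~\ref{corresp}: on $U_S$ the zero-pattern inside each grid-column $C_j$ is constant, so one may single out the representative vector $w_j:=v_{r_j,j}$, where $r_j$ is the least row with $(r_j,j)\notin S$. Since $w_j\neq 0$ on all of $U_S$ and the vectors of $C_j$ are parallel to it, the scalar $\lambda_{ij}$ with $v_{ij}=\lambda_{ij}w_j$ is a globally regular function on $U_S$ (on the open set where the $p$-th coordinate of $w_j$ is nonzero it equals $(v_{ij})_p/(w_j)_p$, and these expressions agree on overlaps). The map $Y\mapsto\bigl((w_j)_{j\in[\ell]},\,(\lambda_{ij})_{(i,j)\notin S,\ i\neq r_j}\bigr)$ is then an isomorphism $U_S\xrightarrow{\ \sim\ }F_S\times(\CC^{*})^{N}$, where $N=\ell(k-1)-\lvert S\rvert$ (each grid-column contributes $k-1-\lvert S\cap C_j\rvert$ free scalars) and $F_S\subseteq(\CC^d)^{\ell}$ is the image of $U_S$ under $Y\mapsto(w_j)_{j\in[\ell]}$; one identifies this image with the locus defined by precisely the matroid rank conditions that the grid-row constraints force on the representatives once the scalars indexed by $S$ are set to zero. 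Consequently the components of $U_S$ are exactly the preimages of the components of $F_S$ under the projection $U_S\to F_S$; writing $F_{S,i}\leftrightarrow U_{S,i}$ and $V_{S,i}:=\overline{U_{S,i}}$ then gives $\Dim(V_{S,i})=\Dim(U_{S,i})=\Dim(F_{S,i})+N$, which is Corollary~\ref{dimensions}.

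For Proposition~\ref{equations}: because $w_j$ is literally the block of variables of $X$ at grid-position $(r_j,j)$, the collapse map $\rho\colon(\CC^d)^{k\ell}\to(\CC^d)^{\ell}$ used above is a coordinate projection, so a finite generating set $g_1,\dots,g_m$ of the prime ideal $I(F_{S,i})$ of $\overline{F_{S,i}}$ pulls back to polynomials $\rho^{*}g_1,\dots,\rho^{*}g_m\in\CC[X]$. Using the isomorphism $U_S\cong F_S\times(\CC^{*})^{N}$ one shows that $V_{S,i}=\overline{U_{S,i}}$ is cut out by these polynomials together with the defining equations of $V_S$ obtained in the previous stage: the $\rho^{*}g_a$ clearly vanish on $V_{S,i}$, and conversely a point $p\in V_S$ on which they all vanish has $\rho(p)\in\overline{F_S}\cap V(g_1,\dots,g_m)=\overline{F_{S,i}}$ and hence, by the product structure, lies in $V_{S,i}$ rather than in any $V_{S,i'}$ with $i'\neq i$ --- the latter being excluded because distinct components of $F_S$ have incomparable vanishing ideals, so $\overline{F_{S,i'}}\cap\overline{F_{S,i}}\subsetneq\overline{F_{S,i'}}$. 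This is the promised recipe turning generators of $I(F_{S,i})$ into a complete set of defining equations of $V_{S,i}$.

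I expect the two middle stages to carry essentially all the difficulty, and the single hardest point to be the precise matroidal description of $F_S$: one must pin down \emph{exactly} which dependencies among the representatives $w_1,\dots,w_{\ell}$ are forced by the grid-row constraints once the scalars indexed by $S$ have been zeroed. Rescaling a vector configuration can destroy some dependencies while preserving others, so one has to track how rank conditions propagate along the columns, and this is where the lattice of flats of the matroids attached to $V_{\Delta}$ enters --- getting these conditions right is what makes the correspondence with $U_S$ an exact bijection on components rather than merely a surjection, and it is also what feeds into the dimension formula and the defining equations. The secondary difficulty is proving irredundancy of $V_{\Delta}=\bigcup_S\bigcup_i V_{S,i}$ across different admissible $S$: for a fixed $S$ this is automatic, but since taking closures mixes the strata one needs a genuine argument --- for instance recovering the zero-pattern $S$ from a generic point of $V_{S,i}$, or a dimension/incidence comparison --- showing that no component is contained in another.
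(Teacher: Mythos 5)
Your overall architecture matches the paper's, and the reformulation of Theorem~\ref{corresp} as an isomorphism $U_S\cong F_S\times(\CC^*)^{N}$ (normalizing by the first non-$S$ row in each column) is a tidy variant of the paper's surjection $\psi\colon(\CC^*)^{|[k\ell]\setminus S|}\times F_S\to U_S$ with $(\CC^*)^{\ell}$-fibers; the dimension count agrees. However, there are two genuine gaps, neither of which you flag as a difficulty.

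First, the stratification step does not actually give $V_\Delta=\bigcup_S U_S$. The strata $U_S$ of Definition~\ref{def: variety US} require, beyond the zero-pattern condition, that each $S_i$ be a \emph{flat of rank $t-1$} in $\mathcal{M}_\gamma$. Plenty of $\gamma\in V_\Delta$ have a zero-pattern $S$ for which some $S_i$ fails to be a flat (e.g.\ an entire column is zero, or $\rank(S_i)<t-1$), and such $\gamma$ lie in no $U_{S'}$ at all. So ``exhibits $V_\Delta$ as a finite union of the quasi-affine loci $U_S$'' is false, and $V_\Delta=\bigcup_S V_S$ is not a formal consequence of stratification. The paper's proof of Theorem~\ref{irreducible} proves this by a three-step perturbation: first fill in any all-zero columns, then push the global rank up to at least $t-1$ without introducing loops, then, whenever $S_i$ is not a flat, turn on the offending zero entry along the column direction. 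Each step is a genuine lemma about deforming inside $V_\Delta$, not a bookkeeping exercise, and it is what converts your $\bigsqcup_{(S,M)}$ stratification into the claimed covering by closures of the good strata.

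Second, the defining-equations argument is broken on the boundary $V_S\setminus U_S$, precisely because you pull back along a single fixed representative $r_j$ per column. On $U_S$ the map $\rho\colon\gamma\mapsto(\gamma_{r_j})_j$ is a faithful section, but a limit point $p\in V_S$ can have $p_{r_j}=0$ while some other entry $p_u$ ($u\in C_j\setminus S$) is nonzero and still ``dominates'' the column. Then $\rho(p)$ has spurious extra zeros and carries none of the information of the surviving vectors, so $\rho(p)\in\overline{F_{S,i}}$ does not let you conclude $p\in V_{S,i}$; your appeal to ``the product structure'' uses an identification that only holds on $U_S$, and the final remark about incomparable ideals merely says $\overline{F_{S,i}}\neq\overline{F_{S,i'}}$, which does not prevent $\rho(p)$ from landing in their intersection. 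Concretely, on $V_S$ one can have a sequence in $U_{S,i'}$ with $w_j^{(n)}\to 0$ while $\lambda_{ij}^{(n)}\to\infty$, so the limit $p$ is not in $\Phi(\overline{F_{S,i'}}\times\CC^N)$ for any single choice of slice, and your proposed system of equations can vanish at a $p\in V_{S,i'}\setminus V_{S,i}$. The paper's Proposition~\ref{equations} sidesteps this by pulling back $I(F_{S,i})$ along \emph{all} $S$-representatives $T\in\mathcal{T}(S)$: for a given $p\in V_S$ one then chooses $T$ so that $p_{t_j}$ dominates column $j$, and the perturbation argument runs through. You would need the same device, or an argument that the fixed slice $\rho$ already sees all boundary points, which is false.

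The irredundancy of the decomposition across different admissible $S$ you correctly identify as needing an argument but do not supply; the paper recovers $S$ as the common zero-pattern of points of $U_S\cap V_{S,a}$ and then compares the rank/flat structure of ${S^{(1)}}_p$ and ${S^{(2)}}_p$ to derive a contradiction, which is the kind of ``recover $S$ from a generic point'' argument you sketch.
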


By Theorem~\ref{theoremA}, the irreducible decomposition of $V_{\Delta}$, the dimensions of its components, and their defining equations all reduce to the respective questions for the varieties $F_{S}$. We emphasize that the passage through the varieties $F_{S}$ is not merely a convenient strategy for decomposing $V_{\Delta}$, but rather a necessary and intrinsic step for understanding this decomposition.

We successfully apply our method to obtain the irreducible decomposition of $V_{\Delta}$ in two cases: (i) $k=2$ and arbitrary $\ell \geq t$; (ii) $t=\ell$ and arbitrary $k$.
The decomposition for $t=\ell$ was previously derived in~\cite{clarke2020conditional}; however, our approach provides substantially simpler proofs, offering additional insight into possible generalizations.

\begin{theoremB}\label{theoremB}
For $t=\ell$, $V_{\Delta}$ has the following irredundant irreducible decomposition:
\begin{equation*}
V_{\Delta} \;=\; V_{\emptyset} \,\cup\, \bigcup_{S\in \mathcal{R}} V_{S}, \tag{Corollary~\ref{desc del teo}}
\end{equation*}
where 
\[
\mathcal{R}=\{S\subset [k\ell]:\ \text{$\size{R_{i}\cap S}=1$ for all $i\in [k]$ and $C_{j}\not\subset S$ for all $j\in [\ell]$}\}.
\]
Moreover, the dimensions of its irreducible components are given by
\[
\Dim(V_{\emptyset})=\ell(k+d)-d-1, 
\qquad 
\Dim(V_{S})=\ell(k+d-1)-k 
\ \text{for each } S\in \mathcal{R}. 
\tag{Proposition~\ref{prop:t=l-components}}
\]
\end{theoremB}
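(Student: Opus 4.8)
The plan is to obtain both the decomposition and the dimension formulas by specializing the general machinery of Theorem~\ref{irreducible}, Theorem~\ref{corresp}, and Corollary~\ref{dimensions} to the case $t=\ell$, where the combinatorics of $\Delta$ collapses: since $\lvert R_i\rvert=\ell=t$, each set $\binom{R_i}{t}$ is just $\{R_i\}$, so $V_\Delta$ is cut out by the conditions $\operatorname{rank}(Y_{R_i})\le\ell-1$ for $i\in[k]$ together with $\operatorname{rank}(Y_{C_j})\le1$ for $j\in[\ell]$. Writing the columns of a point $Y$ in a block $C_j$ as scalar multiples of a single vector $v_j\in\CC^d$, the row condition on $R_i$ becomes: either $R_i$ contains a zero (loop) column of $Y$, or the vectors $\{v_j:(i,j)\text{ not a loop of }Y\}$ are linearly dependent. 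This is the picture I would work with throughout.

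The first step is to make Definition~\ref{admissible} explicit in this regime and show that an admissible $S\subset[k\ell]$ has $V_S\neq\emptyset$ precisely when $S=\emptyset$ or $S\in\mathcal R$. If $R_i\cap S=\emptyset$ for some $i$, the row condition forces $v_1,\dots,v_\ell$ to be globally dependent, which is exactly what cuts out $V_\emptyset$, so $S$ contributes nothing new unless $S=\emptyset$; and if some $R_i$ meets $S$ in at least two elements, or $C_j\subseteq S$ for some $j$ (which forces $v_j=0$), one checks that $V_S$ is strictly contained in some $V_{S'}$ with $S'\in\{\emptyset\}\cup\mathcal R$ (for instance $V_{C_j}\subseteq V_\emptyset$). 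What survives is $S=\emptyset$ together with the sets $S=\{(i,\sigma(i)):i\in[k]\}$ for non-constant $\sigma\colon[k]\to[\ell]$, i.e.\ exactly the members of $\mathcal R$; nonemptiness of the corresponding $V_S$ is then immediate by writing down a point with the prescribed loop set.

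Next, by Theorem~\ref{corresp} it suffices to show that $F_S$ is irreducible, hence $a_S=1$, for each such $S$. For $S=\emptyset$ the variety $F_\emptyset\subset(\CC^d)^\ell$ is, in this case, the classical determinantal variety of $d\times\ell$ matrices $[v_1\mid\cdots\mid v_\ell]$ of rank at most $\ell-1$, which is irreducible; for $S\in\mathcal R$ every row of $\mathcal Y$ already carries a loop, so no dependency is imposed on the $v_j$ and $F_S$ is (an open dense subvariety of) the affine space $(\CC^d)^\ell$, again irreducible. Theorem~\ref{irreducible} then yields $V_\Delta=V_\emptyset\cup\bigcup_{S\in\mathcal R}V_S$ with each $V_S$ irreducible, and irredundancy is inherited from Theorem~\ref{irreducible}; it can also be seen directly, since the generic points of $V_\emptyset$ and of the various $V_S$ have pairwise incompatible loop-and-rank patterns (on $V_S$ the $v_j$ are generic, hence independent when $\ell\le d$, whereas on $V_\emptyset$ they are dependent, and distinct $\sigma$ produce distinct loop sets).

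The dimensions follow from $\Dim(V_S)=\Dim(F_S)+\ell(k-1)-\lvert S\rvert$. For $S\in\mathcal R$ we have $\lvert S\rvert=k$ and $\Dim(F_S)=d\ell$, so $\Dim(V_S)=d\ell+\ell(k-1)-k=\ell(k+d-1)-k$; for $S=\emptyset$ we have $\lvert S\rvert=0$ and $\Dim(F_\emptyset)=(\ell-1)(d+1)$ by the standard dimension formula for rank-deficient $d\times\ell$ matrices, so $\Dim(V_\emptyset)=(\ell-1)(d+1)+\ell(k-1)=\ell(k+d)-d-1$. I expect the main obstacle to be the first step: extracting the exact list of admissible subsets from Definition~\ref{admissible} when $t=\ell$ and proving the dichotomy that every $S$ not on this list has $V_S$ empty or redundant, together with the accompanying identification of the precise shape of $F_S$ — in particular verifying that for $S\in\mathcal R$ the matroid data conceals no further linear dependency among the $v_j$. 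Once the per-row loop bookkeeping is set up this is elementary but must be carried out with care.
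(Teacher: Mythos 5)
Your proposal is correct and follows essentially the same route as the paper: specialize Theorems~\ref{irreducible} and~\ref{corresp} and Corollary~\ref{dimensions} to $t=\ell$, identify the relevant admissible $S$ as $\{\emptyset\}\cup\mathcal R$, observe that $F_\emptyset$ is the classical rank-deficient determinantal variety and that $F_S$ for $S\in\mathcal R$ is open dense in $(\CC^d)^\ell$, and read off the dimensions. The one place you are slightly imprecise is your treatment of admissible $S\neq\emptyset$ with $R_i\cap S=\emptyset$ for some $i$: you frame this as a \emph{redundancy} ("$S$ contributes nothing new"), but Theorem~\ref{irreducible} already asserts irredundancy over admissible $S$ with $V_S\neq\emptyset$, so the clean thing — and what the paper's Lemma~\ref{only} actually proves — is that $F_S=\emptyset$ for such $S$. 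Your observation that the row condition forces $\rank\{v_1,\dots,v_\ell\}=\ell-1$ is right; what is missing is the extra step that any $S_j$ with $R_j\cap S\neq\emptyset$ is then a proper subset of $[\ell]$ of the same rank $\ell-1$, hence equals $[\ell]$ (flats of equal rank with one contained in the other coincide), which is a contradiction and yields $F_S=\emptyset$. Spelling this out turns your redundancy heuristic into the paper's emptiness lemma, after which everything else in your proposal matches the paper's argument line for line.
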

For $k = 2$, the only case in which the decomposition of $V_{\Delta}$ was previously known is when $d = t$, obtained in~\cite{alexandr2025decomposing}. We note, however, that the arguments in that proof do not extend to the general case $d \geq t$. Using our framework, we derive the explicit irredundant irreducible decomposition of $V_{\Delta}$ for $k = 2$ and arbitrary $d \geq t$.

\begin{theoremC}\label{theoremC}
For $k = 2$, the irredundant irreducible decomposition of $V_{\Delta}$ is given by
\[
V_{\Delta} \;=\; V_{\emptyset} \,\cup\, \bigcup_{S} \, \bigcup_{j} V(J_{S,j}), \tag{Theorem~\ref{thm:decomposition of Vdelta for k=2}}
\]
where the outer union runs over all admissible nonempty subsets $S \subset [2\ell]$, and for each such $S$, the inner union runs over all values of $j$ specified in~\eqref{maximal j}. The ideals $J_{S,j}$ are as defined in~\eqref{ideal JSj}. 

Moreover, in Corollary~\ref{cor: dimension of V_S^j} we compute the dimensions of the irreducible components:
\[
\Dim(V(J_{S,j})) \;=\; d(2t - 2 - j) + (t - 2)(u + v) + j(\ell - u - v) - \bigl(j^{2} + 2(t - 1 - j)^{2} + 2j(t - 1 - j)\bigr) + \ell,
\]
and in Theorem~\ref{thm:maximum dimension general} we obtain the dimension of $V_{\Delta}$:
\begin{align*}
\begin{cases}
  \max\{d\ell + 2t - \ell - 2,\ (t - 1)(d + \ell - t + 1) + \ell\}& \ell < 2t-2, \\
  \max\{(t - 1)(2d - 2t + \ell + 2),\ (t - 1)(d + \ell - t + 1) + \ell\} & \ell \geq 2t-2.
\end{cases}
\end{align*}

Furthermore, in Proposition~\ref{prop:degree of Vdelta} we express the degree of $V_{\Delta}$ in terms of the degree of $V_{\emptyset}$:
\begin{itemize}
    \item[{\rm (i)}] If $\ell > 2t - 2$, the degree of $V_\Delta$ is
    \[
        \alpha \cdot \mathbbm{1}_{(t - 1)(d - t + 1) \geq \ell} 
        \;+\;
        \deg(V_\emptyset) \cdot \mathbbm{1}_{(t - 1)(d - t + 1) \leq \ell},
    \]
    \item[{\rm (ii)}] If $\ell \leq 2t - 2$, the degree of $V_\Delta$ is
    \[
        \beta \cdot \mathbbm{1}_{(d - t)(\ell - t) + d - 1 \geq \ell}
        \;+\;
        \deg(V_\emptyset) \cdot \mathbbm{1}_{(d - t)(\ell - t) + d - 1 \leq \ell},
    \]
\end{itemize}
where 
\[
\alpha \coloneqq 
\sum_{u = t - 1}^{\ell - t + 1}
\binom{\ell}{u}
\det\!\bigl[\tbinom{d + u - i - j}{\, d - i \,}\bigr]_{i,j = 1}^{t - 1}
\cdot
\det\!\bigl[\tbinom{d + \ell - u - i - j}{\, d - i \,}\bigr]_{i,j = 1}^{t - 1},
\qquad
\beta \coloneqq
\binom{\ell}{t - 1}
\binom{t - 1}{2t - 2 - \ell}
d^{\, 2t - 2 - \ell}.
\]
Here $\mathbbm{1}_{a \geq b} = 1$ if $a \geq b$ and $0$ otherwise.

Finally, in Section~\ref{sec: combinatorial approach to compute degree} we develop a combinatorial method to compute the degree of $V_{\emptyset}$ by counting certain lattice paths. For the particular case $d = t$, we obtain the explicit formula
\[
\deg(V_{\emptyset})
\;=\;
d^{d - 1}(d - 1)^{\ell - d + 1}\binom{\ell}{d-1}.\tag{Corollary~\ref{cor:count}}
\]
For the general case $d\geq t$, we provide the generating function
\begin{equation*}
\mathcal{G}_{d,\ell,t}(z_1,\dotsc,z_\ell) =
\prod_{j=1}^\ell z^{\min(i,t-1)}\; \cdot \;
\det \left( 
h_{d-i}(z_j,z_{j+1},\dotsc,z_\ell)
\right)_{1\leq i,j \leq t-1},
\end{equation*}
where $h_i$ is the complete homogeneous symmetric polynomial of degree $i$, and show that
\[
  \deg(V_{\emptyset})=\left.\frac{\partial}{\partial z_1} \dotsb \frac{\partial}{\partial z_\ell}
  \mathcal{G}_{d,\ell,t}(z_1,\dotsc,z_\ell) \right\vert_{z_1=\dotsb = z_\ell =1}.\tag{Proposition~\ref{cor:total_transversal}}
\]
\end{theoremC}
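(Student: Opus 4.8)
The plan is to specialize the general reduction of Theorem~\ref{theoremA} to the case $k=2$. By Theorems~\ref{irreducible} and~\ref{corresp} it suffices, for each admissible $S\subset[2\ell]$ with $V_S\neq\emptyset$, to decompose the quasi-affine variety $F_S\subset(\CC^d)^{\ell}$ and transport the answer back to $V_\Delta$; Corollary~\ref{dimensions} then supplies the dimensions and Proposition~\ref{equations} the defining ideals. So the real work splits into: (i) describing $F_S$ explicitly when $k=2$ and decomposing it; (ii) reading off the dimensions and maximizing; (iii) identifying the top-dimensional components of $V_\Delta$ and summing their degrees; (iv) computing $\deg(V_\emptyset)$ combinatorially.

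\textbf{Decomposition and dimensions (Theorem~\ref{thm:decomposition of Vdelta for k=2}, Corollary~\ref{cor: dimension of V_S^j}, Theorem~\ref{thm:maximum dimension general}).} With $k=2$ the grid $\mathcal Y$ has two rows $R_1,R_2$ of $\ell$ cells each, and the defining conditions of $\Delta$ say that the $d\times\ell$ matrices formed along $R_1$ and along $R_2$ each have rank $<t$, while the column $2$-minors together with the data of $S$ force prescribed pairs of column vectors to be parallel. Unwinding the matroidal rank conditions of Definition~\ref{def: variety US} in this setting, I expect $F_S$ to break up according to an integer $j$ recording how the shared rank budget $t-1$ is split between the two rows, together with parameters $u,v$ counting the cells of each row inside and outside $S$; each resulting stratum should be the closure of a locally closed piece of a product of generic determinantal varieties of $d\times\ast$ matrices whose coranks are governed by $t$ and $j$. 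The combinatorial core is pinning down the exact range of admissible $j$ (formula~\eqref{maximal j}) and checking irredundancy — that no stratum lies in another or inside $V_\emptyset$. The dimension of each factor is the classical $r(m+n-r)$ for rank-$\le r$ matrices; adding the correction $\ell(k-1)-\size{S}=\ell-\size{S}$ from Corollary~\ref{dimensions} gives the closed form for $\Dim(V(J_{S,j}))$, and maximizing this (piecewise quadratic in $j$) over admissible $S,j$ — a case split on $\ell\lessgtr 2t-2$ — together with the separate computation of $\Dim(V_\emptyset)$ from its own product-of-determinantal structure yields $\Dim(V_\Delta)$.

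\textbf{Degree of $V_\Delta$ (Proposition~\ref{prop:degree of Vdelta}).} Since degree is additive over the top-dimensional components of an equidimensional-at-the-top decomposition, I would first show that for each fixed $S$ the expression $\Dim(V(J_{S,j}))$ is maximized at an extreme value of $j$, so essentially one candidate component per $S$ competes; comparing these maxima against $\Dim(V_\emptyset)$ produces the two indicator-function regimes. The degree of a surviving $V(J_{S,j})$, being a product of generic determinantal varieties, is a product of Giambelli--Thom--Porteous determinants $\det\bigl[\tbinom{d+u-i-j}{\,d-i\,}\bigr]_{i,j=1}^{t-1}$; summing over the $\binom{\ell}{u}$ admissible $S$ with a given split $u$ gives $\alpha$, and in the degenerate range $\ell\le 2t-2$ the rank constraints force the matrices to be "almost full", collapsing the sum to $\beta$.

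\textbf{Degree of $V_\emptyset$ via non-intersecting lattice paths (Corollary~\ref{cor:count}, Proposition~\ref{cor:total_transversal}).} This is the step I expect to be the main obstacle. The idea is to exhibit a term order (or a SAGBI/Gröbner degeneration of a convenient affine chart of $V_\emptyset$, exploiting that its generating minors already form a Gröbner basis, cf.~\cite{HS04}) under which $V_\emptyset$ degenerates to a monomial or toric variety whose degree counts families of $t-1$ non-intersecting lattice paths across the $\ell$ columns of the grid, with prescribed start and end points dictated by the rank-$(t-1)$ constraint in each column. The Lindström--Gessel--Viennot lemma converts this count into a determinant of binomial coefficients; for $d=t$ only $d-1$ of the $\ell$ columns carry a genuine constraint and the remaining ones are free, giving $d^{\,d-1}(d-1)^{\ell-d+1}\binom{\ell}{d-1}$. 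For general $d\ge t$ one refines the enumeration with a variable $z_j$ per column: each of the $t-1$ paths has its prefix forced through its first $\min(j,t-1)$ steps (the factor $\prod_j z_j^{\min(j,t-1)}$), while the free continuations of the paths starting at column $j$ are enumerated by the complete homogeneous symmetric polynomial $h_{d-i}(z_j,\dots,z_\ell)$, so the weighted LGV determinant is exactly $\mathcal G_{d,\ell,t}$; applying $\partial/\partial z_1\cdots\partial/\partial z_\ell$ and setting all $z_j=1$ selects the configurations that meet every column once, which is precisely $\deg(V_\emptyset)$. The delicate points are: justifying the degeneration and that it preserves the degree; identifying the precise lattice-path model (endpoints and the $\min(j,t-1)$ prefix); and checking that the differentiate-then-evaluate operation extracts the intended weighted enumeration.
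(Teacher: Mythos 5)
Your overall architecture matches the paper's: reduce to $F_S$ via Theorem~\ref{theoremA}, stratify $F_S$ by an integer parameter $j$, transport dimensions/equations back to $V_\Delta$, maximize to get $\dim V_\Delta$, sum top-dimensional degrees, and convert $\deg V_\emptyset$ into a weighted count of non-intersecting lattice paths via an initial-ideal degeneration and Lindström--Gessel--Viennot. Two points, however, contain genuine gaps that would block the proof if pressed.

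First, the dimension of $F_S^j$ is not an application of the classical $r(m+n-r)$ formula for rank-$\le r$ matrices. Each stratum $F_S^j$ is parametrized by a map $\varphi:\mathbb C^{d\times(2t-2-j)}\times\mathbb C^{(t-1)\times u}\times\mathbb C^{j\times(\ell-u-v)}\times\mathbb C^{(t-1)\times v}\to\mathbb C^{d\times\ell}$ whose generic fiber is a single orbit of a block-triangular subgroup $GL_j\times(GL_{t-1-j})^2\times(\mathbb C^{j\times(t-1-j)})^2$. The proof that the fiber is \emph{exactly} this group --- not something larger --- is the content of Theorem~\ref{thm:dimension}, and it is what produces the correction term $j^2+2(t-1-j)^2+2j(t-1-j)$ in $\dim V(J_{S,j})$. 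You would not reach that expression by stacking two generic determinantal varieties; the two rank conditions interact through the shared $j$-dimensional intersection $\Span\gamma_A\cap\Span\gamma_B$, so there is a shared ``change-of-basis'' redundancy that must be quotiented out, and the whole point of the proof is to pin down its exact size. (Similarly, $j$ records $\dim(\Span\gamma_A\cap\Span\gamma_B)$, not a split of the rank budget $t-1$ between rows.)

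Second, the lattice-path step is missing its central reduction and misreads what the partial derivatives do. The Gröbner degeneration of $J_\emptyset$ lives on $[d]\times[2\ell]$ and its minimal transversals are against the hypergraph $\mathcal B=\mathcal B_t\cup\mathcal C_t$, \emph{not} directly against the $t$-diagonal hypergraph $\mathcal A_t$ on $[d]\times[\ell]$. The paper's Lemmas~\ref{lem: one from the other}--\ref{lem: multiplicity} and Proposition~\ref{prop:rho} establish a many-to-one correspondence $\min(\mathcal B)\to\min(\mathcal A_t)$ with fiber sizes $m(A)=\prod_{i}(d-|A\cap Y_i|)$; this is where the column multiplicities come from, and it is the step your sketch skips. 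Consequently, $\partial_{z_1}\cdots\partial_{z_\ell}\mathcal G_{d,\ell,t}|_{z=1}$ does not ``select the configurations that meet every column once'' --- every non-intersecting family already meets every column, since the first $\min(j,t-1)$ paths cross column $j$ --- rather, the derivatives extract the exponent $|C_j\cap\mathbf P|$ in each variable, i.e.\ multiply each path family by its multiplicity $m(A)$, so the evaluation computes $\sum_A m(A)$. Without the $\mathcal B\to\mathcal A_t$ reduction these multiplicities have no meaning and the formula in Proposition~\ref{cor:total_transversal} would not follow.

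A smaller remark: irreducibility of $F_S^j$ and primeness of $J_{S,j}$ are obtained in the paper by exhibiting $\overline{F_S^j}=\overline{\operatorname{Im}\varphi}$ with irreducible source, together with the square-free Gröbner basis result of \cite{seccia2022knutson}; the ``closure of a locally closed piece of a product of generic determinantal varieties'' picture does not directly yield either of these statements, and in particular the mixed ideal $I_t(Y_A)+I_t(Y_B)+I_{j+1}(Y_{A\cap B})+I_{2t-j-1}(Y)$ is not a product ideal. The rest of your plan --- the case split on $\ell\lessgtr 2t-2$, the additivity of degree over top-dimensional components, and the Giambelli--Thom--Porteous product formula when the top components have $A\cap B=\emptyset$ --- does agree with the paper.
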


Theorems~\ref{theoremB} and~\ref{theoremC} have the following statistical interpretation.

\begin{corollary}\label{cor:Iempty int property}
Consider the CI model $\mathcal{C}$ from~\eqref{model}, where $X$, $Y_1$, and $Y_2$ are observed random variables with finite state spaces of sizes $d$, $k$, and $\ell$, respectively, and $H$ is a hidden variable with state space of size $t-1$. 
Assume that $d$, $k$, $\ell$, and $t$ are integers satisfying $k=2$ or $t=\ell$. 
Then the \emph{intersection property} holds; that is,
\[
X \mathrel{\perp\!\!\!\perp} \{Y_1, Y_2\} \mid H.
\]
More precisely, the component $V_{\emptyset}$ in the decomposition of $\mathcal{V}_\Delta$ describes the set of joint distributions with full support (those without structural zeros) in the probability table. 
This component corresponds to the polynomials defining the conditional independence relation above. 
\end{corollary}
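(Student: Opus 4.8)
The corollary has two parts: the intersection property itself, and the statement that it is the distinguished component $V_\emptyset$ that records it. \emph{The first part is a direct computation and does not use the decomposition.} A full-support point of $V_\Delta$ — equivalently, a full-support distribution in the model $\mathcal C$ — satisfies the rank-one conditions on the column blocks $C_j$, which under positivity are exactly $X\mathrel{\perp\!\!\!\perp} Y_1\mid Y_2$; hence $p(x,y_1,y_2)=p(y_1\mid y_2)\,q(x,y_2)$, where $q$ is the $X$–$Y_2$ marginal. It also satisfies the rank-$(t-1)$ conditions on the row blocks $R_i$, which under positivity say exactly that $q$ has rank at most $t-1$, i.e. $q(x,y_2)=\sum_{h=1}^{t-1}p(h)\,p(x\mid h)\,p(y_2\mid h)$. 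Substituting back gives $p(x,y_1,y_2)=\sum_{h}p(h)\,p(x\mid h)\,\bigl(p(y_1\mid y_2)\,p(y_2\mid h)\bigr)$, which is the factorization witnessing $X\mathrel{\perp\!\!\!\perp}\{Y_1,Y_2\}\mid H$ with a hidden $H$ of size $t-1$; this is the intersection property.

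\emph{For the geometric part} one first identifies $V_\emptyset$, independently of the decomposition, as the variety $\{C_j\text{-proportionality}\}\cap\{\operatorname{rank}(Y)\leq t-1\}$. Indeed, under $C_j$-proportionality the columns of $Y$ all lie in the span of $\ell$ vectors $v_1,\dots,v_\ell$, one per block, so $\operatorname{rank}(Y)=\operatorname{rank}[v_1\mid\dots\mid v_\ell]=\operatorname{rank}(Y_{R_i})$ on the dense open subset where no entry vanishes; this variety is irreducible, being the image of the irreducible parameter space consisting of a $d\times\ell$ matrix of rank $\leq t-1$ together with the scalars rescaling the columns within the blocks $C_j$, and it is precisely $\closure{U_\emptyset}$. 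Consequently $V_\emptyset=V\bigl(\langle 2\text{-minors of the blocks }C_j\rangle+\langle t\text{-minors of }Y\rangle\bigr)$, whose generators are exactly the CI equations of $\{X\mathrel{\perp\!\!\!\perp} Y_1\mid Y_2,\ X\mathrel{\perp\!\!\!\perp}\{Y_1,Y_2\}\mid H\}$ — the first already belonging to $\mathcal C$. It remains to show that $V_\emptyset$ is the unique component of $V_\Delta$ meeting the open set $\Sigma$ of matrices with all entries nonzero: a generic point of $V_\emptyset$ lies in $\Sigma$, whereas by Definition~\ref{def: variety US} every index of a nonempty admissible $S$ is a loop, forcing zero columns, so every $V_{S,i}$ with $S\ne\emptyset$ lies outside $\Sigma$. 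Here the decompositions of Theorems~\ref{theoremB} and~\ref{theoremC} enter, guaranteeing that these are all the remaining components (for $t=\ell$ this is read off from $\mathcal R$, and for $k=2$ from the ideals $J_{S,j}$). Therefore $\closure{\Sigma\cap V_\Delta}=V_\emptyset$, as claimed.

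\emph{The main obstacle} is this last point: checking, from Definition~\ref{def: variety US} and the admissibility condition of Definition~\ref{admissible}, that a nonempty admissible $S$ genuinely forces zero columns rather than a subtler degeneracy, so that $\Sigma$ indeed misses every $V_{S,i}$ with $S\ne\emptyset$. This is the only place where the hypothesis $k=2$ or $t=\ell$ is used — it is what makes the component lists of Theorems~\ref{theoremB} and~\ref{theoremC} exhaustive; for general parameters the first part still yields the intersection property, but $\closure{\Sigma\cap V_\Delta}$ cannot yet be identified with a single named irreducible component.
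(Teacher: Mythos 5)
The paper itself gives no explicit proof of this corollary; it is presented as a ``statistical interpretation'' of Theorems~B and~C. Your proposal supplies the missing details and is essentially sound: the direct positivity computation showing that the two defining rank conditions of $I_\Delta$ together factor a full-support distribution through a hidden $H$ of size $t-1$ is correct and, as you note, independent of the decomposition machinery; and the identification of $V_\emptyset$ as the unique irreducible component whose generic point has full support, together with the observation that $I(V_\emptyset)= I_t(X)+\sum_{j}I_2(X_{C_j})$ consists precisely of the CI constraints, is the right content.

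Where you go astray is in the role you assign to the hypothesis $k=2$ or $t=\ell$, and in the ``main obstacle'' you isolate at the end. You worry that a nonempty admissible $S$ might produce ``a subtler degeneracy'' rather than genuine zero columns, and that Theorems~B and~C are needed to rule this out. That concern is unfounded: by Definition~\ref{def: variety US}, $U_S\subset\{\gamma:\gamma_j=0\text{ for all }j\in S\}$, and the right-hand set is Zariski-closed, so $V_S=\overline{U_S}$ and every $V_{S,i}$ inherit this vanishing. Hence for any nonempty $S$ the whole component $V_{S,i}$ lies outside the full-support locus $\Sigma$; no further analysis, and in particular no explicit decomposition, is required. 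Consequently, the conclusion $\overline{\Sigma\cap V_\Delta}=V_\emptyset$ follows from the general Theorem~\ref{irreducible} alone, together with the irreducibility of $V_\emptyset$ (the argument in Proposition~\ref{prop:t=l-components}(i) uses only that $\overline{F_\emptyset}$ is a determinantal variety and does not depend on $t=\ell$). The restriction $k=2$ or $t=\ell$ in the corollary reflects the scope of the paper's explicit decompositions, not a mathematical necessity for the statement; your own argument, once this gap is seen to close trivially, actually proves the corollary for all admissible parameter values.

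Two smaller slips: calling the indices $j\in S$ ``loops'' misuses the paper's terminology --- the matroid $\mathcal M_\gamma$ lives on $[\ell]$, and admissibility explicitly forbids loops ($C_j\not\subset S$); what you mean is simply that the vectors $\gamma_j$ with $j\in S$ vanish. And in the sentence identifying $V_\emptyset$, you write ``$t$-minors of $Y$'' where you mean the full $d\times k\ell$ matrix $X$, consistent with $J_\emptyset=I_t(X)+\sum_j I_2(X_{C_j})$ as recalled at the start of Section~\ref{sec: combinatorial approach to compute degree}.
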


\medskip
\noindent
\textbf{Outline of the paper.}
In Section~\ref{sec:intro}, we introduce the basic definitions, fix notation, and summarize the main results.  
Section~\ref{sec:decomposition} develops the general decomposition framework: we introduce the varieties $U_S$ and $V_S$ and show that $V_\Delta$ decomposes as a union of the $V_S$, which we then refine to an irredundant irreducible decomposition.  
In Section~\ref{sec:FS}, we introduce the auxiliary quasi-affine varieties $F_S$ and establish a correspondence between the irreducible components of $V_S$ and those of $F_S$, thereby reducing the decomposition problem to a lower-dimensional setting. Section~\ref{sec:t=l} illustrates this framework by recovering the case $t=\ell$.  
Section~\ref{sec:dim-eq} focuses on the 
defining equations for the irreducible components of $V_\Delta$.  
In Section~\ref{sec:VDelta-k=2}, we specialize to the case $k=2$ and obtain a complete irredundant irreducible decomposition of $V_\Delta$.  
Sections~\ref{sec:dim} and~\ref{sec:degree} compute the dimensions and degrees of the resulting varieties, completing their geometric description.  
Finally, Section~\ref{sec:quasi_product} places our results in a broader matroid-theoretic context by interpreting conditional independence varieties in terms of quasi-products of matroids.

\section{General decomposition theorem}\label{sec:decomposition}

In this section, we address Question~\ref{question decomposition} by developing a general framework for decomposing the variety $V_{\Delta}$. 
The main result, Theorem~\ref{irreducible}, provides an implicit description of the irreducible decomposition of $V_{\Delta}$ by reducing the problem to a more tractable one. 
Specifically, it shows that the decomposition of $V_{\Delta}$ may be understood in terms of the irreducible components of certain auxiliary varieties $U_{S}$ associated with subsets $S \subset [k\ell]$, which are introduced in the next subsection.
This reduction identifies the combinatorial structure underlying $V_{\Delta}$ and forms the basis for all subsequent results.

\subsection{Decomposition into varieties \texorpdfstring{$V_S$}{VS}}

We now introduce the varieties $U_{S}$ and $V_{S}$, defined for subsets $S \subset [k\ell]$, which play a central role in the decomposition of $V_{\Delta}$. 
The subset $S$ encodes which matrix entries are forced to vanish, while the remaining entries are constrained by matroid-theoretic rank conditions arising naturally from the geometry of $V_{\Delta}$.

\begin{notation}
We interpret $V_{\Delta}$ as the variety consisting of all tuples $\gamma = (\gamma_{j} : j \in [k\ell]) \in (\CC^{d})^{k\ell}$ of $k\ell$ vectors in $\CC^{d}$ satisfying the following conditions:
\begin{itemize}
\item For any $i_{1}, i_{2} \in [k\ell]$ belonging to the same column of the matrix $\mathcal{Y}$ in~\eqref{matrix}, the vectors $\{\gamma_{i_{1}}, \gamma_{i_{2}}\}$ are linearly dependent.\vs
\item For any subset $\{i_{1}, \ldots, i_{t}\} \subset [k\ell]$ of size $t$ belonging to the same row of $\mathcal{Y}$, the vectors $\{\gamma_{i_{1}}, \ldots, \gamma_{i_{t}}\}$ are linearly dependent.
\end{itemize}
\end{notation}

\begin{example}\label{example: first example}
Let $k=3$, $\ell=6$, and $t=d=4$. Consider the following matrix:
\begin{equation}\label{matrix with vectors}
\begin{pmatrix}
(0,0,0,0) & (0,0,0,0) & (2,0,0,2) & (2,0,0,4) & (0,3,0,3) & (0,3,0,6)\\
(0,0,1,1) & (0,0,1,2) & (0,0,0,0) & (0,0,0,0) & (0,1,0,1) & (0,1,0,2)\\
(0,0,4,4) & (0,0,4,8) & (1,0,0,1) & (1,0,0,2) & (0,0,0,0) & (0,0,0,0)
\end{pmatrix}.
\end{equation}
This matrix represents an element $\gamma \in V_{\Delta}$, since any two vectors in the same column and any four vectors in the same row are linearly dependent.  
Using the indexing of the entries of the matrix $\mathcal{Y}$ from~\eqref{matrix}, we may equivalently regard $\gamma$ as a tuple of $k\ell$ vectors. For instance, under that indexing we have $\gamma_{7} = (2,0,0,2)$ and $\gamma_{17} = (0,1,0,2)$. 
\end{example}

For $\gamma \in V_{\Delta}$ represented by a $k \times l$ matrix of vectors, we index the vectors as in the entries of the matrix $\mathcal{Y}$ from~\eqref{matrix}. We now associate a matroid on the ground set $[\ell]$ to each point $\gamma \in V_{\Delta}$.

\begin{definition}\label{matr aso}
Let $\gamma = (\gamma_{j} : j \in [k\ell]) \in V_{\Delta}$. 
For each $i \in [\ell]$, define a vector $v_{i} \in \CC^{d}$ by
\[
v_{i} =
\begin{cases}
0, & \text{if } \gamma_{j} = 0 \text{ for all } j \in C_{i}, \\[4pt]
\gamma_{j}, & \text{for any } j \in C_{i} \text{ with } \gamma_{j} \neq 0.
\end{cases}
\]
The matroid $\mathcal{M}_\gamma$ on $[\ell]$ is defined as the matroid represented by the tuple of vectors $(v_{1}, \ldots, v_{\ell})$. 

\smallskip
Note that the tuple $(v_{1}, \ldots, v_{\ell})$ may depend on the choice of the nonzero vector $\gamma_{j} \in C_{i}$.
However, the matroid $\mathcal{M}_\gamma$ is well defined, since for each $i \in [\ell]$, any two nonzero vectors $\gamma_{j_{1}}$ and $\gamma_{j_{2}}$ with $j_{1}, j_{2} \in C_{i}$ are scalar multiples of one another.
\end{definition}

\begin{example}\label{example: matroid Mgamma}
Consider the element $\gamma \in V_{\Delta}$ from Example~\ref{example: first example}.  
Define
\[
(v_{1},v_{2},v_{3},v_{4},v_{5},v_{6})
= ((0,0,1,1), (0,0,1,2), (1,0,0,1), (1,0,0,2), (0,1,0,1), (0,1,0,2)).
\]
It is immediate that for each $j \in C_{i}$, the vector $\gamma_{j}$ is a scalar multiple of $v_{i}$.  
The associated matroid $\mathcal{M}_{\gamma}$ is the one determined by $(v_{1},\ldots,v_{6})$.  
In this case, it is a matroid of rank $4$ on the ground set $[6]$ whose collection of non-spanning circuits is
$\{\{1,2,3,4\},\ \{3,4,5,6\},\ \{1,2,5,6\}\}.$
\end{example}
\begin{notation}
Let $S \subset [k\ell]$. For each $i \in [k]$, we define
$S_{i} = \{\, j \in [\ell] : \mathcal{Y}_{i,j} \notin S \,\} \subset [\ell].$
\end{notation}

\begin{example}
Let $k = 3$, $\ell = 6$, and $S = \{1,4,8,11,15,18\}$. Then
\[
(S_1, S_2, S_3) = \{\{3,4,5,6\},\; \{1,2,5,6\},\; \{1,2,3,4\}\}.
\]
\end{example}
We next associate a variety to each subset $S \subset [k\ell]$.

\begin{definition}\label{def: variety US}
For every subset $S \subset [k\ell]$, define $U_{S} \subset V_{\Delta}$ as the set of all tuples of vectors $\gamma = (\gamma_{j} : j \in [k\ell]) \in V_{\Delta}$ satisfying:
\begin{itemize}
\item $\gamma_{j} = 0$ if and only if $j \in S$; \vs
\item for each $i \in [k]$, the set $S_{i}$ is a flat of rank $t - 1$ in the matroid $\mathcal{M}_\gamma$.
\end{itemize}
Furthermore, let $V_{S}$ denote the Zariski closure of $U_{S}$ in $(\CC^{d})^{k\ell}$.
\end{definition}

\begin{example}\label{example: S1,S2,S3}
Let $k = 3$, $\ell = 6$, and $d = t = 4$, and let $S = \{1,4,8,11,15,18\}$. 
Consider the element $\gamma \in V_{\Delta}$ from Example~\ref{example: first example}. 
We have $\gamma \in U_{S}$ since $\gamma_{j} = 0$ precisely when $j \in S$, and because the sets
\[
(S_{1}, S_{2}, S_{3}) = \{\,\{3,4,5,6\},\; \{1,2,5,6\},\; \{1,2,3,4\}\}
\]
are flats of rank three in the matroid $\mathcal{M}_{\gamma}$, as previously established in 
Example~\ref{example: matroid Mgamma}.
\end{example}

\begin{lemma}\label{quasi}
For every subset $S \subset [k\ell]$, the set $U_{S}$ is a quasi-affine variety in $(\CC^{d})^{k\ell}$.
\end{lemma}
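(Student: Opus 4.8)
The goal is to show that $U_S$ is the intersection of a Zariski-closed set with a Zariski-open set in $(\CC^d)^{k\ell}$. The plan is to write down the defining conditions of $U_S$ explicitly and sort each one into either ``closed'' or ``open'' type. There are three families of conditions: (a) the rank conditions inherited from $V_\Delta$ (two vectors in a common column of $\mathcal{Y}$ are dependent; any $t$ vectors in a common row are dependent); (b) the vanishing/non-vanishing condition $\gamma_j = 0 \iff j \in S$; and (c) for each $i \in [k]$, the set $S_i$ is a flat of rank $t-1$ in $\mathcal{M}_\gamma$. Conditions in (a) are polynomial (vanishing of certain minors), hence closed; the same is true for $V_\Delta$ itself by Definition~\ref{setup}. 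In (b), the conditions $\gamma_j = 0$ for $j \in S$ are closed (vanishing of coordinates), while the conditions $\gamma_j \neq 0$ for $j \notin S$ are open. So the only delicate part is (c), the flat condition.

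First I would unwind condition (c) using Definition~\ref{matr aso}. On the locus where (b) already holds, the vector $v_i \in \CC^d$ attached to column $i$ is, up to a nonzero scalar, equal to any $\gamma_j$ with $j \in C_i$ and $\gamma_j \neq 0$; and $v_i = 0$ exactly when $C_i \subset S$, i.e.\ when $i \notin S_{i'}$ for the relevant index. Thus on this locus the matroid $\mathcal{M}_\gamma$ is the column matroid of the tuple $(v_1, \dots, v_\ell)$, and the rank of any subset $F \subseteq [\ell]$ equals the rank of the corresponding submatrix of $\gamma$ formed by picking one representative column from each $C_i$, $i \in F$ — equivalently, it equals $\operatorname{rank}$ of the full submatrix of $\gamma$ with columns $\bigcup_{i \in F} C_i$ (since within each $C_i$ all nonzero columns are parallel). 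Hence ``$S_i$ is a flat of rank $t-1$'' translates into: (i) $\operatorname{rank}$ of the columns indexed by $\bigcup_{m \in S_i} C_m$ is $\leq t-1$ (a closed condition: all $t$-minors vanish) and is $\geq t-1$ (an open condition: some $(t-1)$-minor is nonzero); and (ii) for every $m \in [\ell] \setminus S_i$, adjoining column-block $C_m$ strictly increases the rank, i.e.\ $\operatorname{rank}(\text{columns of } \bigcup_{m' \in S_i} C_{m'} \cup C_m) \geq t$ — again a closed condition, the non-vanishing of at least one $t$-minor being what we want, so actually this is the statement that the rank is $\geq t$, which combined with the closed bound from (a)-type constraints... let me restate: ``rank goes up'' means ``rank $\geq t$'', and ``rank $\geq t$'' is an open condition (some $t$-minor $\neq 0$). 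I would collect: the ``$\operatorname{rank} \leq t-1$'' parts are closed, the ``$\operatorname{rank} \geq t-1$'' and ``$\operatorname{rank} \geq t$'' parts are open.

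Putting it together: $U_S = Z \cap W$ where $Z$ is cut out by all the polynomial equations (the $V_\Delta$-defining minors, the coordinate-vanishing for $j \in S$, and the vanishing of all $t$-minors on each column-block $\bigcup_{m \in S_i} C_m$), and $W$ is the open set on which (the $\gamma_j$ for $j \notin S$ are all nonzero) and (each $S_i$-block has a nonvanishing $(t-1)$-minor) and (each block $\bigcup_{m \in S_i} C_m \cup C_{m'}$ with $m' \notin S_i$ has a nonvanishing $t$-minor). Both $Z$ and $W$ are manifestly (co)closed, so $U_S$ is quasi-affine. The step that needs the most care is the verification that on the locus where (a) and (b) hold, the abstract rank function of $\mathcal{M}_\gamma$ really does agree with the matrix-rank of the corresponding columns of $\gamma$ — this is where Definition~\ref{matr aso}'s well-definedness (nonzero columns within a $C_i$ are parallel) and the distinction between $v_i = 0$ versus $v_i \neq 0$ must be handled so that the flat/rank conditions become genuine polynomial (in)equalities in the entries of $\gamma$ rather than conditions on the auxiliary $v_i$. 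Once that translation is pinned down, the sorting into open and closed parts is routine.
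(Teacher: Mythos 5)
Your proof is correct and follows essentially the same strategy as the paper's: translate the defining conditions of $U_S$ into polynomial vanishing (closed) and non-vanishing (open) constraints, with the flat condition on each $S_i$ expanding into rank (in)equalities expressed via minors. The only cosmetic difference is that you list the conditions $\operatorname{rank}(\cup_{m\in S_i}C_m) \leq t-1$ and $\geq t-1$ explicitly, whereas the paper treats the upper bound as already implied by $\gamma\in V_\Delta$ and absorbs the lower bound into the ``exists a $(t-1)$-subset that extends to $t$ independent vectors'' condition; these redundancies are harmless.
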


\begin{proof}
A tuple of vectors $\gamma = (\gamma_{j} : j \in [k\ell])$ is in $U_{S}$ if and only if the following conditions hold:
\begin{itemize}
\item[{\rm (i)}] $\gamma \in V_{\Delta}$;
\item[{\rm (ii)}] $\gamma_{j} = 0$ if and only if $j \in S$;
\item[{\rm (iii)}] for each $i \in [k]$ and every $r \in [\ell] \setminus S_{i}$, there exist a $(t-1)$-subset 
$\{i_{1}, \ldots, i_{t-1}\} \subseteq \cup_{j \in S_{i}} C_{j}$ 
and an element $i_{t} \in C_{r}$ such that the vectors 
$\{\gamma_{i_{1}}, \ldots, \gamma_{i_{t}}\}$ are linearly independent.
\end{itemize}
Condition~{\rm (iii)} is equivalent to requiring that each $S_{i}$ is a flat of rank $t - 1$ in the matroid $\mathcal{M}_\gamma$.  
Since $V_{\Delta}$ is Zariski closed, condition~{\rm (ii)} 
imposes vanishing and non-vanishing constraints and condition~{\rm (iii)} imposes polynomial rank constraints, it follows that $U_{S}$ is the intersection of a Zariski-closed and a Zariski-open subset of $(\CC^{d})^{k\ell}$.  
Hence, $U_{S}$ is a quasi-affine variety.
\end{proof}

The main result of this subsection is Theorem~\ref{irreducible}, where we compute the irredundant irreducible decomposition of the variety $V_{\Delta}$, thus providing a partial answer to Question~\ref{question decomposition}. The result is partial in the sense that it reduces the problem to determining the irreducible components of the varieties $U_{S}$, where $S \subset [k\ell]$ is an admissible subset as defined in Definition~\ref{admissible}.

\medskip

In all subsequent proofs, we will use the notion of \emph{infinitesimal motion}s, introduced below.
\begin{definition}
An \emph{infinitesimal motion} (or \emph{perturbation}) refers to a modification that can be made arbitrarily small. 
We say that a \emph{perturbation of $x$ produces an element of a set $X$} if $x$ lies in the 
Euclidean closure of $X$; equivalently, for every $\varepsilon > 0$ there exists a perturbation of $x$ at distance at most $\varepsilon$ that belongs to $X$. 
\end{definition}

\begin{example}\label{example perturbation} We build on \cite[Remark 5.10]{clarke2021matroid}.
Let $k=2,\ell=5$ and $d=t=3$. Consider the tuple of vectors 
$\gamma=(\gamma_{j}:j\in [10])$ represented by the matrix
\[
\begin{pmatrix}
(1,0,0) & (1,1,0) & (0,0,0) & (0,0,0) & (0,0,0)\\
(0,0,0) & (0,0,0) & (0,0,0) & (0,1,1) & (0,0,1)
\end{pmatrix}.
\]
Clearly, $\gamma\in V_{\Delta}$, since any two vectors in the same column and any three vectors in the same row are linearly dependent.
Choosing one representative from each column as in Definition~\ref{matr aso}, we set
\[
(v_{1},v_{2},v_{3},v_{4},v_{5})=((1,0,0),(1,1,0),(0,0,0),(0,1,1),(0,0,1)).
\]
The associated matroid $\mathcal{M}_{\gamma}$ on $[5]$ has rank $3$, with its only nonspanning circuit being $\{3\}$, a loop. These vectors are represented by the matrix
$A=\scalebox{0.85}{$\begin{bmatrix}
1 & 1 & 0 & 0 & 0 \\
0 & 1 & 0 & 1 & 0 \\
0 & 0 & 0 & 1 & 1
\end{bmatrix}$}$.
Consider the subset $S=\{2,4,7,9\}\subset [10]$, which determines $(S_{1},S_{2})=\{\{1,2,3\},\{3,4,5\}\}$. Note that $\gamma\notin U_{S}$, since $\gamma_{5}=\gamma_{6}=0$ while $5,6\notin S$. However, we can show that $\gamma\in V_{S}=\overline{U_{S}}$ by applying an arbitrarily small perturbation to $\gamma$ that produces an element of $U_{S}$.
Specifically, Figure~\ref{fig: perturbation example} illustrates perturbing $A$ to
\[
A(\epsilon) = 
\begin{bmatrix}
1 & 1 & 0 & 0 & 0 \\
0 & 1 & \epsilon & 1 & 0 \\
0 & 0 & 0 & 1 & 1 
\end{bmatrix},
\]
which corresponds to moving $v_{3}$ so that it is non-zero and lies at the intersection of the planes spanned by $\{v_{1},v_{2}\}$ and $\{v_{4},v_{5}\}$. This induces the perturbed tuple
\[
\gamma(\epsilon)=
\begin{pmatrix}
(1,0,0) & (1,1,0) & (0,\epsilon,0) & (0,0,0) & (0,0,0)\\
(0,0,0) & (0,0,0) & (0,\epsilon,0) & (0,1,1) & (0,0,1)
\end{pmatrix}.
\]

It is now straightforward to verify that $\gamma(\epsilon)\in U_{S}$. Since $\gamma(\epsilon)$ can be made arbitrarily close to $\gamma$, we conclude $\gamma\in \overline{U_{S}}=V_{S}$. In what follows, we will refer to such motions simply as \emph{perturbations}.

\begin{figure}[!ht]
    \centering
    \includegraphics[scale=0.85,page=3]{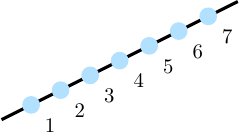}
    \quad
    \includegraphics[scale=0.85,page=5]{grid-problem-figures.pdf}
    \quad
    \includegraphics[scale=0.85,page=6]{grid-problem-figures.pdf}
    \quad
    \includegraphics[scale=0.85,page=4]{grid-problem-figures.pdf}
    \caption{Illustration of the perturbation of the vector $v_{3}$ from Example~\ref{example perturbation}.  
    }
    \label{fig: perturbation example}
\end{figure}
\end{example}

\begin{definition}\label{admissible}
A subset $S \subset [k\ell]$ is called \emph{admissible} if it satisfies the following two conditions:
\begin{itemize}
\item for every $i \in [k]$, we have $\lvert R_{i} \setminus S \rvert \geq t - 1$;
\item for every $j \in [\ell]$, the column set $C_{j}$ is not entirely contained in $S$.
\end{itemize}

\begin{example}
Let $k=3$, $\ell=6$, and $d=t=4$, and consider the subset $S=\{1,4,8,11,15,18\}$ as in Example~\ref{example: S1,S2,S3}. For each $i\in [3]$, we have $\lvert R_{i}\setminus S\rvert = 4 \ge t-1 = 3$. Moreover, no column set $C_{j}$ is entirely contained in $S$. It follows that $S$ is admissible.
\end{example}

For each admissible~set~$S \subset [k\ell]$, let 
$U_{S,1}, \ldots, U_{S,a_{S}}$
be the irreducible components of $U_{S}$.
We denote the corresponding irreducible components of $V_{S}$ by 
\[
V_{S,1}, \ldots, V_{S,a_{S}}, \quad \text{where $V_{S,i} = \overline{U_{S,i}}$}.
\]
\end{definition}

We now give the irredundant irreducible decomposition of the variety $V_{\Delta}$.

 \begin{theorem}\label{irreducible}
The variety $V_{\Delta}$ has the following irredundant irreducible decomposition:
\begin{equation}\label{inclu 2}
V_{\Delta} = \bigcup_{S} \bigcup_{i=1}^{a_{S}} V_{S,i},
\end{equation}
where the union ranges over all admissible subsets $S \subset [k\ell]$ from Definition~\ref{admissible} such that $V_{S}\neq \emptyset$.
\end{theorem}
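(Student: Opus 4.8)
The plan is to prove the decomposition in two stages: first establish the set-theoretic equality $V_\Delta = \bigcup_S \bigcup_i V_{S,i}$, then verify irredundancy. For the equality, the inclusion $\supseteq$ is essentially immediate: each $U_S$ is contained in $V_\Delta$ by construction (condition (i) in the proof of Lemma~\ref{quasi}), and since $V_\Delta$ is Zariski closed, $V_S = \overline{U_S} \subseteq V_\Delta$, hence so is each component $V_{S,i}$. The substantive direction is $\subseteq$. Given an arbitrary $\gamma \in V_\Delta$, I would first identify the ``natural'' subset $S_0 = \{j \in [k\ell] : \gamma_j = 0\}$ recording the vanishing pattern, and consider the matroid $\mathcal{M}_\gamma$ on $[\ell]$ from Definition~\ref{matr aso}. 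The goal is to produce an \emph{admissible} subset $S \supseteq$ (or related to) $S_0$ such that $\gamma \in V_S = \overline{U_S}$, i.e.\ $\gamma$ lies in the Euclidean/Zariski closure of $U_S$; this is exactly the kind of perturbation argument illustrated in Example~\ref{example perturbation}.

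The core of the argument is the perturbation construction. Starting from $\gamma \in V_\Delta$, I would enlarge the zero set and adjust the remaining vectors so that, for each row $i \in [k]$, the column-index set $S_i = \{j : \mathcal{Y}_{i,j} \notin S\}$ becomes a flat of rank exactly $t-1$ in the (perturbed) matroid. Concretely: within each row $R_i$, the vectors $\{\gamma_j : j \in R_i\}$ span a space of dimension $\le t-1$ (every $t$ of them are dependent); one chooses $S_i$ to be (the closure of) the support of a rank-$(t-1)$ subconfiguration, setting the other entries in that row to zero via an infinitesimal motion, and simultaneously perturbing the nonzero entries so the rank is exactly $t-1$ and $S_i$ is genuinely a flat. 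One must check this can be done compatibly across all $k$ rows and all $\ell$ columns while respecting the column-wise linear dependence constraints (two vectors in a common column of $\mathcal{Y}$ stay proportional) and while keeping the motion arbitrarily small — this is where the column structure and the definition of $v_i$ via a single representative per column are used. The resulting $S$ is admissible: $|R_i \setminus S| \ge t-1$ because we kept a rank-$(t-1)$ worth of nonzero entries in row $i$, and no column $C_j$ is fully contained in $S$ because the $v_j$'s of a rank-$(t-1)$ flat together with a complementary vector still need $\ell$-many columns represented — more carefully, one argues that a column can be entirely zeroed only if it was redundant, and one arranges the construction to avoid that (or, if unavoidable, the point already lies in a smaller $V_\Delta$-stratum handled by induction on $\ell$). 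I expect \textbf{this compatibility/perturbation step to be the main obstacle}: making the infinitesimal motions in all rows and columns simultaneously consistent, and verifying the limit is exactly $\gamma$, requires care, and is the technical heart of the theorem.

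For irredundancy, I would show that for distinct admissible $S \ne S'$, neither $V_{S,i}$ is contained in any $V_{S',i'}$. The key invariant distinguishing the strata is the vanishing pattern together with the flat structure: on a dense open subset of $U_S$ (hence of $V_{S,i}$), a point $\gamma$ has zero set precisely $S$, while a general point of $V_{S',i'}$ has zero set containing $S'$ with the rank-$(t-1)$ flat condition on the $S'_i$. If $V_{S,i} \subseteq V_{S',i'}$ then a general point of $U_{S,i}$ would have to lie in $V_{S'}$, forcing (by taking closures and comparing generic vanishing loci / generic ranks) $S = S'$ and then $i = i'$ since the $V_{S,i}$ are the distinct irreducible components of the single variety $V_S$. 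The cleanest formulation: the assignment $\gamma \mapsto (\{j : \gamma_j = 0\}, \mathcal{M}_\gamma)$ is constant on a dense open subset of each $U_S$, takes the value $(S, \text{matroid with } S_i \text{ flats of rank } t-1)$ there, and these data are semicontinuous in a way that precludes one component specializing into another with a strictly different $S$; the distinctness of components within a fixed $S$ is automatic. I would also note that the union is finite since there are finitely many subsets $S \subset [k\ell]$ and each $U_S$ has finitely many components, so~\eqref{inclu 2} is a genuine irredundant irreducible decomposition, and restricting to $S$ with $V_S \ne \emptyset$ removes the vacuous terms.
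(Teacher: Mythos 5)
The irredundancy half of your plan is in the right spirit (compare the vanishing pattern of a generic point of $U_{S}$ against the constraints imposed by membership in $V_{S'}$, then use the flat-of-rank-$(t-1)$ condition to rule out proper inclusion), though you would still need to sharpen the final step: you cannot immediately conclude $S = S'$; the paper first derives $S \supsetneq S'$ from the vanishing patterns and then uses the flat condition to obtain a contradiction from $S_p^{(1)} \subsetneq S_p^{(2)}$ with $\rank(S_p^{(2)}) \le t-1$.

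The genuine gap is in the core perturbation step, and it runs in exactly the opposite direction from what works. You propose to \emph{enlarge} the zero set $S \supseteq S_0 = \{j : \gamma_j = 0\}$ by ``setting the other entries in that row to zero via an infinitesimal motion.'' But sending a nonzero vector $\gamma_j$ to $0$ is not an infinitesimal motion: the distance is $\|\gamma_j\|$, which is bounded away from zero. Since you need $\gamma \in V_S = \overline{U_S}$, i.e.\ points of $U_S$ arbitrarily close to $\gamma$, and every point of $U_S$ has its $j$-th vector exactly zero for $j \in S$, no $S$ containing an index where $\gamma_j \ne 0$ can work. In other words, the final admissible $S$ must satisfy $S \subseteq S_0$, not $S \supseteq S_0$. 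The correct construction goes the other way: starting from $\gamma$, one \emph{fills in} zero entries (which \emph{is} an arbitrarily small perturbation), shrinking the zero set. Concretely, first replace any fully-zero column $C_i$ by putting a small nonzero vector in the span of row $R_k$ at position $\mathcal{Y}_{k,i}$; then, whenever some $S_i$ fails to be a flat of rank $t-1$ because there is $r \notin S_i$ with $\rank(S_i \cup \{r\}) \le t-1$, the entry $\gamma_{\mathcal{Y}_{i,r}}$ is forced to be zero (since $r \notin S_i$ means $\mathcal{Y}_{i,r} \in S$), and one replaces it by a small nonzero scalar multiple of $v_r$ — keeping all row and column rank constraints satisfied and removing $\mathcal{Y}_{i,r}$ from the zero set. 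Iterating this terminates (the zero set strictly shrinks at each step) and lands in some $U_{S'}$ with $S'$ admissible. Your proposed induction on $\ell$ for columns that get fully zeroed is also unnecessary and a symptom of the same reversal: in the correct construction fully-zero columns never arise because Step~1 eliminates them and subsequent steps only remove zeros.
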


\begin{proof}
We will first show that $V_{\Delta} = \cup_{S} V_{S}$. By definition, each $U_{S}$ is contained in $V_{\Delta}$, hence $V_{S} = \overline{U_{S}} \subset V_{\Delta}$. 
To prove the reverse inclusion, let $\gamma = (\gamma_{j} : j \in [k\ell])$ be a tuple of vectors in $V_{\Delta}$. 
We show that $\gamma$ lies in $V_S=\overline{U_{S}}$ for some admissible subset $S$. 
This is achieved by perturbing the vectors of $\gamma$ to obtain a new tuple $\widetilde{\gamma}$ such that $\widetilde{\gamma} \in U_S$ for some admissible $S$. 
The construction of $\widetilde{\gamma}$ is as follows.

\medskip
\noindent\textbf{Step~1.} Suppose there exists $i \in [\ell]$ such that $\gamma_{j} = 0$ for all $j \in C_{i}$.  
Consider the matrix entry $ki = \mathcal{Y}_{k,i}$ from~\eqref{matrix}.  
Since $\gamma \in V_{\Delta}$, the set of vectors $\{\gamma_{j} : j \in R_{k}\}$ spans a subspace of dimension at most $t-1$.  
We may perturb the zero vector $\gamma_{ki} = 0$, replacing it with a nonzero vector in this span.  
The resulting configuration remains in $V_{\Delta}$ and ensures that not all vectors in column $C_{i}$ are zero.  
By iterating this perturbation across all columns, we obtain a tuple $\gamma^{1} \in V_{\Delta}$ with this property for every $i \in [\ell]$.

\medskip
\noindent\textbf{Step~2.} We next perturb $\gamma^{1}$ to obtain a tuple $\gamma^{2} \in V_{\Delta}$ satisfying $\rank(\gamma^{2}) \geq t-1$.  
The perturbation is chosen so as not to introduce loops, ensuring that for every $i \in [\ell]$, the corresponding column $C_{i}$ is not entirely zero.

\medskip
\noindent\textbf{Step~3.} Let $M_{\gamma^{2}}$ denote the matroid associated with the collection $\{v_{1}, \ldots, v_{\ell}\}$ as in Definition~\ref{matr aso}, and let $\operatorname{rk}$ be its rank function.  
Define $S = \{ j \in [k\ell] : \gamma^{2}_{j} = 0 \}$.  
Then:
\begin{itemize}
\item For each $j \in [\ell]$, the vectors $\{\gamma^{2}_{p} : p \in C_{j}\}$ are scalar multiples of $v_{j}$.
\item Consequently, for each $i \in [k]$, we have $\operatorname{rk}\{\gamma^{2}_{p} : p \in R_{i}\} = \operatorname{rk}(S_{i})$.
\end{itemize}

Suppose that for some $i \in [k]$, the set $S_{i}$ is not a flat of rank $t-1$ in $M_{\gamma^{2}}$.  
Then there exists $r \in [\ell] \setminus S_{i}$ such that $\operatorname{rk}(S_{i} \cup \{r\}) \leq t-1$.  
Consider the entry $i + k(r-1) = \mathcal{Y}_{i,r}$ in~\eqref{matrix}.  
Since $r \notin S_{i}$, we have $i + k(r-1) \in S$, implying $\gamma^{2}_{i+k(r-1)} = 0$.  
The condition $\operatorname{rk}(S_{i} \cup \{r\}) \leq t-1$ gives
\[
\operatorname{rk}\{v_{j} : j \in S_{i} \cup \{r\}\} \leq t-1.
\]
We then perturb the zero vector $\gamma^{2}_{i+k(r-1)}$, replacing it with a nonzero scalar multiple of $v_{r}$.  
Since all vectors in $C_{r}$ are scalar multiples of $v_{r}$, the rank of column $C_{r}$ remains one, while the rank of row $R_{i}$ stays at most $t-1$.  
Thus, the perturbed tuple still belongs to $V_{\Delta}$, now with $\gamma^{2}_{i+k(r-1)} \neq 0$.

Iterating this process, we obtain $\gamma^{3} \in V_{\Delta}$ such that, denoting $S' = \{ j \in [k\ell] : \gamma^{3}_{j} = 0 \}$, each $S'_{i}$ is a flat of rank $t-1$ in $M_{\gamma^{3}}$.

\medskip
\noindent\textbf{Admissibility of $S'$.} 
We now prove that the subset $S'$ is admissible.  
To see this, note that:
\begin{itemize}
\item Since each $S'_{i}$ is a flat of rank $t-1$, we have $|R_{i} \setminus S'| = |S'_{i}| \geq t-1$ for all $i \in [k]$.
\item For each $i \in [\ell]$, at least one entry in the column $C_{i}$ is nonzero, hence no $C_{i}$ is fully contained in~$S'$.
\end{itemize}

Finally, since $\gamma^{3}_{j} = 0$ if and only if $j \in S'$ and $S'_{i}$ is a flat of rank $t-1$ in $M_{\gamma^{3}}$ for every $i \in [k]$, we conclude that $\gamma^{3} \in U_{S'}$, as desired. Hence, $V_{\Delta} = \cup_{S} V_{S}$.

\medskip
\noindent{\bf Irredundant decomposition.} To prove that the decomposition in \eqref{inclu 2} is irredundant, assume for contradiction, that there exist two distinct admissible subsets $S^{(1)}, S^{(2)} \subset [k\ell]$ such that 
\[
V_{S^{(1)},a} \subset V_{S^{(2)},b}
\quad \text{and} \quad
V_{S^{(1)},a} \neq \emptyset.
\]
By Lemma~\ref{quasi}, the variety $U_{S^{(1)}}$ is quasi-affine, and thus every irreducible component of $V_{S^{(1)}}$ intersects $U_{S^{(1)}}$ nontrivially. Consequently, we may choose a tuple of vectors 
\[
\gamma = (\gamma_{j} : j \in [k\ell]) \in V_{S^{(1)},a} \cap U_{S^{(1)}}.
\]
Since $\gamma \in V_{S^{(1)},a} \subset V_{S^{(2)},b}$, it follows that $\gamma_{j} = 0$ for all $j \in S^{(2)}$. On the other hand, as $\gamma \in U_{S^{(1)}}$, we have $\gamma_{j} = 0$ if and only if $j \in S^{(1)}$. Hence, we must have $S^{(1)} \supsetneq S^{(2)}$.

Now, since $S^{(1)} \supsetneq S^{(2)}$, there exists $p \in [k]$ such that 
$R_{p} \setminus S^{(1)} \subsetneq R_{p} \setminus S^{(2)}$,
or equivalently, ${S^{(1)}}_{p} \subsetneq {S^{(2)}}_{p}$, where $R_{p}$ denotes the $p^{\text{th}}$ row of the matrix $\mathcal{Y}$ from~\eqref{matrix}.  
Because $\gamma \in V_{S^{(1)},a} \subset V_{S^{(2)},b}$, the set ${S^{(2)}}_{p}$ has rank at most $t-1$ in $\mathcal{M}_\gamma$. However, since $\gamma \in U_{S^{(1)}}$, ${S^{(1)}}_{p}$ is a flat of rank $t-1$ in $\mathcal{M}_\gamma$. This is impossible, as ${S^{(1)}}_{p} \subsetneq {S^{(2)}}_{p}$ and $\rank({S^{(2)}}_{p}) \le t-1$, hence the decomposition in~\eqref{inclu 2} is indeed irredundant.
\end{proof}

Theorem~\ref{irreducible} reduces the problem of finding the irreducible decomposition of $V_{\Delta}$ to the following question.
\begin{question}\label{quest us}
Given an admissible subset $S \subset [k\ell]$, determine the irreducible decomposition of~$U_{S}$.
\end{question}

\subsection{Decomposition into varieties \texorpdfstring{$F_{S}$}{FS}}\label{sec:FS}

As shown in Theorem~\ref{irreducible}, determining the irreducible components of $V_{\Delta}$ reduces to finding the irreducible decompositions of the varieties $U_{S}$ for admissible subsets $S \subset [k\ell]$. 
We now further simplify this task by introducing auxiliary varieties $F_{S} \subset (\CC^{d})^{\ell}$ of lower dimension, whose irreducible components are in direct correspondence with those of $U_{S}$; see Theorem~\ref{corresp}. This correspondence allows all geometric questions about $V_{\Delta}$ to be addressed in the simpler setting of the varieties $F_S$.

\begin{definition}\label{fs}
For a tuple of vectors $\gamma = (\gamma_{1}, \ldots, \gamma_{n})$ in a common vector space, we denote by $\mathcal{N}_{\gamma}$ the matroid on $[n]$ determined by these vectors. Let $S \subset [k\ell]$ be an admissible subset. We define $F_{S} \subset (\CC^{d})^{\ell}$ as the set of all tuples of vectors $\gamma = (\gamma_{i} : i \in [\ell]) \in (\CC^{d})^{\ell}$ satisfying:
\begin{itemize}
\item $\gamma_{i} \neq 0$ for all $i \in [\ell]$;\vs
\item for each $i \in [k]$, the set $S_{i}$ is a flat of rank $t-1$ in $\mathcal{N}_{\gamma}$.
\end{itemize}
\end{definition}

By an argument analogous to that of Lemma~\ref{quasi}, we have:
\begin{lemma}
For every subset $S\subset [k\ell]$, the set $F_{S}$ is a quasi-affine variety in $\CC^{d\ell}$.
\end{lemma}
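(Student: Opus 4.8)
The statement asserts that $F_S$ is quasi-affine, exactly parallel to Lemma~\ref{quasi}, so the plan is to mimic that proof. First I would rewrite membership in $F_S$ as a conjunction of three types of conditions on a tuple $\gamma = (\gamma_i : i \in [\ell]) \in (\CC^d)^\ell$: (i) each $\gamma_i$ is nonzero; (ii) for every $i \in [k]$, the rank of $\{\gamma_j : j \in S_i\}$ is at most $t-1$; and (iii) for every $i \in [k]$ and every $r \in [\ell] \setminus S_i$, the rank of $\{\gamma_j : j \in S_i \cup \{r\}\}$ is still at most $t-1$ — equivalently, $S_i$ is \emph{closed}, i.e.\ a flat. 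Condition (ii) together with (iii) is precisely the requirement that $S_i$ be a flat of rank $t-1$ in $\mathcal{N}_\gamma$, since (ii) bounds the rank above by $t-1$ and the admissibility of $S$ (namely $|S_i| = |R_i \setminus S| \geq t-1$) together with genericity is not needed here — rather, $S_i$ being a flat of rank exactly $t-1$ means $\rank(S_i) = t-1$ and no element outside $S_i$ lies in its closure. I should be slightly careful: "$S_i$ is a flat of rank $t-1$" forces $\rank(S_i) = t-1$, which is an open (rank-at-least) condition, not merely a closed one; this is the one subtlety distinguishing the analysis from a naive reading.

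Next I would classify each condition as Zariski-open or Zariski-closed. Condition (i), $\gamma_i \neq 0$ for all $i$, is the complement of a union of linear subspaces, hence Zariski-open. The condition $\rank(S_i) \geq t-1$ (part of (ii)) is Zariski-open: it holds iff some $(t-1) \times (t-1)$ minor of the matrix with columns $\{\gamma_j : j \in S_i\}$ is nonzero. The condition $\rank(S_i \cup \{r\}) \leq t-1$ for all $i$ and all $r \notin S_i$ (which packages the upper bound in (ii) and all of (iii)) is Zariski-closed, being the vanishing of all $t \times t$ minors of the relevant submatrices. Intersecting these, $F_S$ is the intersection of a Zariski-open set with a Zariski-closed set, hence quasi-affine in $\CC^{d\ell}$.

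There is essentially no obstacle here: the only point requiring mild care is the bookkeeping that "$S_i$ is a flat of rank $t-1$" decomposes cleanly into an open part ($\rank(S_i) \ge t-1$) and a closed part ($\rank(S_i \cup \{r\}) \le t-1$ for all $r \notin S_i$, which already subsumes $\rank(S_i) \le t-1$ by taking $r$ arbitrary and also handles the flat condition). Once this is laid out, the proof is a verbatim adaptation of Lemma~\ref{quasi}, which is why the authors only say "by an argument analogous to that of Lemma~\ref{quasi}." I would write it in two or three sentences invoking that lemma's structure. No new ideas are needed; the result is a direct transcription with the ambient space $(\CC^d)^{k\ell}$ replaced by $(\CC^d)^\ell$ and the defining conditions trimmed to those listed in Definition~\ref{fs}.
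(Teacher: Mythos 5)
Your proof plan contains a genuine error in translating the matroid condition into an algebraic one: you have the flat condition \emph{backwards}. You write that $S_i$ being a flat of rank $t-1$ means that for every $r \in [\ell]\setminus S_i$ we have $\rank(S_i\cup\{r\}) \le t-1$, and you classify this as a Zariski-closed condition. But a flat $F$ is a set satisfying $F = \closure{F}$, which means that for every $r \notin F$ we have $r \notin \closure{F}$, i.e.\ $\rank(F\cup\{r\}) > \rank(F)$. So if $\rank(S_i)=t-1$, the flat condition requires $\rank(S_i\cup\{r\}) \geq t$ for every $r\notin S_i$ — an \emph{open} condition (some $t\times t$ minor of the submatrix on columns $S_i\cup\{r\}$ is nonzero), exactly as in the paper's proof of Lemma~\ref{quasi}, where condition (iii) is phrased as the \emph{existence} of $t$ linearly independent vectors. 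The condition you wrote, $\rank(S_i\cup\{r\})\le t-1$ for all $r$, says instead that every element lies in $\closure{S_i}$, i.e.\ $\closure{S_i}=[\ell]$; intersected with $\rank(S_i)=t-1$ this describes the locus where the whole matroid $\mathcal{N}_\gamma$ has rank $t-1$, which is not $F_S$.

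The correct decomposition of ``$S_i$ is a flat of rank $t-1$'' is: the closed part is $\rank(S_i)\le t-1$ (all $t$-minors of the columns in $S_i$ vanish), and the open part is the flat condition $\rank(S_i\cup\{r\})\ge t$ for every $r\notin S_i$, together with $\rank(S_i)\ge t-1$ (the latter is implied by the former whenever $[\ell]\setminus S_i\ne\emptyset$, but should be kept for the degenerate case $S_i=[\ell]$). Your final conclusion — that $F_S$ is an intersection of a Zariski-open set with a Zariski-closed set — is of course still true, and the overall strategy of mimicking Lemma~\ref{quasi} is exactly what the paper intends, but the conditions you wrote down do not actually define $F_S$, so the argument as stated does not go through.
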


\begin{example}\label{example: FS}
Let $k=3$, $\ell=6$, and $d=t=4$, and consider the subset $S=\{1,4,8,11,15,18\}$ and the tuple $\gamma\in V_{\Delta}$ as in Example~\ref{example: S1,S2,S3}. In this instance, we have $\gamma\in U_{S}$. 

The associated matroid $M_{\gamma}$ is obtained by selecting one representative vector from each column of the matrix in~\eqref{matrix with vectors}. Here, we may choose
\[
(v_{1},v_{2},v_{3},v_{4},v_{5},v_{6})
= ((0,0,1,1), (0,0,1,2), (1,0,0,1), (1,0,0,2), (0,1,0,1), (0,1,0,2)),
\]
as described in the construction of $M_{\gamma}$. It is straightforward to verify that $(v_{1},\dots,v_{6})\in F_{S}$.
\end{example}

As Example~\ref{example: FS} illustrates, any element $\gamma\in U_{S}$ naturally determines a set of elements in $F_{S}$, all obtained by choosing one representative (up to a nonzero scalar) from each column $C_{j}$. Conversely, as we will see in Theorem\ref{corresp}, every element of $F_{S}$ can be lifted back to an element of $U_{S}$ via a polynomial map
\[
F_{S} \times (\mathbb{C}^{\ast})^{|[k\ell]\setminus S|} \longrightarrow U_{S}.
\]
Note that each tuple of vectors in $U_{S}$ is indexed by the elements of $[k\ell]$, corresponding to the entries of the matrix $\mathcal{Y}$ from~\eqref{matrix}, whereas in $F_{S}$ the vectors are indexed by $[\ell]$. This results in a substantial reduction in the dimension of the ambient space. 

\begin{notation}
For a set $A$, we denote by $\CC^{A}$ the set of all collections of complex numbers $(\lambda_{a})_{a \in A}$ indexed by the elements of $A$.
\end{notation}

The following theorem establishes the desired connection between the varieties $U_{S}$ and $F_{S}$.

\begin{theorem}\label{corresp}
The irreducible components of $U_{S}$ are in one-to-one correspondence with those of $F_{S}$.
\end{theorem}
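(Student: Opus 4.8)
The plan is to construct an explicit polynomial map $\varphi\colon F_S\times(\CC^\ast)^{[k\ell]\setminus S}\to U_S$ that realizes the heuristic described before the statement: given a tuple $(v_i:i\in[\ell])\in F_S$ and scalars $(\lambda_j)_{j\in[k\ell]\setminus S}$, define $\gamma_j=0$ for $j\in S$ and $\gamma_j=\lambda_j v_{c(j)}$ for $j\notin S$, where $c(j)\in[\ell]$ is the index of the column $C_{c(j)}$ containing $j$. The first step is to check that $\varphi$ indeed lands in $U_S$: the vanishing pattern of $\gamma$ is exactly $S$ by construction (here I use $v_i\neq0$ and $\lambda_j\neq0$); any two $\gamma_j$'s in a common column $C_i$ are scalar multiples of $v_i$, hence linearly dependent; any $t$ of the $\gamma_j$'s in a common row $R_i$ have the same linear span as the corresponding subset of the $v$'s, whose rank is at most $t-1$ because $S_i$ is a flat of rank $t-1$; and the matroid $\mathcal M_\gamma$ from Definition~\ref{matr aso} is precisely $\mathcal N_v$, so each $S_i$ remains a flat of rank $t-1$. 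Conversely, $\varphi$ is surjective: given $\gamma\in U_S$, for each column $C_i$ pick the vector $v_i$ as in Definition~\ref{matr aso} (a fixed nonzero $\gamma_j$ with $j\in C_i$, which exists since $C_i\not\subset S$ by admissibility), and recover the $\lambda_j$ as the scalars with $\gamma_j=\lambda_j v_{c(j)}$; then $(v_i)\in F_S$ and $\varphi$ maps the resulting data to $\gamma$.

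Next I would analyze the fibers of $\varphi$. The map is not injective, but the ambiguity is completely understood: rescaling $v_i\mapsto\mu_i v_i$ for $\mu_i\in\CC^\ast$ and simultaneously $\lambda_j\mapsto\mu_{c(j)}^{-1}\lambda_j$ for all $j\in C_i\setminus S$ leaves $\gamma$ unchanged, and these are the only fibers. Hence $\varphi$ factors through a free action of $(\CC^\ast)^{\ell}$, and the induced map
\[
\bigl(F_S\times(\CC^\ast)^{[k\ell]\setminus S}\bigr)\big/(\CC^\ast)^{\ell}\longrightarrow U_S
\]
is a bijection. Since $F_S\times(\CC^\ast)^{[k\ell]\setminus S}$ is a product of a quasi-affine variety with a torus, its irreducible components are exactly the products $F_{S,i}\times(\CC^\ast)^{[k\ell]\setminus S}$ where $F_{S,i}$ runs over the irreducible components of $F_S$; and each is stable under the $(\CC^\ast)^\ell$-action. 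So it remains to argue that the continuous-and-dominant (indeed surjective-with-irreducible-fibers) map $\varphi$ sets up a bijection on irreducible components: $\varphi$ is an open map onto $U_S$ with irreducible fibers of constant dimension $\ell$ (a torsor under $(\CC^\ast)^\ell$), hence it restricts to a dominant morphism from each $F_{S,i}\times(\CC^\ast)^{[k\ell]\setminus S}$ onto an irreducible closed subset $U_{S,i}\subseteq U_S$, and these exhaust and are maximal among irreducible subsets because $\varphi$ is surjective and fiberwise irreducible. This gives the one-to-one correspondence $F_{S,i}\leftrightarrow U_{S,i}$, which is the content of the theorem (and simultaneously supplies the explicit correspondence referenced in Theorem~\ref{theoremA}).

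The main obstacle I anticipate is not the construction of $\varphi$ but the bookkeeping needed to pass cleanly between irreducible components of a quasi-affine variety under a surjective morphism with irreducible equidimensional fibers. One must be slightly careful that $\varphi$ is genuinely open (or at least that images of components are irreducible and that every component of $U_S$ arises this way), and that the torus factor does not create or destroy components — both are standard but require invoking that $\CC^\ast$ is irreducible and that a product of irreducible varieties is irreducible, together with the fact that a dominant morphism with irreducible generic fiber from an irreducible variety has irreducible closure of image. A convenient way to package all of this is to observe that $\varphi$ admits local sections (choosing representatives column-by-column as in Definition~\ref{matr aso} works on the locus where a fixed entry of each column is nonzero, and these loci cover $U_S$), so $\varphi$ is locally trivial with fiber $(\CC^\ast)^\ell$ in the Zariski topology; local triviality with irreducible fiber immediately yields the bijection on irreducible components. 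I would also record the dimension count $\dim U_{S,i}=\dim F_{S,i}+|[k\ell]\setminus S|-\ell=\dim F_{S,i}+\ell(k-1)-|S|$, which is exactly Corollary~\ref{dimensions} and falls out of the fiber-dimension formula for $\varphi$.
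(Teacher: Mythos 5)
Your construction is the same as the paper's: the map you call $\varphi$ is exactly the map $\psi$ in the published proof (up to reordering the factors), and the verification that it is well defined and surjective, and that the fibers are precisely orbits of the rescaling action of $(\CC^\ast)^\ell$, matches the paper. Where you diverge is in the key step — showing that the images $\varphi(F_{S,i}\times(\CC^\ast)^{[k\ell]\setminus S})$ give an \emph{irredundant} decomposition of $U_S$. You argue structurally: $\varphi$ is a Zariski-locally-trivial $(\CC^\ast)^\ell$-fibration (you exhibit sections by fixing one column representative per block and dividing off the scalars), and a locally trivial fibration with irreducible fiber induces a bijection on irreducible components of total space and base. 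The paper instead proves irredundancy by a perturbation argument: assuming $\overline{\psi(F_i)}\subseteq\overline{\psi(F_j)}$, it uses that for a constructible set the Zariski and Euclidean closures agree, approximates a point of $\psi(F_i)$ by points of $\psi(F_j)$, and deduces $\overline{F_{S,i}}\subseteq\overline{F_{S,j}}$, a contradiction. Both are correct. Your bundle-theoretic route is more conceptual, avoids the analytic closure argument, and as you note delivers Corollary~\ref{dimensions} for free from the fiber-dimension formula; the paper's argument is more elementary and keeps to the perturbation toolkit used throughout. The one place where you should be a bit more careful is the sentence claiming that local triviality with irreducible fiber ``immediately'' gives the bijection on components: this is true, but it does rely on the Noetherian gluing fact that a space covered by pairwise-intersecting irreducible opens is irreducible (applied to $\varphi^{-1}(Y_j)$ over each component $Y_j$ of $U_S$), and on $U_S$ being covered by trivializing opens; you flag this yourself, and with that lemma spelled out the argument is complete.
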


\begin{proof}
Consider the polynomial map
\begin{equation}\label{psi}
\psi : (\CC^{\ast})^{|[k\ell]\setminus S|}\times F_{S}\longrightarrow U_{S}, \qquad 
\psi((\lambda_{i})_{i\in [k\ell]\setminus S},\gamma)_{p} =
\begin{cases}
0, & \text{if } p\in S,\\[4pt]
\lambda_{p}\gamma_{j}, & \text{if } p\in C_{j}\setminus S,
\end{cases}
\end{equation}
where the indices $j$ and $p$ correspond to the positions of the respective vectors in their tuples.  
By the definitions of $U_{S}$ and $F_{S}$, the map $\psi$ is well-defined, its image lies in $U_{S}$, and it is surjective.

Let $F_{S,1},\ldots,F_{S,b_{S}}$ denote the irreducible components of $F_{S}$, and for simplicity set  
\[
F_{i} \coloneqq (\CC^{\ast})^{|[k\ell]\setminus S|}\times F_{S,i}, \quad i\in [b_{S}].
\]
We claim that $U_{S}$ admits the irreducible decomposition
$U_{S} = \bigcup_{i=1}^{b_{S}}\psi(F_{i})$,
or equivalently,
\begin{equation}\label{vs}
V_{S} = \bigcup_{i=1}^{b_{S}}\overline{\psi(F_{i})}.
\end{equation}
The equality in~\eqref{vs} follows from the surjectivity of $\psi$. Each $F_{S,i}$ is irreducible, and since the product of irreducible varieties is irreducible, each $F_{i}$ is irreducible as well. Moreover, $\psi$ being a polynomial map is continuous, and the image of an irreducible set under a continuous map is again irreducible. Hence, each $\psi(F_{i})$ is irreducible, and every component in~\eqref{vs} is irreducible. To complete the proof, it remains to verify that this decomposition is non-redundant.

Assume, for the contrary, that there exist distinct indices $i,j\in [b_{S}]$ such that  
\begin{equation}\label{clos}
\overline{\psi(F_{i})}\subset \overline{\psi(F_{j})}.
\end{equation}
Let $\pi_{2}$ denote the projection onto the second factor $F_{S}$ in the product appearing in~\eqref{psi}. We claim that
\[
\pi_{2}\bigl(\psi^{-1}(\overline{\psi(F_{j})})\bigr)\subset \overline{F_{S,j}}.
\]
Suppose that $\psi(\lambda,\gamma)\in \overline{\psi(F_{j})}$, with $\lambda\in (\CC^{\ast})^{|[k\ell]\setminus S|}$ and $\gamma\in F_{S}$.  
Since $\gamma=\pi_{2}(\psi^{-1}(\psi(\lambda,\gamma)))$, it suffices to show that $\gamma\in \overline{F_{S,j}}$.  
Let $\tau=\psi(\lambda,\gamma)\in U_{S}$. For each $i\in [\ell]$, pick $p_{i}\in [k\ell]\setminus S$, so that each vector $\tau_{p_{i}}$ is a nonzero scalar multiple of $\gamma_{i}$. Thus, the tuples $\gamma=(\gamma_{1},\ldots,\gamma_{\ell})$ and $(\tau_{p_{1}},\ldots,\tau_{p_{\ell}})$ differ only by rescaling.

Since $F_{S,j}$ is quasi-affine, so is $\psi(F_{j})$; hence its Zariski closure coincides with its Euclidean closure. It follows that $\tau\in \overline{\psi(F_{j})}$ can be approximated by points of $\psi(F_{j})$ via arbitrarily small perturbations of its coordinates.  
Moreover, $\psi(F_{j})$ can be described as
\[
\psi(F_{j}) = \{\kappa\in V_{\Delta} : M_{\kappa}\in F_{S,j}\}.
\]
Thus, the vectors of $\tau$ can be slightly perturbed to obtain a tuple in $\psi(F_{j})$, implying $(\tau_{p_{1}},\ldots,\tau_{p_{\ell}})\in \overline{F_{S,j}}$.  
Since this tuple differs from $\gamma$ only by nonzero scalars, it follows that $\gamma\in \overline{F_{S,j}}$, as desired.

Now note that $F_{S,i}\subset \pi_{2}\bigl(\psi^{-1}(\psi(F_{i}))\bigr)$.  
Applying $\pi_{2}\circ \psi^{-1}$ to both sides of~\eqref{clos}, we obtain
\[
F_{S,i}\subset \pi_{2}\bigl(\psi^{-1}(\overline{\psi(F_{i})})\bigr)
\subset \pi_{2}\bigl(\psi^{-1}(\overline{\psi(F_{j})})\bigr)
\subset \overline{F_{S,j}}.
\]
Hence $\overline{F_{S,i}}\subset \overline{F_{S,j}}$, contradicting the fact that $F_{S,i}$ and $F_{S,j}$ are distinct components of $F_{S}$.  
This contradiction shows that the decomposition in~\eqref{vs} is irredundant, completing the proof.
\end{proof}

\begin{notation}\label{not corres}
In accordance with the correspondence established in Theorem~\ref{corresp}, we denote by $F_{S,i}$ the irreducible component of $F_{S}$ corresponding to $U_{S,i}$, for each $i \in [a_{S}]$.
\end{notation}

Theorem~\ref{corresp} shows that Question~\ref{quest us} reduces to the following.
\begin{question}\label{quest fs}
Given an admissible subset $S \subset [k\ell]$, determine the irreducible decomposition of~$F_{S}$.
\end{question}

Thus, determining the irreducible components of $V_{\Delta}$ reduces to describing the irreducible decompositions of the varieties $F_{S}$. 
In the next two subsections, we carry this out explicitly for two families: the case $t=\ell$ in Section~\ref{sec:t=l} and the case $k=2$ in Section~\ref{sec:VDelta-k=2}.

\subsubsection{Dimension of the components of $V_{\Delta}$ for arbitrary $k$}\label{sec:dim arbitrary k}

As an immediate consequence of the proof of Theorem~\ref{corresp}, we can compute the 
dimensions of the varieties $V_{S,i}$ appearing in the decomposition in~\eqref{psi} in terms of the associated varieties $F_{S,i}$.

\begin{corollary}\label{dimensions}
Let $S \subset [k\ell]$ be an admissible subset. Then, for each $i \in [a_{S}]$, 
\begin{equation}\label{vsj}
\Dim(V_{S,i}) = \Dim(U_{S,i}) = \Dim(F_{S,i}) + \ell(k-1) - \lvert S \rvert.
\end{equation}
\end{corollary}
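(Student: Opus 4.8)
The plan is to read off the dimension count directly from the surjective polynomial map $\psi$ constructed in the proof of Theorem~\ref{corresp}. First I would recall that $\psi$ restricts to a surjection $F_i = (\CC^\ast)^{|[k\ell]\setminus S|}\times F_{S,i} \to \psi(F_i)$, whose Zariski closure $V_{S,i} = \overline{U_{S,i}}$ is an irreducible component of $V_S$. The equality $\Dim(V_{S,i}) = \Dim(U_{S,i})$ is immediate, since $U_{S,i}$ is a dense subset of its closure $V_{S,i}$. So the content is the second equality, and for that I would analyze the fibers of $\psi$ restricted to $F_i$.

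The key step is to compute the generic fiber dimension of $\psi|_{F_i}$. Given $\tau = \psi((\lambda_p)_{p},\gamma)\in U_S$, the preimage consists of all pairs $((\mu_p)_p, \delta)$ with $\mu_p\delta_j = \lambda_p\gamma_j$ for $p\in C_j\setminus S$. Since each $\delta_j$ must be a nonzero scalar multiple of $\gamma_j$ (both being, up to scalar, the common direction of the nonzero vectors in column $C_j$), we can write $\delta_j = c_j\gamma_j$ for some $c_j\in\CC^\ast$, and then $\mu_p = \lambda_p/c_j$ is forced for $p\in C_j\setminus S$. Thus the fiber is parametrized freely by $(c_1,\dots,c_\ell)\in(\CC^\ast)^\ell$, so every fiber of $\psi|_{F_i}$ has dimension exactly $\ell$. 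By the fiber-dimension theorem for dominant morphisms of irreducible varieties (applied to $\psi|_{F_i}\colon F_i\to \overline{\psi(F_i)} = V_{S,i}$, which is dominant and all of whose fibers have the same dimension $\ell$), we get
\[
\Dim(V_{S,i}) = \Dim(F_i) - \ell = |[k\ell]\setminus S| + \Dim(F_{S,i}) - \ell.
\]
Finally, $|[k\ell]\setminus S| = k\ell - |S|$, so $|[k\ell]\setminus S| - \ell = \ell(k-1) - |S|$, which gives the claimed formula.

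I expect the main (minor) obstacle to be bookkeeping the constancy of the fiber dimension: one must verify that the direction vector of column $C_j$ is genuinely well-defined for points of $U_S$ (which follows from the defining conditions of $U_S$ and was already noted in Definition~\ref{matr aso}), so that the fiber is always the full torus $(\CC^\ast)^\ell$ acting by simultaneous rescaling within columns, with no degeneration. Since this holds identically over all of $U_S$ — not just generically — the fiber-dimension argument applies without needing to pass to an open dense subset, and the equality in~\eqref{vsj} follows cleanly. One should also remark that the torus factor $(\CC^\ast)^{|[k\ell]\setminus S|}$ contributes its full dimension because $\psi$ is injective in those coordinates once $\gamma$ is fixed up to the column rescalings, which is exactly what the fiber computation above records.
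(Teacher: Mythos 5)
Your proposal is correct and takes essentially the same approach as the paper's own proof: both compute $\Dim(U_{S,i})$ by using the surjection $\psi|_{F_i}\colon F_i\to U_{S,i}$ from Theorem~\ref{corresp} and subtracting the fiber dimension $\ell$. The paper simply asserts that each fiber $\psi^{-1}(\gamma)$ is isomorphic to $(\CC^{\ast})^{\ell}$, whereas you spell out the parametrization by column rescalings $(c_1,\dots,c_\ell)$, but this is just filling in the detail the paper omits.
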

\begin{proof}
Consider the map $\psi$ defined in~\eqref{psi}. For each $i \in [a_{S}]$, set
$F_{i} := (\CC^{\ast})^{\lvert [k\ell]\setminus S \rvert} \times F_{S,i}$.
By the proof of Theorem~\ref{corresp}, we have $U_{S,i} = \psi(F_{i})$. Since $U_{S,i}$ is quasi-affine, it has the same dimension as its Zariski closure, which gives $\Dim(V_{S,i}) = \Dim(U_{S,i})$.
Moreover, $\Dim(F_{i}) = \Dim(F_{S,i}) + k\ell - \lvert S \rvert$. For any $\gamma \in U_{S,i}$, the fiber $\psi^{-1}(\gamma)$ is isomorphic to $(\CC^{\ast})^{\ell}$. Hence
$\Dim(U_{S,i}) = \Dim(F_{i}) - \ell = \Dim(F_{S,i}) + \ell(k-1) - \lvert S \rvert$.
\end{proof}

\begin{example}
Under the setting of Example~\ref{example: S1,S2,S3}, we have $\Dim(V_{S,i})=\Dim(F_{S,i})+6$.
\end{example}

\subsection{Example: Irreducible decomposition of \texorpdfstring{$V_{\Delta}$}{VDelta} for 
\texorpdfstring{$t=\ell$}{t=l}}\label{sec:t=l}

The irreducible components of $V_{\Delta}$ for $t=\ell$ were determined in \cite{clarke2020conditional}.  
Here, we rederive these results using our framework based on the varieties $F_{S}$ and $U_{S}$, yielding substantially simpler proofs.  
Throughout, we fix $t=\ell$ and first identify the admissible subsets $S\subset [k\ell]$ for which $F_{S}\neq \emptyset$.

\begin{lemma}\label{only}
Let $S \subset [k\ell]$ be admissible with $F_{S}\neq \emptyset$.  
Then either $S=\emptyset$, or $S$ satisfies
\[
\lvert S \cap R_i \rvert = 1 \quad \text{for all } i \in [k]\quad\text{and}\quad C_j \not\subset S \quad \text{for all } j \in [\ell].
\]
\end{lemma}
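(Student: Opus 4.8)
The plan is to use the hypothesis $t = \ell$ to force severe restrictions on how many columns can be "killed" by $S$ in each row, and then translate the admissibility conditions into the stated dichotomy. Recall that $S$ admissible means $|R_i \setminus S| \geq t - 1 = \ell - 1$ for every $i \in [k]$, and $C_j \not\subset S$ for every $j \in [\ell]$; since $|R_i| = \ell$, the first condition says $|S \cap R_i| \in \{0, 1\}$ for each $i$. So the only thing left to prove is: \emph{if $F_S \neq \emptyset$, then either $|S \cap R_i| = 0$ for all $i$, or $|S \cap R_i| = 1$ for all $i$} — i.e., the sizes $|S \cap R_i|$ cannot be mixed between $0$ and $1$.

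The key step is to exploit the flat condition in the definition of $F_S$. Pick $\gamma = (\gamma_1, \dots, \gamma_\ell) \in F_S$, giving a matroid $\mathcal{N}_\gamma$ on $[\ell]$ with no loops (all $\gamma_i \neq 0$). For each $i \in [k]$, the set $S_i = \{j : \mathcal{Y}_{i,j} \notin S\} \subseteq [\ell]$ must be a flat of rank $t - 1 = \ell - 1$ in $\mathcal{N}_\gamma$. Now $|S_i| = |R_i \setminus S| \in \{\ell - 1, \ell\}$. If $|S_i| = \ell$, then $S_i = [\ell]$, and for $[\ell]$ to have rank $\ell - 1$ the matroid $\mathcal{N}_\gamma$ must itself have rank exactly $\ell - 1$. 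If $|S_i| = \ell - 1$, say $S_i = [\ell] \setminus \{j\}$, then $S_i$ being a flat of rank $\ell - 1$ means $\rank(S_i) = \ell - 1$ but $\rank(S_i \cup \{j\}) = \rank([\ell]) > \ell - 1$, hence $\rank([\ell]) \geq \ell$; combined with $\rank([\ell]) \leq \ell$ this forces $\rank(\mathcal{N}_\gamma) = \ell$, i.e., $\gamma_1, \dots, \gamma_\ell$ are linearly independent. These two scenarios — $\rank(\mathcal{N}_\gamma) = \ell - 1$ versus $\rank(\mathcal{N}_\gamma) = \ell$ — are mutually exclusive, so all the $|S \cap R_i|$ must agree: either all are $0$ (the case $S = \emptyset$, forcing $\rank(\mathcal{N}_\gamma) = \ell - 1$) or all are $1$ (forcing $\mathcal{N}_\gamma$ free of rank $\ell$).

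I expect the main subtlety to be bookkeeping the correspondence between $S \cap R_i$ and $S_i$ correctly: an entry $\mathcal{Y}_{i,j} \in S$ removes $j$ from $S_i$, so $|S_i| = \ell - |S \cap R_i|$, and one must be careful that the "flat of rank $t-1$" condition is applied to $S_i$ as a subset of the \emph{common} ground set $[\ell]$ of $\mathcal{N}_\gamma$, not to some row-local object. Once that dictionary is in place, the argument is just the rank count above. Note also that when $S = \emptyset$ every $S_i = [\ell]$, and $[\ell]$ is automatically a flat (the whole ground set is always a flat), of rank $\ell - 1$ precisely when $\mathcal{N}_\gamma$ has rank $\ell - 1$; such $\gamma$ certainly exist (e.g.\ $\ell$ vectors spanning an $(\ell-1)$-dimensional space in general position), so $F_\emptyset \neq \emptyset$ and the case $S = \emptyset$ is genuinely realized, consistent with the statement.
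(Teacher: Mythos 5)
Your proof is correct and follows essentially the same strategy as the paper: reduce admissibility (with $t=\ell$) to $|S\cap R_i|\in\{0,1\}$, then use the flat-of-rank-$(t-1)$ condition on $S_i$ to pin down $\rank(\mathcal{N}_\gamma)$ and rule out mixing. The only difference is cosmetic organization — the paper argues ``if some $S_i=[\ell]$, then $[\ell]$ is the unique rank-$(\ell-1)$ flat, so every $S_j=[\ell]$,'' whereas you derive the rank in both cases ($\ell-1$ vs.\ $\ell$) and note they are incompatible — but these are logically equivalent ways of expressing the same rank dichotomy.
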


\begin{proof}
By Definition~\ref{admissible}, a subset $S\subset [k\ell]$ is admissible if
$\lvert R_i \setminus S\rvert \ge t-1$ for all $i\in [k]$ and $C_j\not\subset S$ for all $j\in [\ell]$.
Since $\lvert R_i\rvert=\ell=t$, this is equivalent to
$\lvert S\cap R_i\rvert \le 1$ for all $i\in [k]$ and $C_j\not\subset S$ for all $j\in [\ell]$.
If $\lvert S\cap R_i\rvert=1$ for every $i$, then $S$ satisfies these conditions.

Now suppose there exists $i\in [k]$ with $S\cap R_i=\emptyset$.
As $F_S\neq \emptyset$, take $\gamma\in F_S$.
By Definition~\ref{fs}, each $S_j$ is a flat of rank $t-1$ in the matroid $\mathcal{N}_\gamma$.
The condition $S\cap R_i=\emptyset$ implies $S_i=[\ell]$, so $[\ell]$ is a flat of rank $t-1$ in $\mathcal{N}_\gamma$.
Since $[\ell]$ is the unique flat of that rank, $S_j=[\ell]$ for all $j\in [k]$, and hence $S=\emptyset$.
\end{proof}

\begin{notation}
We define the following collection of subsets of $[k\ell]$:
\[\mathcal{R}=\{S\subset [k\ell]:\ \text{$\size{R_{i}\cap S}=1$ for all $i\in [k]$ and $C_{j}\not \subset S$ for all $j\in [\ell]$}\}.\]
\end{notation}
We show that for $S=\emptyset$ or $S\in\mathcal{R}$, 
the varieties $F_S$ are 
and irreducible, and compute their dimensions.

\begin{proposition}\label{prop:t=l-components}
Let $t=\ell$. Then the following hold:
\begin{itemize}
\item[{\rm (i)}] The variety $F_{\emptyset}$ is nonempty and irreducible, and
$\Dim(V_{\emptyset})=\ell(k+d)-d-1.$
\item[{\rm (ii)}] For each $S\in\mathcal{R}$, the variety $F_{S}$ is nonempty and irreducible, and
$\Dim(V_{S})=\ell(k+d-1)-k.$
\end{itemize}
\end{proposition}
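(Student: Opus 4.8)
The plan is to analyze the varieties $F_S$ for $S = \emptyset$ and $S \in \mathcal{R}$ directly from Definition~\ref{fs}, exploiting the special feature $\ell = t$, namely that a flat of rank $t-1$ in a matroid of rank $\leq t$ on $[\ell]$ is a proper flat, and the only flat of rank exactly $t-1 = \ell - 1$ containing a given corank-one subset is determined by it. First I would handle (i): a tuple $\gamma = (\gamma_1, \dotsc, \gamma_\ell) \in F_\emptyset$ must have all $S_i = [\ell]$ a flat of rank $t - 1 = \ell - 1$; since each $\gamma_i \neq 0$, this says precisely that $(\gamma_1, \dotsc, \gamma_\ell)$ spans a subspace of dimension $\ell - 1$, i.e. the $\ell$ vectors are linearly dependent but any $\ell - 1$ of them are independent — equivalently, $\mathcal{N}_\gamma$ is the uniform matroid $U_{\ell-1,\ell}$. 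I would then show $F_\emptyset$ is irreducible by exhibiting it as a dense constructible subset of an irreducible variety: the set of $(d \times \ell)$-matrices of rank $\leq \ell - 1$ whose columns are all nonzero, which is open in the (irreducible) generic determinantal variety of $(\ell-1)$-rank matrices; the uniform-matroid condition is a further nonempty Zariski-open condition, so $F_\emptyset$ is irreducible of the same dimension. Computing $\dim F_\emptyset$: the variety of $d \times \ell$ matrices of rank $\leq \ell - 1$ has dimension $d\ell - (d - \ell + 1)(\ell - \ell + 1) = d\ell - (d - \ell + 1)$ by the standard formula $\dim = nm - (n-r)(m-r)$ with $n = d$, $m = \ell$, $r = \ell - 1$. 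Then Corollary~\ref{dimensions} with $k$ arbitrary, $|S| = 0$, gives
\[
\Dim(V_\emptyset) = \Dim(F_\emptyset) + \ell(k-1) = d\ell - (d - \ell + 1) + \ell(k-1) = \ell(k + d) - d - 1,
\]
as claimed (using $\ell = t$ only implicitly, via the rank bound).

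Next I would handle (ii). Fix $S \in \mathcal{R}$, so $|S \cap R_i| = 1$ for every $i \in [k]$ and no column is contained in $S$; write $S \cap R_i = \{\mathcal{Y}_{i, c_i}\}$, so $S_i = [\ell] \setminus \{c_i\}$, a subset of size $\ell - 1 = t - 1$. The condition "$S_i$ is a flat of rank $t - 1$" then forces $S_i$ itself to be independent (rank $t - 1 = |S_i|$), i.e. the vectors $\{\gamma_j : j \neq c_i\}$ are linearly independent, and moreover $c_i$ must \emph{not} lie in the closure of $S_i$, i.e. $\gamma_{c_i} \notin \operatorname{span}\{\gamma_j : j \neq c_i\}$. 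But if $S_i$ is an independent set of size $\ell - 1$ and adding $c_i$ keeps independence, then $(\gamma_1, \dotsc, \gamma_\ell)$ is linearly independent, which requires $d \geq \ell$; I should note the statement implicitly assumes $d \geq \ell = t$ (consistent with Definition~\ref{setup}). Conversely, any tuple of $\ell$ linearly independent nonzero vectors lies in $F_S$ for every such $S$. Hence $F_S$ is, for all $S \in \mathcal{R}$, the \emph{same} variety: the set of $\ell$-tuples of linearly independent vectors in $\CC^d$, which is the nonempty Zariski-open subset of $(\CC^d)^\ell \cong \CC^{d\ell}$ where the $d \times \ell$ matrix has full rank $\ell$. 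This is irreducible (open dense in affine space) of dimension $d\ell$. Applying Corollary~\ref{dimensions} with $|S| = k$ (since $|S| = \sum_i |S \cap R_i| = k$):
\[
\Dim(V_S) = \Dim(F_S) + \ell(k-1) - |S| = d\ell + \ell(k-1) - k = \ell(k + d - 1) - k,
\]
as stated.

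The main obstacle is not any single hard estimate but getting the matroid bookkeeping exactly right: one must be careful that "$S_i$ is a flat of rank $t - 1$" genuinely decomposes into the two conditions (i) $S_i$ independent and (ii) $c_i \notin \closure{S_i}$, which crucially uses $|S_i| = t - 1$ together with $\rank(\mathcal{N}_\gamma) \leq t$ — this is where $t = \ell$ enters decisively, since for general $\ell > t$ a flat of rank $t - 1$ need be neither independent nor of size $t - 1$. A secondary point requiring care is the claim that the uniform-matroid locus is Zariski-open and nonempty inside the rank-$(\ell-1)$ determinantal variety (for part (i)); this follows because non-uniformity is cut out by the vanishing of various maximal minors, which is a closed condition, and a generic rank-$(\ell - 1)$ matrix is realized e.g. by deleting one column from a generic $d \times (\ell - 1)$ block padded appropriately, so the locus is nonempty and hence the closure equals the whole irreducible determinantal variety. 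Once these structural identifications are in place, the dimension formulas are immediate from Corollary~\ref{dimensions}, and nonemptiness is witnessed by the explicit generic configurations above.
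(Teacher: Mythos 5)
Your proposal follows the paper's strategy exactly: identify $F_\emptyset$ and $F_S$ concretely, show each is Zariski open in an irreducible ambient (the rank-$\leq\ell-1$ determinantal variety for $S=\emptyset$, affine space for $S\in\mathcal{R}$), read off $\dim F_S$, and apply Corollary~\ref{dimensions}. Part~(ii) is correct, and the unwinding of the flat condition to ``$(\gamma_1,\dotsc,\gamma_\ell)$ linearly independent'' for $S\in\mathcal{R}$ is precisely the paper's observation.

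There is, however, a genuine mischaracterization in part~(i). The condition that $S_i=[\ell]$ be a flat of rank $t-1=\ell-1$ in $\mathcal{N}_\gamma$ is simply $\rank\{\gamma_1,\dotsc,\gamma_\ell\}=\ell-1$ (the whole ground set is automatically a flat), and it does \emph{not} force $\mathcal{N}_\gamma\cong U_{\ell-1,\ell}$: take $\gamma_1=\gamma_2\neq0$ and $\gamma_2,\gamma_3,\dotsc,\gamma_\ell$ linearly independent; this lies in $F_\emptyset$ but $\{1,2\}$ is a $2$-circuit, so the matroid is not uniform. Your ``equivalently, $\mathcal{N}_\gamma$ is $U_{\ell-1,\ell}$'' therefore identifies $F_\emptyset$ with a proper open subset of the true variety. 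This error happens to be harmless for the stated conclusion, because both your set and the correct $F_\emptyset$ are nonempty Zariski-open subsets of the same generic determinantal variety $W$ of rank-$\leq(\ell-1)$ matrices, hence both are irreducible of dimension $d\ell-(d-\ell+1)$. But you should fix the characterization: $F_\emptyset=\{\gamma : \gamma_i\neq0\ \forall i,\ \rank\{\gamma_1,\dotsc,\gamma_\ell\}=\ell-1\}$, as in the paper.
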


\begin{proof}
(i) By Definition~\ref{fs}, the variety $F_{\emptyset}$ consists of all
$\gamma=(\gamma_{1},\ldots,\gamma_{\ell})\in(\CC^{d})^{\ell}$ with
$\gamma_{i}\neq 0$ for all $i$ and $\rank\{\gamma_{1},\ldots,\gamma_{\ell}\}=\ell-1$.
Thus $F_{\emptyset}$ is a nonempty Zariski open subset of
\[
W=\{\gamma=(\gamma_{1},\ldots,\gamma_{\ell})\in (\CC^{d})^{\ell} : 
\text{$\gamma_{1},\ldots,\gamma_{\ell}$ are linearly dependent}\}.
\]
which is irreducible. Hence $F_{\emptyset}$ is irreducible. Since
$\Dim(W)=d\ell-(d+1-\ell)$, we have $\Dim(F_{\emptyset})=d\ell-(d+1-\ell)$, and
Corollary~\ref{dimensions} yields
$\Dim(V_{\emptyset})=\Dim(F_{\emptyset})+\ell(k-1)=\ell(k+d)-d-1$.

\smallskip
(ii) By Definition~\ref{fs}, 
$F_{S}$ consists of all tuples of vectors 
$\gamma=(\gamma_{1},\ldots,\gamma_{\ell})\in (\CC^{d})^{\ell}$ satisfying:
\begin{center} $\gamma_{i}\neq 0$ for all $i\in [\ell]$, \quad
$\rank\{\gamma_{1},\ldots,\gamma_{\ell}\}=\ell$, \quad and\quad
$\rank\{\gamma_{p}:p\in S_{i}\}=\ell-1$ for every $i\in [k]$.
\end{center} 
Since each $S_{i}$ has size $\ell-1$,
$F_{S}$ is a nonempty Zariski open subset of $(\CC^{d})^{\ell}$, and hence irreducible.
Thus $\Dim(F_{S})=d\ell$, and Proposition~\ref{dimensions} gives
$\Dim(V_{S})=\Dim(F_{S})+\ell(k-1)-k
= \ell(k+d-1)-k.$
\end{proof}

We can now deduce the irredundant irreducible decomposition of $V_{\Delta}$ in the case $t=\ell$.

\begin{corollary}\label{desc del teo}
The irredundant irreducible decomposition of $V_{\Delta}$ is
$V_{\Delta} \;=\; V_{\emptyset} \,\cup\, \bigcup_{S\in \mathcal{R}} V_{S}.$
\end{corollary}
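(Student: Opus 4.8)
The plan is to invoke Theorem~\ref{irreducible} and then remove all contributions that turn out to be empty. By Theorem~\ref{irreducible}, $V_\Delta=\bigcup_S\bigcup_{i=1}^{a_S}V_{S,i}$, where $S$ ranges over admissible subsets with $V_S\neq\emptyset$. Since $V_S=\overline{U_S}$ and, by Theorem~\ref{corresp}, the irreducible components of $U_S$ are in bijection with those of $F_S$, we have $V_S\neq\emptyset$ precisely when $F_S\neq\emptyset$. Thus the first step is to restrict the union in Theorem~\ref{irreducible} to those admissible $S$ with $F_S\neq\emptyset$.

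The second step is to identify exactly which admissible $S$ satisfy $F_S\neq\emptyset$ when $t=\ell$. This is precisely the content of Lemma~\ref{only}: any such $S$ is either $\emptyset$ or lies in $\mathcal{R}$. Conversely, Proposition~\ref{prop:t=l-components} shows that for $S=\emptyset$ and for every $S\in\mathcal{R}$, the variety $F_S$ is nonempty (and irreducible), so these subsets do contribute. Hence the union over admissible $S$ with $V_S\neq\emptyset$ is exactly the union over $\{\emptyset\}\cup\mathcal{R}$.

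The third step handles irreducibility and the inner union. By Proposition~\ref{prop:t=l-components}, $F_\emptyset$ is irreducible and each $F_S$ with $S\in\mathcal{R}$ is irreducible, so by Theorem~\ref{corresp} each corresponding $U_S$ is irreducible, hence so is $V_S=\overline{U_S}$. Therefore $a_S=1$ for every relevant $S$, and the double union collapses to $V_\Delta=V_\emptyset\cup\bigcup_{S\in\mathcal{R}}V_S$. Irredundancy is inherited directly from Theorem~\ref{irreducible}, since that decomposition is already irredundant and we have merely discarded empty terms and observed that each surviving $V_S$ is a single irreducible component; no two of the surviving $V_S$ can be nested by the irredundancy argument in the proof of Theorem~\ref{irreducible}.

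Since every ingredient is already established, there is essentially no obstacle here; the only point requiring a moment's care is the equivalence $V_S\neq\emptyset\iff F_S\neq\emptyset$, which follows because $\psi$ in~\eqref{psi} is surjective onto $U_S$ with domain $(\CC^\ast)^{|[k\ell]\setminus S|}\times F_S$, so $U_S=\emptyset$ iff $F_S=\emptyset$, and $V_S=\overline{U_S}$. Thus the corollary is a direct bookkeeping consequence of Theorem~\ref{irreducible}, Theorem~\ref{corresp}, Lemma~\ref{only}, and Proposition~\ref{prop:t=l-components}.
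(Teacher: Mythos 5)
Your proof is correct and follows essentially the same route as the paper, which simply cites Theorems~\ref{irreducible} and~\ref{corresp} together with Proposition~\ref{prop:t=l-components} (with Lemma~\ref{only} implicit in the preceding development). You have merely spelled out the bookkeeping — in particular the observation that $\psi$ is surjective, so $U_S\neq\emptyset$ iff $F_S\neq\emptyset$ — which the paper leaves to the reader.
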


\begin{proof}
The statement follows from Theorems~\ref{irreducible} and~\ref{corresp} together with Proposition~\ref{prop:t=l-components}.
\end{proof}
\begin{example}
Let $k=3$ and $\ell=6$. Then $|\mathcal{R}|=6^{3}-6$, and $V_{\Delta}$ has $6^{3}-5=211$ irreducible components.
\end{example}

\section{Defining equations of the components of \texorpdfstring{$V_{\Delta}$}{VDelta}}\label{sec:dim-eq}

In the previous section, we reduced the irreducible decomposition of $V_{\Delta}$ and the dimensions of its components to the study of the varieties $F_S$. In this section, we show that the defining equations of the components of $V_{\Delta}$ can likewise be recovered from the corresponding data for the components of $F_S$.
\subsection{Defining equations}

We turn to the defining equations of the irreducible components of $V_{\Delta}$. 
For a component $V_{S,i}$, our goal is to describe a complete set of defining equations, i.e.\ generators of its ideal up to radical. 
We show that these equations can be obtained directly from defining equations for the corresponding variety $F_{S,i}$.

\begin{notation} We begin by introducing some notation.
\begin{itemize}
\item Let $X=(x_{i,j})$ denote a $d\times k\ell$ matrix of indeterminates.   
For each $j\in [k\ell]$, we write $X_{j}$ for the $j^{\text{th}}$ column of $X$.

\item Let $Y=(y_{i,j})$ be a $d\times \ell$ matrix of indeterminates, with columns indexed by the elements of $[\ell]$.  
For each $j\in [\ell]$, we denote by $Y_{j}$ the $j^{\text{th}}$ column of $Y$.

\item When referring to the defining equations of a quasi-affine variety $A$, we mean the defining equations of its Zariski closure $\overline{A}$.  
These are precisely the polynomials $p$ that vanish on all points of $A$, or equivalently, the elements of $I(A)$.

\item A \emph{complete set of defining equations} for $A$ refers to a set of generators of $I(A)$ up to radical; that is, a collection of polynomials $\{p_{1},\ldots,p_{r}\}$ satisfying  
$V(\{p_{1},\ldots,p_{r}\})=\overline{A}$.
\end{itemize}
\end{notation}

We next describe how to obtain polynomials in $I(V_{S,i})$ from those in $I(F_{S,i})$, after fixing~some~notation.

\begin{notation}
For each admissible subset $S$ and each $i\in [a_{S}]$, we adopt the following conventions:
\begin{itemize}
\item Every $p\in I(V_{S,i})$ is viewed as a polynomial in $\CC[X]$, where $X=(x_{i,j})$ is a $d\times k\ell$ matrix of indeterminates.
\item Every $p\in I(F_{S,i})$ is viewed as a polynomial in $\CC[Y]$, where $Y=(y_{i,j})$ is a $d\times \ell$ matrix of indeterminates, as above.
\item A subset $T=\{t_{1},\ldots,t_{\ell}\}\subset [k\ell]$ is called an \emph{$S$-representative} if $t_{i}\in C_{i}\setminus S$ for all $i\in [\ell]$.  
We denote by $\mathcal{T}(S)$ the collection of all $S$-representatives.
\end{itemize}
\end{notation}

For each $p\in \CC[Y]$ and each $S$-representative $T$, we now associate a corresponding polynomial $p_{T}\in \CC[X]$.

\begin{definition}
Let $p\in \CC[Y]$ and let $T=\{t_{1},\ldots,t_{\ell}\}\subset [k\ell]$ be an $S$-representative.  
We define the polynomial $p_{T}\in \CC[X]$ via the algebra morphism 
\[
\phi_{T}:\CC[Y]\longrightarrow \CC[X], \qquad y_{i,j}\longmapsto x_{i,t_{j}} \ (i\in[d],\,j\in[\ell]),
\]
and set $p_{T}=\phi_{T}(p)$.
Moreover, for $\gamma=(\gamma_{j}: j\in [k\ell])\in V_{\Delta}$ and an $S$-representative $T=\{t_{1},\ldots,t_{\ell}\}$, set
\[
\gamma_{T} \coloneqq (\gamma_{t_{1}},\ldots,\gamma_{t_{\ell}}).
\]
\end{definition}

\begin{example}
Let $k=3$, $l=6$, and $S=\{1,4,8,11,15,18\}$ as in Example~\ref{example: S1,S2,S3}. In this case, the subset $T = \{2,5,7,12,13,17\} \subset [18]$ is an $S$-representative. The associated 
morphism 
$\phi_{T}:\CC[Y] \longrightarrow \CC[X]$
is given by:
$
y_{i,1} \mapsto x_{i,2}, \quad
y_{i,2} \mapsto x_{i,5}, \quad
y_{i,3} \mapsto x_{i,7}, \quad
y_{i,4} \mapsto x_{i,12}, \quad
y_{i,5} \mapsto x_{i,13}, \quad
y_{i,6} \mapsto x_{i,17}.
$
\end{example}

\begin{lemma}\label{gama t}
Let $\gamma\in U_{S,i}$ and let $T\subset [k\ell]$ be an $S$-representative.  
Then $\gamma_{T}\in F_{S,i}$.
\end{lemma}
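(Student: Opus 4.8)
\textbf{Proof proposal for Lemma~\ref{gama t}.}

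The plan is to verify directly that $\gamma_T = (\gamma_{t_1}, \ldots, \gamma_{t_\ell})$ satisfies the two defining conditions of $F_S$ from Definition~\ref{fs}, and then to upgrade membership in $F_S$ to membership in the specific component $F_{S,i}$ using the correspondence of Theorem~\ref{corresp}. First I would observe that since $\gamma \in U_{S,i} \subset U_S$, we have $\gamma_j = 0$ precisely when $j \in S$; as each $t_i \in C_i \setminus S$ by the definition of an $S$-representative, every $\gamma_{t_i}$ is nonzero, giving the first condition. For the second condition, recall from Definition~\ref{matr aso} that the matroid $\mathcal{M}_\gamma$ on $[\ell]$ is represented by a tuple $(v_1, \ldots, v_\ell)$ where $v_i$ is a nonzero scalar multiple of $\gamma_{t_i}$ (since all nonzero vectors in $C_i$ are mutually proportional, and $\gamma_{t_i} \neq 0$). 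Hence the tuple $\gamma_T$ represents the very same matroid $\mathcal{M}_\gamma = \mathcal{N}_{\gamma_T}$, because rescaling representing vectors by nonzero scalars does not change the matroid. Since $\gamma \in U_S$, each $S_i$ is a flat of rank $t-1$ in $\mathcal{M}_\gamma$, and therefore also in $\mathcal{N}_{\gamma_T}$. This shows $\gamma_T \in F_S$.

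It remains to pin down the component: I must show $\gamma_T \in F_{S,i}$ and not merely $\gamma_T \in F_S$. Here I would invoke the explicit surjection $\psi : (\CC^\ast)^{|[k\ell]\setminus S|} \times F_S \to U_S$ from~\eqref{psi} in the proof of Theorem~\ref{corresp}, under which $U_{S,i} = \psi(F_i)$ with $F_i = (\CC^\ast)^{|[k\ell]\setminus S|} \times F_{S,i}$. Choosing scalars $(\lambda_p)_{p \in [k\ell]\setminus S}$ so that $\psi((\lambda_p)_p, \gamma_T) = \gamma$ — this is possible exactly because each $\gamma_p$ for $p \in C_j \setminus S$ is a scalar multiple of $\gamma_{t_j}$ — we get that $(\lambda, \gamma_T)$ lies in $\psi^{-1}(\gamma) \subset \psi^{-1}(U_{S,i})$. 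By the decomposition $U_S = \bigsqcup$-type argument in Theorem~\ref{corresp} (more precisely, since the $\psi(F_m)$ are the irreducible components of $U_S$ and $\gamma \in U_{S,i}$ lies in that component, while $\gamma$ is a generic-enough point of it — here I would use that every irreducible component of $U_S$ meets $U_S$ itself, or pass to the component containing $(\lambda,\gamma_T)$), the point $(\lambda, \gamma_T)$ must lie in $F_i$, whence $\gamma_T = \pi_2(\lambda, \gamma_T) \in F_{S,i}$.

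The main obstacle is the last step: $\psi$ is surjective but need not send components to components injectively on the nose, so one cannot blindly conclude that a preimage of a point in $U_{S,i}$ lies in $F_i$. The clean fix is to note that $U_{S,i} = \psi(F_i)$ is irreducible and that the various $\psi(F_m)$ are exactly the irreducible components of $U_S$ by the irredundancy established in Theorem~\ref{corresp}; since $\gamma \in U_{S,i}$, and since $\psi^{-1}(U_{S,i})$ is a union of the $F_m$'s whose image lands in $U_{S,i}$, one argues that the fiber over $\gamma$ — being isomorphic to a torus $(\CC^\ast)^\ell$ and hence irreducible and connected — lies entirely in a single $F_m$, which must be $F_i$. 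I would spell out that $\pi_2$ of this fiber is the single point $\gamma_T$ (up to the torus action it is literally $\gamma_T$), so $\gamma_T \in F_{S,i}$. This identification of the fiber with a torus is already recorded in the proof of Corollary~\ref{dimensions}, so it can be cited rather than reproved.
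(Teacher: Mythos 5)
Your first paragraph (showing $\gamma_T\in F_S$ directly from Definition~\ref{fs}) is correct but unnecessary for the paper's route, and your second paragraph — the step you yourself flag as "the main obstacle" — has a genuine gap.

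The gap is in the proposed "clean fix." You claim that $\psi^{-1}(U_{S,i})$ is a union of the sets $F_m$ whose image lands in $U_{S,i}$, and then conclude from connectedness of the fiber $\psi^{-1}(\gamma)$ that the fiber lies in a single $F_m$. Neither step holds. First, for $(\lambda,\tau)\in F_m$ one has $\psi(\lambda,\tau)\in U_{S,m}$, so $(\lambda,\tau)\in\psi^{-1}(U_{S,i})$ exactly when $\psi(\lambda,\tau)\in U_{S,m}\cap U_{S,i}$; thus $\psi^{-1}(U_{S,i})\cap F_m$ is in general a proper subset of $F_m$ when $m\neq i$, not all of it. Second, a connected set covered by finitely many closed sets need not lie inside any one of them when those closed sets overlap — and distinct irreducible components $F_{S,m}$ can intersect, so the $F_m$ do too. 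The connectedness of the torus fiber simply does not deliver the conclusion you need, and the argument as written does not justify that the single $F_m$ you land in "must be $F_i$."

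The fix the paper uses is both shorter and avoids the issue entirely: since $\gamma\in U_{S,i}=\psi(F_i)$, one may \emph{choose} a preimage $(\mu,\tau)$ with $\tau\in F_{S,i}$ from the start. By the definition of $\psi$ and the fact that $t_j\in C_j\setminus S$, the vector $\gamma_{t_j}=\mu_{t_j}\tau_j$ is a nonzero scalar multiple of $\tau_j$; hence $\gamma_T$ and $\tau$ differ coordinate-wise by nonzero scalars. Since $F_{S,i}$ is invariant under coordinate-wise rescaling by $(\CC^\ast)^\ell$ (the action of the connected torus on $F_S$ permutes its irreducible components, hence fixes each one), this yields $\gamma_T\in F_{S,i}$. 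You gesture at this with "pass to the component containing $(\lambda,\gamma_T)$," but the load-bearing fact is the rescaling-invariance of $F_{S,i}$, which you never invoke, and which is exactly what replaces the faulty connectedness argument.
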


\begin{proof}
Let $\psi$ be the polynomial map defined in~\eqref{psi}, and set
$F_i \coloneqq (\CC^{\ast})^{|[k\ell]\setminus S|}\times F_{S,i}$.
From the proof of Theorem~\ref{corresp}, we have $U_{S,i}=\psi(F_i)$.
Suppose $\gamma=\psi(\lambda,\tau)$ with $\lambda\in (\CC^{\ast})^{|[k\ell]\setminus S|}$ and $\tau\in F_{S,i}$.
By definition of an $S$-representative $T=\{t_1,\ldots,t_\ell\}$, each $t_j\in C_j\setminus S$.
Hence, by the definition of $\psi$, the coordinate $\gamma_{t_j}$ is a nonzero scalar multiple of $\tau_j$ for all $j\in[\ell]$.
Thus, $\gamma_T$ and $\tau$ differ only by nonzero scalar multiples.
Since $F_{S,i}$ is closed under multiplication by nonzero scalars, it follows that $\gamma_T\in F_{S,i}$.
\end{proof}

\begin{proposition}\label{prop p}
Let $p\in I(F_{S,i})$ and let $T\subset [k\ell]$ be an $S$-representative.  
Then $p_{T}\in I(V_{S,i})$.
\end{proposition}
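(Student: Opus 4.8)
The plan is to show that $p_T$ vanishes on all of $V_{S,i}$, which by definition of the ideal of a variety is equivalent to showing $p_T \in I(V_{S,i})$. Since $V_{S,i} = \overline{U_{S,i}}$ is the Zariski closure of $U_{S,i}$, and since the vanishing locus of any polynomial is Zariski closed, it suffices to verify that $p_T$ vanishes on the dense subset $U_{S,i}$. So first I would fix an arbitrary point $\gamma = (\gamma_j : j \in [k\ell]) \in U_{S,i}$ and aim to prove $p_T(\gamma) = 0$.

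The key step is to relate the evaluation $p_T(\gamma)$ to an evaluation of $p$ itself. By the definition of $p_T = \phi_T(p)$, where $\phi_T$ sends $y_{i,j} \mapsto x_{i,t_j}$, substituting the coordinates of $\gamma$ into $p_T$ is the same as substituting into $p$ the vector obtained by taking the $t_j$-th column of $\gamma$ for the $j$-th slot; that is, $p_T(\gamma) = p(\gamma_T)$, where $\gamma_T = (\gamma_{t_1}, \ldots, \gamma_{t_\ell})$ as in the notation introduced before the statement. This is a purely formal identity following from how $\phi_T$ was defined, so I would state it and move on quickly. Then, invoking Lemma~\ref{gama t}, since $\gamma \in U_{S,i}$ and $T$ is an $S$-representative, we have $\gamma_T \in F_{S,i}$. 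Because $p \in I(F_{S,i})$, by definition $p$ vanishes on every point of $F_{S,i}$, in particular $p(\gamma_T) = 0$. Combining these gives $p_T(\gamma) = p(\gamma_T) = 0$.

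Having shown $p_T$ vanishes on all of $U_{S,i}$, and since $V_{S,i} = \overline{U_{S,i}}$ while $V(p_T)$ is Zariski closed and contains $U_{S,i}$, we conclude $V_{S,i} \subseteq V(p_T)$, i.e.\ $p_T \in I(V_{S,i})$, as desired.

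I do not anticipate a genuine obstacle here: the proof is essentially bookkeeping, with the only subtlety being to state cleanly the identity $p_T(\gamma) = p(\gamma_T)$ coming from the substitution morphism $\phi_T$, and then to remember that it suffices to check vanishing on the dense quasi-affine subset $U_{S,i}$ rather than on all of $V_{S,i}$. The real content was already extracted into Lemma~\ref{gama t}, which lifts elements of $U_{S,i}$ to elements of $F_{S,i}$ by passing to an $S$-representative; this Proposition is the straightforward consequence of that lemma at the level of defining equations.
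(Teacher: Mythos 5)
Your proof is correct and follows essentially the same route as the paper: reduce to vanishing on the dense quasi-affine subset $U_{S,i}$, use the substitution identity $p_T(\gamma)=p(\gamma_T)$, and apply Lemma~\ref{gama t} together with $p\in I(F_{S,i})$ to conclude.
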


\begin{proof}
It suffices to show that $p_T$ vanishes on every element of $U_{S,i}$.
Let $\gamma\in U_{S,i}$.
Since the support of $p_T$ involves only variables indexed by $T$, we have
$p_T(\gamma)=p(\gamma_T)=0$,
where the final equality follows from $p\in I(F_{S,i})$ and Lemma~\ref{gama t}, which implies that $\gamma_T\in F_{S,i}$, and completes the proof.
\end{proof}

We now show that defining equations for $F_{S,i}$ induce those for $V_{S,i}$.

\begin{proposition}\label{equations}
Let $\{p_{1},\ldots,p_{r}\}\subset \CC[Y]$ be a complete set of defining equations for $F_{S,i}$.  
Then the following set of polynomials in $\CC[X]$ forms a complete set of defining equations for $V_{S,i}$:
\begin{equation}\label{ecu}
\{x_{i,j}:j\in S\}\cup \bigcup_{j=1}^{\ell} I_{2}(X_{C_{j}\setminus S}) \;\cup\; \bigcup_{m=1}^{r} \{\,(p_{m})_{T} : T\in \mathcal{T}(S)\,\},
\end{equation}
where $X_{C_{j}\setminus S}$ denotes the submatrix of $X$ whose columns are indexed by $C_{j}\setminus S$, and $I_{2}(X_{C_{j}\setminus S})$ denotes the set of its $2$-minors.
\end{proposition}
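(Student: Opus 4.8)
The plan is to show that the vanishing locus of the set in~\eqref{ecu} equals $V_{S,i}=\overline{U_{S,i}}$, by establishing both inclusions of varieties. Write $E$ for the set of polynomials in~\eqref{ecu}. First I would check $V_{S,i}\subseteq V(E)$, i.e.\ that every polynomial in $E$ lies in $I(V_{S,i})$. For the linear forms $x_{i,j}$ with $j\in S$ this is clear, since every $\gamma\in U_{S,i}$ has $\gamma_j=0$ for $j\in S$ by definition of $U_S$. For the $2$-minors in $I_2(X_{C_j\setminus S})$, it follows because for $\gamma\in U_{S,i}\subset V_\Delta$ any two vectors $\gamma_{p_1},\gamma_{p_2}$ with $p_1,p_2$ in the same column $C_j$ are linearly dependent, so all $2\times2$ minors of the submatrix on columns $C_j\setminus S$ vanish. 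For the polynomials $(p_m)_T$ with $T\in\mathcal{T}(S)$, this is exactly Proposition~\ref{prop p}. Hence $E\subseteq I(U_{S,i})=I(V_{S,i})$, giving $V_{S,i}\subseteq V(E)$.

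The substance is the reverse inclusion $V(E)\subseteq V_{S,i}$. Let $\gamma\in V(E)$. From $x_{i,j}$ ($j\in S$) we get $\gamma_j=0$ for all $j\in S$. From the $2$-minor equations we get that for each $j\in[\ell]$, the vectors $\{\gamma_p:p\in C_j\setminus S\}$ are pairwise linearly dependent, hence all lie in a common line $\ell_j\subseteq\CC^d$ (possibly the zero line if all are zero). Pick an $S$-representative $T=\{t_1,\dots,t_\ell\}$; then $\gamma_T=(\gamma_{t_1},\dots,\gamma_{t_\ell})$ and I want to argue $\gamma_T\in\overline{F_{S,i}}$. The point is that $\gamma$ vanishes on all $(p_m)_T$, and since the variables occurring in $(p_m)_T$ are exactly those indexed by $T$, we have $p_m(\gamma_T)=(p_m)_T(\gamma)=0$ for all $m$; as $\{p_1,\dots,p_r\}$ is a complete set of defining equations for $F_{S,i}$, this gives $\gamma_T\in V(p_1,\dots,p_r)=\overline{F_{S,i}}$. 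Now one reconstructs $\gamma$ from $\gamma_T$ by the map $\psi$ of~\eqref{psi}: writing $\gamma_p=\mu_p\,\gamma_{t_j}$ for $p\in C_j\setminus S$ (with $\mu_p\in\CC$, allowing $\mu_p=0$) and $\gamma_p=0$ for $p\in S$, we see $\gamma$ lies in the Zariski closure of $\psi\bigl((\CC^\ast)^{|[k\ell]\setminus S|}\times F_{S,i}\bigr)=U_{S,i}$: indeed $\gamma$ is a limit of points $\psi((\lambda_p),\eta)$ with $\eta\in F_{S,i}$ approximating $\gamma_T$ and $\lambda_p$ approximating $\mu_p$ — the only subtlety is that some $\mu_p$ may be $0$ while $\psi$ requires nonzero scalars, but this is handled by letting those $\lambda_p\to 0$, keeping us in the closure. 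Hence $\gamma\in\overline{U_{S,i}}=V_{S,i}$.

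There is one gap to patch in the previous paragraph: to apply $\psi$ I need $\gamma_T\in F_{S,i}$, not merely in its closure $\overline{F_{S,i}}$; and more basically I need to know that the line $\ell_j$ is the same for every $S$-representative, so that $\gamma_T$ and $\gamma_{T'}$ differ only by nonzero scalars and landing in $\overline{F_{S,i}}$ is independent of the choice of $T$. The latter is immediate since $\ell_j$ depends only on the columns, not on which representative $t_j\in C_j\setminus S$ is chosen (and $C_j\not\subset S$ by admissibility of $S$, so $C_j\setminus S\neq\emptyset$). For the former, I would argue at the level of closures throughout: $\gamma_T\in\overline{F_{S,i}}$ together with $\gamma_j=0$ for $j\in S$ and the common-line condition places $\gamma$ in the Zariski closure of $\{\kappa\in V_\Delta: M_\kappa\in\overline{F_{S,i}}\}$, and one checks — using that $\psi(F_{S,i})=\{\kappa\in V_\Delta:M_\kappa\in F_{S,i}\}$ is quasi-affine with $U_{S,i}=\psi((\CC^\ast)^{\ast}\times F_{S,i})$ dense in $V_{S,i}$ — that this closure is contained in $V_{S,i}$.

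**Main obstacle.** I expect the delicate point to be the reverse inclusion's final step: passing from "$\gamma_T$ lies in $\overline{F_{S,i}}$ and the columns of $\gamma$ collapse onto lines" to "$\gamma\in\overline{U_{S,i}}$", because $\psi$ is only defined with nonzero scalar parameters and because $\overline{F_{S,i}}$ may strictly contain $F_{S,i}$. The cleanest route is a perturbation argument in the spirit of Example~\ref{example perturbation}: simultaneously perturb $\gamma_T$ into $F_{S,i}$ and perturb the vanishing scalars $\mu_p$ into $\CC^\ast$, producing nearby points of $U_{S,i}$; since $U_{S,i}$ is quasi-affine its Euclidean and Zariski closures agree, so the limit $\gamma$ lies in $\overline{U_{S,i}}=V_{S,i}$. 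Carrying this out carefully, and verifying that the perturbed configurations genuinely satisfy the flat-of-rank-$(t-1)$ condition defining $F_S$ (which is why one perturbs \emph{inside} $F_{S,i}$ rather than arbitrarily), is the crux of the argument.
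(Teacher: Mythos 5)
Your proof is correct and follows essentially the same two-step strategy as the paper: forward inclusion by showing each generator lies in $I(U_{S,i})$, and reverse inclusion by extracting $\gamma_j=0$ on $S$, collapsing columns to lines via the $2$-minors, deducing $\gamma_T\in\overline{F_{S,i}}$, and then a perturbation argument pulling $\gamma_T$ into $F_{S,i}$ to land $\gamma$ in $\overline{U_{S,i}}$. The subtlety you flag about zero scalars $\mu_p$ needing to be perturbed into $\CC^{\ast}$ is genuine and handled exactly as you propose (the paper's phrasing is more terse but relies on the same simultaneous perturbation); the only small imprecision is your claim that $\gamma_T$ and $\gamma_{T'}$ differ by nonzero scalars for all $S$-representatives $T,T'$ — this can fail when a chosen representative is the zero vector while another is not, which is why one must pick $T$ so that every $\gamma_p$ in $C_j\setminus S$ is a multiple of $\gamma_{t_j}$, as the paper does — but this does not affect the substance of your argument.
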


\begin{proof}
Let $\mathcal{F}$ denote the family of polynomials in~\eqref{ecu}, and set
\[\mathcal{F}_{1}=\{x_{i,j}:j\in S\}, \qquad
\mathcal{F}_{2} = \bigcup_{j=1}^{\ell} I_{2}(X_{C_{j}\setminus S}), 
\qquad 
\mathcal{F}_{3} = \bigcup_{m=1}^{r} \{\,(p_{m})_{T} : T\in \mathcal{T}(S)\,\}.
\]
We aim to show that $V(\mathcal{F}) = V_{S,i}$, where $V_{S,i} = \overline{U_{S,i}}$.

\medskip
\noindent
\textbf{Step 1:} ($V_{S,i}\subseteq V(\mathcal{F})$)
To prove this inclusion, it suffices to show that $\mathcal{F}\subseteq I(U_{S,i})$. Since 
$\gamma_{j}=0$ for all $j\in S$ and $\gamma\in U_{S}$, we conclude that $\mathcal{F}_{1}\subseteq I(U_{S,i})$.
By the definition of $V_{\Delta}$, we have
$\bigcup_{j=1}^{\ell} I_{2}(X_{C_{j}})\subseteq I(V_{\Delta}),$
which implies
$\mathcal{F}_{2} \subseteq \bigcup_{j=1}^{\ell} I_{2}(X_{C_{j}}) \subseteq I(V_{\Delta}) \subseteq I(U_{S,i}).$
Moreover, by Proposition~\ref{prop p}, each $(p_{m})_{T}$ belongs to $I(U_{S,i})$, hence $\mathcal{F}_{3}\subseteq I(U_{S,i})$ as well.  
Therefore, $\mathcal{F}\subseteq I(U_{S,i})$, which yields $V_{S,i}\subseteq V(\mathcal{F})$.

\medskip
\noindent
\textbf{Step 2:} ($V(\mathcal{F})\subseteq V_{S,i}$)
Let $\gamma\in V(\mathcal{F})$, and we show that $\gamma\in V_{S,i}$. Since $\gamma\in V(\mathcal{F}_{1})$, it follows that $\gamma_{j}=0$ for all $j\in S$. 
Since $\gamma\in V(\mathcal{F}_{2})$, for each $j\in [\ell]$ the set $\{\gamma_{u} : u\in C_{j}\setminus S\}$ has rank at most one.  
Hence, there exists an $S$-representative $T=\{t_{1},\ldots,t_{\ell}\}$ such that, for every $j\in [\ell]$, all vectors $\{\gamma_{u}:u\in C_{j}\setminus S\}$ are scalar multiples of $\gamma_{t_{j}}$.

Since $\gamma\in V(\mathcal{F}_{3})$, we have $(p_{m})_{T}(\gamma)=0$ for all $m\in [r]$, which gives
$p_{m}(\gamma_{T}) = (p_{m})_{T}(\gamma) = 0$.
Thus $\gamma_{T}\in \overline{F_{S,i}}$, because $\{p_{1},\ldots,p_{r}\}$ is a complete set of defining equations for $F_{S,i}$.  
Since $F_{S,i}$ is quasi-affine, $\gamma_{T}$ belongs to its Zariski closure, so it can be approximated arbitrarily closely by a collection of vectors in $F_{S,i}$.  
Each vector of $\gamma$ being a scalar multiple of some vector in $\gamma_{T}$, a small perturbation of $\gamma_{T}$ within $F_{S,i}$ induces a corresponding perturbation of $\gamma$ within $U_{S,i}$.  
Hence $\gamma\in \overline{U_{S,i}} = V_{S,i}$. This completes the proof.
\end{proof}

Using these results, we reach the following conclusion.

\begin{remark}
By Theorems~\ref{irreducible} and~\ref{corresp}, Corollary~\ref{dimensions} and~Proposition~\ref{equations}, the determination of the irreducible components of $V_{\Delta}$, together with their dimensions and defining equations, reduces to analyzing the corresponding properties of the varieties $F_{S}$, where $S$ ranges over the admissible subsets.
\end{remark}

\begin{example}
Let $k$, $\ell$, $t$, $d$, and $S$ be as in Example~\ref{example: S1,S2,S3}. If 
$\{p_{1},\dots,p_{r}\}$ is a complete set of defining equations for $F_{S,i}$, then a complete set of defining equations for $V_{S,i}$ is
\begin{align*}
& \{x_{i,j} : j \in \{1,4,8,11,15,18\}\} \;\cup\; I_{2}(X_{2,3}) \;\cup\; I_{2}(X_{5,6}) \;\cup\; I_{2}(X_{7,9}) \\
& \;\;\;\;\cup\; I_{2}(X_{10,12}) \;\cup\; I_{2}(X_{13,14}) \;\cup\; I_{2}(X_{16,17}) \;\cup\; 
\bigcup_{m=1}^{r} \{\, (p_{m})_{T} : T \in \mathcal{T}(S) \,\}.
\end{align*}
\end{example}

\subsection{Constructing polynomials in \texorpdfstring{$I(F_{S})$}{I(FS)}}

In this subsection, we construct polynomials in $I(F_S)$ using rank constraints arising from intersections of matroid flats. 
These constraints translate into vanishing conditions for suitable minors, yielding explicit equations on $F_S$. 
We recall the following classical property of matroids; see \cite{Oxley}.

\begin{lemma}\label{flats}
Let $M$ be a matroid, and let $F$ and $G$ be flats such that $F \not\subseteq G$. Then $F \cap G$ is also a flat, and it satisfies
$\rank(F \cap G) < \rank(F).$
\end{lemma}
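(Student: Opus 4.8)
The plan is to prove both assertions directly from the definition of a flat together with the submodularity of the matroid rank function. Recall that for a matroid $M$ with rank function $\rank$, a set $F$ is a flat precisely when $\rank(F \cup \{x\}) > \rank(F)$ for every $x \notin F$, i.e.\ $F$ contains every element in its closure. Throughout, I will freely use submodularity: $\rank(A \cup B) + \rank(A \cap B) \le \rank(A) + \rank(B)$ for all subsets $A, B$ of the ground set.

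\textbf{Step 1: $F \cap G$ is a flat.} Suppose, for contradiction, that $F \cap G$ is not a flat. Then there exists an element $x$ in the ground set with $x \notin F \cap G$ but $\rank\bigl((F \cap G) \cup \{x\}\bigr) = \rank(F \cap G)$, so $x$ lies in the closure of $F \cap G$. Since closure is monotone, $x$ also lies in the closure of $F$ and in the closure of $G$. But $F$ and $G$ are flats, so they each equal their own closure; hence $x \in F$ and $x \in G$, i.e.\ $x \in F \cap G$, contradicting $x \notin F \cap G$. Therefore $F \cap G$ is a flat.

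\textbf{Step 2: the strict rank inequality.} Since $F \not\subseteq G$, pick $x \in F \setminus G$. I claim $x$ is not in the closure of $F \cap G$; indeed, if it were, then by monotonicity of closure it would lie in the closure of $G$, which equals $G$ since $G$ is a flat, contradicting $x \notin G$. Hence, by Step 1 (or directly, since $F \cap G$ is a flat and $x \notin F \cap G$), we have $\rank\bigl((F \cap G) \cup \{x\}\bigr) = \rank(F \cap G) + 1$. Now apply submodularity to $A = (F \cap G) \cup \{x\}$ and $B = F$: note $A \cup B = F$ (because $x \in F$) and $A \cap B = (F \cap G) \cup \{x\}$ as well when... more carefully, $A \cap B = \bigl((F\cap G)\cup\{x\}\bigr)\cap F = (F \cap G) \cup \{x\}$ since both $F \cap G \subseteq F$ and $x \in F$. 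This gives nothing new, so instead I argue monotonically: $(F \cap G) \cup \{x\} \subseteq F$ and this containment is proper only insofar as ranks go, so $\rank(F) \ge \rank\bigl((F \cap G) \cup \{x\}\bigr) = \rank(F \cap G) + 1 > \rank(F \cap G)$, which is the desired strict inequality.

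\textbf{Main obstacle.} There is essentially no deep obstacle here: the statement is a standard textbook fact about matroids, and the only mild subtlety is making sure that in Step 2 one correctly extracts an element witnessing the strict rank drop — namely any $x \in F \setminus G$ — and confirms it genuinely increases the rank of $F \cap G$ using the flat property of $G$. One should be careful to phrase the argument using whichever equivalent definition of ``flat'' the paper has adopted (closure-based, as in the Definition recalled above), rather than invoking rank-based characterizations that would require an extra lemma. I would present Steps 1 and 2 as two short paragraphs, citing \cite{Oxley} for submodularity if needed, and conclude.
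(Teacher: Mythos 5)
The paper does not actually supply a proof of this lemma: it is recalled as a classical property of matroids with a citation to Oxley, so there is no argument in the text to compare against. Your proof is correct and self-contained. Step~1 is the standard argument: monotonicity of closure together with $F=\overline{F}$ and $G=\overline{G}$ gives $\overline{F\cap G}\subseteq \overline{F}\cap\overline{G}=F\cap G$, so $F\cap G$ is a flat. Step~2 is also correct: picking $x\in F\setminus G$, monotonicity of closure gives $\overline{F\cap G}\subseteq\overline{G}=G$, so $x\notin\overline{F\cap G}$, hence $\rank\bigl((F\cap G)\cup\{x\}\bigr)=\rank(F\cap G)+1$; since $(F\cap G)\cup\{x\}\subseteq F$, rank-monotonicity yields $\rank(F)\ge\rank(F\cap G)+1$.

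One stylistic point worth cleaning up: the announced use of submodularity in Step~2 is a false start, as you yourself note midstream---applying it with $A=(F\cap G)\cup\{x\}$ and $B=F$ produces a tautology because $A\subseteq B$. The argument you actually settle on uses only monotonicity of the rank function plus the unit-increment property of rank under adjoining a single element, so the submodularity preamble (and the claim in the opening paragraph that you will ``freely use submodularity'') should simply be removed. With that pruning, the proof is clean and matches the level of generality of the statement.
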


We now introduce a few additional notions.

\begin{definition}
Let $\mathcal{F}$ be a collection of subsets of $[n]$. A subset $F \subset [n]$ is said to be an \emph{$r$-intersection} 
if there exist elements $F_{1}, \ldots, F_{r} \in \mathcal{F}$ such that:
\[\bigcap_{i=1}^{j} F_{i} \not\subseteq F_{j+1}\ \text{for every }j \leq r-1;\quad \text{and}\quad
F = \bigcap_{i=1}^{r} F_{i}.
\]
For each $r \geq 1$, we denote by $\mathcal{F}_{r} \subset 2^{[n]}$ the collection
$\mathcal{F}_{r} = \{ F \subset [n] : \text{$F$ is an $r$-intersection} \}.$
\end{definition}
When $\mathcal{F}$ is the collection of flats of a matroid of fixed rank, we obtain the following consequence.

\begin{lemma}\label{flats 2}
Let $M$ be a matroid on $[n]$ and let $m\leq n$ be a fixed positive integer. Let $\mathcal{F}$ denote the collection of flats of rank $m$ of $M$. Then, for any $F \in \mathcal{F}_{r}$, we have
$\rank(F) \leq m - r + 1.$
\end{lemma}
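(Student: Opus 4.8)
The plan is to prove Lemma~\ref{flats 2} by induction on $r$, using Lemma~\ref{flats} as the engine at each step. The statement to establish is that if $F$ is an $r$-intersection of rank-$m$ flats of $M$, then $\rank(F) \le m - r + 1$.

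\textbf{Base case.} For $r = 1$, an $r$-intersection is simply a single flat $F = F_1 \in \mathcal{F}$, which has rank $m = m - 1 + 1$, so the bound holds with equality.

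\textbf{Inductive step.} Suppose the claim holds for $r - 1$, and let $F = \bigcap_{i=1}^r F_i$ be an $r$-intersection witnessed by $F_1, \dots, F_r \in \mathcal{F}$, so that $\bigcap_{i=1}^j F_i \not\subseteq F_{j+1}$ for each $j \le r-1$. Set $G = \bigcap_{i=1}^{r-1} F_i$. The witnessing conditions for $F$ restricted to the first $r-1$ sets show that $G$ is an $(r-1)$-intersection, so by the inductive hypothesis $G$ is a flat (each $r$-intersection is in particular a flat, by repeated application of Lemma~\ref{flats}) with $\rank(G) \le m - (r-1) + 1 = m - r + 2$. Now $F = G \cap F_r$, and the condition at level $j = r-1$ gives precisely $G = \bigcap_{i=1}^{r-1} F_i \not\subseteq F_r$. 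Since $G$ and $F_r$ are both flats with $G \not\subseteq F_r$, Lemma~\ref{flats} applies: $F = G \cap F_r$ is a flat and $\rank(F) < \rank(G) \le m - r + 2$. As ranks are integers, $\rank(F) \le m - r + 1$, completing the induction.

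\textbf{Anticipated obstacle.} There is no serious obstacle here; the lemma is essentially a bookkeeping argument once Lemma~\ref{flats} is in hand. The one point requiring mild care is confirming that the definition of an $r$-intersection is correctly ``nested'': one must check that the prefixes $\bigcap_{i=1}^j F_i$ are themselves flats so that Lemma~\ref{flats} can be applied iteratively, and that the chain of strict inclusions $\bigcap_{i=1}^j F_i \not\subseteq F_{j+1}$ is exactly what is needed to force a strict rank drop at each stage. Both follow by the same induction, so it is cleanest to prove simultaneously (or as an immediate byproduct) that every $r$-intersection is a flat and that the partial intersections $\bigcap_{i=1}^j F_i$ form a strictly decreasing chain of flats in rank.
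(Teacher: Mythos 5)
Your proof is correct and is simply a careful unpacking of the paper's one-line argument, which says the claim ``follows by successive application of Lemma~\ref{flats}''; the induction on $r$, with the strict rank drop at each stage and the simultaneous observation that each partial intersection remains a flat, is exactly what that phrase means.
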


\begin{proof}
The claim follows by successive application of Lemma~\ref{flats}.
\end{proof}

We use the previous lemma to construct polynomials in $\CC[Y]$ that vanish on $F_S$.

\begin{definition}
Let $S \subset [k\ell]$ be admissible, and let $\mathcal{F}^{S}=\{S_{1},\ldots,S_{k}\}$ be the collection of subsets of $[\ell]$ as given in Definition~\ref{matr aso}.  
For $r \ge 1$, denote by $\mathcal{F}_{r}^{S}$ the collection of $r$-intersections of $\mathcal{F}^{S}$.
\end{definition}

\begin{example}\label{example: type 3}
Let $k=3$, $\ell=8$, $d=t=4$, and $S=\{1,4,8,11,15,18\}$. In this case,
\[
\mathcal{F}^{S} = \{S_{1}, S_{2}, S_{3}\}
= \big\{\{3,4,5,6,7,8\},\, \{1,2,5,6,7,8\},\, \{1,2,3,4,7,8\}\big\}.
\]
Since $S_{1} \cap S_{2} \cap S_{3} = \{7,8\}$ and $S_{1} \cap S_{2} \not\subseteq S_{3}$, $\{7,8\}$ is a 3-intersection, or equivalently $\{7,8\} \in \mathcal{F}_{3}^{S}$.
\end{example}

Observe that, by the definition of $F_{S}$, for any $\gamma = (\gamma_{j} : j \in [k\ell]) \in F_{S}$,
the collection $\mathcal{F}^{S} = \{S_{1}, \ldots, S_{\ell}\}$ is contained in the family of flats of rank $t-1$ of the matroid associated with $\gamma$.  
This observation motivates the following construction of polynomials in $I(F_{S})$.

\begin{proposition}\label{prop:polynomials for FS}
Let $Y_{A}$ denotes the submatrix of $Y$ with columns indexed by $A$, and $I_{t - r + 1}(Y_{A})$ denotes the set of all $(t - r + 1)$-minors of this submatrix. Then, for each $r \in [t]$, we have
\[
\{I_{t - r + 1}(Y_{A}) : A \in \mathcal{F}^{S}_{r}\} \subset I(F_{S}),
\]
\end{proposition}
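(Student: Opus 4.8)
The plan is to show that each polynomial in $I_{t-r+1}(Y_A)$, for $A \in \mathcal{F}_r^S$, vanishes identically on $F_S$. Since $I(F_S)$ is the ideal of polynomials vanishing on all points of $F_S$ (equivalently, on $\overline{F_S}$), it suffices to fix an arbitrary point $\gamma = (\gamma_i : i \in [\ell]) \in F_S$ and verify that every $(t-r+1)$-minor of the submatrix of $\gamma$ with columns indexed by $A$ is zero. Recalling the convention that $\gamma$ here is a tuple in $(\CC^d)^\ell$ and $Y_A$ selects the columns indexed by $A$, a $(t-r+1)$-minor of $Y_A$ evaluated at $\gamma$ vanishes precisely when $\rank\{\gamma_a : a \in A\} \le t - r$. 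So the claim reduces to the single rank inequality $\rank_{\mathcal{N}_\gamma}(A) \le t - r$ for every $A \in \mathcal{F}_r^S$.

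The core of the argument is then an application of Lemma~\ref{flats 2}. First I would observe, as noted in the paragraph preceding the statement, that for $\gamma \in F_S$ the sets $S_1, \dots, S_k$ are all flats of rank exactly $t-1$ in the matroid $\mathcal{N}_\gamma$; this is exactly Definition~\ref{fs}. Thus the collection $\mathcal{F}^S = \{S_1, \dots, S_k\}$ is a subcollection of the flats of rank $t-1$ of $\mathcal{N}_\gamma$. Now take $A \in \mathcal{F}_r^S$; by the definition of an $r$-intersection, $A = \bigcap_{i=1}^r S_{j_i}$ for some choice of indices with $\bigcap_{i=1}^{\ell} S_{j_i} \not\subseteq S_{j_{\ell+1}}$ for each $\ell \le r-1$. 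This is precisely the hypothesis under which Lemma~\ref{flats 2} applies, taking $M = \mathcal{N}_\gamma$, $m = t-1$, and $\mathcal{F}$ the full set of rank-$(t-1)$ flats (which contains $\mathcal{F}^S$, hence contains all the $S_{j_i}$ and every $r$-intersection built from them, since $\mathcal{F}^S_r \subseteq \mathcal{F}_r$). Lemma~\ref{flats 2} then yields $\rank_{\mathcal{N}_\gamma}(A) \le (t-1) - r + 1 = t - r$, which is exactly what we need.

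Having established $\rank\{\gamma_a : a \in A\} \le t-r$, every $(t-r+1)$-minor of the matrix whose columns are $\{\gamma_a : a \in A\}$ vanishes, since the rank of a matrix is the size of its largest nonvanishing minor. Hence every generator in $I_{t-r+1}(Y_A)$ lies in $I(F_S)$, and taking the union over all $A \in \mathcal{F}_r^S$ gives the claimed containment. One small bookkeeping point to state carefully: I should confirm that $\mathcal{F}^S_r$, the collection of $r$-intersections of $\mathcal{F}^S$, is contained in $\mathcal{F}_r$, the collection of $r$-intersections of the full flat-set of rank $t-1$ — this is immediate because every element of $\mathcal{F}^S$ is such a flat, so any chain of intersections witnessing membership in $\mathcal{F}^S_r$ also witnesses membership in $\mathcal{F}_r$.

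I do not anticipate a serious obstacle here; the statement is essentially a direct translation of Lemma~\ref{flats 2} into the language of minors via the representability of $\mathcal{N}_\gamma$. The only point requiring mild care is the compatibility of the two notions of $r$-intersection (for $\mathcal{F}^S$ versus for the full flat lattice), and the observation that the rank bound must hold simultaneously for \emph{every} $\gamma \in F_S$ with the \emph{same} bound $t-r$ — which it does, since the bound depends only on $r$ and $t$, not on $\gamma$. This uniformity is what allows us to conclude membership in $I(F_S)$ rather than merely pointwise vanishing at special configurations.
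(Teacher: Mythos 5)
Your proof is correct and follows the same approach as the paper's: reduce to a rank bound at each $\gamma\in F_S$, note that $\mathcal{F}^S$ consists of rank-$(t-1)$ flats of $\mathcal{N}_\gamma$ by Definition~\ref{fs}, apply Lemma~\ref{flats 2} to get $\rank(A)\le t-r$, and conclude that all $(t-r+1)$-minors of $Y_A$ vanish. The extra bookkeeping you supply — that $\mathcal{F}^S_r$ sits inside the set of $r$-intersections of the full collection of rank-$(t-1)$ flats, so Lemma~\ref{flats 2} applies as stated — is a small but genuine clarification that the paper leaves implicit.
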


\begin{proof}
It suffices to show that $F_{S} \subset V(I_{t - r + 1}(Y_{A}))$ for each $A \in \mathcal{F}^{S}_{r}$.
Let $\gamma = (\gamma_{j} : j \in [k\ell]) \in F_{S}$, and let $\mathcal{N}_{\gamma}$ be the matroid on $[\ell]$ determined by the vectors of $\gamma$.  
By Definition~\ref{fs}, each element of $\mathcal{F}^{S}$ is a flat of rank $t-1$ in $\mathcal{N}_{\gamma}$.  
Applying Lemma~\ref{flats 2}, we obtain
$\rank\{\gamma_{a} : a \in A\} \le t - r$,
for every $A \in \mathcal{F}^{S}_{r}$.  
Hence, all $(t - r + 1)$-minors of $Y_{A}$ vanish at $\gamma$, proving the claim.
\end{proof}

\begin{example}
Let $k, \ell, d, t$, and $S$ be as in Example~\ref{example: type 3}.  
Since $\{7,8\} \in \mathcal{F}_{3}^{S}$, Proposition~\ref{prop:polynomials for FS} implies  
$I_{2}(Y_{7,8}) \subset I(F_{S})$,
that is, all $2$-minors of the submatrix of $Y$ formed by columns $7$ and $8$ vanish on $F_{S}$.
\end{example}

We can also derive a necessary condition to ensure that $F_S$ is non-empty.

\begin{lemma}
If $\mathcal{F}_{t}^{S}\neq \emptyset$, then $F_{S}=\emptyset$.
\end{lemma}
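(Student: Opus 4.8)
The plan is to show that if there exists an $r$-intersection of the family $\mathcal{F}^S = \{S_1,\dots,S_k\}$ with $r = t$, then $F_S$ must be empty. First I would take any hypothetical $\gamma = (\gamma_i : i \in [\ell]) \in F_S$ and consider the associated matroid $\mathcal{N}_\gamma$ on $[\ell]$. By Definition~\ref{fs}, each $S_i$ is a flat of rank $t-1$ in $\mathcal{N}_\gamma$, so $\mathcal{F}^S$ is contained in the collection $\mathcal{F}$ of rank-$(t-1)$ flats of $\mathcal{N}_\gamma$. Let $A \in \mathcal{F}_t^S$; by definition $A$ is obtained as $A = \bigcap_{i=1}^t F_i$ with each $F_i \in \mathcal{F}^S \subseteq \mathcal{F}$ and $\bigcap_{i=1}^j F_i \not\subseteq F_{j+1}$ for every $j \le t-1$, so $A$ is also a $t$-intersection of the rank-$(t-1)$ flats of $\mathcal{N}_\gamma$.

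Next I would apply Lemma~\ref{flats 2} with $m = t-1$ and $r = t$ to conclude that $\rank(A) \le (t-1) - t + 1 = 0$ in $\mathcal{N}_\gamma$. This means every element $a \in A$ is a loop of $\mathcal{N}_\gamma$, i.e.\ $\gamma_a = 0$. But by Definition~\ref{fs}, every element of $F_S$ has all coordinates nonzero, so $\gamma_a \neq 0$ for all $a \in [\ell]$. Since $A$ is nonempty (it is an intersection defining an element of $\mathcal{F}_t^S$, and the chain condition $\bigcap_{i=1}^{t-1} F_i \not\subseteq F_t$ forces $\bigcap_{i=1}^{t-1} F_i$, hence $A$ could still be empty — so I need to be slightly careful here: if $A = \emptyset$ it is vacuously a flat of rank $0$ and imposes no contradiction). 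The cleaner route is: the hypothesis $\mathcal{F}_t^S \neq \emptyset$ just asserts the existence of flats $F_1,\dots,F_t \in \mathcal{F}^S$ with $\bigcap_{i=1}^j F_i \not\subseteq F_{j+1}$ for all $j \le t-1$; repeatedly invoking Lemma~\ref{flats} shows $\rank(\bigcap_{i=1}^{t-1} F_i) \le 0$ in $\mathcal{N}_\gamma$ already, and since $\bigcap_{i=1}^{t-1} F_i \not\subseteq F_t$ this intersection is nonempty, giving a loop and hence a zero coordinate — the desired contradiction.

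Thus no such $\gamma$ can exist and $F_S = \emptyset$. The only genuine subtlety, and the step I would be most careful about, is the nonemptiness bookkeeping: one must extract a nonempty set of rank $0$ from the chain of strict non-containments, rather than landing on $A = \emptyset$. The non-containment conditions $\bigcap_{i=1}^j F_i \not\subseteq F_{j+1}$ guarantee that each partial intersection down to the $(t-1)$-st is nonempty, and that is exactly where the contradiction with ``no loops in $F_S$'' bites. Everything else is a direct application of Lemmas~\ref{flats} and~\ref{flats 2} together with Definition~\ref{fs}.
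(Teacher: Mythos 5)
Your initial argument mirrors the paper's: take a hypothetical $\gamma \in F_S$, use that each $S_i$ is a rank-$(t-1)$ flat of $\mathcal{N}_\gamma$, apply Lemma~\ref{flats 2} with $m = t-1$ and $r = t$ to get $\rank(A) \le 0$ for $A \in \mathcal{F}_t^S$, conclude that every element of $A$ is a loop, and contradict the looplessness of $\mathcal{N}_\gamma$. You are right to worry that this needs $A \neq \emptyset$; the paper's proof simply asserts that ``there is a nonempty subset $A \in \mathcal{F}_t^S$'' without deriving nonemptiness from the hypothesis, so your concern is well-placed.

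The fix you propose, however, contains an arithmetic error and does not close the gap. From $t-1$ rank-$(t-1)$ flats $F_1, \ldots, F_{t-1}$ satisfying the chain condition, Lemma~\ref{flats 2} with $r = t-1$ (equivalently, $t-2$ iterated applications of Lemma~\ref{flats}) gives $\rank\bigl(\bigcap_{i=1}^{t-1} F_i\bigr) \le (t-1) - (t-1) + 1 = 1$, not $\le 0$ as you assert. A nonempty flat of rank $\le 1$ in a loopless matroid is entirely unproblematic — it is just a parallel class — so no loop and no zero coordinate are produced. To force rank $\le 0$ you need the full $t$-fold intersection $A = \bigcap_{i=1}^{t} F_i$, at which point you are back to exactly the situation you tried to avoid: in a loopless matroid the only rank-$0$ flat is $\emptyset$, so whenever $\rank\bigl(\bigcap_{i=1}^{t-1} F_i\bigr) = 1$ the set $A$ is necessarily empty. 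Your detour through the $(t-1)$-fold intersection therefore does not deliver a contradiction; the argument genuinely needs a nonempty $t$-intersection, which is precisely the unjustified step in the paper that you flagged in the first place.
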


\begin{proof}
Assume, for contradiction, that there exists $\gamma=(\gamma_{1},\ldots,\gamma_{\ell})\in F_{S}$.  
By definition, for each $i\in [k]$, the set $S_{i}$ is a flat of rank $t-1$ in the matroid $\mathcal{N}_{\gamma}$.  
Since $\mathcal{F}_{t}^{S}\neq \emptyset$, there is a nonempty subset $A\subset [n]$ such that $A\in \mathcal{F}_{t}^{S}$.  
Applying Lemma~\ref{flats 2}, we obtain $\operatorname{rank}(A)\le 0$.  
Hence every element of $A$ is a loop in $\mathcal{N}_{\gamma}$, which implies $\gamma_{a}=0$ for all $a\in A$, contradicting the assumption that the vectors of $\gamma$ are~nonzero.
\end{proof}

\section{Irreducible decomposition of \texorpdfstring{$V_{\Delta}$}{VDelta} for \texorpdfstring{$k=2$}{k=2}}\label{sec:VDelta-k=2}

In this section, we determine the irreducible components of $V_{\Delta}$ under the assumption $k=2$. We note that, for $k=2$, only the special case $d = t$ was previously resolved in~\cite{alexandr2025decomposing}; hence, the results presented here constitute a substantial generalization. Our first step is to establish the irreducibility of the varieties~$F_{S}^{j}$.

\subsection{Decomposing the varieties \texorpdfstring{$F_S$}{FS} for \texorpdfstring{$k=2$}{k=2}}\label{decompose_F_S}

In this subsection, we focus on the case $k=2$ and provide a decomposition of each $F_{S}$ for admissible subsets $S$ into a union of quasi-affine subvarieties $F_{S}^{i}$, defined as follows.

\begin{definition}
Define the subsets of $[\ell]$
\[
A \coloneqq \{ j \in [\ell] : \mathcal{Y}_{1,j} \notin S \}
\quad \text{and} \quad
B \coloneqq \{ j \in [\ell] : \mathcal{Y}_{2,j} \notin S \},
\]
where $\mathcal{Y}$ denotes the matrix from~\eqref{matrix}. Moreover, we set 
\[u \coloneqq  \ell - |A|=\size{S\cap R_{1}} \quad  \text{and} \quad v \coloneqq  \ell - |B|=\size{S\cap R_{2}},\]
and we say that $S$ is of {\em type} $(u,v)$.
For each $j$, we define the quasi-affine variety
\[
F_{S}^{j} \coloneqq 
\bigl\{ \gamma \in F_{S} : 
\dim\bigl( \Span \gamma_{A} \cap \Span \gamma_{B} \bigr) = j \bigr\}, \quad \text{where $\gamma_{A}=\{\gamma_{i}:i\in A\}$}.\]

\end{definition}

\begin{example}\label{example for k=2}
Let $\,\ell = 10$, $t = 5$, $d = 6$, and $S = \{1,3,5,16,18,20\}$.
In this case,
\[
A = \{4,5,6,7,8,9,10\}, 
\qquad 
B = \{1,2,3,4,5,6,7\},
\qquad 
u = v = 3.
\]

For instance, the quasi-affine variety $F_{S}^{2}$ consists of all tuples of vectors $\gamma = (\gamma_i : i \in [10])$
such that both $A$ and $B$ are flats of rank $4$ in $N_\gamma$, and
\[
\dim\!\left( 
\Span\{\gamma_i : 4 \leq i \leq 10\} 
\;\cap\; 
\Span\{\gamma_i : 1 \leq i \leq 7\}
\right) = 2.
\]
\end{example}

We then obtain a decomposition of $F_{S}$ as a union of the quasi-affine varieties $F_{S}^{i}$, which, a priori, need not be irreducible nor irredundant. 

\begin{theorem}\label{thm:decomposition}
     For any admissible subset $\emptyset \neq S\subseteq [2 \ell]$, we have 
        $F_S = \cup_{j \in \mathcal{P}(S)} F_{S}^j,$
    where 
    \begin{align*}
        &\mathcal{P}(S) = [x(S),t-2],\\
        & x(S) = \begin{cases}
            \max\{1, t-1-|A\setminus B|, t-1-|B\setminus A|, 2t-2-d\} & \text{ if } A \cap B \neq\emptyset, \\
            \max \{0, 2t-2-d\} & \text{ if }A \cap B = \emptyset.
        \end{cases}
    \end{align*}
\end{theorem}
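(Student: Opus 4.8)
The plan is to establish the claimed equality $F_S = \bigcup_{j \in \mathcal{P}(S)} F_S^j$ by showing two things: first, that the range $\mathcal{P}(S) = [x(S), t-2]$ captures exactly the possible values of $j = \dim(\Span\gamma_A \cap \Span\gamma_B)$ for $\gamma \in F_S$, and second, that every such $j$ is actually attained (though for the stated decomposition as a union, only the inclusion $F_S \subseteq \bigcup_{j \in \mathcal{P}(S)} F_S^j$ strictly needs the upper and lower bounds on $j$; the reverse inclusion is automatic since each $F_S^j \subseteq F_S$ by definition). So the core of the proof is: for every $\gamma \in F_S$, the intersection dimension $j$ lies in $[x(S), t-2]$.

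First I would record the basic dimension facts. Since $S$ is admissible, each $S_i$ is required to be a flat of rank $t-1$ in $\mathcal{N}_\gamma$; in the $k=2$ notation these flats are exactly $A = S_1$ and $B = S_2$. So $\dim \Span\gamma_A = \dim \Span\gamma_B = t-1$. The upper bound $j \leq t-2$ then follows from Lemma~\ref{flats}: since $A$ and $B$ are distinct flats (if $A = B$ then, being the only two flats among $S_1, S_2$ of rank $t-1$ and equal, one checks $S$ would fail admissibility or the set $A\cap B$ argument collapses — more carefully, $A\ne B$ because $S\ne\emptyset$ forces $u\ge 1$ or $v\ge 1$, and if $A=B=[\ell]$ then $S=\emptyset$), neither contains the other, so $A \cap B$ is a flat of rank strictly less than $t-1$, i.e. at most $t-2$. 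Since $\Span\gamma_A \cap \Span\gamma_B \supseteq \Span\gamma_{A\cap B}$ is not quite enough — actually $j = \dim(\Span\gamma_A \cap \Span\gamma_B)$ could exceed $\rank(A\cap B)$ in general, so I need the sharper observation that $\Span\gamma_A \cap \Span\gamma_B$, being an intersection of two flats' spans in the representation, corresponds to the flat $A\cap B$ when the representation is "generic enough" — but since we only want an \emph{upper} bound $j \le t-2$, it suffices that two distinct $(t-1)$-dimensional subspaces of $\CC^d$ intersect in dimension at most $t-2$, which is immediate as long as $\Span\gamma_A \ne \Span\gamma_B$. And $\Span\gamma_A = \Span\gamma_B$ would force $A$ and $B$ to have the same closure, hence (both being flats of the same rank) $A = B$, excluded above. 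This gives $j \le t-2$.

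For the lower bound $j \geq x(S)$ I would use the dimension formula $\dim(\Span\gamma_A \cap \Span\gamma_B) = \dim\Span\gamma_A + \dim\Span\gamma_B - \dim\Span\gamma_{A\cup B} = 2(t-1) - \dim\Span(\gamma_{A\cup B})$. Since all vectors live in $\CC^d$, $\dim\Span\gamma_{A\cup B} \le d$, giving $j \ge 2(t-1) - d = 2t-2-d$. Next, $\Span\gamma_{A\cup B} = \Span\gamma_A + \Span\gamma_{B\setminus A}$, and $\dim\Span\gamma_{B\setminus A} \le |B\setminus A|$ as well as $\le t-1$; combining, $\dim\Span\gamma_{A\cup B} \le (t-1) + \min(|B\setminus A|, t-1)$, hence $j \ge (t-1) - \min(|B\setminus A|, t-1) = \max(0, t-1-|B\setminus A|)$, and symmetrically with $A\setminus B$. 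Finally, when $A \cap B \ne \emptyset$, pick any $a \in A\cap B$; then $0 \ne \gamma_a \in \Span\gamma_A \cap \Span\gamma_B$, so $j \ge 1$. Assembling these four lower bounds — $j \ge 2t-2-d$ always, $j \ge \max(0, t-1-|A\setminus B|, t-1-|B\setminus A|)$ always, and $j \ge 1$ when $A\cap B \ne\emptyset$ — reproduces exactly the two-case definition of $x(S)$. This completes $F_S \subseteq \bigcup_{j\in\mathcal{P}(S)} F_S^j$, and since each $F_S^j \subseteq F_S$ trivially, the decomposition follows; one should also note that $F_S^j$ is quasi-affine since the condition $\dim(\Span\gamma_A\cap\Span\gamma_B) = j$ is the intersection of the closed condition "$\le j$" (vanishing of appropriate minors of a concatenated matrix) with the open condition "$\ge j$".

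\textbf{Main obstacle.} The one point requiring genuine care, rather than bookkeeping, is the interplay between the \emph{matroid} intersection $A \cap B$ and the \emph{linear} intersection $\Span\gamma_A \cap \Span\gamma_B$: these need not coincide, so I must be disciplined about using only inequalities that hold for \emph{all} $\gamma \in F_S$ (the modular/submodular inequality for subspace dimensions, and the trivial bound by ambient dimension $d$), rather than accidentally invoking properties that hold only for generic representations. Relatedly, handling the degenerate possibilities $A = B$ or $A\cap B = \emptyset$ correctly — and confirming that when $S\ne\emptyset$ the flats $A,B$ are genuinely distinct so that $j\le t-2$ is strict — needs a short but non-optional argument via admissibility and the fact that $[\ell]$ is the unique flat of rank $t-1$ only when... (here one uses $\ell \ge t$ and that $|A| = \ell - u$, $|B| = \ell - v$ with $u+v \ge 1$). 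The subsequent irreducibility of each $F_S^j$, which the surrounding text flags as the real work, is deferred to later results and is not part of this statement.
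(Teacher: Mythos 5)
Your proof is correct and essentially the same as the paper's: both derive the lower bounds on $j=\dim(\Span\gamma_A\cap\Span\gamma_B)$ from the ambient dimension $d$, the set sizes $|A\setminus B|$, $|B\setminus A|$, and nonemptiness of $A\cap B$, and both obtain $j\le t-2$ by noting that $j=t-1$ would force $\Span\gamma_A=\Span\gamma_B$, hence $A=B$ and $S=\emptyset$; your derivation of $j\ge t-1-|B\setminus A|$ via $\dim\Span\gamma_{A\cup B}$ and modularity is a cosmetic variant of the paper's route via $\rank(A\cap B)$ and submodularity. One small point to tidy: the parenthetical ``if $A=B=[\ell]$ then $S=\emptyset$'' is not the right reduction, since $A=B$ does not by itself force $A=B=[\ell]$; what forces $S=\emptyset$ is $A=B$ \emph{together with admissibility} (no column $C_j\subseteq S$), as you in fact indicate in passing.
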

\begin{proof}
    First note that for all $\gamma \in F_S$, we have
    \begin{align*}
        d \geq \rank(\gamma) &= \Dim(\Span \gamma_A) + \Dim(\Span \gamma_B) - \Dim(\Span \gamma_A \cap \Span \gamma_B)  \\
        &= \rank(A) + \rank(B) - \Dim(\Span \gamma_A \cap \Span \gamma_B) = 2t-2 - \Dim(\Span\gamma_A \cap \Span \gamma_B),
    \end{align*}
    and therefore, $\Dim(\Span \gamma_A \cap \Span \gamma_B) \geq 2t-2-d$. Moreover, if $A \cap B \neq \emptyset$, we have 
    \begin{align*}
        t-1=\rank(A)  = \rank((A\setminus B) \cup (A\cap B)) \leq \rank(A\setminus B) + \rank(A\cap B) \leq |A\setminus B| + \rank(A\cap B),
    \end{align*}
    and therefore,
    \begin{align*}
        \Dim(\Span \gamma_A \cap \Span \gamma_B) \geq \rank(\gamma_{A\cap B}) = \rank(A\cap B) \geq t-1-|A\setminus B|.
    \end{align*}
    Similarly, $ \Dim(\Span \gamma_A \cap \Span \gamma_B
    )\geq t-1 - |B\setminus A|$. Hence, for all $\gamma \in F_S$, we have $\Dim(\Span \gamma_A \cap \Span \gamma_B) \geq x(S)$. On the other hand, 
    \begin{align*}
        \Dim(\Span \gamma_A \cap \Span \gamma_B) \leq \Dim(\Span\gamma_A) -1 = t-2,
    \end{align*}
    since otherwise, $\Span\gamma_A = \Span\gamma_B$. But since $A$ and $B$ are both flats, this means $A = B$ or equivalently, $S=\emptyset$. Thus, for all $\gamma\in F_S$, $\Dim(\Span \gamma_A \cap \Span \gamma_B) \in \mathcal{P}(S)$. 
\end{proof}

\begin{example}
Let $\ell, t, d$, and $S$ be as in Example~\ref{example for k=2}.  
In this setting, we have $x(S) = 2$.  
Therefore, by Theorem~\ref{thm:decomposition} we obtain the decomposition
$F_{S} \;=\; F_{S}^{2} \,\cup\, F_{S}^{3}$.
\end{example}

\subsection{Ideals of the varieties \texorpdfstring{$F_{S}^j$}{Fsj}}\label{sec:Ideasl:F_S}

In this subsection, we describe a complete set of defining equations for the varieties $F_{S}^{j}$. 
These equations are given by minors of the $d \times \ell$ matrix of indeterminates $Y=(y_{i,j})$.

\begin{theorem}\label{thm:ideals}
For any admissible subsets $\emptyset \neq S \subseteq [2\ell]$ and any $j \in \mathcal{P}(S)$, we have $\overline{F_{S}^j} = V(I_{S,j})$, where
\[
I_{S,j} \coloneqq I_{t}(Y_{A}) + I_{t}(Y_{B}) + I_{j+1}(Y_{A \cap B}) + I_{2t - j - 1}(Y).
\]
\end{theorem}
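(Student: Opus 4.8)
The inclusion $\overline{F_S^j}\subseteq V(I_{S,j})$ is the easier direction. For $\gamma\in F_S^j$, the sets $A$ and $B$ are flats of rank $t-1$ in $\mathcal N_\gamma$, so $\operatorname{rk}\gamma_A=\operatorname{rk}\gamma_B=t-1$, which forces all $t$-minors of $Y_A$ and $Y_B$ to vanish at $\gamma$; hence $I_t(Y_A)+I_t(Y_B)\subseteq I(F_S^j)$. The condition $\dim(\Span\gamma_A\cap\Span\gamma_B)=j$ gives $\operatorname{rk}\gamma_{A\cap B}\le j$ (since $\Span\gamma_{A\cap B}\subseteq\Span\gamma_A\cap\Span\gamma_B$), yielding $I_{j+1}(Y_{A\cap B})\subseteq I(F_S^j)$, and $\operatorname{rk}\gamma=\operatorname{rk}\gamma_A+\operatorname{rk}\gamma_B-\dim(\Span\gamma_A\cap\Span\gamma_B)=2t-2-j$, giving $I_{2t-j-1}(Y)\subseteq I(F_S^j)$. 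Since $V(I_{S,j})$ is closed, $\overline{F_S^j}\subseteq V(I_{S,j})$.

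For the reverse inclusion $V(I_{S,j})\subseteq\overline{F_S^j}$, I would take $\gamma=(\gamma_1,\dots,\gamma_\ell)\in V(I_{S,j})$ and show it lies in the Euclidean (equivalently Zariski, since $F_S^j$ is quasi-affine) closure of $F_S^j$ by an explicit perturbation argument. The vanishing of $I_t(Y_A)$, $I_t(Y_B)$, $I_{j+1}(Y_{A\cap B})$, $I_{2t-j-1}(Y)$ tells us $\operatorname{rk}\gamma_A\le t-1$, $\operatorname{rk}\gamma_B\le t-1$, $\operatorname{rk}\gamma_{A\cap B}\le j$, $\operatorname{rk}\gamma\le 2t-2-j$. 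The plan is to perturb $\gamma$ so that these become equalities and, in addition, $A$ and $B$ become \emph{flats} of rank $t-1$ (not just rank-$\le t-1$ sets), all $\gamma_i$ become nonzero, and $\dim(\Span\gamma_A\cap\Span\gamma_B)$ becomes exactly $j$. Concretely, I would first choose generic subspaces $W_{A\cap B}\subseteq W_A$, $W_{A\cap B}\subseteq W_B$ of dimensions $j$, $t-1$, $t-1$ respectively with $W_A\cap W_B=W_{A\cap B}$ and $\dim(W_A+W_B)=2t-2-j\le d$, then move each $\gamma_i$ (for $i\in A\cap B$, $i\in A\setminus B$, $i\in B\setminus A$) by an arbitrarily small amount into the appropriate subspace while keeping it in general position there, so that $\Span\gamma_{A\cap B}=W_{A\cap B}$, $\Span\gamma_A=W_A$, $\Span\gamma_B=W_B$. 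One must check this perturbation can be taken arbitrarily small: this is where the inequalities defining $x(S)$ and the upper bound $t-2$ in $\mathcal P(S)$ enter, ensuring that the target ranks $t-1-|A\setminus B|\le j$ etc.\ and $2t-2-j\le d$ are compatible, so that the generic configuration one is perturbing toward actually exists and sits in the closure of the orbit of $\gamma$. Finally, one checks $A$ and $B$ are flats: a set $F$ of rank $t-1$ spanning $W_F$ is a flat iff no $\gamma_i$ with $i\notin F$ lies in $W_F$; since $\gamma_i$ for $i\notin A$ (resp.\ $i\notin B$) can be perturbed into general position outside $W_A$ (resp.\ $W_B$), genericity of the perturbation achieves this. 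One also needs $j\le t-2$ so that $W_A\neq W_B$ and the flats $A,B$ are genuinely distinct, consistent with $S\neq\emptyset$.

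The main obstacle is the perturbation/closure argument for the reverse inclusion: one must verify that the target generic configuration is \emph{reachable by an arbitrarily small perturbation} of every point of $V(I_{S,j})$, i.e.\ that $V(I_{S,j})$ does not contain a component lying outside $\overline{F_S^j}$. The delicate point is that starting from a highly degenerate $\gamma$ (say with many $\gamma_i=0$ or with $\Span\gamma_A\cap\Span\gamma_B$ of dimension strictly less than $j$), one must simultaneously raise several ranks to their target values using a single continuous family $\gamma(\epsilon)$; the standard technique is to perturb the vectors one index at a time along directions chosen inside the fixed generic subspaces $W_A,W_B,W_{A\cap B}$, checking at each step that the partially-perturbed tuple still lies in $V(I_{S,j})$ and that the relevant minors become nonzero in the limit $\epsilon\to 0^+$. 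This is the same style of argument used in Example~\ref{example perturbation} and in the proof of Theorem~\ref{irreducible}, and I expect the bookkeeping of which subspace each block of indices is moved into — governed precisely by the constants in $x(S)$ — to be the technical heart of the proof.
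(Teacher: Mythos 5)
Your easy direction is correct and matches the paper's one-line argument. The gap is in the reverse inclusion, and it is a real one, not just bookkeeping. Your plan is to fix generic target subspaces $W_{A\cap B}\subseteq W_A$, $W_{A\cap B}\subseteq W_B$ of dimensions $j$, $t-1$, $t-1$ with $W_A\cap W_B = W_{A\cap B}$, and then "move each $\gamma_i$ by an arbitrarily small amount into the appropriate subspace." But these two requirements are in tension. If the $W$'s are fully generic (independent of $\gamma$), then moving a given $\gamma_i$ into, say, $W_A$ is not a small move at all — $W_A$ is a fixed $(t-1)$-plane that generically stays far from $\gamma_i$. If instead you pick the $W$'s to contain the current spans (so that the perturbation starts inside them and is genuinely small), you hit a wall for degenerate $\gamma\in V(I_{S,j})$: if $\operatorname{rk}\gamma < 2t-2-j$ while $\operatorname{rk}\gamma_A = \operatorname{rk}\gamma_B = t-1$, then $\dim(\Span\gamma_A\cap\Span\gamma_B) = \operatorname{rk}\gamma_A + \operatorname{rk}\gamma_B - \operatorname{rk}\gamma > j$, and any $W_A\supseteq\Span\gamma_A$, $W_B\supseteq\Span\gamma_B$ force $\dim(W_A\cap W_B) > j$. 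More drastically, $\gamma = 0$ lies in $V(I_{S,j})$ (the ideal is homogeneous) and has to be perturbed into $F_S^j$ too, so the target subspaces certainly cannot be fixed in advance and independent of $\epsilon$.

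The paper resolves this by not phrasing the perturbation in terms of a fixed target configuration at all. It adapts bases $\mathcal{B}_1\cup\mathcal{B}_2\cup\mathcal{B}_3$ to the given $\gamma$, factors $\gamma = M\cdot N$ where $M$ is built from these bases and padded with zero columns (so that $M$ is $d\times(2t-2-j)$ regardless of how degenerate $\gamma$ is), and where $N$ has a block structure whose zero blocks encode the required containments $\Span\gamma_{A\cap B}\subseteq\Span\gamma_A\cap\Span\gamma_B$ and the flat conditions. Then perturbing $M$ to full column rank and certain blocks of $N$ to full row rank is an arbitrarily small move (it is a perturbation of the \emph{factors}, not of the vectors toward a fixed subspace), and the block structure of $N$ guarantees the result lands in $F_S^j$. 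The three cases in the paper's proof handle exactly the degenerate situations (how much of $\mathcal{B}_3$ exists, whether $b - j - |\mathcal{B}_3|$ is negative, etc.) that your "choose generic subspaces" approach cannot reach. So while both proofs are perturbation arguments and you have correctly identified the inequalities defining $x(S)$ and the bound $2t-2-j\le d$ as the constraints that make the perturbation possible, the actual mechanism you propose fails on degenerate points, and the paper's matrix-factorization device is what makes the argument go through.
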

\begin{proof}
First, note that all polynomials in $I_{S,j}$ must vanish on the matrices in $F_S^j$, by definition. Therefore, $\overline{F_S^j}\subseteq V(I_{S,j})$. 

To show the other containment, take $\gamma\in V(I_{S,j})$. For every $\varepsilon>0$, we prove there exists $\gamma'\in B_\varepsilon(\gamma)$ such that $\gamma'\in F_{S}^j$. Since the minors $I_{j+1}(Y_{A\cap B})$ vanish on $\gamma$, we have that $\rank(\gamma_{A\cap B}) = j_0$, where $j_0 \leq \min\{j,|A\cap B|\}$. Therefore, we can pick a basis $\mathcal{B}_2 \subseteq \mathbb{R}^d$ of size $j_0$ for $\Span \gamma_{A\cap B}$ such that $\gamma_{A\cap B} = (\mathcal{B}_2) N_2$ for some coefficient matrix $N_2 \in \mathbb{R}^{j_0 \times |A\cap B|}$. Now we define $a \coloneqq \rank(\gamma_A)$ and extend $\mathcal{B}_2$ to a basis $\mathcal{B}_1 \cup \mathcal{B}_2$ for $\gamma_A$ such that $|\mathcal{B}_1| = a - j_0$ and $\gamma_{A\setminus B} = (\mathcal{B}_1) N_{11} + (\mathcal{B}_2) N_{12}$ for some coefficient matrices $N_{11} \in \mathbb{R}^{(a-j_0) \times |A\setminus B|} $ and $N_{12} \in \mathbb{R}^{j_0 \times |A\setminus B|}$. Finally we extend $\mathcal{B}_1\cup \mathcal{B}_2$ to a basis $\mathcal{B}_1 \cup \mathcal{B}_2 \cup \mathcal{B}_3$ for $\gamma$. Define $b\coloneqq \rank(\gamma_B )$. Then for some $C\subseteq \mathcal{B}_1$ with $|C| = b-j_0-|\mathcal{B}_3|$, we have that $C \cup \mathcal{B}_2 \cup \mathcal{B}_3$ is a basis for $\Span (\gamma_B)$. 
We consider the following cases:

\textbf{1. Assume $\boldsymbol{0\leq b-j-|\mathcal{B}_3|}$.}
Since $j_0\leq j$, we can pick $C'\subseteq C$ such that $|C'| = b - j - |\mathcal{B}_3|$. Then there exist coefficient matrices $N_{31} \in \mathbb{R}^{(j-j_0) \times |B\setminus A|}$, $N_{32} \in \mathbb{R}^{j_0 \times |B\setminus A|}$, $N_{33} \in \mathbb{R}^{|\mathcal{B}_3|\times |B\setminus A|}$ and $N_{34} \in \mathbb{R}^{(b-j-|\mathcal{B}_3|)\times |B\setminus A|}$ such that 
\begin{align*}
    \gamma_{B\setminus A} = \begin{pmatrix} C\setminus C' & \mathcal{B}_2  & \mathcal{B}_3 & C' \end{pmatrix} \begin{pmatrix} N_{31} \\ N_{32} \\ N_{33}  \\ N_{34} \end{pmatrix}.
\end{align*}
Now by the construction above, $\gamma = M\cdot N$ where
\[
M = \left(\begin{array}{c !{\vrule width 1.5pt} ccc !{\vrule width 1.5pt} c !{\vrule width 1.5pt} cc !{\vrule width 1.5pt} c}
   0_{d\times(t-1-a)} & \mathcal{B}_1 \setminus C &  C' & C\setminus C' & \mathcal{B}_2 & \mathcal{B}_3 & C' & 0_{d\times(t-1-b)}
\end{array}\right), \quad N = \left(\begin{array}{c|c|c}
    0 & 0 & 0 
    \\ \noalign{\hrule height 1.5pt}
    N_{11} & 0 & \begin{array}{c}
           0\\ 
         0 \\
         N_{31}
    \end{array} \\
    \noalign{\hrule height 1.5pt}
    N_{12} & N_{2} & N_{32}\\
    \noalign{\hrule height 1.5pt}
    0 & 0 & \begin{array}{c} N_{33} \\ N_{34} \end{array} \\\noalign{\hrule height 1.5pt}
    0 & 0 & 0\\
\end{array}\right).
\]

Note that since $a = \rank(\gamma_A)\leq t-1$ and $b = \rank(\gamma_B)\leq t-1$, the matrix $M$ is well-defined. Define $\ell_0 \coloneqq \min\{j, |A\cap B|\}$. Now we can move up the second horizontal line in matrix $N$ so that the number of rows in the third block becomes equal to $\ell_0$, and then remove the first and fourth horizontal lines. Therefore, without loss of generality, $\gamma = M \cdot N$ such that 
\begin{align*}
    M \in \mathbb{R}^{d\times (2t-2-j)}, \ N = \left(\begin{array}{c|c|c}K_{11} & 0 & \begin{array}{c}  0 \\ K_{31} \end{array}\\\hline K_{12}  & K_2 & K_{32}  \\\hline 0 & 0 & K_{33}\end{array}\right) \begin{array}{c}
    \text{ with } K_{11} \in \mathbb{R}^{(t-1-\ell_0) \times |A\setminus B|}, K_{12} \in \mathbb{R}^{\ell_0 \times |A\setminus B|}, \\
    K_2 \in \mathbb{R}^{\ell_0 \times |A\cap B|},
    K_{31} \in \mathbb{R}^{(j - \ell_0) \times |B \setminus A|}, \\K_{32}\in \mathbb{R}^{\ell_0\times |B\setminus A|}, K_{33} \in \mathbb{R}^{(t-1-j )\times |B\setminus A|}.
    \end{array}
\end{align*}

\textbf{2. Assume $\boldsymbol{b-j-|\mathcal{B}_3| < 0\leq t-1-j-|\mathcal{B}_3|}$.} In this case, since $C \cup \mathcal{B}_2 \cup \mathcal{B}_1$ is a basis for $\gamma_{B}$, there exist coefficient matrices $N_{31} \in \mathbb{R}^{(b-j_0-|\mathcal{B}_3
|) \times |B\setminus A|}$, $N_{32} \in \mathbb{R}^{j_0 \times |B\setminus A|}$ and $N_{33} \in \mathbb{R}^{|\mathcal{B}_3| \times B\setminus A}$ such that 
\begin{align} \label{eq: gamma BA}
    \gamma_{B\setminus A} = \begin{pmatrix} C & \mathcal{B}_2 & \mathcal{B}_3 \end{pmatrix} \begin{pmatrix} N_{31} \\ N_{32} \\ N_{33} \end{pmatrix}. 
\end{align}
Therefore, $\gamma = M \cdot N$ such that
\begin{align*}
    M  = \left(\begin{array}{c !{\vrule width 1.5pt}  cc !{\vrule width 1.5pt} c !{\vrule width 1.5pt}  c  !{\vrule width 1.5pt} c}
    0_{d\times (t-1-a)} & \mathcal{B}_1\setminus C & C & \mathcal{B}_2  & \mathcal{B}_3 & 0_{d \times (t-1-j-|\mathcal{B}_3|)}
    \end{array}\right),
    N = \left(\begin{array}{c|c|c}
    0 & 0 & 0\\
    \noalign{\hrule height 1.5pt}
    N_{11} &  0 &  \begin{array}{c} 0 \\ N_{31} \end{array} \\ \noalign{\hrule height 1.5pt}
    N_{12} & N_2 & N_{32} \\ \noalign{\hrule height 1.5pt} 0 & 0 & N_{33} \\ \noalign{\hrule height 1.5pt} 0 & 0 & 0
    \end{array}
    \right).
\end{align*}
The matrix $M$ is well-defined since $a = \rank(\gamma_A)\leq t-1$. Again we move up the second horizontal line in $N$ to make the number of rows in the third block equal to $\ell_0$, and then remove the first and fourth horizontal lines. Then we get that $\gamma = M \cdot N$, where
\begin{align*}
    M \in \mathbb{R}^{d \times (2t-2-j)}, \ 
    N = \left(\begin{array}{c | c | c}  K_{11} & 0 & \begin{array}{c}  0 \\ K_{31} \end{array} \\\hline K_{12} & K_2 & K_{32} \\\hline 0 & 0 & K_{33}\end{array}\right)
    \begin{array}{c}
    \text{with } K_{11} \in \mathbb{R}^{(t-1-\ell_0) \times |A \setminus B|}, K_{12} \in \mathbb{R}^{\ell_0 \times |A\setminus B|}, \\
    K_2 \in \mathbb{R}^{\ell_0 \times |A\cap B|} , K_{31} \in \mathbb{R}^{(j-\ell_0) \times |B\setminus A|}, \\
    K_{32}\in \mathbb{R}^{\ell_0 \times |B\setminus A|}, K_{33}\in \mathbb{R}^{(t-1-j) \times |B\setminus A|}.
    \end{array}
\end{align*}

\textbf{3. Assume $\boldsymbol{ t-1-j-|\mathcal{B}_3|<0}$.} In this case, note that \eqref{eq: gamma BA} still holds. Thus, $\gamma = M \cdot N$ such that 
\begin{align*}
  M  = \left(\begin{array}{c !{\vrule width 1.5pt}  cc !{\vrule width 1.5pt} c !{\vrule width 1.5pt}  c }
    0_{d\times (2t-2-j-a-|\mathcal{B}_3|)} & \mathcal{B}_1\setminus C & C & \mathcal{B}_2  & \mathcal{B}_3
    \end{array}\right),
    N = \left(\begin{array}{c|c|c}
    0 & 0 & 0\\
    \noalign{\hrule height 1.5pt}
    N_{11} &  0 &  \begin{array}{c} 0 \\ N_{31} \end{array} \\ \noalign{\hrule height 1.5pt}
    N_{12} & N_2 & N_{32} \\ \noalign{\hrule height 1.5pt} 0 & 0 & N_{33} 
    \end{array}
    \right).    
\end{align*}
The matrix $M$ is well-defined since $a + |\mathcal{B}_3| = \rank(\gamma) \leq 2t-2-j$. Now we move up the second horizontal line in $N$ so that the third block in $N$ has $\ell_0$ rows. We then remove the first horizontal line. Therefore, we get that $\gamma = M \cdot N$, where
\begin{align*}
     M \in \mathbb{R}^{d\times (2t-2-j)}, \ N = \left(\begin{array}{c|c|c}K_{11} & 0 & \begin{array}{c}  0 \\ K_{31} \end{array}\\\hline K_{12}  & K_2 & K_{32}  \\\hline \begin{array}{c} K_{13} \\ 0  \end{array} & 0 & K_{33}\end{array}\right) 
     \hspace{-2.5mm}\begin{array}{c}
    \text{ with } \\ K_{11} \in \mathbb{R}^{(2t-2-j-\ell_0 - |\mathcal{B}_3|) \times |A\setminus B|}, K_{12} \in \mathbb{R}^{\ell_0 \times |A\setminus B|}, \\
    K_{13} \in \mathbb{R}^{(-t+1+j+|\mathcal{B}_3|)\times |B\setminus A|},
    K_2 \in \mathbb{R}^{\ell_0 \times |A\cap B|},\\
    K_{31} \in \mathbb{R}^{(t-1 - \ell_0 - |\mathcal{B}_3|) \times |B \setminus A|}, K_{32}\in \mathbb{R}^{\ell_0\times |B\setminus A|}, \\ K_{33} \in \mathbb{R}^{|\mathcal{B}_3|\times |B\setminus A|}.
    \end{array}
\end{align*}

 In all cases, perturb the matrices such that $M$ changes to a full column-rank matrix $M'$, and $\begin{pmatrix} K_{11} \\ K_{13} \end{pmatrix}$, $K_2$, and $\begin{pmatrix} K_{31} \\ K_{33}\end{pmatrix}$ change to full row-rank matrices $\begin{pmatrix} K_{11}' \\ K_{13}' \end{pmatrix}$, $K_2'$ and $K_{31}'$ respectively. This is possible since the number of columns of $M$ is larger than or equal to its number of rows, and the numbers of rows of the other matrices are larger than or equal to their numbers of columns. We can show that $\gamma' = M' \cdot N' \in F_S^j$.
 \end{proof}

\begin{example}\label{ex:6*10}
Consider $\ell, t, d$, and $S$ as in Example~\ref{example for k=2}, and let 
$Y = (y_{i,j})$ be the $6 \times 10$ matrix of variables.  
Applying Theorem~\ref{thm:ideals} in this setting yields
\begin{equation}\label{IS2}
I_{S,2}
=
I_{5}\bigl(Y_{4,5,6,7,8,9,10}\bigr)
+
I_{5}\bigl(Y_{1,2,3,4,5,6,7}\bigr)
+
I_{3}\bigl(Y_{4,5,6,7}\bigr),
\end{equation}
and
\begin{equation}\label{IS3}
I_{S,3}
=
I_{5}\bigl(Y_{4,5,6,7,8,9,10}\bigr)
+
I_{5}\bigl(Y_{1,2,3,4,5,6,7}\bigr)
+
I_4\bigl(Y_{4,5,6,7}\bigr) +
I_{6}(Y),
\end{equation}
where $Y_{A}$ denotes the submatrix of $Y$ with columns indexed by $A$. Out of the 210 $6$-minors that generate $I_{6}(Y)$, 79 of them are also minimal generators of $I_{S,3}$, as can be confirmed computationally in \texttt{Macaulay2}.
\end{example}

\subsection{Irreducibility of the varieties \texorpdfstring{$F_{S}^j$}{FSj}}

Here, we establish the irreducibility of the varieties $F_S^j$ . 
The proof proceeds by realizing $F_S^j$ as the image of a polynomial map from an irreducible variety, and showing that its Zariski closure coincides with $\overline{F_S^j}$.

\begin{theorem}\label{thm:irreducible}
 For every admissible subset $  S \subset [2\ell]$ and every $j \in \mathcal{P}(S)$, the 
 variety $F_{S}^{j}$ is irreducible. 
\end{theorem}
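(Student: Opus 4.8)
The plan is to exhibit $F_S^j$ as the image of an irreducible variety under a polynomial (regular) map, and then invoke the fact that the image of an irreducible set under a continuous map is irreducible, so that $F_S^j$ is irreducible. This is exactly the strategy already used in the proof of Theorem~\ref{corresp} and implicitly in Theorem~\ref{thm:ideals}: there, any $\gamma \in \overline{F_S^j}$ was written as $\gamma = M \cdot N$ with $M \in \CC^{d \times (2t-2-j)}$ and $N$ a block matrix whose blocks $\bigl(\begin{smallmatrix}K_{11}\\K_{13}\end{smallmatrix}\bigr)$, $K_2$, $\bigl(\begin{smallmatrix}K_{31}\\K_{33}\end{smallmatrix}\bigr)$ are full-rank matrices of prescribed shapes (and the remaining blocks are free). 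I would take the parametrizing space $P$ to be the set of pairs $(M, N)$ where $M$ ranges over all of $\CC^{d \times (2t-2-j)}$ and $N$ ranges over the affine space of block matrices of the fixed shape appearing at the end of the proof of Theorem~\ref{thm:ideals} (with the positions of the zero blocks fixed). This $P$ is an affine space, hence irreducible, and the multiplication map $\mu\colon P \to (\CC^d)^\ell$, $(M,N) \mapsto M\cdot N$, is polynomial.

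The key steps, in order: (1) Fix the block structure: record precisely which entries of $N$ are forced to be zero and which row/column index sets $A\setminus B$, $A\cap B$, $B\setminus A$ the column-blocks correspond to; this is read off from the three cases in the proof of Theorem~\ref{thm:ideals}, where after the normalization the shape of $N$ is the same in all cases. Let $P = \CC^{d\times(2t-2-j)} \times \{N \text{ of this shape}\} \cong \CC^{D}$ for the appropriate $D$, which is irreducible. (2) Check that the image $\mu(P)$ is contained in $\overline{F_S^j}$: any product $M\cdot N$ with $N$ of this shape automatically has $\rank(\gamma_A) \le t-1$, $\rank(\gamma_B)\le t-1$, $\rank(\gamma_{A\cap B})\le j$, and $\rank(\gamma)\le 2t-2-j$, so $\mu(P) \subseteq V(I_{S,j}) = \overline{F_S^j}$ by Theorem~\ref{thm:ideals}. (3) Check density: by the argument in the proof of Theorem~\ref{thm:ideals}, every $\gamma \in F_S^j$ itself has such a decomposition (indeed every $\gamma\in V(I_{S,j})$ can be perturbed into one lying in $\mu(P) \cap F_S^j$), so $F_S^j \subseteq \mu(P)$, and conversely the generic point of $P$ — where $M$ has full column rank and the designated blocks have full rank — maps into $F_S^j$ (one verifies $A,B$ become flats of rank exactly $t-1$ and the intersection dimension is exactly $j$, using that the relevant full-rank conditions are open and nonempty on $P$). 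Hence $F_S^j = \mu(U)$ for a nonempty Zariski-open $U\subseteq P$, and $\overline{F_S^j} = \overline{\mu(P)}$. (4) Conclude: $P$ irreducible $\Rightarrow$ $\mu(P)$ irreducible $\Rightarrow$ $\overline{F_S^j} = \overline{\mu(P)}$ irreducible $\Rightarrow$ $F_S^j$ is an irreducible quasi-affine variety.

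The main obstacle I expect is step (3): verifying that the \emph{generic} choice of $(M,N) \in P$ actually lands in $F_S^j$ and not in some smaller stratum — i.e. that the rank and flatness conditions defining $F_S^j$ are satisfied with equality, not just inequality, on a dense open subset of $P$. Concretely one must ensure that, for generic full-rank choices of the designated blocks, the columns indexed by $A$ span exactly a $(t-1)$-dimensional space which is a flat (no extra column of $\gamma$ falls into it), likewise for $B$, and that $\Span\gamma_A \cap \Span\gamma_B$ has dimension exactly $j$; each of these is an open condition, so the real content is checking nonemptiness, which follows by exhibiting one good point — and such a point is furnished by the $\gamma$ constructed in the proof of Theorem~\ref{thm:ideals} after perturbation. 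A secondary bookkeeping nuisance is matching the three cases there to a single uniform block shape for $N$; the proof of Theorem~\ref{thm:ideals} already arranges this, so I would simply cite that normalization rather than redo it. The non-flatness subtlety (ensuring $A$ and $B$ are flats, not merely rank-$(t-1)$ sets) deserves an explicit sentence, since it is the one place where admissibility of $S$ and the structure of the ambient matroid genuinely enter.
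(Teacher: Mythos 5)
Your approach is the same as the paper's: parametrize $\overline{F_S^j}$ by a factored matrix product from an affine space, show the two inclusions $\mathrm{Im}\,\mu \subseteq \overline{F_S^j}$ (via Theorem~\ref{thm:ideals}) and $F_S^j \subseteq \mathrm{Im}\,\mu$ (via a basis decomposition), and conclude irreducibility from the irreducibility of the image of an irreducible set. Two small cautions on your write-up: (1) the block shape you borrow from the end of Theorem~\ref{thm:ideals} is not actually uniform — it depends on $\gamma$-specific data such as $\ell_0 = \min\{j, |A\cap B|\}$ and $|\mathcal{B}_3|$ (in Case 3 the row-block sizes involve $|\mathcal{B}_3|$); the paper's $\varphi$ instead uses the fixed row-block sizes $t-1-j$, $j$, $t-1-j$ (equivalently $N_1 \in \CC^{(t-1)\times u}$, $N_2 \in \CC^{j\times(\ell-u-v)}$, $N_3 \in \CC^{(t-1)\times v}$), so you should fix those block sizes directly rather than cite the normalization in Theorem~\ref{thm:ideals}. (2) The "main obstacle" you flag — checking that a generic point of the parameter space lands in $F_S^j$ rather than a smaller stratum — is in fact not needed: once you have $F_S^j \subseteq \mathrm{Im}\,\mu$ and $\mathrm{Im}\,\mu \subseteq \overline{F_S^j}$, taking closures gives $\overline{F_S^j} = \overline{\mathrm{Im}\,\mu}$, which is irreducible because $\mu$ has an irreducible domain, and a nonempty locally closed subset of an irreducible space is irreducible. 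The inclusion $F_S^j \subseteq \mathrm{Im}\,\mu$ is also easier than you suggest: for $\gamma\in F_S^j$ the dimensions are exact (not just bounded), so one simply picks a basis $\mathcal{B}_2$ of size $j$ for $\Span\gamma_A \cap \Span\gamma_B$, extends to $\mathcal{B}_1\cup\mathcal{B}_2$ for $\Span\gamma_A$ and $\mathcal{B}_2\cup\mathcal{B}_3$ for $\Span\gamma_B$, and reads off $M$ and the $N_i$; no perturbation is involved.
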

\begin{proof}
    For $S\neq \emptyset$, let $u \coloneqq  \ell - |A|$ and $v \coloneqq  \ell - |B|$. Define the map
    \begin{align*}
        &\varphi : \mathbb{C}^{d \times (2t-2-j)} \times \mathbb{C}^{(t-1) \times u} \times \mathbb{C}^{j  \times (\ell - u -v)} \times \mathbb{C}^{(t-1) \times v} \to \mathbb{C}^{d \times \ell} \\
        & (M, N_1, N_2, N_3) \mapsto 
        \begin{pmatrix}
            M[1,t-1]\cdot N_1 & M[t-j,t-1]\cdot N_2 & M[t-j, 2t-2-j] \cdot N_3
        \end{pmatrix}.
    \end{align*}
We will show that $\overline{\text{Im}(\varphi)} = \overline{F_{S}^j}$. Since the domain of $\varphi$ is an irreducible variety, this will prove that $\overline{F_{S}^j}$ is irreducible. 
Indeed, consider a matrix $\gamma = \varphi(M, N_1, N_2, N_3)$. Then, the first $\ell - v$ columns of $\gamma$ are spanned by $M[1, t-1]$, while the last $\ell - u$ columns are spanned by $M[t - j, 2t - 2 - j]$ by construction. This translates to $\rank(\gamma_A) \leq t-1$ and $\rank(\gamma_B) \leq t - 1$. Therefore, the ideals $I_t(Y_A)$ and $I_t(Y_B)$ must vanish on $\gamma$. Similarly, by construction $\rank (\gamma_{A\cap B}) \leq j$. Therefore, the ideal $I_{j+1}(Y_{A\cap B})$ vanishes on $\gamma$. Moreover, by construction, the columns of $\gamma$ are spanned by the columns of $M$, and since $M$ has $2t-2-j$ columns, the ideal $I_{2t-j-1}(Y)$ vanishes on $\gamma$. We conclude that $\gamma \in \overline{F_S^j}$ by Theorem \ref{thm:ideals}.

For the other direction, consider a  $d\times \ell$ matrix $\gamma \in F_S^j$. By definition of $F_S^j$, 
   $\Dim(\Span\gamma_A \cap \Span \gamma_B) = j.$
So, we can select a basis $\mathcal{B}_2$ of size $j$ for $\Span\gamma_A \cap \Span \gamma_B$. Note that $\Span{\mathcal{B}_2} \supseteq \Span \gamma_{A\cap B}$. On the other hand, $A$ is a flat of rank $t - 1$ in $\mathcal{N}_\gamma$. In particular, the vectors in $\gamma_A$ span a space of rank $t - 1$, which contains $\Span\mathcal{B}_2$. Therefore, we can extend $\mathcal{B}_2$ to a basis $\mathcal{B}_1\cup \mathcal{B}_2$ of $\Span\gamma_A$. Similarly, we can extend $\mathcal{B}_2$ to a basis $\mathcal{B}_2\cup \mathcal{B}_3$ of $\Span\gamma_B$. Therefore, letting 
\begin{align*}
        M \coloneqq \begin{pmatrix} \mathcal{B}_1  & \mathcal{B}_2 & \mathcal{B}_3 \end{pmatrix},
    \end{align*}
and selecting $N_1, N_2, N_3$ to contain the appropriate coefficients, we get  $\gamma = \varphi(M, N_1, N_2, N_3)$. Therefore, $F_S^j\subseteq \text{Im}(\varphi).$ Taking Zariski closure, we obtain the desired result.
\end{proof}

\begin{corollary}\label{cor: are prime}
    For every admissible subset $S\subseteq [k\ell]$ and every $j \in \mathcal{P}(S)$, the ideal $I_{S,j}$ is prime. 
\end{corollary}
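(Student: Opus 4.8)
The plan is to combine the two geometric statements just established with a Gröbner-basis argument for radicality. By Theorem~\ref{thm:ideals} we have $V(I_{S,j})=\overline{F_{S}^{j}}$, and by Theorem~\ref{thm:irreducible} the variety $F_{S}^{j}$, and hence its Zariski closure, is irreducible; therefore $\sqrt{I_{S,j}}=I\bigl(V(I_{S,j})\bigr)=I\bigl(\overline{F_{S}^{j}}\bigr)$ is a prime ideal. So the only thing left to prove is that $I_{S,j}$ is radical, for then $I_{S,j}=\sqrt{I_{S,j}}$ is prime and we are done.

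To prove radicality, I would first reorder the columns of $Y$ so that the three sets $A\setminus B$, $A\cap B$, $B\setminus A$ become consecutive blocks. This is harmless, since permuting columns is an automorphism of $\CC[Y]$ carrying minors to minors, and since admissibility of $S$ (the condition $C_{j}\not\subset S$, which for $k=2$ forces $A\cup B=[\ell]$) guarantees that these three blocks partition $[\ell]$. After this relabelling, the generators of $I_{S,j}$ listed in Theorem~\ref{thm:ideals} are the $t$-minors of $Y_{A}$ (an initial block of columns), the $t$-minors of $Y_{B}$ (a terminal block), the $(j+1)$-minors of $Y_{A\cap B}$ (the middle block), and the $(2t-j-1)$-minors of the whole matrix $Y$. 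Fixing a diagonal term order $<$ on $\CC[Y]$, the key claim is that these minors form a Gröbner basis of $I_{S,j}$. Granting it, the initial ideal $\mathrm{in}_{<}(I_{S,j})$ is generated by their leading terms, each of which is a product of distinctly indexed variables, hence a squarefree monomial; so $\mathrm{in}_{<}(I_{S,j})$ is a squarefree monomial ideal and therefore radical, which forces $I_{S,j}$ to be radical as well, completing the argument.

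The Gröbner-basis claim is where the real work lies, and I see two routes. The direct one applies Buchberger's criterion, checking that the S-polynomial of each pair of the listed minors reduces to zero — in effect a straightening-law computation in the generic matrix. The conceptual one recognizes $I_{S,j}$ as a cogenerated (mixed) ladder determinantal ideal of a generic matrix: its defining rank conditions are exactly ``the rank of a given column-interval is at most $r$'' for the nested chain of intervals $A\cap B\subseteq A\subseteq[\ell]$ and $A\cap B\subseteq B\subseteq[\ell]$ with prescribed bounds, a situation in which the generating minors are known to form a Gröbner basis for a diagonal order and the quotient is a Cohen--Macaulay normal domain; see~\cite{HS04} and the classical literature on ladder determinantal rings. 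I expect the main obstacle to be the S-pair reductions between minors of \emph{different} sizes supported on overlapping blocks --- for instance a $t$-minor of $Y_{A}$ paired with a $(j+1)$-minor of $Y_{A\cap B}$, or with a $(2t-j-1)$-minor of $Y$ --- where one must verify that every nonstandard product straightens into standard monomials supported on the permitted blocks; this is precisely the point at which the inequalities $x(S)\le j\le t-2$ and the admissibility hypothesis on $S$ enter. As a by-product, the argument also records a Gröbner basis of $I_{S,j}$ consisting of its generating minors, as announced in the introduction.
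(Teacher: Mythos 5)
Your overall strategy is exactly the one the paper uses: combine the irreducibility of $\overline{F_S^j}$ (Theorem~\ref{thm:irreducible}) with radicality of $I_{S,j}$ to conclude primeness, and get radicality by exhibiting a squarefree Gr\"obner basis. The gap is in the justification of the Gr\"obner-basis claim, which is the entire substance of the argument once the framework is set up.

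The paper closes this gap by a single citation to \cite{seccia2022knutson} (Seccia's work on Knutson ideals of generic matrices): the ideals $I_r(Y_{[a,b]})$ for column intervals $[a,b]$ all lie in one Knutson family, that family is closed under sums, and membership in it guarantees that the natural generating minors form a squarefree Gr\"obner basis in a diagonal order. Your two proposed substitutes are weaker. The direct Buchberger route is in principle viable but is an enormous straightening computation that you have not carried out, so it does not constitute a proof. The ladder-determinantal route has a genuine structural problem: after your column reordering, the constrained column sets are $A\cap B$, $A$, $B$, and $[\ell]$, and although $A\cap B \subseteq A \subseteq [\ell]$ and $A\cap B \subseteq B \subseteq [\ell]$ are chains, $A$ and $B$ are incomparable, so the four intervals form a diamond poset rather than a chain. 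Classical (co)generated ladder determinantal ideals, and the results you are gesturing at via~\cite{HS04}, handle a single nested family of regions; they do not directly apply to this branching configuration. The Knutson-ideal machinery of \cite{seccia2022knutson} is designed precisely for such sums of non-nested column-interval ideals and is the specific tool that makes the one-line citation possible. So: same skeleton as the paper, but the key lemma is left unjustified, and the ladder-determinantal framing you offer as the conceptual route does not cover this case as stated.
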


\begin{proof}
    By~\cite{seccia2022knutson}, the canonical generating set of the ideal $I_{S,j}$ from Theorem~\ref{thm:ideals} is a Gröbner basis for this ideal. In particular, $I_{S,j}$ has a square-free Gr\"obner basis, and is therefore radical. On the other hand, by Theorem~\ref{thm:ideals}, $V(I_{S,j}) = \overline{F_S^j}$. Since by Theorem~\ref{thm:irreducible}, $\overline{F_S^j}$ is irreducible, we conclude that $I_{S,j}$ is prime.
\end{proof}

\begin{example}
In the setting of Example~\ref{ex:6*10}, the ideals $I_{S,2}$ and $I_{S,3}$ from~\eqref{IS2} and~\eqref{IS3} are prime.
\end{example}

\subsection{Identifying the maximal varieties \texorpdfstring{$F_{S}^{j}$}{FSj}}\label{sec:maximal}

Fix an admissible subset $S$ and consider a variety $F_{S}^{j}$ in the decomposition of $F_{S}$ from Theorem~\ref{thm:decomposition}, with $j\in\mathcal{P}(S)$. 
We call $F_S^j$ \emph{maximal} if there is no $i\in\mathcal{P}(S)$ with $i\neq j$ such that
$\overline{F_S^j}\subseteq \overline{F_S^i}$.

The following theorem characterizes the indices $j\in\mathcal P(S)$ for which $F_S^j$ is maximal.

\begin{theorem}\label{thm:maximal varieties FSj}
Let $\emptyset \neq S \subseteq [k\ell]$ be an admissible subset and $j \in \mathcal{P}(S)$.  
Then $F_{S}^{j}$ is maximal~if~and~only~if
\begin{equation}\label{maximal j}
j \in 
\begin{cases}
[x(S),\, |A \cap B|], & \text{if } x(S) \le |A \cap B| \le t-2,\\[3pt]
[x(S),\, t-2], & \text{if } |A \cap B| > t-2,\\[3pt]
\{x(S)\}, & \text{if } |A \cap B| < x(S).
\end{cases}
\end{equation}
\end{theorem}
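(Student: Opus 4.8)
The plan is to determine exactly the containment relations among the Zariski closures $\overline{F_S^j} = V(I_{S,j})$ for $j \in \mathcal P(S)$, using the explicit generators $I_{S,j} = I_t(Y_A) + I_t(Y_B) + I_{j+1}(Y_{A\cap B}) + I_{2t-j-1}(Y)$ from Theorem~\ref{thm:ideals}. First I would record the monotonicity of the individual ideal pieces in $j$: the ideal $I_{j+1}(Y_{A\cap B})$ is \emph{larger} for smaller $j$ (more minors vanish), while $I_{2t-j-1}(Y)$ is \emph{larger} for larger $j$. Thus neither $I_{S,j} \subseteq I_{S,i}$ nor the reverse holds automatically, and the containment $\overline{F_S^j}\subseteq\overline{F_S^i}$ must be decided geometrically: a general point of $\overline{F_S^i}$ is (by Theorem~\ref{thm:irreducible} and its proof, via the map $\varphi$) a matrix $\gamma$ with $\dim(\Span\gamma_A \cap \Span\gamma_B)=i$ exactly, and one asks whether such $\gamma$ can lie in $V(I_{S,j})$, i.e.\ whether $\rank(\gamma_{A\cap B})\le j$ and $\rank(\gamma)\le 2t-2-j$ are compatible with $\gamma$ being generic in $F_S^i$.

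The key observation is that on $F_S^i$ the generic value of $\rank(\gamma_{A\cap B})$ is $\min(i, |A\cap B|)$ (since, in the parametrization $\varphi$, the block $N_2 \in \mathbb{C}^{i \times (\ell-u-v)}$ with $|A\cap B| = \ell - u - v$ is generic of full rank), and the generic value of $\rank(\gamma)$ on $F_S^i$ is $\min(2t-2-i, d)$, which for $i \ge x(S) \ge 2t-2-d$ equals $2t-2-i$. Therefore $\overline{F_S^i}\subseteq\overline{F_S^j}$ holds iff $\min(i,|A\cap B|)\le j$ and $2t-2-i\le 2t-2-j$, i.e.\ iff $i\ge j$ and $\min(i,|A\cap B|)\le j$. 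I would prove the ``if'' direction by exhibiting, for generic $\gamma\in F_S^i$ with these numerical constraints, that $\gamma\in V(I_{S,j})$ directly from the generator list (the flat conditions $\rank\gamma_A,\rank\gamma_B\le t-1$ are automatic on all of $F_S$), and the ``only if'' direction by noting that a generic point of $F_S^i$ violates $I_{j+1}(Y_{A\cap B})$ when $\min(i,|A\cap B|)>j$ and violates $I_{2t-j-1}(Y)$ when $i<j$.

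From this criterion the maximal $j$ are exactly those not dominated by any other $i\in\mathcal P(S)=[x(S),t-2]$. I would then do the bookkeeping by cases. If $|A\cap B|>t-2$, then $\min(i,|A\cap B|)=i$ for all $i\in\mathcal P(S)$, so $\overline{F_S^i}\subseteq\overline{F_S^j}$ forces $i\ge j$ and $i\le j$, hence no proper containments at all and every $j\in[x(S),t-2]$ is maximal. If $x(S)\le|A\cap B|\le t-2$, then for $j\le|A\cap B|$ the condition $\min(i,|A\cap B|)\le j$ with $i>j$ can only be met when $i>|A\cap B|$ is impossible unless... here one checks that $\overline{F_S^j}$ with $j>|A\cap B|$ is contained in $\overline{F_S^{|A\cap B|}}$ (take $i=|A\cap B|$... no—rather take any $i$ with $|A\cap B|\le i$, reversing roles), collapsing all $j>|A\cap B|$ into the range $j\le|A\cap B|$, so the maximal indices are precisely $[x(S),|A\cap B|]$; within that range no containments occur since $\min(i,|A\cap B|)=i$ again. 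Finally if $|A\cap B|<x(S)$, then for all $i,j\in\mathcal P(S)$ one has $\min(i,|A\cap B|)=|A\cap B|<x(S)\le j$, so $\overline{F_S^i}\subseteq\overline{F_S^j}$ whenever $i\ge j$; chaining these, every $\overline{F_S^j}$ is contained in $\overline{F_S^{t-2}}$, but also the reverse direction ($i\ge j$ with the roles of the two conditions) gives all closures equal to $\overline{F_S^{x(S)}}$—one must be careful: the containment is $\overline{F_S^i}\subseteq\overline{F_S^j}$ for $i\ge j$, so the \emph{largest} index $t-2$ gives the \emph{smallest} closure and $x(S)$ gives the largest, hence the unique maximal variety is $\overline{F_S^{x(S)}}$. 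The main obstacle I anticipate is pinning down the generic ranks in the parametrization $\varphi$ rigorously—in particular verifying that $\rank(\gamma_{A\cap B})$ generically equals $\min(i,|A\cap B|)$ and not something smaller because of the shared block structure of $M$—and then being scrupulous about the direction of the implication ``$i\ge j$'' versus the direction of the closure containment, since it is easy to invert.
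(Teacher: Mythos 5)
Your plan is correct and, after the brief self-correction mid-paragraph, lands on the right case analysis. The key difference from the paper's own proof is organizational: you first establish a clean iff criterion
\[
\overline{F_S^i}\subseteq\overline{F_S^j}\ \Longleftrightarrow\ i\ge j\ \text{and}\ \min(i,|A\cap B|)\le j,
\]
and then read off maximality by pure bookkeeping, whereas the paper never states such a unified criterion. Instead, the paper argues maximality for $j$ in the claimed range by contradiction in two parts — first using $\rank(\gamma)=2t-2-j$ on all of $F_S^j$ to force $i\le j$, then building an explicit witness $\gamma\in F_S^j$ with $\rank(\gamma_{A\cap B})=j$ (via full-rank blocks $N_1,N_2,N_3$) to force $j\le i$ — and handles non-maximality for $j>|A\cap B|$ separately by exhibiting an \emph{ideal} containment $I_{S,i}\subseteq I_{S,j}$ (using that $I_{i+1}(Y_{A\cap B})=0$ when $i\ge|A\cap B|$ and that higher minors lie in the ideal of lower minors). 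The two routes rest on the same rank observations, but your packaging has the advantage of making the containment poset among the $\overline{F_S^j}$ explicit, which both confirms irredundancy in one stroke and is reusable; the paper's route has the advantage of giving an ideal-theoretic containment directly, which is slightly more than what is needed for variety containment. The single point you would need to nail down rigorously — that a generic $\gamma\in F_S^i$ satisfies $\rank(\gamma_{A\cap B})=\min(i,|A\cap B|)$ — does follow from the parametrization $\varphi$ (the block $M[t-i,t-1]$ is $d\times i$ with $d\ge i$, so generically full column rank, and $N_2$ is a generic $i\times|A\cap B|$ matrix), and is essentially the witness construction the paper carries out explicitly. Also note that $\rank(\gamma)=2t-2-i$ holds for every $\gamma\in F_S^i$ (not only generically), since $A\cup B=[\ell]$ by admissibility, which simplifies the $i\ge j$ half of your criterion to a non-generic statement.
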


\begin{proof}
    Let $j=x(S)$ or $x(S) < j\leq \min\{t-2,|A\cap B|\}$. We show that $\overline{F_S^j}$ is maximal. Suppose, toward a contradiction, that $\overline{F_S^j}\subseteq \overline{F_S^i}$ for some $i\in\mathcal P(S)$ with $i\neq j$.
On the one hand, for all $\gamma\in F_S^j$, we have 
    \begin{align*}
        \Dim(\Span \gamma) = \Dim(\Span \gamma_A) + \Dim(\Span \gamma_B) - \Dim(\Span \gamma_A \cap \Span \gamma_B) = 2t-2-j.
    \end{align*}
Since $\gamma \in \overline{F_S^i} = V(I_{S,i})$, we have $\Dim(\Span \gamma) \leq 2t-2-i$. Therefore, $2t-2-j\leq 2t-2-i$, which implies $i\leq j$. This already yields a contradiction in the case $j=x(S)$, proving that $F_S^j$ is maximal in this case.

    On the other hand, since $2t-2-j\leq d$, we can pick $2t-2-j$ linearly independent vectors $v_1, \ldots, v_{2t-2-j} \in \mathbb{R}^d$. Consider arbitrary full-rank matrices $N_1 \in \mathbb{R}^{(t-1-j) \times u}$, $N_2 \in  \mathbb{R}^{j  \times (\ell - u -v)}$ and $ N_3 \in \mathbb{R}^{(t-1-j) \times v} $ such that none of them has 0 columns. Define
    \begin{align*}
     \gamma \coloneqq  \begin{pmatrix}
        \begin{pmatrix} | & \cdots & | \\ v_1 & \cdots & v_{t-1-j} \\ | & \cdots & | \end{pmatrix} N_1 & \begin{pmatrix} | & \cdots & | \\ v_{t-j} & \cdots & v_{t-1} \\ | & \cdots & | \end{pmatrix} N_2 & \begin{pmatrix} | & \cdots & | \\ v_{t} & \cdots & v_{2t-2-j} \\ | & \cdots & | \end{pmatrix} N_3 
     \end{pmatrix} \in \mathbb{R}^{d\times \ell}.
    \end{align*}
    Since $j\geq x(S)$, we have $t-1-j \leq |A\setminus B| = v$ and $t-1-j\leq |B\setminus A| = u$. So, $\rank(N_1) = \rank(N_3) = t-1-j$. Hence,  $\Dim(\Span \gamma_{A\setminus B}) = \Dim(\Span \gamma_{B\setminus A}) = t-1-j$. Moreover, $j\leq |A\cap B| = \ell-u-v$, and so, $\rank(N_2) = j$. Thus, $\Dim(\Span \gamma_{A\cap B})=j$. Therefore, $\gamma\in F_S^j$ with $\Dim(\Span \gamma_A \cap \Span \gamma_B) = \Dim(\Span \gamma_{A\cap B})=j$.
    Now since $\gamma\in F_S^j$, we have $\gamma\in \overline{F_S^i}$, and hence,  $\gamma \in V(I_{i+1} (Y_{A\cap B}))$. So, $j=\Dim(\Span \gamma_{A \cap B})  \leq i$. Therefore, $j=i$, a contradiction, which proves the maximality of $\overline{F_S^j}$.

  We now show that for $j\in\mathcal P(S)$ with $j\neq x(S)$ and $j>|A\cap B|$, the variety $\overline{F_S^j}$ is not maximal.
This is done by showing that $I_{S,i} \subseteq I_{S,j}$ for some $j\neq i \in \mathcal{P}(S)$.
In particular, if $x(S)\leq |A\cap B|$, we take $i=|A\cap B|$, and if $|A\cap B|<x(S)$, we take $i=x(S)$.
\end{proof}
\begin{example}
Following~Example~\ref{example for k=2},~we have $x(S)=2$ and $\lvert A\cap B\rvert=4>3=t-2$. 
By Theorem~\ref{thm:maximal varieties FSj}, $F_S^j$ is maximal 
for $j\in\{2,3\}$, and 
$\overline{F_S}=\overline{F_S^{2}}\cup\overline{F_S^{3}}$ is the irredundant irreducible decomposition of~$\overline{F_S}$.
\end{example}

\subsection{Irreducible decomposition of \texorpdfstring{$V_{\Delta}$}{VDelta}}
In this subsection, we describe the irreducible decomposition of $V_{\Delta}$ for $k=2$. 
We begin by giving the corresponding 
irreducible decompositions of the varieties $F_S$ and $V_S$ for admissible subsets $S \subset [2\ell]$.

\begin{proposition}\label{prop:irr dec FS}
Let $\emptyset \neq S \subset [2\ell]$ be an admissible subset.  The variety $\overline{F_{S}}$ admits the irredundant irreducible decomposition
\(\overline{F_{S}} = \bigcup_{j} \overline{F_{S}^{j}}\),
where $j$ ranges over all values specified in~\eqref{maximal j}.
\end{proposition}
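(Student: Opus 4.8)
The plan is to obtain the statement by assembling the three preceding results. By Theorem~\ref{thm:decomposition} we have $F_S=\bigcup_{j\in\mathcal P(S)}F_S^j$, a finite union; taking Zariski closures, and using that closure commutes with finite unions, gives $\overline{F_S}=\bigcup_{j\in\mathcal P(S)}\overline{F_S^j}$. By Theorem~\ref{thm:irreducible} each $\overline{F_S^j}$ is irreducible, so this is already an irreducible decomposition, and it remains only to discard the redundant terms and to verify that what is left is irredundant.

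Next I would remove the non-maximal pieces. If $F_S^j$ is not maximal, then by definition $\overline{F_S^j}\subseteq\overline{F_S^i}$ for some $i\in\mathcal P(S)$ with $i\neq j$; moreover the proof of Theorem~\ref{thm:maximal varieties FSj} exhibits an \emph{explicit} such $i$ (namely $i=\lvert A\cap B\rvert$ when $x(S)\le\lvert A\cap B\rvert$, and $i=x(S)$ otherwise) via an inclusion of ideals $I_{S,i}\subseteq I_{S,j}$, whence $\overline{F_S^j}=V(I_{S,j})\subseteq V(I_{S,i})=\overline{F_S^i}$, and this $i$ is itself maximal according to~\eqref{maximal j}. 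Hence $\overline{F_S}$ equals the union of the $\overline{F_S^j}$ over the maximal indices $j$, which by Theorem~\ref{thm:maximal varieties FSj} are precisely those described in~\eqref{maximal j}.

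Finally, irredundancy follows directly from the definition of maximality: if $j$ is maximal there is no $i\in\mathcal P(S)$ with $i\neq j$ and $\overline{F_S^j}\subseteq\overline{F_S^i}$, in particular no other maximal index $i$ with this property; and the closures $\overline{F_S^j}$ for distinct maximal $j$ are pairwise distinct, since on $\overline{F_S^j}=V(I_{S,j})$ one has $\rank(Y)\le 2t-2-j$ with equality at points of $F_S^j$, so $j$ is recovered from $\overline{F_S^j}$. A finite collection of pairwise distinct irreducible closed sets, none contained in another, is an irredundant irreducible decomposition, which proves the proposition. The only step requiring care is the passage from ``not maximal'' to ``contained in a maximal piece'', which is why I would invoke the explicit ideal inclusions from the proof of Theorem~\ref{thm:maximal varieties FSj} rather than a bare finiteness argument on the containment poset.
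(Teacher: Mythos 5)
Your proof is correct and takes essentially the same route as the paper, which likewise assembles Theorem~\ref{thm:decomposition}, Theorem~\ref{thm:irreducible}, and Theorem~\ref{thm:maximal varieties FSj} in that order. You are somewhat more careful than the paper in checking that every non-maximal $\overline{F_S^j}$ is actually absorbed into a \emph{maximal} piece (by reading the explicit witness $i$ out of the proof of Theorem~\ref{thm:maximal varieties FSj} and noting that it lies in the range~\eqref{maximal j}), a step the paper leaves implicit; note, though, that the pairwise-distinctness argument via generic rank is not actually needed for irredundancy, since the paper's definition of maximality (no $i\ne j$ with $\overline{F_S^j}\subseteq\overline{F_S^i}$) already rules out any containment between two maximal pieces.
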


\begin{proof}
By Theorem~\ref{thm:decomposition}, the variety $F_{S}$ decomposes as the union of the subvarieties $F_{S}^{j}$ for $j \in \mathcal{P}(S)$.  
Theorem~\ref{thm:maximal varieties FSj} identifies the maximal varieties in this decomposition as those indexed by the values of $j$ given in~\eqref{maximal j}, giving both the equality and the irredundancy of the decomposition.  
Finally, the irreducibility of each component follows from Theorem~\ref{thm:irreducible}.
\end{proof}

\begin{corollary}
Let $\emptyset \neq S \subset [2\ell]$ be an admissible subset.  
The primary decomposition of the ideal~$\mathbb{I}(F_{S})$~is
\[
\mathbb{I}(F_{S}) = 
\bigcap_{j} 
\bigl( I_{t}(Y_{A}) + I_{t}(Y_{B}) + I_{j+1}(Y_{A \cap B}) + I_{2t - j - 1}(Y_{A \cup B}) \bigr),
\]
where $j$ ranges over all values specified in~\eqref{maximal j}.
\end{corollary}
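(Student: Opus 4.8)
The plan is to obtain the statement as a formal consequence of the irreducible decomposition of $\overline{F_{S}}$ together with the ideal-theoretic description of its components. I would begin from the general principle that for a quasi-affine variety $A$ the ideal $\mathbb{I}(A)=I(\overline{A})$ is radical, and that if $\overline{A}=\bigcup_{j}Z_{j}$ is its irredundant decomposition into irreducible components, then $\mathbb{I}(A)=\bigcap_{j}I(Z_{j})$ is the unique minimal primary decomposition of $\mathbb{I}(A)$, each $I(Z_{j})$ being a minimal prime.

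Concretely, I would first apply Proposition~\ref{prop:irr dec FS}: for admissible $\emptyset\neq S\subset[2\ell]$, the variety $\overline{F_{S}}$ has the irredundant irreducible decomposition $\overline{F_{S}}=\bigcup_{j}\overline{F_{S}^{j}}$, with $j$ ranging over the values in~\eqref{maximal j}; taking ideals gives $\mathbb{I}(F_{S})=\bigcap_{j}I(\overline{F_{S}^{j}})$. Next I would identify each factor: by Theorem~\ref{thm:ideals} we have $\overline{F_{S}^{j}}=V(I_{S,j})$ with $I_{S,j}=I_{t}(Y_{A})+I_{t}(Y_{B})+I_{j+1}(Y_{A\cap B})+I_{2t-j-1}(Y)$, and by Corollary~\ref{cor: are prime} the ideal $I_{S,j}$ is prime, hence radical, so $I(\overline{F_{S}^{j}})=I(V(I_{S,j}))=I_{S,j}$. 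Since each $I_{S,j}$ is prime it is in particular primary, and since the components $\overline{F_{S}^{j}}$ are pairwise incomparable (irredundancy in Proposition~\ref{prop:irr dec FS}), the primes $I_{S,j}$ are pairwise incomparable, hence are exactly the minimal primes of the radical ideal $\mathbb{I}(F_{S})$; the intersection $\bigcap_{j}I_{S,j}$ is therefore its minimal primary decomposition.

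It then remains only to match the cosmetic form of the last summand: because $S$ is admissible, no column $C_{j}$ is contained in $S$, which is precisely the statement $A\cup B=[\ell]$; hence $Y_{A\cup B}=Y$ and $I_{2t-j-1}(Y)=I_{2t-j-1}(Y_{A\cup B})$. Combining everything yields
\[
\mathbb{I}(F_{S}) = \bigcap_{j} \bigl( I_{t}(Y_{A}) + I_{t}(Y_{B}) + I_{j+1}(Y_{A \cap B}) + I_{2t - j - 1}(Y_{A \cup B}) \bigr),
\]
as claimed. I do not expect a genuine obstacle here: the corollary is essentially a repackaging of Proposition~\ref{prop:irr dec FS}, Theorem~\ref{thm:ideals}, and Corollary~\ref{cor: are prime}, the only points needing a line of justification being the passage from geometric irredundancy to minimality of the primary decomposition and the identification $A\cup B=[\ell]$.
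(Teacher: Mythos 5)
Your proposal is correct and follows exactly the same route as the paper: cite Proposition~\ref{prop:irr dec FS} for the irredundant irreducible decomposition, Theorem~\ref{thm:ideals} to identify each component as $V(I_{S,j})$, and Corollary~\ref{cor: are prime} for primality, then pass from geometric irredundancy to minimality of the primary decomposition. The one extra point you spell out that the paper leaves implicit, the identification $A \cup B = [\ell]$ (hence $Y_{A\cup B}=Y$) coming from admissibility, is accurate and worth noting.
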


\begin{proof}
The statement follows directly from Proposition~\ref{prop:irr dec FS}, Theorem~\ref{thm:ideals}, and Corollary~\ref{cor: are prime}.
\end{proof}

\begin{example}
Consider $t,\ell,d$, and $S$ as in Example~\ref{example for k=2}.  
The primary decomposition of $\mathbb{I}(F_{S})$ is
\begin{small}
\begin{align*}
\mathbb{I}(F_{S})
&=
\Bigl(
 I_{5}\bigl(Y_{4,5,6,7,8,9,10}\bigr)
 + I_{5}\bigl(Y_{1,2,3,4,5,6,7}\bigr)
 + I_{3}\bigl(Y_{4,5,6,7}\bigr)
\Bigr)
\\[4pt]
&\qquad\cap\;
\Bigl(
 I_{5}\bigl(Y_{4,5,6,7,8,9,10}\bigr)
 + I_{5}\bigl(Y_{1,2,3,4,5,6,7}\bigr)
 + (\det(Y_{4,5,6,7}))
 + I_{6}(Y)
\Bigr).
\end{align*}
\end{small}
\end{example}

We now focus on determining the irreducible components of $V_{S}$.

\begin{definition}
Let $\emptyset \neq S \subset [2\ell]$ be an admissible subset.  
By the one-to-one correspondence between the irreducible components of $F_{S}$ and those of $U_{S}$ established in Theorem~\ref{corresp}, we define
\[
U_{S}^{j} \coloneqq \psi\bigl( (\mathbb{C}^{\ast})^{\lvert [k\ell] \setminus S \rvert} \times F_{S}^{j} \bigr)
\]
to be the irreducible component of $U_{S}$ corresponding to $F_{S}^{j}$, where $\psi$ denotes the map introduced in~\eqref{psi}.  
We further set $V_{S}^{j}$ to be the Zariski closure of $U_{S}^{j}$.
\end{definition}

We now give defining equations for the varieties $V_S^j$ in terms of minors of the $d\times 2\ell$ matrix of indeterminates $X=(x_{i,j})$.
For a subset $D\subset[\ell]$, we let
$D' \coloneqq \bigcup_{j\in D}\{2j-1,2j\}.$

\begin{proposition}\label{thm:ideals 2}
Let $\emptyset \neq S \subseteq [2\ell]$ be an admissible subset and  $j \in \mathcal{P}(S)$. Then
$V_{S}^{j} = V(J_{S,j})$,
where
\begin{equation}\label{ideal JSj}
J_{S,j} \coloneqq (x_{i,j} : j \in S)
    + \sum_{j = 1}^{\ell} I_{2}(X_{C_{j}})
    + I_{t}(X_{A'}) + I_{t}(X_{B'})
    + I_{j + 1}(X_{A' \cap B'}) + I_{2t - j - 1}(X).
\end{equation}
Moreover, the ideal $J_{S,j}$ is prime. 
\end{proposition}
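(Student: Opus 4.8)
## Proof proposal for Proposition~\ref{thm:ideals 2}

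The plan is to transfer the description of $\overline{F_S^j}$ from Theorem~\ref{thm:ideals} through the lifting map $\psi$ of~\eqref{psi}, using the machinery of Section~\ref{sec:dim-eq}. By definition $V_S^j = \overline{U_S^j}$ and $U_S^j = \psi\bigl((\mathbb C^\ast)^{|[k\ell]\setminus S|}\times F_S^j\bigr)$; since $F_S^j$ is an irreducible component of $F_S$ (Theorem~\ref{thm:irreducible}, together with Proposition~\ref{prop:irr dec FS}), it plays the role of $F_{S,i}$ in the general framework. By Corollary~\ref{cor: are prime} the ideal $I_{S,j}$ is prime, and by Theorem~\ref{thm:ideals} its canonical generating set is a complete set of defining equations for $F_S^j$. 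I would then invoke Proposition~\ref{equations} directly: with $\{p_1,\dots,p_r\}$ the generators of $I_{S,j}$, a complete set of defining equations for $V_S^j$ is
\[
\{x_{i,j} : j\in S\}\ \cup\ \bigcup_{j=1}^{\ell} I_2(X_{C_j\setminus S})\ \cup\ \bigcup_{m=1}^r\{(p_m)_T : T\in\mathcal T(S)\}.
\]
The task is thus to show this set cuts out the same variety as $J_{S,j}$, i.e. that $V(J_{S,j}) = V_S^j$.

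The key step is to match the two column-indexing conventions. In the $k=2$ setting, column $C_j$ of $\mathcal Y$ is $\{2j-1,2j\}$, and $A'=\bigcup_{j\in A}\{2j-1,2j\}$, $B'=\bigcup_{j\in B}\{2j-1,2j\}$. On any point $\gamma$ satisfying $x_{i,j}=0$ for $j\in S$ and the $2$-minor relations $I_2(X_{C_j})$ (equivalently $I_2(X_{C_j\setminus S})$ once the vanishing on $S$ is imposed), every vector indexed by $C_j$ is a scalar multiple of a single vector $v_j$, so for any $S$-representative $T=\{t_1,\dots,t_\ell\}$ one has $\gamma_T=(\gamma_{t_1},\dots,\gamma_{t_\ell})$ with $\gamma_{t_j}$ a nonzero multiple of $v_j$. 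Hence $\rank$ of any column-subset of $X$ indexed by $\bigcup_{j\in D}C_j$ equals $\rank$ of the corresponding $Y$-columns indexed by $D$. This identifies: $I_t(X_{A'})$ with $(I_t(Y_A))_T$, $I_t(X_{B'})$ with $(I_t(Y_B))_T$, $I_{j+1}(X_{A'\cap B'})$ with $(I_{j+1}(Y_{A\cap B}))_T$ (noting $A'\cap B' = \bigcup_{i\in A\cap B}\{2i-1,2i\}$), and $I_{2t-j-1}(X)$ with $(I_{2t-j-1}(Y))_T$, in the sense that on the common zero locus of $\{x_{i,j}:j\in S\}\cup\bigcup_j I_2(X_{C_j})$, a minor from $J_{S,j}$ vanishes at $\gamma$ iff the corresponding minor from $I_{S,j}$ vanishes at $\gamma_T$. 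So on that locus, $V(J_{S,j})$ and the variety cut out by Proposition~\ref{equations} coincide, giving $V(J_{S,j})=V_S^j$.

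For primeness of $J_{S,j}$ I would argue as in Corollary~\ref{cor: are prime}: the canonical generating set of $J_{S,j}$ is, up to the linear forms $x_{i,j}$ ($j\in S$) which only eliminate variables, a union of a ``product of columns'' structure (the $I_2(X_{C_j})$, which are $2\times 2$ minors of generic $d\times 2$ blocks) together with the nested minor ideals on $A'$, $B'$, $A'\cap B'$ and the full matrix $X$. Since $I_{S,j}$ is prime (Corollary~\ref{cor: are prime}), $V(I_{S,j})=\overline{F_S^j}$ is irreducible; by the explicit surjection $\psi$ (Theorem~\ref{corresp}), $U_S^j$ is the continuous image of the irreducible variety $(\mathbb C^\ast)^{|[k\ell]\setminus S|}\times F_S^j$, hence irreducible, so $V_S^j=V(J_{S,j})$ is irreducible, i.e. $\sqrt{J_{S,j}}$ is prime. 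To conclude $J_{S,j}$ itself is prime it suffices to know it is radical; for this I would cite that $J_{S,j}$ has a squarefree Gröbner basis. The generators are $2$-minors of the column blocks together with classical determinantal minor ideals on overlapping column sets; results on Knutson ideals / liftable ideals in the style of~\cite{seccia2022knutson, clarke2024liftable} apply once one checks the combinatorial compatibility of these nested minor conditions (which is exactly the $k=2$ incarnation of the condition already verified for $I_{S,j}$). This radicality step — verifying that the combined generating set of $J_{S,j}$ forms a squarefree/Knutson Gröbner basis — is the part I expect to require the most care, since one must handle the interaction of the generic $2$-minors $I_2(X_{C_j})$ with the large minor ideals on $A'$, $B'$, $A'\cap B'$, $X$ simultaneously, rather than invoking a single off-the-shelf theorem.
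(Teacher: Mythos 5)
Your proposal is correct and follows essentially the same route as the paper's proof: invoke Theorem~\ref{thm:ideals} for $I_{S,j}$, transfer through Proposition~\ref{equations} to obtain the defining equations of $V_S^j$, and then argue primeness via a square-free Gröbner basis combined with the irreducibility coming from Theorems~\ref{corresp} and~\ref{thm:irreducible}, exactly as in Corollary~\ref{cor: are prime}. The additional detail you supply in matching the $X$- and $Y$-column indexings (and noting that $I_2(X_{C_j})$ and $\{x_{i,j}:j\in S\}+I_2(X_{C_j\setminus S})$ generate the same radical) fills in what the paper compresses to ``applying this result in the present context yields $J_{S,j}$,'' and your caution about the Knutson-ideal citation covering the mixed generators of $J_{S,j}$ is a fair observation, though the paper does already make this assertion for $J_\emptyset$ in the proof of Proposition~\ref{prop:degree-paths}.
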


\begin{proof}
By Theorem~\ref{thm:ideals}, the ideal $I_{S,j}$ provides a complete set of defining equations for $F_{S}^{j}$.  
Furthermore, Proposition~\ref{equations} describes how to obtain defining equations for $V_{S}^{j}$ from those of $F_{S}^{j}$.  
Applying this result in the present context yields the ideal $J_{S,j}$, and hence $V_{S}^{j} = V(J_{S,j})$. The primeness of $J_{S,j}$ follows from applying exactly the same argument as in Corollary~\ref{cor: are prime}.
\end{proof}

\begin{proposition}\label{prop:irr dec VS}
Let $\emptyset\neq S\subset[2\ell]$ be admissible, and let $j$ range over the values in~\eqref{maximal j}.
 Then
\[
V_S=\bigcup_j V_S^j
\quad\text{and}\quad
\mathbb{I}(V_S)=\bigcap_j J_{S,j}
\]
give an irredundant irreducible decomposition of $V_S$ and the primary decomposition of $\mathbb{I}(V_S)$, respectively.
\end{proposition}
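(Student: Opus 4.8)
The plan is to assemble this statement from results already in place, since all the genuinely substantive work has been done upstream: Proposition~\ref{prop:irr dec FS} supplies the irredundant irreducible decomposition of $\overline{F_S}$ into the pieces $\overline{F_S^j}$, Theorem~\ref{corresp} transports irreducible components of $F_S$ to those of $U_S$ (equivalently, of $V_S$), and Proposition~\ref{thm:ideals 2} identifies $V_S^j$ with $V(J_{S,j})$ for a prime ideal $J_{S,j}$. So I would present this as a short deduction rather than a fresh argument.

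First I would record that, as $j$ ranges over the indices listed in~\eqref{maximal j}, the varieties $F_S^j$ are precisely the irreducible components of $F_S$: this is Proposition~\ref{prop:irr dec FS} together with Theorem~\ref{thm:irreducible}. Next, under the bijection of Theorem~\ref{corresp} — realized by the map $\psi$ from~\eqref{psi} — the component $F_S^j$ corresponds to $U_S^j = \psi\bigl((\CC^\ast)^{|[k\ell]\setminus S|}\times F_S^j\bigr)$, which is therefore an irreducible component of $U_S$; taking Zariski closures, $V_S^j = \overline{U_S^j}$ is the matching irreducible component of $V_S = \overline{U_S}$. Because $\overline{F_S} = \bigcup_j \overline{F_S^j}$ is irredundant and the correspondence of Theorem~\ref{corresp} preserves irredundancy — this is exactly the content of the non-redundancy argument inside its proof — it follows that $V_S = \bigcup_j V_S^j$ is an irredundant irreducible decomposition, as claimed.

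For the second assertion I would use that $\mathbb{I}(V_S)$ is radical, so its primary decomposition is the intersection of the minimal primes over it, and those minimal primes are precisely the ideals $\mathbb{I}(V_S^j)$ of the irreducible components. By Proposition~\ref{thm:ideals 2}, $V_S^j = V(J_{S,j})$ with $J_{S,j}$ prime, hence $\mathbb{I}(V_S^j) = \mathbb{I}(V(J_{S,j})) = \sqrt{J_{S,j}} = J_{S,j}$. Therefore $\mathbb{I}(V_S) = \bigcap_j \mathbb{I}(V_S^j) = \bigcap_j J_{S,j}$, and the irredundancy just established guarantees there are no containments among the $V_S^j$, hence none among the prime ideals $J_{S,j}$, so this is the minimal primary decomposition.

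The argument uses no new idea; the only point requiring care is the bookkeeping of the correspondence — one must verify that the component of $U_S$ (resp.\ $V_S$) attached to $F_S^j$ via Theorem~\ref{corresp} and Notation~\ref{not corres} is literally $U_S^j$ (resp.\ $V_S^j$) as defined, and that the ideal $J_{S,j}$ produced by Proposition~\ref{thm:ideals 2} is exactly $\mathbb{I}(V_S^j)$ and not merely a set of defining equations up to radical. Primeness of $J_{S,j}$ is what closes the latter gap. In short, the main obstacle is organizational rather than mathematical, and the proof can be stated in a few lines citing Proposition~\ref{prop:irr dec FS}, Theorem~\ref{corresp}, and Proposition~\ref{thm:ideals 2}.
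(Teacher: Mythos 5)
Your proposal is correct and follows the same route as the paper: deduce the irreducible decomposition from Proposition~\ref{prop:irr dec FS} and the correspondence of Theorem~\ref{corresp}, then obtain the primary decomposition from the primeness of $J_{S,j}$ in Proposition~\ref{thm:ideals 2}. The paper states this in two sentences; your version spells out the bookkeeping (transporting irredundancy through $\psi$, and using $J_{S,j}$ prime to conclude $\mathbb{I}(V_S^j)=J_{S,j}$), which is exactly what makes the short citation valid.
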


\begin{proof}
The irreducible decomposition follows from Proposition~\ref{prop:irr dec FS} together with the correspondence between the irreducible components of $F_{S}$ and those of $U_{S}$ established in Theorem~\ref{corresp}.  
The description of the primary decomposition of $\mathbb{I}(V_{S})$ then follows from Theorem~\ref{thm:ideals 2}.
\end{proof}

\begin{example}
Consider $t,\ell,d$ and $S$ as in Example~\ref{example for k=2}.  
Then, the primary decomposition of $\mathbb{I}(V_{S})$ is
\begin{small}
\begin{align*}
\mathbb{I}(V_{S})
&=
\Bigl(
 (x_{i,j} : j \in \{1,3,5,16,18,20\})
 \;+\;
 I_{2}(X_{7,8}) + I_{2}(X_{9,10}) + I_{2}(X_{11,12}) + I_{2}(X_{13,14})
\\[-2pt]
&\hspace{4.2cm}
 +\; I_{5}(X_{7,\ldots,20})
 +\; I_{5}(X_{1,\ldots,14})
 +\; I_{3}(X_{7,\ldots,14})
\Bigr)
\\[6pt]
&\qquad \cap\;
\Bigl(
 (x_{i,j} : j \in \{1,3,5,16,18,20\})
 \;+\;
 I_{2}(X_{7,8}) + I_{2}(X_{9,10}) + I_{2}(X_{11,12}) + I_{2}(X_{13,14})
\\[-2pt]
&\hspace{4.2cm}
 +\; I_{5}(X_{7,\ldots,20})
 +\; I_{5}(X_{1,\ldots,14})
 +\; I_{4}(X_{7,\ldots,14})
 +\; I_{6}(X)
\Bigr).
\end{align*}
\end{small}
\end{example}

We now arrive at the main result of this section, where we establish the irredundant irreducible decomposition of $V_{\Delta}$ under the assumption $k=2$.

\begin{theorem}\label{thm:decomposition of Vdelta for k=2}
For $k=2$, the variety $V_{\Delta}$ admits the following irredundant irreducible decomposition:
\[
V_{\Delta} = V_{\emptyset} \,\cup\, \bigcup_{S} \, \bigcup_{j} V(J_{S,j}),
\]
where the outer union runs over all admissible subsets $\emptyset \neq S \subset [2\ell]$, and for each such $S$, the second union runs over all values of $j$ specified in~\eqref{maximal j}. The ideals $J_{S,j}$ are as defined in~\eqref{ideal JSj}.
\end{theorem}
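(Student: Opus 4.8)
The plan is to assemble the statement from the general reduction of Theorem~\ref{irreducible} together with the detailed analysis of the varieties $F_S$ and $V_S$ developed in this section. Theorem~\ref{irreducible} already provides the irredundant irreducible decomposition $V_\Delta = \bigcup_S \bigcup_{i=1}^{a_S} V_{S,i}$, the outer union running over admissible subsets $S \subset [2\ell]$ with $V_S \neq \emptyset$; so it remains only to identify the irreducible components of each $V_S$ and to check that listing them together introduces no redundancy.

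First I would treat $S = \emptyset$ separately, since the decomposition of Theorem~\ref{thm:decomposition} is stated only for $S \neq \emptyset$. For $S = \emptyset$ one has $A = B = [\ell]$, so by Definition~\ref{fs} the variety $F_\emptyset$ consists of all $\gamma = (\gamma_1,\dotsc,\gamma_\ell) \in (\CC^d)^\ell$ with every $\gamma_i \neq 0$ and $\rank\{\gamma_1,\dotsc,\gamma_\ell\} = t-1$. Exactly as in the proof of Proposition~\ref{prop:t=l-components}(i), this is a nonempty (here $d,\ell \geq t-1$) Zariski-open subset of the irreducible generic determinantal variety $\{\gamma : \rank\{\gamma_1,\dotsc,\gamma_\ell\} \le t-1\}$, hence irreducible; then $U_\emptyset$ is irreducible as a continuous surjective image of $(\CC^\ast)^{|[2\ell]\setminus\emptyset|}\times F_\emptyset$ under the map $\psi$ of~\eqref{psi}, and so is $V_\emptyset = \overline{U_\emptyset}$. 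Thus $S=\emptyset$ contributes the single component $V_\emptyset$.

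Next, for every admissible $S \neq \emptyset$ with $V_S \neq \emptyset$, I would invoke Proposition~\ref{prop:irr dec VS}, which gives the irredundant irreducible decomposition $V_S = \bigcup_j V_S^j = \bigcup_j V(J_{S,j})$, where $j$ ranges precisely over the values in~\eqref{maximal j} and each $J_{S,j}$ is prime by Proposition~\ref{thm:ideals 2}. Since $V_S \neq \emptyset$ is equivalent to $\mathcal{P}(S) \neq \emptyset$, writing the outer union over all admissible $\emptyset \neq S$ with the understanding that the inner union over $j$ is empty for the remaining $S$ is harmless. Substituting these decompositions into the formula of Theorem~\ref{irreducible} yields the claimed equality $V_\Delta = V_\emptyset \cup \bigcup_S \bigcup_j V(J_{S,j})$.

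It remains to argue irredundancy of the full list, and this is where the two sources of irredundancy must be combined: inclusions between components attached to the \emph{same} $S$ are ruled out by Proposition~\ref{prop:irr dec VS}, while inclusions between components attached to \emph{distinct} admissible subsets $S^{(1)} \neq S^{(2)}$ --- in particular any comparison involving $V_\emptyset$ and some $V(J_{S,j})$ --- are ruled out by the irredundancy argument already carried out in the proof of Theorem~\ref{irreducible} (which, given $\gamma$ in a component of $V_{S^{(1)}}$ meeting $U_{S^{(1)}}$, forces $S^{(1)} \supsetneq S^{(2)}$ and then derives a contradiction from the flat condition). I do not expect a genuine obstacle: the substantive work is already done in Theorems~\ref{irreducible}, \ref{corresp}, \ref{thm:decomposition}, \ref{thm:ideals}, \ref{thm:irreducible}, \ref{thm:maximal varieties FSj} and Propositions~\ref{prop:irr dec FS}, \ref{thm:ideals 2}, \ref{prop:irr dec VS}; the only points needing care are the stand-alone treatment of $S = \emptyset$ and the clean merging of the ``same $S$'' and ``different $S$'' irredundancy statements.
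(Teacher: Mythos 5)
Your proposal is correct and takes essentially the same route as the paper, whose proof is the single sentence that the result follows from Theorem~\ref{irreducible} and Proposition~\ref{prop:irr dec VS}. You usefully fill in two details that the paper leaves implicit: that $S=\emptyset$ must be handled outside Proposition~\ref{prop:irr dec VS} (you correctly show $F_\emptyset$, and hence $V_\emptyset$, is irreducible via the argument of Proposition~\ref{prop:t=l-components}(i)), and that irredundancy combines the within-$S$ irredundancy of Proposition~\ref{prop:irr dec VS} with the across-$S$ irredundancy established in the proof of Theorem~\ref{irreducible}.
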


\begin{proof}
The result follows directly from Theorem~\ref{irreducible} and Proposition~\ref{prop:irr dec VS}.
\end{proof}

\begin{example}
Let $t=4$ and $d=\ell=5$.  
For $S\subset[10]$, we say that $S$ has \emph{type} $(u,v)$ if
$(|S\cap R_1|,|S\cap R_2|)=(u,v)$.
Then $S$ is admissible iff $u,v\le2$ and no column $C_j$ is contained in $S$.
Hence the admissible types are
\[
(0,0),\ (0,1),\ (0,2),\ (1,0),\ (2,0),\ (1,1),\ (1,2),\ (2,1),\ (2,2).
\]
Table~\ref{tab:maxj} lists the maximal values of $j$ for each admissible type $(u,v)$, as determined by~\eqref{maximal j}.

\begin{table}[H]
\centering
\begin{tabular}{c|c|c|c|c|c|c|c|c}
\text{type $(u,v)$}  & $(0,1)$ & $(0,2)$ & $(1,0)$ & $(2,0)$  & $(1,1)$ & $(1,2)$ & $(2,1)$ & $(2,2)$ \\
\hline
\text{maximal values of $j$}& $\emptyset$ & $\emptyset$ & $\emptyset$ & $\emptyset$ & $\{2\}$ & $\{2\}$ & $\{2\}$ & $\{1\}$
\end{tabular}
\caption{Values of $j$ for which $F_{S}^{j}$ is maximal for an admissible subset $S$ of type $(u,v)$.}
\label{tab:maxj}
\end{table}

From the table we see that $F_{S} = \emptyset$ for types  $(0,1), (0,2), (1,0), (2,0)$,
while $F_{S}$ is nonempty and irreducible for types  $(1,1), (1,2), (2,1), (2,2)$.  
By Theorem~\ref{thm:decomposition of Vdelta for k=2}, the irredundant irreducible decomposition of $V_{\Delta}$ is
\[
V_{\Delta} \;=\; 
V_{\emptyset}\ \cup\!
\bigcup_{\substack{
S \text{ admissible} \\
\text{of type } (1,1), (1,2), (2,1), (2,2)
}}
V_{S}.
\]
Finally, note that a fixed type $(u,v)$ has $\binom{5}{u}\binom{5-u}{v}$ admissible subsets.
Hence there are $20,30,30,$ and $30$ subsets of types $(1,1),(1,2),(2,1)$, and $(2,2)$, respectively, and $V_\Delta$ has exactly $111$ irreducible components.
\end{example}

\section{Dimension of the varieties \texorpdfstring{$F_{S}^j$}{FSj}, \texorpdfstring{$V_{S}^j$}{VSj}, and \texorpdfstring{$V_\Delta$}{VDelta} for the case \texorpdfstring{$k=2$}{k=2}}\label{sec:dim}
In this section, we determine the dimension of the variety $V_{\Delta}$ and of its irreducible components in the case $k=2$. 
Using the correspondence established in Notation~\ref{not corres}, we relate the dimensions of the components of $V_{\Delta}$ to those of the auxiliary varieties $F_S$. 
We begin by computing the dimensions of the varieties $F_S^j$, and then transfer these results to obtain the dimensions of the corresponding components $V_S^j$ and of $V_{\Delta}$.

\begin{theorem}\label{thm:dimension}
    For any non-empty admissible set $S$ and any $j\in \mathcal{P}(S)$, the dimension of $\overline{F_{S}^j}$ is 
\[
\mathcal{D} \coloneqq d(2t - 2 - j) + (t-1)(u+v)+ j(\ell-u-v) - (j^2 + 2(t-1-j)^2 + 2j(t-1-j)).
\]
\end{theorem}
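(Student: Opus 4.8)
The natural approach is to compute the dimension of $\overline{F_S^j}$ by re-examining the surjective parametrization $\varphi$ from the proof of Theorem~\ref{thm:irreducible}, since $\overline{F_S^j} = \overline{\operatorname{Im}(\varphi)}$. Recall that
\[
\varphi : \mathbb{C}^{d\times(2t-2-j)} \times \mathbb{C}^{(t-1)\times u} \times \mathbb{C}^{j\times(\ell-u-v)} \times \mathbb{C}^{(t-1)\times v} \longrightarrow \mathbb{C}^{d\times\ell},
\]
so the source has dimension $d(2t-2-j) + (t-1)u + j(\ell-u-v) + (t-1)v$. The dimension of the image equals the dimension of the source minus the dimension of a generic fiber, so the heart of the proof is to identify the generic fiber and show it has dimension exactly $j^2 + 2(t-1-j)^2 + 2j(t-1-j)$. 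I would first restrict $\varphi$ to the open locus where $M$ has full column rank $2t-2-j$ and the three coefficient blocks $N_1, N_2, N_3$ have full row rank (respectively $t-1$, $j$, $t-1$); by Theorem~\ref{thm:irreducible} this locus still dominates $F_S^j$, and on it the fiber computation is cleaner.

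The second step is the fiber analysis. Over a generic point $\gamma \in F_S^j$, the fiber of $\varphi$ consists of all ways to write $\gamma$ in the prescribed block form. Since $M$ has full column rank, its column span is determined by $\gamma$: writing $M = (\mathcal{B}_1 \mid \mathcal{B}_2 \mid \mathcal{B}_3)$ with block sizes $t-1-j$, $j$, $t-1-j$, the subspaces $\operatorname{Span}\mathcal{B}_2 = \operatorname{Span}\gamma_A \cap \operatorname{Span}\gamma_B$, $\operatorname{Span}(\mathcal{B}_1\cup\mathcal{B}_2) = \operatorname{Span}\gamma_A$, and $\operatorname{Span}(\mathcal{B}_2\cup\mathcal{B}_3) = \operatorname{Span}\gamma_B$ are all intrinsically determined. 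Hence the ambiguity in $M$ is exactly the ambiguity in choosing ordered bases: a change of basis for $\operatorname{Span}\mathcal{B}_2$ (dimension $j^2$), together with a change of basis for the complement $\mathcal{B}_1$ inside $\operatorname{Span}\gamma_A$ modulo $\operatorname{Span}\mathcal{B}_2$ — here the relevant group acts with dimension $(t-1-j)^2 + j(t-1-j)$ (the $\operatorname{GL}$ on the complement plus the freedom to add $\mathcal{B}_2$-components) — and symmetrically for $\mathcal{B}_3$, contributing another $(t-1-j)^2 + j(t-1-j)$. Once $M$ is fixed within its fiber, the matrices $N_1, N_2, N_3$ are uniquely determined because the relevant column submatrices of $M$ have full column rank. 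Summing gives fiber dimension $j^2 + 2(t-1-j)^2 + 2j(t-1-j)$, and subtracting from the source dimension yields the claimed formula $\mathcal{D}$.

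The main obstacle I anticipate is making the fiber group action completely precise — in particular, being careful that the generic fiber is actually an orbit of the described group (so that its dimension is the group dimension, not something smaller due to stabilizers, nor something larger), and correctly accounting for the ``block-triangular'' freedom of modifying $\mathcal{B}_1$ and $\mathcal{B}_3$ by $\mathcal{B}_2$-components. One clean way to organize this is to observe that the fiber sits inside the larger fiber of the ``forget $N$'' map $M \mapsto \operatorname{colspan}(M)$ and to compute stepwise: first the dimension of the space of valid $M$'s (a bundle over a point, namely the set of block-structured full-rank matrices with prescribed flag of column spans), then note the $N_i$ are determined. A cleaner alternative, avoiding group actions altogether, is to directly parametrize: fix the flag $\operatorname{Span}\mathcal{B}_2 \subset \operatorname{Span}\gamma_A$, $\operatorname{Span}\mathcal{B}_2 \subset \operatorname{Span}\gamma_B$ determined by $\gamma$, count the dimension of the space of ordered bases of $M$ compatible with this flag, and verify uniqueness of the $N_i$ on the full-rank locus; this is the route I would actually write up, as it sidesteps stabilizer subtleties.
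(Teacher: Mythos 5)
Your proposal is correct and takes essentially the same approach as the paper: both compute the dimension as $\dim(\text{source of }\varphi)$ minus the generic fiber dimension, where $\varphi$ is the parametrization from Theorem~\ref{thm:irreducible}, and both identify the fiber with the set of block-triangular changes of basis compatible with the flag $\Span\mathcal{B}_2\subset\Span\gamma_A,\,\Span\gamma_B$ intrinsically determined by $\gamma$. The paper organizes this as a two-sided argument (exhibiting an explicit image-preserving action of $GL_j\times(GL_{t-1-j})^2\times(\CC^{j\times(t-1-j)})^2$ for the lower bound on fiber dimension, then matrix manipulations forcing the off-diagonal blocks $T_1,\dots,T_4$ of any change-of-basis matrix $C$ to vanish for the upper bound), whereas your flag-based parametrization reaches the same count more directly; the underlying computation is identical.
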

\begin{proof}
    First, we will show that the dimension of $F_{S}^j$ is bounded above by $\mathcal{D}$ by showing that there is an action of $GL_j \times (GL_{t-1-j})^2 \times (\CC^{j\times (t-1-j)})^2$ on the parameter space that leaves the image unchanged. We will show that for any choice of parameters $(M, N_1, N_2, N_3)$, we have
    \begin{equation}\label{eq:image-unchanged}
    \varphi(M, N_1, N_2, N_3) = \varphi\left(MC, \begin{pmatrix}
        R_1 & 0 \\ Q_1 & P
    \end{pmatrix}^{-1}N_1,\quad P^{-1}N_2, \ \begin{pmatrix}
        P & Q_2 \\ 0 & R_2
    \end{pmatrix}^{-1}N_3\right),
     \end{equation}
where for some $P\in GL_j, R_1,R_2 \in GL_{t-1-j}, Q_1,Q_2 \in \mathbb{C}^{j \times (t-1-j)}$,
    \begin{align*}
        C = \begin{pmatrix}
            R_1 & 0_{(t-1-j) \times j} & 0_{(t-1-j) \times (t-1-j)} \\
            Q_1 & P & Q_2 \\
            0_{(t-1-j) \times (t-1-j)} & 0_{(t-1-j) \times j} & R_2
        \end{pmatrix}.
    \end{align*}

Write the matrix $M$ as $M = \begin{pmatrix}
    M_1 & M_2 & M_3
\end{pmatrix}$ where $M_1, M_3\in\CC^{d \times (t - 1 - j)}$ and $M_2 \in\CC^{d \times j}$. Then observe that $(MC)[t - j, t - 1] = M_2 P$ and
\begin{align*}
(MC)[1, t-1] = \begin{pmatrix}
    M_1 & M_2
\end{pmatrix}\begin{pmatrix}
    R_1 & 0 \\ Q_1 & P
\end{pmatrix} \text{ and } (MC)[t - j, 2t - 2 - j] = \begin{pmatrix}
    M_2 & M_3
\end{pmatrix}\begin{pmatrix}
    P & Q_2 \\ 0 & R_2
    \end{pmatrix}.
\end{align*}
Hence, \eqref{eq:image-unchanged} follows, and the generic fiber of $\varphi$ has dimension at least $j^2+2(t-1-j)^2+2j(t-1-j)$.

We now show that this upper bound is attained by proving that no larger image-preserving group acts on the parameters.
To see this, assume $\varphi(M,N_1,N_2,N_3) = \varphi(M', N_1', N_2', N_3')$ for two choices of parameters giving rise to a generic point in the image of $\varphi$. Then since $M$ and $M'$ are full-rank, there exists a unique matrix $C \in GL_{2t-2-j}$ such that $MC = M'$. Assume
\begin{align*}
    C = \begin{pmatrix}
            R_1 & T_1 & T_2\\
            Q_1 & P & Q_2 \\
            T_3 & T_4 & R_2
        \end{pmatrix},
\end{align*}
 with $P\in \mathbb{C}^{j\times j}$, $R_1,R_2 \in \mathbb{C}^{(t-1-j)\times (t-1-j)}$, $Q_1,Q_2 \in \mathbb{C}^{j \times (t-1-j)}$, $T_1,T_4 \in \mathbb{C}^{(t-1-j) \times j}$, $T_2,T_3 \in GL_{t-1-j}$. Once again, observe that we may write $M = \begin{pmatrix}M_1 & M_2 & M_3\end{pmatrix}$ and $M' = \begin{pmatrix}M_1' & M_2' & M_3'\end{pmatrix}$ for $M_1,M_1',M_3,M_3'\in \mathbb{C}^{d\times (t-1-j)}$ and $M_2,M_2'\in \mathbb{C}^{d\times j}$, so
\begin{align}\label{eq:M'-in-terms-of-M}
M' = MC = \begin{pmatrix}M_1R_1 + M_2Q_1 + M_3T_3 & M_1T_1 + M_2P + M_3T_4 & M_1T_2 + M_2Q_2 + M_3R_2
\end{pmatrix}.
\end{align}
Now without loss of generality, for some $W_1, W_1'\in \text{GL}_{t-1}$ and $W_2, W_2'\in\CC^{(d-t+1)\times(t-1)}$, re-write 
\[
\begin{pmatrix} M_1 & M_2 \end{pmatrix} = \begin{pmatrix} W_1 \\ W_2 \end{pmatrix} \text{ and } \begin{pmatrix} M_1' & M_2' \end{pmatrix} = \begin{pmatrix} W_1' \\ W_2' \end{pmatrix}.
\]
Then there exists a unique matrix $G\in {\rm GL}_{t-1}$ such that $W_1G=W_1'$. 
Now, define
\begin{align*}
    N \coloneqq \begin{pmatrix} N_1 & \begin{array}{c} 0_{(t-1-j) \times (\ell-u-v)} \\ N_2 \end{array} \end{pmatrix}, \quad
    N' \coloneqq \begin{pmatrix} N_1' & \begin{array}{c} 0_{(t-1-j) \times (\ell-u-v)} \\ N_2' \end{array} \end{pmatrix}.
\end{align*}
Since $\begin{pmatrix} M_1 & M_2 \end{pmatrix} N = \begin{pmatrix} M_1' & M_2' \end{pmatrix} N'$, we have $W_1N = W_1'N'$ and $W_2N = W_2'N'$. From the first equality we get $N = G N'$. From the second equality, $W_2 G N' =W_2' N'$. Since $S$ is admissible, we have $t-1 \leq \ell-v$, and so the matrix $N'$ has a right inverse. This means that $W_2 G  = W_2'$, therefore 
$\begin{pmatrix} M_1 & M_2 \end{pmatrix} G = \begin{pmatrix} M_1' & M_2' \end{pmatrix}$.
Combining this with \eqref{eq:M'-in-terms-of-M}, we get 
\begin{align}\label{eq:final-equation}
\begin{pmatrix} M_1 & M_2 \end{pmatrix} G = \begin{pmatrix}M_1R_1 + M_2Q_1 + M_3T_3 & M_1T_1 + M_2P + M_3T_4\end{pmatrix}.
\end{align}
Since every column on the left-hand side of \eqref{eq:final-equation} is a linear combination of $M_1$ and $M_2$, while every column on the right-hand side of \eqref{eq:final-equation} is a linear combination of $M_1$, $M_2$ and $M_3$, it follows that $T_3 = 0$ and $T_4 = 0$. Then note that by \eqref{eq:final-equation}, 
\begin{align*}
    \begin{pmatrix} M_1 & M_2 \end{pmatrix} G = \begin{pmatrix} M_1 & M_2 \end{pmatrix} \begin{pmatrix} R_1 & 0 \\ Q_1 & P \end{pmatrix}.
\end{align*}
Now since $d\geq t-1$, the matrix $\begin{pmatrix} M_1 & M_2 \end{pmatrix}$ has a left inverse, and therefore, 
    $G = \begin{pmatrix} R_1 & 0 \\ Q_1 & P \end{pmatrix},$
which forces $R_1\in\text{GL}_{t-1-j}$ and $P\in\text{GL}_{j}$.  
It can be proved similarly that $T_1 = 0$ and $T_2 = 0$ by considering the matrices $\begin{pmatrix}
    M_2 & M_3
\end{pmatrix}$ and $\begin{pmatrix}
    M_2' & M_3'
\end{pmatrix}$. Analogously, $R_2 \in \text{GL}_{t-1-j}$, and the conclusion follows.
\end{proof}

\begin{corollary} \label{cor: dimension of V_S^j}
For any admissible set $\emptyset \neq S\subset [k\ell]$ and any $j\in \mathcal{P}(S)$, the dimension of $V_{S}^{j}$ is 
\[d(2t - 2 - j) + (t-2)(u+v)+ j(\ell-u-v) - (j^2 + 2(t-1-j)^2 + 2j(t-1-j)) + \ell.\]
\end{corollary}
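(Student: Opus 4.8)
The plan is to derive the dimension of $V_S^j$ directly from the dimension of $F_S^j$ computed in Theorem~\ref{thm:dimension}, using the general dimension formula for the components of $V_\Delta$ established in Corollary~\ref{dimensions}. By the correspondence of Theorem~\ref{corresp} and the definition of $U_S^j$ and $V_S^j$ (as $\psi\bigl((\CC^\ast)^{|[k\ell]\setminus S|}\times F_S^j\bigr)$ and its Zariski closure), the component $V_S^j$ of $V_S$ corresponds to the component $F_S^j$ of $F_S$. Hence Corollary~\ref{dimensions} applies verbatim with $k=2$:
\[
\Dim(V_S^j) = \Dim(F_S^j) + \ell(k-1) - |S| = \Dim\bigl(\overline{F_S^j}\bigr) + \ell - |S|.
\]

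The only remaining task is to simplify $\Dim(\overline{F_S^j}) + \ell - |S|$ into the stated closed form. Here I would use that $S$ has type $(u,v)$, so $|S| = |S\cap R_1| + |S\cap R_2| = u + v$ (the rows $R_1,R_2$ partition $[2\ell]$). Substituting the value $\mathcal D$ from Theorem~\ref{thm:dimension} gives
\[
\Dim(V_S^j) = \Bigl(d(2t-2-j) + (t-1)(u+v) + j(\ell-u-v) - \bigl(j^2 + 2(t-1-j)^2 + 2j(t-1-j)\bigr)\Bigr) + \ell - (u+v).
\]
Collecting the $(u+v)$ terms, $(t-1)(u+v) - (u+v) = (t-2)(u+v)$, which yields exactly
\[
d(2t-2-j) + (t-2)(u+v) + j(\ell-u-v) - \bigl(j^2 + 2(t-1-j)^2 + 2j(t-1-j)\bigr) + \ell,
\]
as claimed.

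The argument is essentially bookkeeping once the two prior results are in place; the only point requiring a moment's care is confirming that $|S| = u+v$, i.e.\ that the admissibility/type conventions make $R_1$ and $R_2$ a partition of the index set $[2\ell]$ so that the counts of $S$ in the two rows add up to $|S|$ with no overlap. This is immediate from the definitions of $R_i$ in Definition~\ref{setup} (each column of $\mathcal Y$ contributes one entry to each row when $k=2$), so there is no real obstacle; the substitution and simplification then complete the proof.
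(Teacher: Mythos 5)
Your proof is correct and takes exactly the same route as the paper: combine Corollary~\ref{dimensions} (with $k=2$, so $\Dim V_S^j = \Dim F_S^j + \ell - |S|$) with Theorem~\ref{thm:dimension}, then use $|S| = u+v$ to simplify. The paper states this as a one-line deduction, while you spell out the algebraic bookkeeping; the substance is identical.
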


\begin{proof}
This follows from Theorem~\ref{thm:dimension} and Corollary~\ref{dimensions}, giving
$\Dim V_S^j=\Dim F_S^j+\ell-|S|$.
\end{proof}

We now identify the varieties $V_{S}^{j}$ that attain maximal dimension.

\begin{lemma}\label{top dimensional}
Among all varieties $V_{S}^{j}$, where $S \subset [kl]$ is a nonempty admissible subset and $j \in \mathcal{P}(S)$, the top-dimensional ones occur for $S_0$ with $u+v = \min\{2(\ell-t+1), \ell\}$ and $j_0 = x(S_0)$. In this case,
\begin{align}\label{eq: top dimensions}
\Dim V_{S_0}^{j_0} = 
\begin{cases}
  d\ell + 2t - \ell - 2 & \ell < 2t-2, \\
  (t - 1)(2d - 2t + \ell + 2) & \ell \geq 2t-2.
\end{cases}
\end{align}
\end{lemma}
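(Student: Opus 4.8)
The plan is to optimize the explicit dimension formula from Corollary~\ref{cor: dimension of V_S^j} over the discrete parameters $(u,v,j)$ subject to the admissibility constraints and to $j\in\mathcal P(S)$. Writing $f(u,v,j)=d(2t-2-j)+(t-2)(u+v)+j(\ell-u-v)-\bigl(j^2+2(t-1-j)^2+2j(t-1-j)\bigr)+\ell$, I first observe that $f$ depends on $(u,v)$ only through the sum $w\coloneqq u+v$, so it suffices to optimize over $w$ and $j$. The admissibility of $S$ forces $u,v\le \ell-t+1$ (since $|R_i\setminus S|\ge t-1$), hence $w\le\min\{2(\ell-t+1),\ell\}$; and $j\ge x(S)$, where by Theorem~\ref{thm:decomposition} the lower bound $x(S)$ satisfies $x(S)\ge \max\{0,2t-2-d\}$ and, when $A\cap B\ne\emptyset$, also $x(S)\ge t-1-|A\setminus B|=t-1-v$ and $x(S)\ge t-1-|B\setminus A|=t-1-u$, equivalently $j\ge \max(t-1-u,t-1-v)$, i.e. $u,v\ge t-1-j$, which gives $w\ge 2(t-1-j)$; similarly $j\le t-2$. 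The key monotonicity computations are: (a) for fixed $j$, the coefficient of $w$ in $f$ is $(t-2)-j\ge 0$ since $j\le t-2$, so $f$ is nondecreasing in $w$ and we should take $w$ as large as the constraints allow; (b) having set $w=\min\{2(\ell-t+1),\ell\}$, the remaining function of $j$ is a concave (downward) quadratic whose behavior I analyze to show the optimum is at the smallest feasible $j$, namely $j=x(S_0)$.

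The main steps, in order, are: (1) reduce to the two variables $w,j$ and record the feasible region coming from admissibility and $j\in\mathcal P(S)$; (2) show $f$ is nondecreasing in $w$ using $j\le t-2$, and conclude $w=w_0\coloneqq\min\{2(\ell-t+1),\ell\}$ at the optimum, so there is an admissible $S_0$ of type $(u,v)$ with $u+v=w_0$ (existence of such a type is immediate: e.g. split $w_0$ as evenly as possible between $u$ and $v$, each $\le\ell-t+1$); (3) substitute $w=w_0$ and analyze the resulting one-variable function $g(j)=f(w_0,j)$. Expanding the quadratic part, $j^2+2(t-1-j)^2+2j(t-1-j)=j^2+2(t-1-j)^2+2j(t-1-j)$ simplifies to $(t-1)^2+(t-1-j)^2$ after collecting terms, so $g(j)=d(2t-2-j)+(t-2)w_0+j(\ell-w_0)-(t-1)^2-(t-1-j)^2+\ell$; differentiating in $j$ gives $g'(j)=-d+(\ell-w_0)+2(t-1-j)$, and I check the sign of $g'$ on the feasible range to locate the maximizer; (4) split into the two cases $\ell<2t-2$ (where $w_0=\ell$, so $\ell-w_0=0$) and $\ell\ge 2t-2$ (where $w_0=2(\ell-t+1)$, so $\ell-w_0=2t-2-\ell\le 0$), conclude in each case that the maximum of $g$ over the feasible $j$ is attained at $j=x(S_0)$, and compute $x(S_0)$ explicitly (it is $\max\{0,2t-2-d\}$ in the relevant regime, or is pinned by the $w$-constraints); (5) plug $j_0=x(S_0)$ and $w=w_0$ into $f$ and simplify to obtain the two closed forms in~\eqref{eq: top dimensions}.

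The main obstacle I anticipate is not any single calculation but the careful bookkeeping in Step (4): the value of $x(S_0)$ depends on whether $A\cap B=\emptyset$ and on which of the several lower bounds in Theorem~\ref{thm:decomposition} is active, and whether $d\ge t$ (always true here) makes $2t-2-d$ negative or not — one must verify that the interior critical point $j^\ast$ of $g$ (when it exists) never beats the boundary value $j=x(S_0)$, and that the chosen $S_0$ genuinely realizes $j_0=x(S_0)\in\mathcal P(S_0)$ rather than merely satisfying the inequality. A clean way to handle this is to note that increasing $j$ by $1$ while keeping $w=w_0$ changes $g$ by $g(j+1)-g(j)=-d+(\ell-w_0)+2(t-1-j)-1$, and since on the feasible range $j\ge t-1-w_0/2$ forces $2(t-1-j)\le w_0$, one gets $g(j+1)-g(j)\le -d+\ell-1<0$ whenever $\ell\le d$ — and in the regime $\ell\ge 2t-2$ one uses instead $t-1-j\le t-2$ together with $d\ge t-1$ to see the increment is $\le \ell-w_0+2(t-2)-d-1=2t-2-\ell+2t-4-d-1$, which is negative for $d\ge t$; the sign bookkeeping in the complementary small-$\ell$ regime is symmetric. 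Once monotone decrease in $j$ is established on the whole feasible interval, the optimum is forced to the left endpoint and Steps (1)–(3), (5) are routine substitution.
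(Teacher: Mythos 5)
Your overall strategy matches the paper's: write $\Dim V_S^j$ explicitly via Corollary~\ref{cor: dimension of V_S^j} as a function $f(w,j)$ of $w=u+v$ and $j$, note that the coefficient of $w$ is $t-2-j\ge 0$ so $w$ should be pushed to its cap $w_0=\min\{2(\ell-t+1),\ell\}$, and then show the resulting one-variable function $g(j)$ is maximized at the left endpoint $j=x(S_0)$. The paper does this by computing the vertex of the downward quadratic $g$, namely $j^\ast=\tfrac12\bigl((2t-2-d)+(\ell-w_0)\bigr)$, and observing each summand is $\le x(S_0)$; your discrete-increment version is a valid variant of the same idea. You also correctly simplify the quadratic part of $f$ to $(t-1)^2+(t-1-j)^2$.

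However, your Step~(4) contains a concrete sign error that would derail the execution: you write that $\ell<2t-2$ gives $w_0=\ell$ and $\ell\ge 2t-2$ gives $w_0=2(\ell-t+1)$, but this is reversed. Since $2(\ell-t+1)<\ell\iff \ell<2t-2$, the minimum is $w_0=2(\ell-t+1)$ precisely when $\ell<2t-2$, and $w_0=\ell$ when $\ell\ge 2t-2$. Consequently $\ell-w_0=2t-2-\ell\ge 0$ (not $\le 0$ as you assert), which changes the sign analysis of $g'(j)=-d+(\ell-w_0)+2(t-1-j)$ and, if propagated, would produce the wrong closed-form answer in Step~(5). A second, smaller gap: your bound $g(j+1)-g(j)\le -d+\ell-1<0$ is stated to hold ``whenever $\ell\le d$'' but you do not address $\ell>d$, and in the other regime your bound uses $t-1-j\le t-2$, which presupposes $j\ge 1$ and therefore does not cover the boundary case $j=x(S_0)=0$ (which is exactly the relevant $j_0$ when $d\ge 2t-2$ and $\ell\ge 2t-2$). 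The paper's observation that the continuous critical point $j^\ast$ is an average of two quantities each $\le x(S_0)$ — because $2t-2-d\le x(S_0)$ by definition of $x(S)$, and $\ell-w_0\le x(S_0)$ once the correct $w_0$ is used — sidesteps both of these bookkeeping issues cleanly, and you would do well to adopt it. Finally, the paper also checks $\mathcal P(S)\subseteq\mathcal P(S_0)$ for the chosen $S_0$, which you flag as a concern but do not resolve; this is needed so that maximizing over $\mathcal P(S_0)$ genuinely dominates all other admissible $S$.
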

\begin{proof}
For fixed parameters $j, t, \ell, d$, we have 
$\Dim(V_S^j) = (u + v)(t - 2 - j) + C(j, t, \ell, d)$
for some function $C$. Since $j \le t - 2$, the dimension is maximized when $u + v$ is maximized, i.e., $u + v = \min\{2(\ell - t + 1), \ell\}$. 
Moreover, for any nonempty admissible $S$, one has $\mathcal{P}(S) \subseteq \mathcal{P}(S_0)$, where $S_0$ is any set satisfying $u + v = \min\{2(\ell - t + 1), \ell\}$. 
A direct computation then gives 
\[
\Dim(V_{S_0}^j) = j(2t - 2 - d - j + \ell - \min\{2(\ell - t + 1), \ell\}) + C'(t, \ell, d)
\]
for some function $C'$. 
This is a quadratic polynomial in $j$, and by taking its derivative with respect to $j$, we get the critical point 
\[
    j = \frac12 \bigg(\underbrace{(2t - 2 - d)}_{\leq x(S_0)} + \underbrace{(\ell - \min\{2(\ell-t+1), \ell\})}_{\leq x(S_0)}\bigg), 
\]
which is less than or equal to $x(S_0)$
by the definition of $x(S_0)$ in Theorem~\ref{thm:decomposition}. 
Hence, the varieties of maximal dimension are precisely the $V_{S_0}^{j_0}$.
A case-by-case analysis yields the values of $u+v$ and $j_0$.
\[
\begin{array}{c@{\qquad}c}
\displaystyle
\min\{2(\ell-t+1), \ell\} =
\begin{cases}
2\ell - 2t + 2 & \ell < 2t-2 \le d, \\
2\ell - 2t + 2 & \ell \le d < 2t-2, \\
\ell           & d < 2t-2 \le \ell, \\
2\ell - 2t + 2 & d \le \ell < 2t-2, \\
\ell           & 2t-2 \le d \le \ell, \\
\ell           & 2t-2 \le \ell \le d,
\end{cases}
\quad\text{and}\quad
\displaystyle
j_0 =
\begin{cases}
2t-2-\ell & \ell < 2t-2 \le d, \\
2t-2-\ell & \ell \le d < 2t-2, \\
2t-2-d    & d < 2t-2 \le \ell, \\
2t-2-d    & d \le \ell < 2t-2, \\
0         & 2t-2 \le d \le \ell, \\
0         & 2t-2 \le \ell \le d.
\end{cases}
\end{array}
\]
Substituting these values into the formula for $\dim V_{S_0}^{j_0}$ from Corollary~\ref{cor: dimension of V_S^j} yields~\eqref{eq: top dimensions}.
\end{proof}


\begin{theorem} \label{thm:maximum dimension general}
The dimension of $V_{\Delta}$ is 
\begin{align*}
\begin{cases}
  \max\{d\ell + 2t - \ell - 2,\ (t - 1)(d + \ell - t + 1) + \ell\}& \ell < 2t-2, \\
  \max\{(t - 1)(2d - 2t + \ell + 2),\ (t - 1)(d + \ell - t + 1) + \ell\} & \ell \geq 2t-2.
\end{cases}
\end{align*}
\end{theorem}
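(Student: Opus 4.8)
The plan is to combine the two sources of irreducible components of $V_\Delta$ identified in Theorem~\ref{thm:decomposition of Vdelta for k=2}: the distinguished component $V_\emptyset$ and the components $V(J_{S,j}) = V_S^j$ with $\emptyset\neq S$ admissible and $j$ ranging over the maximal values in~\eqref{maximal j}. Since $V_\Delta$ is the union of these components, its dimension is the maximum of their dimensions, so it suffices to compute $\max\{\Dim V_\emptyset,\ \max_{S,j}\Dim V_S^j\}$. The second term is handled by Lemma~\ref{top dimensional}, which gives exactly the first entry of each branch of the stated maximum, namely $d\ell+2t-\ell-2$ when $\ell<2t-2$ and $(t-1)(2d-2t+\ell+2)$ when $\ell\geq 2t-2$. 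It therefore remains to compute $\Dim V_\emptyset$ and show that it equals $(t-1)(d+\ell-t+1)+\ell$ in both regimes.

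For the dimension of $V_\emptyset$, I would use Corollary~\ref{dimensions}, which in the case $S=\emptyset$, $k=2$ reads $\Dim V_\emptyset = \Dim F_\emptyset + \ell$. Now $F_\emptyset\subset(\CC^d)^\ell$ is, by Definition~\ref{fs}, the quasi-affine variety of tuples $\gamma=(\gamma_1,\dots,\gamma_\ell)$ of nonzero vectors with $[\ell]$ a flat of rank $t-1$ in $\mathcal N_\gamma$; equivalently, the $\gamma_i$ are nonzero and span a subspace of dimension exactly $t-1$ (and $S_1=S_2=[\ell]$ imposes no further constraint). Its Zariski closure is the determinantal variety $\{\rank(\gamma_1,\dots,\gamma_\ell)\le t-1\}\subset\CC^{d\times\ell}$, which is irreducible of dimension $(t-1)(d+\ell-t+1)$ by the classical formula for generic determinantal varieties (see~\cite{bruns2003determinantal}); the open condition that all $\gamma_i\neq 0$ does not change the dimension since $t-1\geq 1$. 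Hence $\Dim F_\emptyset = (t-1)(d+\ell-t+1)$ and $\Dim V_\emptyset = (t-1)(d+\ell-t+1)+\ell$, which is the second entry of the maximum in each branch.

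Putting these together yields
\[
\Dim V_\Delta \;=\; \max\bigl\{\Dim V_\emptyset,\ \max_{S,j}\Dim V_S^j\bigr\}
\;=\;
\begin{cases}
\max\{d\ell+2t-\ell-2,\ (t-1)(d+\ell-t+1)+\ell\}, & \ell<2t-2,\\[3pt]
\max\{(t-1)(2d-2t+\ell+2),\ (t-1)(d+\ell-t+1)+\ell\}, & \ell\ge 2t-2,
\end{cases}
\]
as claimed. The main obstacle I anticipate is not conceptual but bookkeeping: one must be careful that the decomposition in Theorem~\ref{thm:decomposition of Vdelta for k=2} is genuinely irredundant, so that no listed component is contained in another and each contributes its full dimension to the maximum — but this is already guaranteed by Proposition~\ref{prop:irr dec VS} and Theorem~\ref{irreducible}. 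A secondary point requiring a line of justification is that $V_\emptyset$ indeed appears as a top-level component (i.e.\ $\emptyset$ is admissible and $F_\emptyset\neq\emptyset$), which is immediate from Definition~\ref{admissible} since $t-1\le\ell$ and $d\ge t-1$. One should also double-check the edge behavior of Lemma~\ref{top dimensional} at $\ell=2t-2$, where both formulas for the $V_S^j$-branch agree, so the two cases of the final statement match up consistently.
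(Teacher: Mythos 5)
Your proposal is correct and follows essentially the same route as the paper: take the maximum of $\Dim V_\emptyset$ against the top-dimensional $V_S^j$ supplied by Lemma~\ref{top dimensional}. The only difference is that the paper cites~\cite[Theorem~5.1]{alexandr2025decomposing} for $\Dim V_\emptyset=(t-1)(d+\ell-t+1)+\ell$, whereas you rederive this figure internally from Corollary~\ref{dimensions} and the classical dimension formula for $\{\rank\leq t-1\}$ — a small but pleasant self-containment gain, and your identification of $\overline{F_\emptyset}$ with that determinantal variety (so that $\Dim F_\emptyset=(t-1)(d+\ell-t+1)$) is correct.
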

\begin{proof}
The dimension of $V_\Delta$ equals the maximum of the dimensions of its irreducible components.
By Lemma~\ref{top dimensional}, the top-dimensional components have dimension
$(t-1)(2d-2t+\ell+2)$ if $\ell\geq 2t-2$, and $d\ell+2t-\ell-2$ otherwise.
Moreover, $\Dim V_{\emptyset}=(t-1)(d+\ell-t+1)+\ell$ by~\cite[Theorem~5.1]{alexandr2025decomposing}.
\end{proof}

\begin{example}
Consider $t,l,d$ as in Example~\ref{example for k=2}. In this setting we have $l=10\geq 8=2t-2$. Hence, by Theorem~\ref{thm:maximum dimension general}, we have
$\Dim(V_{\Delta})=\max\{56,58\}=58.$
\end{example}

\begin{remark}
    Note that the sign of the difference
\begin{align} \label{eq:difference}
   \dim V_{S_0}^{j_0} - \dim V_\emptyset = \begin{cases}
         (d-t)(\ell-t) -\ell+d-1  & \ell \leq 2t-2,\\
       (t-1)(d-t+1) - \ell & \ell \geq 2t -2.
   \end{cases} 
\end{align}
determines which of the two component types is maximal-dimensional. 
\end{remark}

These results will be used in the next section to compute the degree of $V_\Delta$.

\section{Degree of \texorpdfstring{$V_{\Delta}$}{VDelta} 
via lattice path enumeration}\label{sec:degree}

In this section, we compute the degree of the variety $V_{\Delta}$ associated with the ideal $I_{\Delta}$ from Definition~\ref{setup} in the case $k=2$. 
Using the dimension analysis from Section~\ref{sec:dim}, we identify the irreducible components of $V_{\Delta}$ contributing to its degree and reduce the computation of $\deg(V_{\Delta})$ to that of the component $V_{\emptyset}$.
The key observation, developed in \S\ref{sec: combinatorial approach to compute degree} and~\S\ref{sub:lattice_path}, is that $\deg(V_{\emptyset})$ admits a purely combinatorial description and can be expressed as a weighted count of families of non-intersecting lattice paths in a planar grid.
This reformulates the problem as a lattice path enumeration problem of Lindström--Gessel--Viennot type.
We develop this correspondence and use it to derive explicit degree formulas and generating functions.

\subsection{Reduction to the computation of the degree of \texorpdfstring{$V_{\emptyset}$}{Vempty}}
We begin by recalling the necessary results used in the proof, in order to keep the paper self-contained.

\begin{proposition}\label{prop:useful results}
The following statements hold:
\begin{itemize}
\item[{\rm (i)}] 
Let $I \subset \mathbb{C}[x_{1}, \ldots, x_{n}]$ be an ideal. Then 
$\deg(I) = \sum_{\mathfrak{p}} \deg(\mathfrak{p}),$
where the sum runs over all minimal primes $\mathfrak{p}$ of $I$ of maximal dimension. 

\item[{\rm (ii)}] 
Let $I \subset \mathbb{C}[x_{1}, \ldots, x_{n}]$ and $J \subset \mathbb{C}[y_{1}, \ldots, y_{m}]$ be ideals, and denote by $I'$ and $J'$ the ideals they generate in $\mathbb{C}[x_{1}, \ldots, x_{n}, y_{1}, \ldots, y_{m}]$. Then 
$\deg(I' + J') = \deg(I) \cdot \deg(J).$

\item[{\rm (iii)}] \textup{\cite[Theorem~3.5]{paths}} 
Let $Y = (y_{i,j})_{d \times \ell}$ be a $d \times \ell$ matrix of variables, and let $I_{t}(Y)$ be the ideal generated by the $t \times t$ minors of $Y$. Then 
\[
\deg(I_{t}(Y)) = 
\det\!\bigl[\tbinom{d + \ell - i - j}{\,d - i\,}\bigr]_{i,j = 1, \ldots, t-1.}
\]
\end{itemize}
\end{proposition}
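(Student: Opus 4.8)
The plan is to handle the three statements separately, since each is a standard fact and the role of this proposition is only to record the tools used later. For part~(i), I would argue via additivity of the degree on top-dimensional components. Taking a minimal primary decomposition of $I$ and passing to the radical gives $\sqrt{I} = \bigcap_{\mathfrak p}\mathfrak p$, where $\mathfrak p$ ranges over the minimal primes of $I$; geometrically $V(I) = \bigcup_{\mathfrak p} V(\mathfrak p)$. The degree is read off from the leading term of the Hilbert polynomial, so only the components $V(\mathfrak p)$ of maximal dimension $\Dim V(I)$ contribute to it, and on those the degree is additive — this is the usual associativity/additivity formula for multiplicities. Hence $\deg(I) = \sum_{\mathfrak p}\deg(\mathfrak p)$, the sum being over the maximal-dimensional minimal primes, and I would cite a standard commutative-algebra reference for the additivity statement.

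For part~(ii), the key point is that $I$ and $J$ involve disjoint sets of variables, so
\[
\CC[x_{1},\dots,x_{n},y_{1},\dots,y_{m}]/(I'+J') \;\cong\; \bigl(\CC[x_{1},\dots,x_{n}]/I\bigr)\otimes_{\CC}\bigl(\CC[y_{1},\dots,y_{m}]/J\bigr).
\]
Consequently the bigraded Hilbert series of the left-hand side is the product of the Hilbert series of the two factors, and after specializing the two grading variables to a common one we recover the Hilbert series of $\CC[x,y]/(I'+J')$. Extracting the leading coefficient of the corresponding Hilbert polynomial yields $\deg(I'+J') = \deg(I)\cdot\deg(J)$; equivalently, this is just the multiplicativity of degree under products of the associated affine cones, $\deg\bigl(V(I)\times V(J)\bigr) = \deg V(I)\cdot\deg V(J)$.

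Part~(iii) requires nothing new: it is exactly \cite[Theorem~3.5]{paths}, the classical Giambelli--Jacobi--Trudi determinantal formula for the degree of the generic determinantal variety defined by the $t$-minors of a $d\times\ell$ matrix of indeterminates, and I would simply quote it verbatim. The only mildly delicate point in the whole argument is making the additivity assertion in~(i) precise — in particular, checking that embedded and lower-dimensional primary components do not affect $\deg$ — but this is entirely standard, so I do not anticipate a genuine obstacle.
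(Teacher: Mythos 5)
Your proposal is correct and follows essentially the same route as the paper: both handle~(i) by reducing to the leading term of the Hilbert polynomial and invoking additivity over top-dimensional components, both handle~(ii) via the disjoint-variable observation — you phrase it as a tensor-product decomposition, the paper works directly with multiplication of Hilbert series, which is the same fact — and both simply cite the reference for~(iii). One small caution worth noting: in~(i) you write ``passing to the radical gives $\sqrt{I}=\bigcap_{\mathfrak p}\mathfrak p$,'' but degree is \emph{not} preserved under taking radicals (e.g.\ $\deg(x^2)=2\neq 1=\deg(x)$ in $\CC[x,y]$); the statement $\deg(I)=\sum_{\mathfrak p}\deg(\mathfrak p)$ without multiplicity factors requires the top-dimensional primary components to be reduced. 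The paper's own proof shares this imprecision, and in the intended application the ideals are radical, so the issue is cosmetic, but your phrasing should avoid the explicit passage to $\sqrt I$.
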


\begin{proof}
  To prove (i), we use that for a prime decomposition of an ideal $I$, only the highest-dimensional components contribute to $\deg(I)$ \cite[Proposition~7.6]{hartshorne}. 
Indeed, the degree is the leading coefficient of the Hilbert polynomial, which is additive on exact sequences. 
Thus, it suffices to compute the degrees of the top-dimensional prime components in the decomposition of Theorem~\ref{thm:decomposition} and sum them.

To prove (ii), let $d_1 = \dim I'$ and $d_2 = \dim J'$. Since $I'$ and $J'$ involve disjoint sets of variables, $\Dim(I' + J') = d_1 + d_2$, and their Hilbert series \cite[Chapter 1]{MichalekSturmfels2019InvitationNonlinearAlgebraTEXT} multiply:
\[
HS_{I' + J'}(z) = HS_{I'}(z)HS_{J'}(z)
  = \frac{P_{I'}(z)}{(1-z)^{d_1}} \cdot \frac{P_{J'}(z)}{(1-z)^{d_2}}
  = \frac{P_{I'}(z)P_{J'}(z)}{(1-z)^{d_1 + d_2}}.
\]
Thus $P_{I'+J'}(z)=P_{I'}(z)P_{J'}(z)$.
For any standard graded $\CC$-algebra $R$ of dimension $d$, write
$HS_R(z)=P_R(z)/(1-z)^d$, where $P_R(z)$ is a polynomial with integer coefficients.
Since $(1-z)^{-d}=\sum_{q\ge0}\binom{q+d-1}{d-1}z^q$, it follows that for large enough $q\gg0$,
the Hilbert function of $R$ satisfies 
\[
h_R(q) = \frac{P_R(1)}{(d-1)!}\, q^{d-1} + \text{lower order terms}.
\]
By definition, the leading coefficient of the Hilbert polynomial gives the degree of $R$ (up to the factor $(d-1)!$), so $\deg(R) = P_R(1).$ Hence
$\deg(I' + J') = P_{I' + J'}(1) = P_{I'}(1)P_{J'}(1) = \deg(I')\,\deg(J').$

The proof of (iii) is given 
in \cite[Theorem~3.5]{paths}.
\end{proof}

The following proposition gives the degree of $V_{\Delta}$ in terms of the degree of $V_{\emptyset}$.

\begin{proposition}\label{prop:degree of Vdelta}
Let~$\mathbbm{1}_{a\ge b}$ be the indicator function of 
$a\ge b$, i.e.,
$\mathbbm{1}_{a\ge b}=1$ if $a\ge b$ and $0$ otherwise.~Let 
\begin{align*}
    &\alpha \coloneqq \sum_{u=t-1}^{\ell-t+1}\binom{\ell}{u}\det\!\bigl[\tbinom{d + u - i - j}{\,d - i\,}\bigr]_{_{i,j \in [t-1]}} \cdot \det\!\bigl[\tbinom{d + \ell-u - i - j}{\,d - i\,}\bigr]_{_{i,j \in [t-1]}}, \;\; 
    \beta \coloneqq \binom{\ell}{t-1}\binom{t-1}{2t-2-\ell} d^{2t-2-\ell}. 
\end{align*}
Then
\begin{itemize}
    \item [{\rm (i)}] if $\ell > 2t-2$, the degree of $V_\Delta$ is
        $\alpha\cdot \mathbbm{1}_{(t-1)(d-t+1) \geq \ell} + \deg(V_\emptyset)\cdot \mathbbm{1}_{(t-1)(d-t+1) \leq \ell},$
    \item [{\rm (ii)}] and if $\ell \leq  2t-2$, the degree of $V_\Delta$ is
       $\beta \cdot \mathbbm{1}_{(d-t)(\ell-t)+d-1 \geq \ell} + \deg(V_\emptyset) \cdot\mathbbm{1}_{(d-t)(\ell-t)+d-1 \leq \ell}.$
\end{itemize} 
\end{proposition}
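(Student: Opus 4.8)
The plan is to evaluate $\deg(V_{\Delta})$ as the sum of the degrees of its top-dimensional irreducible components, which is legitimate by Proposition~\ref{prop:useful results}(i). By Theorem~\ref{thm:decomposition of Vdelta for k=2} the components are $V_{\emptyset}$ together with the varieties $V(J_{S,j})=V_{S}^{j}$, over admissible $\emptyset\neq S\subset[2\ell]$ and $j$ as in~\eqref{maximal j}; by Lemma~\ref{top dimensional} the largest among the $V_{S}^{j}$ are the varieties $V_{S_{0}}^{j_{0}}$ with $u+v=\min\{2(\ell-t+1),\ell\}$ and $j_{0}=x(S_{0})$, all of dimension~\eqref{eq: top dimensions}, while $\Dim(V_{\emptyset})=(t-1)(d+\ell-t+1)+\ell$ by \cite[Theorem~5.1]{alexandr2025decomposing}. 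First I would invoke~\eqref{eq:difference}: the sign of $\Dim(V_{S_{0}}^{j_{0}})-\Dim(V_{\emptyset})$, namely of $(t-1)(d-t+1)-\ell$ when $\ell>2t-2$ and of $(d-t)(\ell-t)-\ell+d-1$ when $\ell\le 2t-2$, decides whether the top-dimensional components are the $V_{S_{0}}^{j_{0}}$ alone, $V_{\emptyset}$ alone, or both in the tie -- exactly the split produced by the indicator functions in (i) and (ii). It then remains to count the maximal $S_{0}$ and to compute their common degree, and to match these with the prefactors of $\alpha$ and $\beta$.

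For $\ell>2t-2$ one has $\min\{2(\ell-t+1),\ell\}=\ell$, so a maximal $S_{0}$ has $u+v=\ell$; this forces $A\cap B=\emptyset$, and admissibility gives $|A|+|B|=\ell$ with $|A|,|B|\in[t-1,\ell-t+1]$. Thus the maximal $S_{0}$ correspond bijectively to subsets $A\subseteq[\ell]$ of size $m\in[t-1,\ell-t+1]$ (with $B=[\ell]\setminus A$), giving $\binom{\ell}{m}$ of them per size $m$. I would then simplify $J_{S_{0},j_{0}}$: since $A\cap B=\emptyset$, the family $I_{j_{0}+1}(X_{A'\cap B'})$ is empty, and since $j_{0}=x(S_{0})\in\{0,2t-2-d\}$, on the locus where $I_{t}(X_{A'}),I_{t}(X_{B'})$ vanish one has $\rank(X)\le 2t-2<2t-j_{0}-1$, so $I_{2t-j_{0}-1}(X)$ vanishes there too. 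Removing the vanishing coordinates indexed by $S_{0}$ (which preserves degree, the variety lying in a coordinate subspace) makes the $I_{2}(X_{C_{j}})$ trivial and identifies $V_{S_{0}}^{j_{0}}$ with the determinantal variety $V(I_{t}(Y_{A})+I_{t}(Y_{B}))$, irreducible with $I_{t}(Y_{A})+I_{t}(Y_{B})$ prime (a sum of determinantal primes in disjoint variables). Proposition~\ref{prop:useful results}(ii) and~(iii) then give
\[
\deg(V_{S_{0}}^{j_{0}})=\det\!\bigl[\tbinom{d+|A|-i-j}{\,d-i\,}\bigr]_{i,j\in[t-1]}\cdot\det\!\bigl[\tbinom{d+|B|-i-j}{\,d-i\,}\bigr]_{i,j\in[t-1]},
\]
and summing over the maximal $S_{0}$ (reindexing $m=u$) yields $\alpha$; combining with the sign of $(t-1)(d-t+1)-\ell$ gives (i).

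For $\ell\le 2t-2$ one has $\min\{2(\ell-t+1),\ell\}=2(\ell-t+1)$, and admissibility forces $u=v=\ell-t+1$, hence $|A|=|B|=t-1$, $|A\cap B|=2t-2-\ell$, $A\cup B=[\ell]$; such $S_{0}$ are obtained by choosing $A$ ($\binom{\ell}{t-1}$ ways) and then $A\cap B\subseteq A$ of size $2t-2-\ell$ ($\binom{t-1}{2t-2-\ell}$ ways). Here $I_{t}(X_{A'})$ and $I_{t}(X_{B'})$ have no generators ($t-1$ column-groups each), $I_{j_{0}+1}(X_{A'\cap B'})$ is redundant since $j_{0}=x(S_{0})\ge 2t-2-\ell=|A\cap B|$, and $I_{2t-j_{0}-1}(X)=I_{\min\{d,\ell\}+1}(X)$ is redundant since any matrix satisfying the $I_{2}(X_{C_{j}})$ relations has rank at most $\min\{d,\ell\}$. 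After removing the $S_{0}$-coordinates, $V_{S_{0}}^{j_{0}}$ is cut out inside a coordinate subspace only by $\sum_{j\in A\cap B}I_{2}(X_{C_{j}})$, i.e.\ by forcing each of the $2t-2-\ell$ disjoint $d\times 2$ blocks $X_{C_{j}}$ $(j\in A\cap B)$ to have rank $\le 1$; each such block is a determinantal variety of degree $\deg(I_{2}(d\times 2))=\binom{d}{d-1}=d$ by Proposition~\ref{prop:useful results}(iii), and Proposition~\ref{prop:useful results}(ii) then gives $\deg(V_{S_{0}}^{j_{0}})=d^{\,2t-2-\ell}$. Summing over the $\binom{\ell}{t-1}\binom{t-1}{2t-2-\ell}$ choices produces $\beta$; together with the sign of $(d-t)(\ell-t)-\ell+d-1$ this gives (ii).

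I expect the main obstacle to be the bookkeeping in the two middle paragraphs: in each sub-range of $(d,\ell,t)$ compatible with $\ell>2t-2$ respectively $\ell\le 2t-2$ (in particular $d$ below or above $2t-2$, and $d$ below or above $\ell$), one must check that after eliminating the vanishing coordinates all but one family of generators of $J_{S_{0},j_{0}}$ becomes redundant on the variety cut out by the rest. The crux is the equality $j_{0}=x(S_{0})$ together with the primeness -- hence radicality -- of the relevant determinantal ideals, which is what turns ``vanishing on the variety'' into genuine ideal membership and so validates the degree computation; the remaining ingredients (multiplicativity of degree over disjoint variables and the determinantal formula for $\deg(I_{t})$) are exactly Proposition~\ref{prop:useful results}(ii) and~(iii).
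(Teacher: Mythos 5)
Your proposal is correct and follows essentially the same route as the paper: reduce to the top-dimensional irreducible components via Proposition~\ref{prop:useful results}(i), use Lemma~\ref{top dimensional} and~\eqref{eq:difference} to decide whether $V_{\emptyset}$, the maximal $V_{S_0}^{j_0}$, or both contribute, simplify $J_{S_0,j_0}$ to a sum of determinantal ideals in disjoint variables, and apply Proposition~\ref{prop:useful results}(ii)--(iii) together with the count of maximal admissible $S_0$. One small imprecision: the claim that $I_t(X_{A'})$ and $I_t(X_{B'})$ ``have no generators'' is not literally true (after deleting the $S_0$-coordinates $X_{A'}$ still has $3t-3-\ell \geq t$ columns when $\ell < 2t-2$); the correct statement is that these $t$-minors are redundant because the $I_2(X_{C_j})$ constraints already force $\operatorname{rank}(X_{A'}) \le |A| = t-1$, which is exactly the reasoning you use elsewhere and is what the paper implicitly relies on.
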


\begin{proof}
We divide the proof into several cases. In all cases, we make use of Proposition~\ref{prop:useful results}~(i).

\smallskip
\noindent
\textbf{Case~1.} Suppose that $\ell > 2t - 2$ and $(t - 1)(d - t + 1) < \ell$, or that $\ell < 2t-2$ and $(d-t)(\ell-t) +d-1 -\ell<0$.
Then by~\eqref{eq:difference}, $V_{\emptyset}$ is the unique top-dimensional component of $V_{\Delta}$, hence 
$\deg(V_{\Delta}) = \deg(V_{\emptyset}).$

\smallskip
\noindent
\textbf{Case~2.} Suppose that $\ell > 2t - 2$ and $(t - 1)(d - t + 1) \ge \ell$.  
By~\eqref{eq:difference} and Lemma~\ref{top dimensional}, the top-dimensional components of $V_{\Delta}$ are the varieties $V_{S}^{0}$ corresponding to admissible subsets $S$ such that $u + v = \ell$.  
For such $S$, the defining ideal is 
\[
J_{S,0} = \langle x_{i,j}: j\in S, i\in [d] \rangle+ I_{t}(X_{A'}) + I_{t}(X_{B'}),
\]
where 
\[
A' = \bigcup_{j \in A} \{2j - 1, 2j\}, 
\qquad 
B' = \bigcup_{j \in B} \{2j - 1, 2j\}.
\]
The matrices $X_{A'}$ and $X_{B'}$ have sizes $d\times u$ and $d\times(\ell-u)$, respectively. 
By Proposition~\ref{prop:useful results}~(iii),
\[
\deg(I_{t}(X_{A'})) = 
\det\!\bigl[\tbinom{d + u - i - j}{d - i}\bigr]_{i,j = 1}^{t - 1},
\qquad
\deg(I_{t}(X_{B'})) = 
\det\!\bigl[\tbinom{d + \ell - u - i - j}{d - i}\bigr]_{i,j = 1}^{t - 1}.
\]
Since $A \amalg B = [\ell]$, the ideals $I_{t}(X_{A'})$ and $I_{t}(X_{B'})$ involve disjoint sets of variables.  
Hence, by Proposition~\ref{prop:useful results}~(ii),
\[
\deg(V_{S}^{0}) =
\det\!\bigl[\tbinom{d + u - i - j}{d - i}\bigr]_{i,j = 1}^{t - 1}
\cdot
\det\!\bigl[\tbinom{d + \ell - u - i - j}{d - i}\bigr]_{i,j = 1}^{t - 1}.
\]

The admissibility condition requires $u, v \le \ell - t + 1$, which, using $u + v = \ell$, is equivalent to $t - 1 \le u \le \ell - t + 1$.  
For each such pair $(u, v)$, there are precisely $\textstyle \binom{\ell}{u}$ admissible subsets $S$ satisfying $(\ell - |A|, \ell - |B|) = (u, v)$.  
The degree of $V_{\Delta}$ is therefore obtained by summing the degrees of all its top-dimensional components, yielding the expression for~$\alpha$.

\smallskip
\noindent
\textbf{Case~3.} Suppose that $\ell \geq  2t - 2$ and $(t-1)(d-t+1)=\ell$.  
In this case, the top-dimensional components of $V_{\Delta}$ are those described in Case~2, together with $V_{\emptyset}$ itself.  
Hence,
$\deg(V_{\Delta}) = \alpha + \deg(V_{\emptyset}).$

\smallskip
\noindent
\textbf{Case~5.} Assume that $\ell < 2t - 2$ and $(d - t)(\ell - t) + d - 1 - \ell > 0$.  
By~\eqref{eq:difference} and Lemma~\ref{top dimensional}, the top-dimensional components of $V_{\Delta}$ are the varieties $V_{S}^{\,2t - 2 - \ell}$ corresponding to admissible subsets $S$ with $u = v = \ell - t + 1$.  
For each such $S$, the defining ideal $J_{S,\,2t - 2 - \ell}$ from~\eqref{ideal JSj} is given by
\begin{equation}\label{2t-2-l}
J_{S_0, j_0}
= \big\langle x_{i,j} : j \in S,\, i \in [d] \big\rangle 
+ \sum_{u \in A \cap B} I_{2}(X_{C_u}).
\end{equation}
Since all the ideals in~\eqref{2t-2-l} involve disjoint sets of variables, Proposition~\ref{prop:useful results}~(ii) implies that 
\[
\deg(J_{S,\,2t - 2 - \ell}) 
= \prod_{u \in A \cap B} \deg\big(I_{2}(X_{C_u})\big).
\]
Each $X_{C_u}$ is a $d \times 2$ matrix of variables, so by Proposition~\ref{prop:useful results}~(iii), one has $\deg(I_{2}(X_{C_u})) = d$.  
Moreover, $\lvert A \cap B \rvert = 2t - 2 - \ell$, and there are exactly 
$\binom{\ell}{t - 1} \binom{t - 1}{\,2t - 2 - \ell\,}$
admissible subsets $S$ satisfying $u = v = \ell - t + 1$.  
Hence $\deg V_\Delta$ is obtained by summing the degrees of its top-dimensional components, giving~$\beta$.

\smallskip
\noindent
\textbf{Case~6.} Suppose that $\ell < 2t-2$ and $(d-t)(\ell-t) +d-1 -\ell=0$.  
In this case, the top-dimensional components of $V_{\Delta}$ are those described in Case~5, together with $V_{\emptyset}$ itself.  
Hence,
$\deg(V_{\Delta}) = \beta + \deg(V_{\emptyset}).
$
\end{proof}

We next turn to a combinatorial approach for computing $\deg(V_\emptyset)$.
For $d=t$, this yields an explicit formula, while in general the problem is formulated in terms of counting non-intersecting lattice paths.

\subsection{A combinatorial approach to compute the degree of \texorpdfstring{$V_{\emptyset}$}{Vempty}}\label{sec: combinatorial approach to compute degree}
Here, we present a combinatorial approach to computing $\deg V_{\emptyset}$.
The argument proceeds in three steps:
\begin{itemize}
  \item[{\rm (i)}] Reformulate the problem as counting minimal transversals of a certain hypergraph on $[d]\times[2\ell]$;
  \item[{\rm (ii)}] Reduce it to counting minimal transversals of a smaller hypergraph on $[d]\times[\ell]$;
  \item[{\rm (iii)}] Interpret the result in terms of counting \emph{non-intersecting lattice paths} in the general case.
\end{itemize}
We begin by fixing notation:
\[
X = (x_{ij})_{d \times 2\ell}, \qquad
Y = (y_{ij})_{d \times \ell}.
\]

Throughout this section, we identify $X$ with the set $[d] \times [2\ell]$ and $Y$ with $[d] \times [\ell]$.  
We now define the hypergraphs $\mathcal{A}_{t}$, $\mathcal{B}_{t}$ and $\mathcal{C}_{t}$ on these sets, respectively.

\begin{itemize}
    \item $\mathcal{A}_{t} \subset 2^{[d] \times [\ell]}$ is the collection of diagonals of the $t \times t$ submatrices of $Y$;
    \item $\mathcal{B}_{t} \subset 2^{[d] \times [2\ell]}$ is the collection of diagonals of the $t \times t$ submatrices of $X$ whose columns contain at most one index from each pair $\{2i - 1, 2i\}$ for all $i \in [\ell]$.
    \item $\mathcal{C}_{t} \subset 2^{[d] \times [2\ell]}$ is the collection of diagonals of the $2 \times 2$ submatrices of $X$ whose columns are indexed by pairs of the form $\{2j - 1, 2j\}$ for some $j \in [\ell]$.
\end{itemize}

More precisely,
\begin{equation}\label{hypergraph At}
\mathcal{A}_{t} = 
\Bigl\{ 
\{x_{i_1, j_1}, \ldots, x_{i_t, j_t}\} \;\Bigm|\;
1 \le i_1 < \cdots < i_t \le d,\;
1 \le j_1 < \cdots < j_t \le \ell
\Bigr\},
\end{equation}

\begin{equation}\label{hypergraph Bt}
\mathcal{B}_{t} = 
\Bigl\{ 
\{x_{i_1, j_1}, \ldots, x_{i_t, j_t}\} \;\Bigm|\;
\begin{array}{l}
1 \le i_1 < \cdots < i_t \le d,\;
1 \le j_1 < \cdots < j_t \le 2\ell, \\[4pt]
|\{j_1, \ldots, j_t\} \cap \{2i - 1, 2i\}| \le 1 
\ \text{for all } i
\end{array}
\Bigr\},
\end{equation}
\begin{equation}\label{hypergraph Ct}
\mathcal{C}_{t} = 
\Bigl\{   \{\{ x_{i,2j-1}, x_{i',2j}\} : 1 \leq i <i'\leq d, j \in [\ell] \}
\Bigr\},
\end{equation}
and for brevity, set
\begin{equation}\label{hyper B}
\mathcal{B} \coloneqq \mathcal{B}_t \cup \mathcal{C}_t. 
\end{equation}

\medskip  
{\large \bf Step~1.} 
Let $J_{\emptyset}$ denote the ideal of $V_{\emptyset}$. Recall that  
$J_{\emptyset} = I_t(X) + \sum_{j=1}^{\ell} I_2(X_{C_j}).$

Our first goal is to reformulate the problem of computing the degree of $J_{\emptyset}$ in purely combinatorial terms. Specifically, we will reduce it to counting the minimal transversals of a certain hypergraph on the set $[d] \times [2\ell]$. To this end, we begin with the following proposition, which expresses the degree of a squarefree monomial ideal in terms of minimal transversals of an associated hypergraph. 

For convenience, given a subset $\sigma \subset [n]$, we write  
$x^{\sigma} = \prod_{i \in \sigma} x_i.
$

Recall that a subset $A \subset [n]$ is called a \emph{transversal} of a collection of subsets (or hypergraph) $\Delta \subset 2^{[n]}$ if $A \cap e \neq \emptyset$ for every $e \in \Delta$.  
A transversal is \emph{minimal} if it contains no transversal of strictly smaller size.  
We denote by $\mathcal{T}(\Delta)$ the collection of all transversals of $\Delta$, and by $\min(\Delta)$ the set of minimal ones.

\begin{proposition}\label{prop:dimension and degree for squarefree}
Let $I \subset \CC[x_1,\dots,x_n]$ be a squarefree monomial ideal minimally generated by $\{x^{\sigma_1}, \dots, x^{\sigma_k}\}$, where $\{\sigma_1, \dots, \sigma_k\} \subset 2^{[n]}$. Then:
\begin{itemize}
\item[{\rm (i)}] There is a one-to-one correspondence between the minimal primes of $I$ and the minimal (with respect to inclusion) transversals of the hypergraph $\{\sigma_1,\dots,\sigma_k\}$.  
The minimal prime corresponding to a minimal transversal $A$ is  
\[
\mathfrak{m}^A = (x_i : i \in A).
\]
\item[{\rm (ii)}] The Krull dimension of $R/I$ equals  
\[
\min\{ |A| : A \text{ is a minimal transversal of } \{\sigma_1,\dots,\sigma_k\} \}.
\]
\item[{\rm (iii)}] The degree of $I$ equals  
\[
|\{ A \subset [n] : A \text{ is a transversal of } \{\sigma_1,\dots,\sigma_k\} \text{ of minimum size} \}|.
\]
\item[{\rm (iv)}] If $I$ is Cohen--Macaulay, all minimal transversals have the same size, and hence  
\[
\deg(I) = |\{ A \subset [n] : A \text{ is a minimal transversal of } \{\sigma_1,\dots,\sigma_k\} \}|.
\]
\end{itemize}
\end{proposition}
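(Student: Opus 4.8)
The plan is to run everything through the classical dictionary between squarefree monomial ideals and their minimal prime decompositions; see e.g.~\cite{MichalekSturmfels2019InvitationNonlinearAlgebraTEXT}. First I would establish the monomial-prime description underlying (i): the minimal primes of a monomial ideal are themselves monomial, i.e.\ of the form $\mathfrak{m}^A = (x_i : i \in A)$ for some $A \subseteq [n]$, and $\mathfrak{m}^A \supseteq I$ if and only if every minimal generator $x^{\sigma_j}$ lies in $\mathfrak{m}^A$, which happens precisely when $\sigma_j \cap A \neq \emptyset$ for all $j$ --- that is, exactly when $A$ is a transversal of $\{\sigma_1,\dots,\sigma_k\}$. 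Since $A \subsetneq A'$ forces $\mathfrak{m}^A \subsetneq \mathfrak{m}^{A'}$, passing to inclusion-minimal transversals yields exactly the minimal primes of $I$, giving (i). Part (ii) then follows by computing $\dim R/\mathfrak{m}^A = n - |A|$ (as $\mathfrak{m}^A$ is generated by $|A|$ of the variables) and optimizing over minimal transversals, using that the Krull dimension of $R/I$ is the maximum of $\dim R/\mathfrak{p}$ over the minimal primes $\mathfrak{p}$.

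For (iii), I would invoke Proposition~\ref{prop:useful results}(i): $\deg(I)$ equals the sum of $\deg(\mathfrak{p})$ over the minimal primes $\mathfrak{p}$ of maximal dimension. By (i)--(ii) these are exactly the $\mathfrak{m}^A$ with $A$ a transversal of minimum cardinality --- note that any transversal of minimum size is automatically inclusion-minimal, since a proper subset is smaller and hence not a transversal. Each such $\mathfrak{m}^A$ is generated by a subset of the variables, so $R/\mathfrak{m}^A$ is a polynomial ring, i.e.\ $\mathfrak{m}^A$ defines a coordinate linear subspace and has degree $1$. Summing gives $\deg(I) = \#\{A \subseteq [n] : A \text{ a transversal of minimum size}\}$, which is (iii). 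For (iv), if $R/I$ is Cohen--Macaulay then it is unmixed, so all of its associated primes --- in particular all minimal primes --- have the same dimension; by (i)--(ii) this means all minimal transversals share the same cardinality, namely the height of $I$. Hence the minimum-size transversals are exactly the minimal transversals, and (iv) is immediate from (iii).

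I do not expect a genuine obstacle here: the statement is a packaging of standard monomial-ideal theory. The points requiring care are (a) correctly matching monomial primes over $I$ with transversals of $\{\sigma_j\}$ and checking that minimality is preserved in both directions, and (b) --- the one place it is easy to slip --- keeping \emph{minimal transversal} distinct from \emph{transversal of minimum size} in (iii): only the latter index the top-dimensional components counted by the degree, and the two notions coincide precisely under the unmixedness guaranteed by the Cohen--Macaulay hypothesis in (iv). I would also spell out the two elementary inputs being used: that a coordinate prime $\mathfrak{m}^A$ has degree $1$, and that Cohen--Macaulay implies unmixed, citing a standard reference for the latter.
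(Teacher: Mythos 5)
Your proof is correct and, at the level of the underlying idea, takes the same route as the paper: both go through the standard dictionary between minimal primes of a squarefree monomial ideal $I$ and the combinatorics of its generators. The concrete packaging differs: the paper proves only (i), routing through the Stanley--Reisner complex $\Delta = \{\tau : x^\tau \notin I\}$ and citing the facet--minimal-prime bijection from Miller--Sturmfels, and then asserts that (ii)--(iv) ``follow immediately.'' You instead argue directly that every minimal prime is a coordinate prime $\mathfrak{m}^A$, that $\mathfrak{m}^A \supseteq I$ precisely when $A$ is a transversal, and that inclusion-minimality passes back and forth; this is the same content (a facet of $\Delta$ is exactly the complement of a minimal transversal) but avoids introducing the simplicial complex. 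Your writeup also buys something the paper's does not: you spell out (iii) carefully, in particular observing that a minimum-size transversal is automatically inclusion-minimal but not conversely, and you isolate the exact role of the Cohen--Macaulay hypothesis in (iv) (unmixedness forces all minimal transversals to have equal cardinality). That distinction is precisely the ``point requiring care'' you flagged, and it is worth making explicit rather than leaving it inside ``follows immediately.''

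One genuine mismatch you should flag rather than silently absorb: your own computation in (ii) yields $\dim R/I = \max_A (n - |A|) = n - \min\{|A|\}$, taking the maximum over minimal primes, whereas the statement asserts $\dim R/I = \min\{|A|\}$. Those are not the same thing, and your derivation (which is the correct one) actually contradicts the formula as printed. What the statement of (ii) should say is that $\min\{|A|\}$ equals the \emph{height} (codimension) of $I$, not the Krull dimension of $R/I$. This is almost certainly a typo in the paper, masked there by the fact that the paper never writes out the deduction of (ii) from (i); your explicit argument exposes it. If you keep your proof of (ii), either correct the statement to read ``height'' or rewrite your conclusion as $\dim R/I = n - \min\{|A|\}$, but do not leave a proof whose displayed conclusion differs from the claim it purports to establish.
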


\begin{proof}
We prove only \textup{(i)}, since the remaining statements follow immediately from it.  
Associated to the monomial ideal $I$ is its Stanley--Reisner complex
$\Delta \;=\; \{\tau \subset [n] : x^{\tau} \notin I\},$
see \textup{\cite[Chapter~1]{miller2005combinatorial}}. By \textup{\cite[Theorem~1.7]{miller2005combinatorial}}, the minimal primes of $I$ correspond bijectively to the facets of $\Delta$.  
The minimal prime associated to a facet $\tau\in\Delta$ is
$\mathfrak m^{\tau} = (x_i : i\in\tau^{c}),$
where $\tau^{c}$ denotes the complement of $\tau$.
Thus, it suffices to show that the minimal transversals of $\{\sigma_{1},\dots,\sigma_{k}\}$ are precisely the sets
$\{\tau^{c} : \tau \text{ is a facet of }\Delta\}.
$

We first verify that the transversals of $\{\sigma_{1},\dots,\sigma_{k}\}$ are exactly the sets
$\{\tau^{c} : \tau \in \Delta\}.$
A subset $A\subset[n]$ is a transversal of $\{\sigma_{1},\dots,\sigma_{k}\}$ if and only if  
$A\cap \sigma_{i}\neq\emptyset$ for all $i$, equivalently  
$\sigma_{i}\not\subseteq A^{c}$ for all $i$, or   
$x^{\sigma_{i}} \nmid x^{A^{c}}$ for all $i$.  
Since $I=(x^{\sigma_{1}},\dots,x^{\sigma_{k}})$, this holds exactly when $x^{A^{c}}\notin I$, which by definition means $A^{c}\in\Delta$.
Hence $A$ is a transversal of $\{\sigma_{1},\dots,\sigma_{k}\}$ if and only if $A^{c}$ is a face of $\Delta$.  
The minimal transversals correspond exactly to complements of facets of $\Delta$, completing the proof.
\end{proof}

We now apply this result to our setting.

\begin{proposition}\label{prop:degree-paths}
    The degree of $V_{\emptyset}$ equals the number of minimal transversals of $\mathcal{B}$.  
\end{proposition}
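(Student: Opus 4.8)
The plan is to compute $\deg(V_\emptyset)=\deg(J_\emptyset)$ via a squarefree Gr\"obner degeneration and then read off the answer from Proposition~\ref{prop:dimension and degree for squarefree}. Fix a diagonal term order on $\CC[X]$, i.e.\ one selecting from each $t\times t$ minor of $X$ its main diagonal monomial $x_{i_1,j_1}\cdots x_{i_t,j_t}$ ($i_1<\cdots<i_t$, $j_1<\cdots<j_t$) and from each $2$-minor of $X_{C_j}$ the monomial $x_{i,2j-1}x_{i',2j}$ with $i<i'$. Exactly as in Corollary~\ref{cor: are prime}, $J_\emptyset=I_t(X)+\sum_{j=1}^{\ell}I_2(X_{C_j})$ is a Knutson ideal, so by~\cite{seccia2022knutson} its canonical generating set — the $t$-minors of $X$ together with the $2$-minors of the $X_{C_j}$ — is a Gr\"obner basis, and $\operatorname{in}(J_\emptyset)$ is squarefree, generated by the diagonal monomials of all $t\times t$ submatrices of $X$ together with the monomials $x_{i,2j-1}x_{i',2j}$ ($i<i'$, $j\in[\ell]$).

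Next I would pass to a minimal monomial generating set of $\operatorname{in}(J_\emptyset)$. A diagonal monomial of a $t\times t$ submatrix that uses both columns $2j-1$ and $2j$ for some $j$ contains a factor $x_{i,2j-1}x_{i',2j}$ with $i<i'$, hence is divisible by a generator coming from $I_2(X_{C_j})$ and is redundant; the surviving diagonal monomials are exactly the elements of $\mathcal{B}_t$ in~\eqref{hypergraph Bt}, and no generator of $\mathcal{C}_t$ divides another or divides one in $\mathcal{B}_t$. Thus the minimal monomial generators of $\operatorname{in}(J_\emptyset)$ are precisely the elements of $\mathcal{B}=\mathcal{B}_t\cup\mathcal{C}_t$, and $\operatorname{in}(J_\emptyset)$ is the squarefree monomial ideal attached to the hypergraph $\mathcal{B}$.

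Finally, apply Proposition~\ref{prop:dimension and degree for squarefree} to $\operatorname{in}(J_\emptyset)$. Gr\"obner degeneration preserves the Hilbert polynomial, so $\deg(J_\emptyset)=\deg(\operatorname{in}(J_\emptyset))$; and squarefreeness of $\operatorname{in}(J_\emptyset)$ forces $J_\emptyset$ to be radical, so — since $V(J_\emptyset)=V_\emptyset$ is irreducible — $J_\emptyset$ is prime and $\deg(V_\emptyset)=\deg(J_\emptyset)$. By part~(iii) of the Proposition this number is the count of minimum-cardinality transversals of $\mathcal{B}$; to identify it with $|\min(\mathcal{B})|$, as asserted, one needs all minimal transversals of $\mathcal{B}$ to have the same size, equivalently $R/\operatorname{in}(J_\emptyset)$ equidimensional. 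This is the one delicate point of the argument. It follows from the Cohen--Macaulayness of $R/J_\emptyset$ (arithmetic Cohen--Macaulayness of $V_\emptyset$, cf.~\cite{alexandr2025decomposing}) together with the square-free Gr\"obner degeneration results of Conca and Varbaro, which transfer Cohen--Macaulayness to $R/\operatorname{in}(J_\emptyset)$; alternatively one may verify the equidimensionality directly by exhibiting a shelling of the Stanley--Reisner complex of $\mathcal{B}$, which dovetails with the lattice-path description of $\min(\mathcal{B})$ developed in Steps~2 and~3. Granting this, part~(iv) of Proposition~\ref{prop:dimension and degree for squarefree} yields $\deg(V_\emptyset)=|\min(\mathcal{B})|$. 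The remaining ingredients — the Gr\"obner basis from~\cite{seccia2022knutson}, the identification of the minimal generators with $\mathcal{B}$, and the bookkeeping above — are routine, so the equidimensionality of $\operatorname{in}(J_\emptyset)$ is the main obstacle.
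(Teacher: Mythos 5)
Your proof takes essentially the same route as the paper: pass to the diagonal initial ideal via~\cite{seccia2022knutson}, identify it with the hypergraph~$\mathcal{B}$, apply Proposition~\ref{prop:dimension and degree for squarefree}(iii), and invoke Cohen--Macaulayness to obtain the equidimensionality needed for part~(iv). Your extra care in checking that $\mathcal{B}$ is the set of \emph{minimal} monomial generators, noting that a squarefree initial ideal forces $J_\emptyset$ to be radical, and naming Conca--Varbaro for the transfer of Cohen--Macaulayness to $\operatorname{in}(J_\emptyset)$ only makes explicit a few points the paper passes over silently.
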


\begin{proof}
    The ideal $J_{\emptyset}$ is generated by all $2$-minors of $X_{\{2i - 1, 2i\}}$, the submatrix of $X$ on the columns $2i-1, i$, for $i \in [\ell]$ together with all $t$-minors of $X$.  
Consider the diagonal term order
\[
x_{11} \prec x_{21} \prec \cdots \prec x_{d1} \prec x_{12} \prec x_{22} \prec \cdots \prec x_{d,2\ell}.
\]
By the results of \cite{seccia2022knutson}, a Gröbner basis for $J_{\emptyset}$ with respect to this order is given by the monomials corresponding to the diagonals, namely
\begin{align}\label{generators initial}
\langle x_{i,2j-1}x_{i',2j} \,:\, &1 \leq i < i' \leq d, \, j \in [\ell]\rangle \;\notag\\
&+\;
\langle x_{i_1,j_1} \cdots x_{i_t, j_t} : 1 \leq i_1 < \cdots < i_t \leq d,\, 1 \leq j_1 < \cdots < j_t \leq 2\ell \rangle.
\end{align}
Hence, the initial ideal $\operatorname{in}_\prec J_{\emptyset}$ is generated by these monomials.  
Although $J_{\emptyset}$ itself is not squarefree, the degree is preserved under taking initial ideals:
\begin{equation}\label{equal degrees}
\deg(J_{\emptyset}) = \deg(\operatorname{in}_\prec J_{\emptyset}).
\end{equation}
Since $\operatorname{in}_\prec J_{\emptyset}$ is squarefree, Proposition~\ref{prop:dimension and degree for squarefree} applies.  
From the description \eqref{generators initial}, the associated hypergraph~is  
$\mathcal{B}=\mathcal{B}_t \cup \mathcal{C}_t$
on $[d] \times [2\ell]$, where $\mathcal{B},\mathcal{B}_t$ and $\mathcal{C}_t$ are defined in~\eqref{hyper B}, \eqref{hypergraph Bt} and \eqref{hypergraph Ct}.    
Then, by Proposition~\ref{prop:dimension and degree for squarefree}(iii) and \eqref{equal degrees},  
\[
\deg(J_{\emptyset}) = \text{number of transversals of minimum size of } \mathcal{B}.
\]
Moreover, since $J_{\emptyset}$ (and hence $\operatorname{in}_\prec J_{\emptyset}$) is Cohen--Macaulay, all minimal transversals have the same size by Proposition~\ref{prop:dimension and degree for squarefree}(iv).  
Therefore, the degree of $J_{\emptyset}$ equals the number of minimal transversals of $\mathcal{B}$.  
\end{proof}

\medskip
{\large\bf Step~2.}
The second step reduces the computation of $\deg V_{\emptyset}$ to counting certain minimal transversals of the hypergraph $\mathcal A_t$ on $[d]\times[\ell]$ defined in~\eqref{hypergraph At}.
We explain how minimal transversals of $\mathcal A_t$, viewed in the matrix $Y$, recover the minimal transversals of $\mathcal B$ in $X$.
For this purpose, we introduce a map that associates to each subset of entries of $X$ a corresponding subset of entries of $Y$.

\begin{definition}
Define the map $\pi : 2^{[d]\times[2l]} \to 2^{[d]\times[\ell]}$ by
\[
  \pi(A) = \{(i,j) \in [d]\times[\ell] : (i,2j-1),(i,2j) \in A\}.
\]
In words, $\pi$ assigns to each subset $A \subset [d]\times[2l]$ the set of indices $(i,j)$ for which both entries $(i,2j-1)$ and $(i,2j)$ belong to $A$.
\end{definition}

We now describe how the map $\pi$ connects the transversals of $\mathcal{B}_{t}$ with those of $\mathcal{A}_{t}$.

\begin{lemma}\label{lem: one from the other}
A subset $A \subset [d]\times[2\ell]$ is a transversal of $\mathcal{B}_{t}$ 
if and only if 
$\pi(A)$ is a transversal~of~$\mathcal{A}_{t}$.
\end{lemma}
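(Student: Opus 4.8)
The plan is to unwind both sides of the biconditional into statements about which entries of $X$ (respectively $Y$) must be hit, and then exhibit an explicit correspondence between the hyperedges of $\mathcal{B}_t$ that $A$ could fail to meet and the hyperedges of $\mathcal{A}_t$ that $\pi(A)$ could fail to meet. The key structural fact is that a hyperedge of $\mathcal{B}_t$ is the diagonal of a $t\times t$ submatrix of $X$ using column indices $j_1<\dots<j_t$ with $\lvert\{j_1,\dots,j_t\}\cap\{2i-1,2i\}\rvert\le 1$ for every $i$, i.e.\ at most one column is chosen from each pair. Such a column set is the image, under $j\mapsto 2j-1$ or $j\mapsto 2j$, of a genuine $t$-subset of $[\ell]$; conversely, given a $t$-subset $\{c_1<\dots<c_t\}\subset[\ell]$ and any choice of ``left or right'' for each $c_r$, we obtain a valid hyperedge of $\mathcal{B}_t$. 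So the hyperedges of $\mathcal{B}_t$ are exactly the sets $\{x_{i_1,\epsilon(c_1)},\dots,x_{i_t,\epsilon(c_t)}\}$ where $\{i_1<\dots<i_t\}\subset[d]$, $\{c_1<\dots<c_t\}\subset[\ell]$, and $\epsilon(c_r)\in\{2c_r-1,2c_r\}$.

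First I would prove the ``only if'' direction: suppose $A$ is a transversal of $\mathcal{B}_t$, and let $e=\{(i_1,c_1),\dots,(i_t,c_t)\}$ be an arbitrary hyperedge of $\mathcal{A}_t$ (so $i_1<\dots<i_t$ in $[d]$, $c_1<\dots<c_t$ in $[\ell]$). I must show $\pi(A)\cap e\neq\emptyset$, i.e.\ there is some $r$ with both $(i_r,2c_r-1)\in A$ and $(i_r,2c_r)\in A$. Consider the two hyperedges of $\mathcal{B}_t$ obtained from $e$ by taking all ``odd'' columns, $e^{\mathrm{odd}}=\{(i_1,2c_1-1),\dots,(i_t,2c_t-1)\}$, and all ``even'' columns, $e^{\mathrm{even}}=\{(i_1,2c_1),\dots,(i_t,2c_t)\}$. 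Since $A$ meets $e^{\mathrm{odd}}$, some $(i_p,2c_p-1)\in A$; since $A$ meets $e^{\mathrm{even}}$, some $(i_q,2c_q)\in A$. If $p=q$ we are done. Otherwise, by suitably swapping the choice on the remaining columns one at a time, I would produce a finite sequence of hyperedges interpolating between $e^{\mathrm{odd}}$ and $e^{\mathrm{even}}$, each of which $A$ must meet; a pigeonhole/minimality argument along this sequence forces some column index $c_r$ at which $A$ contains both $(i_r,2c_r-1)$ and $(i_r,2c_r)$. (Concretely: among all $\mathcal{B}_t$-hyperedges supported on rows $\{i_1,\dots,i_t\}$ and columns from the pairs $\{2c_r-1,2c_r\}$, pick one, say $e'$, whose intersection with $A$ is as small as possible; if $A\cap e'$ contained a single element $(i_r,\epsilon(c_r))$, flipping the choice at $c_r$ would yield a hyperedge $A$ still meets, but only possibly at its other coordinates, and a short case check shows this contradicts minimality unless both siblings lie in $A$.)

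Next I would prove the ``if'' direction, which is easier: suppose $\pi(A)$ is a transversal of $\mathcal{A}_t$, and let $f$ be any hyperedge of $\mathcal{B}_t$, say $f=\{(i_1,\epsilon(c_1)),\dots,(i_t,\epsilon(c_t))\}$ with $\{i_1<\dots<i_t\}\subset[d]$, $\{c_1<\dots<c_t\}\subset[\ell]$, and $\epsilon(c_r)\in\{2c_r-1,2c_r\}$. Then $e:=\{(i_1,c_1),\dots,(i_t,c_t)\}$ is a hyperedge of $\mathcal{A}_t$, so $\pi(A)\cap e\neq\emptyset$: there is $r$ with $(i_r,c_r)\in\pi(A)$, meaning both $(i_r,2c_r-1)\in A$ and $(i_r,2c_r)\in A$; in particular $(i_r,\epsilon(c_r))\in A$, so $A\cap f\neq\emptyset$. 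Hence $A$ is a transversal of $\mathcal{B}_t$.

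The main obstacle is the interpolation/pigeonhole argument in the ``only if'' direction: one has a transversal of $\mathcal{B}_t$ but $\pi$ only records rows where \emph{both} siblings are present, so I need to rule out a ``checkerboard'' configuration in which $A$ hits every $\mathcal{B}_t$-hyperedge but never contains a full sibling pair within the rows and columns of a given $\mathcal{A}_t$-hyperedge. I expect the cleanest formulation is via minimality of $\lvert A\cap e'\rvert$ over the $2^t$ sibling-choices $e'$ on a fixed row/column support, as sketched above; an alternative is a direct contradiction argument assuming no sibling pair occurs, which lets one choose, column by column, a sibling \emph{avoided} by $A$, producing a $\mathcal{B}_t$-hyperedge disjoint from $A$. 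I would write up whichever of these is shortest, most likely the latter, since it is a clean one-line construction of a violated hyperedge.
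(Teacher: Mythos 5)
Your proposal is correct and, once you commit to the ``cleaner alternative'' for the only-if direction (choosing, for each $k$, a sibling $r_k\in\{2j_k-1,2j_k\}$ avoided by $A$ and observing that $\{x_{i_1,r_1},\dots,x_{i_t,r_t}\}$ is a $\mathcal{B}_t$-hyperedge disjoint from $A$), it is exactly the paper's argument; the if direction is likewise identical. The interpolation/pigeonhole sketch you offer first is a valid but unnecessary detour, and you correctly identify the direct construction as the shorter route.
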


\begin{proof}
Suppose first that $\pi(A)$ is a transversal of $\mathcal{A}_{t}$. 
We claim that $A$ is then a transversal of $\mathcal{B}_{t}$. 
Consider any element $\{x_{i_1,j_1}, \ldots, x_{i_t,j_t}\}\in \mathcal{B}_{t}$ 
as in~\eqref{hypergraph Bt}. We need to show that
\begin{equation}\label{nonempty}
A \cap \{x_{i_1,j_1}, \ldots, x_{i_t,j_t}\} \neq \emptyset.
\end{equation}
Since $\pi(A)$ is a transversal of $\mathcal{A}_{t}$, we have
$\pi(A) \cap \{y_{i_1,\lceil j_1/2\rceil}, \ldots, y_{i_t,\lceil j_t/2\rceil}\} \neq \emptyset.$
Thus, for some $k\in[t]$, the element $y_{i_k,\lceil j_k/2\rceil}\in \pi(A)$. 
By definition of $\pi$, this implies
$x_{i_k,\,2\lceil j_k/2\rceil-1}, \, x_{i_k,\,2\lceil j_k/2\rceil} \in A,$
and in particular $x_{i_k,j_k}\in A$, proving~\eqref{nonempty}.
Conversely, suppose that $A$ is a transversal of $\mathcal{B}_{t}$. 
Let $\{y_{i_1,j_1},\ldots,y_{i_t,j_t}\}$ be an arbitrary element of $\mathcal{A}_{t}$. 
We must show that
\begin{equation}\label{nonempty 2}
\pi(A) \cap \{y_{i_1,j_1},\ldots,y_{i_t,j_t}\} \neq \emptyset.
\end{equation}
Assume for contradiction that the intersection is empty, 
so $y_{i_k,j_k}\notin \pi(A)$ for all $k\in[t]$. 
From the definition of $\pi$, this means that for each $k$ at least one of 
$x_{i_k,\,2j_k-1}$ or $x_{i_k,\,2j_k}$ does not lie in $A$. 
Let $r_k \in \{2j_k-1,2j_k\}$ be such that $x_{i_k,r_k}\notin A$. 
Then
\[
1 \leq i_1 < \cdots < i_t \leq d, \quad 
1 \leq r_1 < \cdots < r_t \leq 2\ell, \quad
\size{\{r_1,\ldots,r_t\}\cap\{2i-1,2i\}} \leq 1.
\]
Hence $\{x_{i_1,r_1},\ldots,x_{i_t,r_t}\}\in\mathcal{B}_{t}$. 
But by construction this set is disjoint from $A$, 
contradicting that $A$ is a transversal of $\mathcal{B}_{t}$. 
This contradiction establishes~\eqref{nonempty 2} and completes the proof.
\end{proof}

Recall that we denote by $\mathcal{T}(\Delta)$ the sets of transversals of a collection of subsets $\Delta$.
We now assign to each transversal $A$ of $\mathcal{A}_{t}$ a collection of transversals of $\mathcal{B}$.

\begin{definition}
Let $A$ be a transversal of $\mathcal{A}_{t}$, and set
\begin{equation}\label{equ A'}
A'=\{(i,j)\in [d]\times[2\ell] : (i,\lceil j/2\rceil)\in A\}.
\end{equation}
By construction, $\pi(A')=A$. We define
$\rho(A)=\min\{\,B\in\mathcal{T}(\mathcal{B}) : A'\subset B\,\},$
where $\min$ refers to inclusion-minimal subsets. For each $i$, let $Y_i$ denote the $i$-th column of $Y$, and put
$m_i = \size{A\cap Y_i}.$
We then set
\begin{equation*}\label{multiplicity of A}
m(A) = \prod_{i=1}^{\ell} (d - m_i).
\end{equation*}
\end{definition}

\begin{lemma}\label{lem: multiplicity}
For every transversal $A$ of $\mathcal{A}_{t}$ one has
$\size{\rho(A)}=m(A)=\prod_{i=1}^{\ell}(d-m_{i}).$
\end{lemma}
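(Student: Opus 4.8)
The plan is to analyze the structure of $\rho(A)$ column by column. Fix a transversal $A$ of $\mathcal{A}_t$ and form $A' \subset [d]\times[2\ell]$ as in~\eqref{equ A'}, so that $\pi(A') = A$. First I would observe that since $A' \subset B$ for any $B \in \mathcal{T}(\mathcal{B})$ containing it, and since $\pi$ is monotone, $\pi(B) \supseteq \pi(A') = A$ is a transversal of $\mathcal{A}_t$; combined with Lemma~\ref{lem: one from the other} this tells us that membership of $B$ in $\mathcal{T}(\mathcal{B}_t)$ is automatic once $\pi(B)$ contains $A$. Therefore the only real constraints in choosing an inclusion-minimal such $B$ come from the $\mathcal{C}_t$-part of $\mathcal{B}$, i.e. from the $2\times 2$ diagonal hyperedges supported on column pairs $\{2j-1,2j\}$.

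Next I would decompose the problem over the $\ell$ column-pairs. For a fixed $j \in [\ell]$, look at the $d\times 2$ block with columns $2j-1, 2j$. The set $A'$ already contains, in this block, exactly the rows $i$ with $(i,j)\in A$ — that is, $m_j$ full rows (both entries present). The hyperedges of $\mathcal{C}_t$ in this block are the pairs $\{x_{i,2j-1}, x_{i',2j}\}$ with $i < i'$. A subset $T$ of this block is a transversal of these hyperedges iff it is not the case that there exist $i<i'$ with $x_{i,2j-1}\notin T$ and $x_{i',2j}\notin T$; equivalently, writing $a$ for the largest row missing from column $2j-1$ in $T$ (or $0$ if none) and $b$ for the smallest row missing from column $2j$ (or $d+1$ if none), the condition is $a < b$ is violated — one checks the transversal condition is: every missing entry in column $2j-1$ lies below every missing entry in column $2j$. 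Since $A'$ forces rows $\{i : (i,j)\in A\}$ to be present in both columns, the minimal completions $T \supseteq (A' \text{ restricted to this block})$ correspond to choosing which single "staircase corner" to leave incomplete, and a short combinatorial count shows there are exactly $d - m_j$ such inclusion-minimal choices. (Concretely: among the $d-m_j$ rows not forced by $A$, a minimal transversal must make all but exactly one of them "complete" in a way compatible with the staircase condition, and the $d-m_j$ options are indexed by that distinguished row.)

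Then I would assemble the pieces: because the hyperedges of $\mathcal{C}_t$ in distinct column-pairs involve disjoint variables, and the $\mathcal{B}_t$-constraints are already satisfied by any $B$ with $\pi(B)\supseteq A$, the inclusion-minimal elements of $\{B \in \mathcal{T}(\mathcal{B}) : A' \subset B\}$ are precisely the products (unions over $j$) of the column-pair-local minimal completions described above. Hence $\lvert\rho(A)\rvert = \prod_{j=1}^{\ell}(d - m_j) = m(A)$, as claimed.

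**Main obstacle.** The delicate point is the column-pair count: verifying that the inclusion-minimal transversals of the $2\times 2$-diagonal hypergraph on a $d\times 2$ block, subject to containing a prescribed set of $m_j$ full rows, number exactly $d - m_j$. This requires carefully characterizing which entries a minimal transversal omits (the "staircase" structure forced by the hyperedges $\{x_{i,2j-1}, x_{i',2j}\}$ with $i<i'$) and checking that the $A'$-forced rows do not interfere with this count. I would also need to confirm that no cross-column interaction among the $\mathcal{B}_t$ hyperedges can force a minimal $B$ to differ from such a product — this follows from the observation in the first paragraph, but it should be stated explicitly to make the factorization rigorous.
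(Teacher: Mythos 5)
Your proposal follows the same approach as the paper: both observe via Lemma~\ref{lem: one from the other} that any $B \supseteq A'$ automatically transverses $\mathcal{B}_t$, reduce to transversing $\mathcal{C}_t$ block by block, count $d - m_i$ minimal completions per block, and multiply; both also leave the $d-m_i$ count itself as a short combinatorial check. One small slip worth fixing: in your $a,b$ parametrization you have the $\min$/$\max$ swapped — the transversal condition is that the \emph{smallest} row missing from column $2j-1$ is at least the \emph{largest} row missing from column $2j$ (i.e.\ $\min(L_{\mathrm{miss}}) \geq \max(R_{\mathrm{miss}})$, not $\max(L_{\mathrm{miss}}) \geq \min(R_{\mathrm{miss}})$) — but your verbal staircase description and the final count $d-m_j$ are correct and match the paper's assertion.
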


\begin{proof}
Let $B \subset [d]\times [2\ell]$ be any subset containing $A'$.  
Since $\pi(A')=A$ and $A$ is a transversal of $\mathcal{A}_{t}$,  
Lemma~\ref{lem: one from the other} implies that $B$ is a transversal of $\mathcal{B}_{t}$.  
Hence, for $B$ to be a transversal of $\mathcal{B}$, it remains to transverse the collection of $2$-minors.  
Observe that in each block of columns $\{X_{2i-1},X_{2i}\}$ of $X$, exactly $m_i$ rows are already covered by $A'$.  
Thus, there remain $(d-m_i)$ rows in which the corresponding $2\times 2$ minors must be intersected.  
For a fixed $i$, there are precisely $d-m_i$ minimal ways to achieve this.  
Multiplying over all $i$ gives
$\size{\rho(A)}=\prod_{i=1}^{\ell}(d-m_i),$
as claimed.
\end{proof}

The next proposition provides the claimed connection between the transversals 
of $\mathcal{A}_{t}$ and those of $\mathcal{B}$. Recall that $\min(\Delta)$ denotes 
the set of minimal transversals of a hypergraph $\Delta$.

\begin{proposition}\label{prop:rho}
We have the equality
$\min(\mathcal{B})=\bigcup_{A\in \min(\mathcal{A}_{t})}\rho(A).$
\end{proposition}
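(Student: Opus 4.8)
The plan is to prove the two inclusions $\min(\mathcal{B})\subseteq\bigcup_{A}\rho(A)$ and $\bigcup_{A\in\min(\mathcal{A}_t)}\rho(A)\subseteq\min(\mathcal{B})$ separately, using throughout the elementary observation that $(\pi(B))'\subseteq B$ for \emph{every} subset $B\subseteq[d]\times[2\ell]$ (indeed, if $(i,2m-1)$ or $(i,2m)$ lies in $(\pi(B))'$ then $(i,m)\in\pi(B)$, so by definition both $(i,2m-1),(i,2m)\in B$), together with Lemma~\ref{lem: one from the other} and the explicit shape of $\rho(A)$ extracted in the proof of Lemma~\ref{lem: multiplicity}.

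For $\min(\mathcal{B})\subseteq\bigcup_A\rho(A)$, I would take a minimal transversal $B$ of $\mathcal{B}$ and set $A:=\pi(B)$; by Lemma~\ref{lem: one from the other}, $A\in\mathcal{T}(\mathcal{A}_t)$. The first key step is to show $A\in\min(\mathcal{A}_t)$. Suppose not, so that $A\setminus\{(i_0,m_0)\}$ is still a transversal of $\mathcal{A}_t$ for some $(i_0,m_0)\in A$; then $(i_0,2m_0-1),(i_0,2m_0)\in A'\subseteq B$. By minimality of $B$, removing $(i_0,2m_0-1)$ destroys some edge $f\in\mathcal{B}$ with $f\cap B=\{(i_0,2m_0-1)\}$. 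The case $f\in\mathcal{B}_t$ is impossible, since $\pi(B\setminus\{(i_0,2m_0-1)\})=A\setminus\{(i_0,m_0)\}$ still transverses $\mathcal{A}_t$, so Lemma~\ref{lem: one from the other} forces $B\setminus\{(i_0,2m_0-1)\}$ to transverse $\mathcal{B}_t$. Hence $f\in\mathcal{C}_t$, and the only such edge (see~\eqref{hypergraph Ct}) is $f=\{(i_0,2m_0-1),(i',2m_0)\}$ with $i'>i_0$ and $(i',2m_0)\notin B$. Symmetrically, removing $(i_0,2m_0)$ produces an edge $\{(i'',2m_0-1),(i_0,2m_0)\}\in\mathcal{C}_t$ with $i''<i_0$ and $(i'',2m_0-1)\notin B$. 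But then $\{(i'',2m_0-1),(i',2m_0)\}$ is a $\mathcal{C}_t$-edge disjoint from $B$, contradicting that $B$ is a transversal; hence $A\in\min(\mathcal{A}_t)$. Finally, $B\supseteq(\pi(B))'=A'$, and since $B$ has no proper subset that is a transversal of $\mathcal{B}$ at all, it is in particular inclusion-minimal among transversals of $\mathcal{B}$ containing $A'$, i.e.\ $B\in\rho(A)$.

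For the reverse inclusion, I would fix $A\in\min(\mathcal{A}_t)$ and $B\in\rho(A)$. First I record that $\pi(B)=A$: by the analysis in the proof of Lemma~\ref{lem: multiplicity}, $B=A'\cup E$, where $E$ meets each column block $\{2i-1,2i\}$ in one of the $d-m_i$ minimal covers of the residual $2\times2$ minors, and each such cover uses any given row in at most one of the two columns of that block; hence $E$ creates no new "doubled" pair, so $\pi(B)=\pi(A')=A$. Now let $B_0\subseteq B$ be a transversal of $\mathcal{B}$. Then $B_0$ transverses $\mathcal{B}_t$, so $\pi(B_0)\in\mathcal{T}(\mathcal{A}_t)$ by Lemma~\ref{lem: one from the other}, while $\pi(B_0)\subseteq\pi(B)=A$; minimality of $A$ gives $\pi(B_0)=A$. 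Therefore $B_0\supseteq(\pi(B_0))'=A'$, so $B_0$ is a transversal of $\mathcal{B}$ with $A'\subseteq B_0\subseteq B$; since $B$ is inclusion-minimal among transversals of $\mathcal{B}$ containing $A'$, we get $B_0=B$, that is, $B\in\min(\mathcal{B})$.

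I expect the main obstacle to be the identity $\pi(B)=A$ for $B\in\rho(A)$, which depends on knowing the precise combinatorial form of the minimal transversals of $\mathcal{B}$ containing $A'$ (namely $A'$ together with the staircase-type covers of the residual $2$-minors); this is exactly the description established in the proof of Lemma~\ref{lem: multiplicity}, so the remaining work is essentially bookkeeping, the one genuinely new ingredient being the "two broken $\mathcal{C}_t$-edges force a third" argument used to show $\pi(B)$ is a \emph{minimal} transversal of $\mathcal{A}_t$.
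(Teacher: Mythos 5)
Your proof is correct, and both inclusions go through. It is worth comparing the two directions separately against the paper.

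For $\bigcup_{A}\rho(A)\subseteq\min(\mathcal{B})$, your argument is essentially the paper's (phrased directly rather than by contradiction), with one welcome addition: you explicitly justify $\pi(B)=\pi(A')=A$ for $B\in\rho(A)$, by recalling from the proof of Lemma~\ref{lem: multiplicity} that $B=A'\cup E$ where, in each column block, $E$ is one of the $d-m_i$ threshold-type covers and such a cover never uses both entries of a row. The paper states $\pi(B)=\pi(A')$ without comment; this step genuinely depends on that structural description, so spelling it out is a real improvement in rigor.

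For $\min(\mathcal{B})\subseteq\bigcup_A\rho(A)$, you take a genuinely different route. The paper's argument is shorter: given $B\in\min(\mathcal{B})$, it notes $\pi(B)\in\mathcal{T}(\mathcal{A}_t)$, picks \emph{any} minimal $A\subseteq\pi(B)$, observes $A'\subseteq(\pi(B))'\subseteq B$, and concludes $B\in\rho(A)$ because global minimality of $B$ implies minimality among transversals containing $A'$. You instead prove the stronger statement that $\pi(B)$ is \emph{itself} minimal in $\mathcal{A}_t$, via the argument that if $\pi(B)\setminus\{(i_0,m_0)\}$ were still a transversal then, by minimality of $B$, the removals of $(i_0,2m_0-1)$ and $(i_0,2m_0)$ can only be blocked by $\mathcal{C}_t$-edges (never $\mathcal{B}_t$-edges, by Lemma~\ref{lem: one from the other}), producing $i''<i_0<i'$ with $(i'',2m_0-1),(i',2m_0)\notin B$ and hence a missed $\mathcal{C}_t$-edge $\{(i'',2m_0-1),(i',2m_0)\}$. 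This is a nice self-contained argument and is correct, but it is more work than the proposition requires; the payoff is that it makes the map $B\mapsto\pi(B)$ well-defined from $\min(\mathcal{B})$ onto $\min(\mathcal{A}_t)$, which is the fact implicitly underlying the disjointness of the families $\rho(A)$ used in Theorem~\ref{thm:sum with multiplicities}.
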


\begin{proof}
We begin with the inclusion ``$\supseteq$''.  
Let $B \in \rho(A)$ for some $A \in \min(\mathcal{A}_{t})$, and assume for contradiction that $B$ is not a minimal transversal of $\mathcal{B}$. Then there exists a proper subset $\widetilde{B} \subsetneq B$ which is a transversal of $\mathcal{B}$. By construction, we have
$\pi(\widetilde{B}) \subseteq \pi(B) = \pi(A') = A,$
where $A'$ is defined in~\eqref{equ A'}.  
Since $\widetilde{B}$ is a transversal of $\mathcal{B}$, Lemma~\ref{lem: one from the other} implies that $\pi(\widetilde{B})$ is a transversal of $\mathcal{A}_{t}$.  
Because $\pi(\widetilde{B}) \subseteq A$ and $A \in \min(\mathcal{A}_{t})$, it follows that $\pi(\widetilde{B}) = A$, hence $A' \subseteq \widetilde{B}$. But by definition, $B$ is inclusion-minimal among the transversals of $\mathcal{B}$ containing $A'$, which contradicts the existence of~$\widetilde{B}$.  

We now turn to the reverse inclusion.  
Let $B \in \min(\mathcal{B})$.  
By Lemma~\ref{lem: one from the other}, we know that $\pi(B)$ is a transversal of $\mathcal{A}_{t}$.  
Hence, there exists a minimal transversal $A \in \min(\mathcal{A}_{t})$ such that $A \subseteq \pi(B)$.  
This implies that $A' \subseteq B$, where $A'$ is defined as in~\eqref{equ A'}.  
Since $B$ is inclusion-minimal among the transversals of $\mathcal{B}$ containing $A'$, it follows that $B \in \rho(A)$.  
This establishes the desired inclusion.
\end{proof}

This immediately leads to the desired counting. Recall that $Y_i$ denotes the $i$-th column of $Y$ for all $i\in [\ell]$.

\begin{theorem}\label{thm:sum with multiplicities}
The degree of $V_\emptyset$ is equal to \[\sum_{A\in \min(\mathcal{A}_{t})}\prod_{i=1}^{\ell} (d - \size{A\cap Y_{i}}).\]
\end{theorem}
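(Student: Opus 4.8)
The plan is to obtain the formula by combining the three results established in this subsection. By Proposition~\ref{prop:degree-paths}, $\deg(V_{\emptyset})$ equals $\size{\min(\mathcal{B})}$, the number of minimal transversals of the hypergraph $\mathcal{B}=\mathcal{B}_{t}\cup\mathcal{C}_{t}$ on $[d]\times[2\ell]$. By Proposition~\ref{prop:rho}, $\min(\mathcal{B})=\bigcup_{A\in\min(\mathcal{A}_{t})}\rho(A)$, and by Lemma~\ref{lem: multiplicity}, $\size{\rho(A)}=m(A)=\prod_{i=1}^{\ell}\bigl(d-\size{A\cap Y_{i}}\bigr)$ for each $A\in\min(\mathcal{A}_{t})$. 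Hence the only thing left to prove is that the union in Proposition~\ref{prop:rho} is \emph{disjoint}; granting this,
\[
\deg(V_{\emptyset})=\size{\min(\mathcal{B})}=\sum_{A\in\min(\mathcal{A}_{t})}\size{\rho(A)}=\sum_{A\in\min(\mathcal{A}_{t})}\prod_{i=1}^{\ell}\bigl(d-\size{A\cap Y_{i}}\bigr),
\]
which is the claim.

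To prove disjointness I would show that $\pi(B)=A$ for every $A\in\min(\mathcal{A}_{t})$ and every $B\in\rho(A)$; then the sets $\rho(A)$ are separated by the value of $\pi$, so they are pairwise disjoint, and if $B\in\rho(A_{1})\cap\rho(A_{2})$ one gets $A_{1}=\pi(B)=A_{2}$. The inclusion $A\subseteq\pi(B)$ is immediate, since $B\supseteq A'$ forces $\pi(B)\supseteq\pi(A')=A$. For the reverse inclusion I would unpack the structure of $\rho(A)$ already used in the proof of Lemma~\ref{lem: multiplicity}: each $B\in\rho(A)$ has the form $B=A'\cup E$, where in the $j$-th column block $\{X_{2j-1},X_{2j}\}$ the $\size{A\cap Y_{j}}$ rows $i$ with $(i,j)\in A$ have both of their entries already in $A'$, while $E$ restricted to that block is one of the minimal transversals of the $2$-minors $\mathcal{C}_{t}$ on the remaining rows. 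The key point, which is exactly the combinatorics behind the count $\size{\rho(A)}=\prod_{i}(d-m_{i})$, is that such a minimal transversal of the staircase $\{\{x_{u_{a},2j-1},x_{u_{b},2j}\}:a<b\}$ contains at most one of the two entries of any single row $u_{a}$. Therefore, if $(i,j)\in\pi(B)$, i.e.\ both $(i,2j-1)$ and $(i,2j)$ lie in $B$, the row $i$ cannot be an uncovered row of block $j$, so $(i,j)\in A$; this gives $\pi(B)\subseteq A$, hence $\pi(B)=A$.

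With disjointness in hand, the displayed chain of equalities finishes the proof. The only genuinely delicate step is the structural description of the elements of $\rho(A)$ — in particular the observation that a minimal transversal of the $2\times 2$ minors in a fixed column block never selects both entries of a row — so I would make that explicit (for instance via the threshold parametrisation $S_{k}=\{x_{u_{a},2j-1}:a\le k\}\cup\{x_{u_{b},2j}:b\ge k+2\}$ of these minimal transversals), rather than leaving it implicit as in Lemma~\ref{lem: multiplicity}. Everything else is direct bookkeeping of already-proved statements.
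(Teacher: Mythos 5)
Your proposal is correct and follows essentially the same route as the paper: reduce to $\size{\min(\mathcal{B})}$ via Proposition~\ref{prop:degree-paths}, decompose via Proposition~\ref{prop:rho}, and count via Lemma~\ref{lem: multiplicity}. The one place you go further is in carefully proving the pairwise disjointness of the families $\rho(A)$ by establishing $\pi(B)=A$ for every $B\in\rho(A)$ (using the block-by-block staircase structure of the minimal completions), a step the paper's proof only asserts; your justification is sound and worth recording.
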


\begin{proof}
By Proposition~\ref{prop:degree-paths}, it suffices to show that $\size{\min(\mathcal{B})}=\sum_{A\in \min(\mathcal{A}_{t})}m(A).$
By Proposition~\ref{prop:rho} and Lemma~\ref{lem: multiplicity}, 
each $\rho(A)$ contributes exactly $m(A)$ elements.  
Moreover, the families $\rho(A)$ are pairwise disjoint for distinct 
$A \in \min(\mathcal{A}_{t})$.  
Summing over all such $A$ gives the stated formula.
\end{proof}

{\large \bf Step~3.}  
The third and last step consists of reinterpreting the formula for the degree of $V_{\emptyset}$ in terms of counting certain {\em non-intersecting lattice paths} on $Y$, as defined in~\cite{paths}.

Following the notation in \cite{paths}, 
let $a_i = i$ and $b_j = j$ for all $i, j\leq t-1$. This translates to considering $t$-minors of a $d \times \ell$ matrix $Y$. 

\begin{definition} \label{def: WS paths}
    A West-South (WS) \emph{path} on the matrix $Y$ is a sequence of variables $(y_{i_1,j_1}, \ldots, y_{i_m,j_m})$ such that for all $s\in \{2,\ldots,m\}$, we have  $(i_s - i_{s-1}, j_s - j_{s-1}) \in \{(1,0), (0,-1)\}$. A \emph{family of non-intersecting paths} on $Y$ is a collection of $t-1$ WS paths, with the $i$th path starting at $y_{i, \ell}$ 
    and ending at $y_{d, i}$, 
    such that no two paths share an entry.

    If $(i_s - i_{s-1}, j_s - j_{s-1}) = (1,0)$, we call $(y_{{i_{s-1}}}, y_{j_{s-1}})$ a \textit{South location}. Any WS path $P$ is determined by the set of its South locations.
\end{definition}

\begin{lemma}\label{lem:staircase-pattern}
    Fix a minimal transversal $\mathcal{T}$ of $\mathcal{A}_t$. Consider a family of WS paths $P_1, \ldots P_{t-1}$ in $Y \setminus \mathcal{T}$ such that $(\cup_iP_i )\cup \mathcal{T} = Y$.
    Let $Y_{P_i}$ denote the subgrid obtained from $Y$ by removing the entries on, to the left, and above the elements of the path $P_i$. Then for all points $y_{(i_s, j_s)}\in Y_{P_i}$,
    \begin{enumerate}
        \item [{\rm (i)}] if for all points $y_{(i_s,j_q)} \in Y_P$ with $j_q < j_s$, we have $y_{(i_s,j_q)} \in \mathcal{T}$, and $y_{(i_s+1,j_s)} \in \mathcal{T}$, then $y_{(i_s,j_s)} \in \mathcal{T}$;
        \item [{\rm (ii)}] if for all points $y_{(i_q,j_s)} \in Y_P$ with $i_q < i_s$, we have $y_{(i_q,j_s)} \in \mathcal{T}$, and $y_{(i_s,j_s+1)} \in \mathcal{T}$, then $y_{(i_s,j_s)} \in \mathcal{T}$.
    \end{enumerate}
\end{lemma}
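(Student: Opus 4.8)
The plan is to prove both (i) and (ii) by contradiction, using only that $\mathcal{T}=Y\setminus\bigcup_{m}P_{m}$, that the paths $P_{1},\dots,P_{t-1}$ are pairwise disjoint, and the elementary structure of a WS path: $P_{m}$ runs from $y_{(m,\ell)}$ to $y_{(d,m)}$, visits every row in $[m,d]$ and every column in $[m,\ell]$, occupies a contiguous block of columns in each row it meets and a contiguous block of rows in each column it meets, and along the path the rows weakly increase while the columns weakly decrease. The single technical ingredient is the following \emph{boundary} observation: if $y_{(c,e)}\in Y_{P_i}$ and its left neighbour $y_{(c,e-1)}$ (respectively its upper neighbour $y_{(c-1,e)}$) is a grid entry lying outside $Y_{P_i}$, then that neighbour must itself lie on $P_i$. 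I would prove this by expanding the definition of $Y_{P_i}$ — an entry $y_{(c',e')}$ lies in $Y_{P_i}$ iff $P_i$ meets row $c'$ only in columns $<e'$ and meets column $e'$ only in rows $<c'$ — into two cases according to which of these two conditions the neighbour violates, and then using contiguity and monotonicity of the row/column occupancies of $P_i$ to place the neighbour on $P_i$. I expect this two-case verification to be the main thing to get right; note that, by contrast, no nesting property of the family $P_1,\dots,P_{t-1}$ is needed.

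Granting the boundary observation, here is the argument for (i). Implicitly $i_s<d$, since the hypothesis refers to $y_{(i_s+1,j_s)}$. Suppose $y_{(i_s,j_s)}\in Y_{P_i}$ satisfies both hypotheses of (i) but $y_{(i_s,j_s)}\notin\mathcal T$. Since $\bigl(\bigcup_m P_m\bigr)\cup\mathcal T=Y$, the entry lies on some path $P_m$, and as $Y_{P_i}$ contains no entry of $P_i$ we get $m\neq i$. Because $i_s<d$, the entry is not the terminal entry $y_{(d,m)}$ of $P_m$, so $P_m$ leaves it either by a South step to $y_{(i_s+1,j_s)}$ or by a West step to $y_{(i_s,j_s-1)}$. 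In the South case $y_{(i_s+1,j_s)}\in P_m\subseteq Y\setminus\mathcal T$, contradicting $y_{(i_s+1,j_s)}\in\mathcal T$. In the West case $y_{(i_s,j_s-1)}\in P_m$, hence $y_{(i_s,j_s-1)}\notin\mathcal T$; if $y_{(i_s,j_s-1)}\in Y_{P_i}$ this contradicts the first hypothesis (as $j_s-1<j_s$), and if $y_{(i_s,j_s-1)}\notin Y_{P_i}$ the boundary observation places $y_{(i_s,j_s-1)}$ on $P_i$, contradicting disjointness with $P_m$. Hence $y_{(i_s,j_s)}\in\mathcal T$.

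Part (ii) is the transpose of this argument, interchanging rows with columns and the two ends of a path. Here $j_s<\ell$ since the hypothesis refers to $y_{(i_s,j_s+1)}$, so an entry $y_{(i_s,j_s)}\notin\mathcal T$ lying on $P_m$ with $m\neq i$ is not the initial entry $y_{(m,\ell)}$ of $P_m$; thus $P_m$ arrives at it either from $y_{(i_s,j_s+1)}$ by a West step — whence $y_{(i_s,j_s+1)}\in P_m\subseteq Y\setminus\mathcal T$, against $y_{(i_s,j_s+1)}\in\mathcal T$ — or from $y_{(i_s-1,j_s)}$ by a South step, and then $y_{(i_s-1,j_s)}\in P_m\setminus\mathcal T$ is either in $Y_{P_i}$, contradicting the first hypothesis (as $i_s-1<i_s$), or outside $Y_{P_i}$, in which case the upper-neighbour version of the boundary observation places it on $P_i$, again contradicting disjointness with $P_m$. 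The remaining work is exactly the two-case verification of the boundary observation described in the first paragraph.
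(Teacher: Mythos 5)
Your proof is correct and takes a genuinely different route from the paper's. The paper argues via minimality of $\mathcal{T}$: assuming $a=y_{(i_s,j_s)}\notin\mathcal{T}$, it fixes a diagonal $\mathcal{D}$ through $b=y_{(i_s+1,j_s)}$, supposes $\mathcal{D}\cap\mathcal{T}=\{b\}$, and derives a contradiction by constructing a modified diagonal $\mathcal{D}'$ whose entries are shifted along the various paths; minimality is then invoked, since otherwise $\mathcal{T}\setminus\{b\}$ would again be a transversal. You instead trace the paths directly: from $Y=\mathcal{T}\sqcup\bigcup_m P_m$, the entry $y_{(i_s,j_s)}$ (if not in $\mathcal{T}$) lies on some $P_m$ with $m\neq i$, and examining the next (South or West) step of $P_m$ yields a contradiction with the hypothesis on $y_{(i_s+1,j_s)}$, with the hypothesis on row $i_s$, or with disjointness of $P_m$ and $P_i$ via your boundary observation. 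That observation is indeed correct: using the characterization that $y_{(c,e)}\in Y_{P_i}$ iff $P_i$ meets row $c$ only in columns $<e$ and column $e$ only in rows $<c$, together with the fact that a monotone WS path occupies a contiguous column interval in each row and a contiguous row interval in each column, the two cases in which the left (resp.\ upper) neighbour exits $Y_{P_i}$ both force that neighbour onto $P_i$. The notable feature of your argument is that it makes no use whatsoever of the transversality or minimality of $\mathcal{T}$, only of the disjoint cover and path geometry; this is a genuine simplification and clarifies what the lemma really depends on. On the other hand, you do use that each $P_m$ terminates in row $d$ and starts in column $\ell$ (to rule out that $y_{(i_s,j_s)}$ is an endpoint), which the paper's diagonal argument does not explicitly require; and, like the paper, you implicitly assume the paths are pairwise disjoint, a hypothesis not written into the lemma statement but evidently intended and used by both arguments.
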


\begin{proof}
    To prove (i), assume $a \coloneqq y_{(i_s,j_s)} \not\in \mathcal{T}$ towards a contradiction. Then fix any diagonal $\mathcal{D}$ in $Y$ containing $b\coloneqq y_{(i_s+1,j_s)}$. We will show that $\mathcal{D} \cap \mathcal{T}$ contains at least one element other than $b$, which would mean that $\mathcal{T} \setminus \{b\}$ is also a transversal, contradicting with the minimality of $\mathcal{T}$. Towards another contradiction, assume $\mathcal{D} \cap \mathcal{T} = \{ b\}$. Then $\mathcal{D}$ cannot contain any of the elements $y_{(i_s,j_q)}\in Y_P$ with $j_q < j_s$. Therefore, for all points $y_{(i_q,j_q)} \in \mathcal{D}$ with $j_q < j_s$, i.e. for all points on the diagonal $\mathcal{D}$  to the left of $b$, there is a path $P^{{(i_q,j_q)}}$ containing $y_{(i_q,j_q)}$. Note that a single WS path cannot contain more than one element of the diagonal. Now we obtain a new diagonal $\mathcal{D}'$ from $\mathcal{D}$ as follows:

    If $\mathcal{D}$ contains no element from row $i_s$, then we obtain $\mathcal{D}'$ by replacing $b$ with $a$ in $\mathcal{D}$. Now since $\mathcal{T}$ is a transversal,  $\mathcal{D}' \cap \mathcal{T} \neq \emptyset$. But by the choice of $\mathcal{D}'$, only the elements $y_{(i_q,j_q)} \in \mathcal{D}'$ with $j_q > j_s$ can belong to $\mathcal{T}$. Any of such elements also belongs to $\mathcal{D}$, which is a contradiction to our assumption that $\mathcal{D} \cap \mathcal{T} = \{b\}$.

   For notational simplicity, write $\mathcal{D} = \{d_1, \ldots, d_u = b, \ldots, d_{t}\}$, ordered by increasing row/column index. Suppose  $d_{u-1} = y_{(i_s,j_q)}$ with $j_q < j_s$. Since $P^{(i_s,j_q)}$ is a WS path, we can shift $d_{u-1}$ along $P^{(i_s,j_q)}$, first East and then North, to arrive at the South location immediately before $d_{u-1}$ on the path $P^{(i_s,j_q)}$. Call this South location $d_{u-1}'$. If $d_{u-1}'$ lies in the same row as $d_{u-2}$, we perform the same shift on the path containing $d_{u-2}$ to obtain $d_{u-2}'$; otherwise, set $d_{u-2}'\coloneqq d_{u-2}$.
Iterating this procedure yields $d_1'$.
This is possible since the paths are non-intersecting and each path contains at most one diagonal element.
Define $\mathcal{D}' \coloneqq \{d_1', \ldots, d_{u-1}', a, d_{u+1}, \ldots, d_{t}\}$. Arguing as before, this contradicts the assumption
$\mathcal{D} \cap \mathcal{T} = \{b\}$.
   
    The proof of (ii) is similar to (i).
\end{proof}

\begin{theorem}\label{thm:non-intersection-path}
     A family of $t-1$ WS paths $P_1,\ldots,P_{t-1}$ on $Y$ is non-intersecting if and only if the set $Y \setminus \cup_{i=1}^{t-1} P_i$ is a minimal transversal of $\mathcal{A}_t$.
\end{theorem}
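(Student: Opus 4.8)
The plan is to establish the slightly stronger claim that for \emph{any} family of $t-1$ WS paths $P_{1},\dots,P_{t-1}$ on $Y$ (disjointness not assumed), the set $\mathcal{T}:=Y\setminus\bigcup_{i=1}^{t-1}P_{i}$ is a transversal of $\mathcal{A}_{t}$, and that it is inclusion-minimal precisely when the $P_{i}$ are pairwise disjoint. The ``transversal'' half is a direct geometric observation: a single WS path meets each diagonal of $\mathcal{A}_{t}$ in at most one cell, because along a WS path the row index is weakly increasing while the column index is weakly decreasing (each step is $(1,0)$ or $(0,-1)$), whereas the cells $y_{i_{1},j_{1}},\dots,y_{i_{t},j_{t}}$ of a diagonal have $i_{1}<\dots<i_{t}$ and $j_{1}<\dots<j_{t}$, so no two of them can lie on one path. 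Hence $\bigcup_{i=1}^{t-1}P_{i}$ covers at most $t-1$ of the $t$ cells of any diagonal, so $\mathcal{T}$ meets every diagonal; this uses no hypothesis on the $P_{i}$.

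For the minimality statement I would argue by cardinality. The path $P_{i}$ runs from $y_{i,\ell}$ to $y_{d,i}$ using $d-i$ South steps and $\ell-i$ West steps, so $|P_{i}|=d+\ell-2i+1$ and $\sum_{i=1}^{t-1}|P_{i}|=(t-1)(d+\ell+1-t)$. Since $\bigl|\bigcup_{i}P_{i}\bigr|\le\sum_{i}|P_{i}|$ with equality if and only if the $P_{i}$ are pairwise disjoint,
\[
|\mathcal{T}|\;=\;d\ell-\Bigl|\bigcup_{i=1}^{t-1}P_{i}\Bigr|\;\ge\;d\ell-(t-1)(d+\ell+1-t)\;=\;(d-t+1)(\ell-t+1),
\]
again with equality if and only if the $P_{i}$ are pairwise disjoint. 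On the other hand, $\mathcal{A}_{t}$ is exactly the generating hypergraph of the diagonal initial ideal $\operatorname{in}_{\prec}I_{t}(Y)$, whose height equals $\operatorname{ht}I_{t}(Y)=(d-t+1)(\ell-t+1)$; thus the minimum size of a transversal of $\mathcal{A}_{t}$ is $(d-t+1)(\ell-t+1)$, and since $\operatorname{in}_{\prec}I_{t}(Y)$ is Cohen--Macaulay, Proposition~\ref{prop:dimension and degree for squarefree}(iv) gives that \emph{every} inclusion-minimal transversal of $\mathcal{A}_{t}$ has exactly this size.

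Combining these facts closes both implications. If the $P_{i}$ are non-intersecting, then $\mathcal{T}$ is a transversal of the minimum possible size, hence inclusion-minimal. Conversely, if $\mathcal{T}$ is an inclusion-minimal transversal of $\mathcal{A}_{t}$, then $|\mathcal{T}|=(d-t+1)(\ell-t+1)$ by the equidimensionality above, which via the displayed inequality forces $\bigl|\bigcup_{i}P_{i}\bigr|=\sum_{i}|P_{i}|$, i.e.\ the $P_{i}$ are pairwise disjoint. The only nontrivial ingredient is the equidimensionality of $\operatorname{in}_{\prec}I_{t}(Y)$ (equivalently, its Cohen--Macaulayness), which is needed only for the ``minimal $\Rightarrow$ non-intersecting'' direction; this is the classical statement already used in the proof of Proposition~\ref{prop:degree-paths}, and I expect the bookkeeping in that direction to be the only delicate point. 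If preferred, one can replace the Cohen--Macaulay input by a direct combinatorial argument using Lemma~\ref{lem:staircase-pattern}, showing that the complement of a minimal transversal of $\mathcal{A}_{t}$ admits a unique decomposition into a non-intersecting WS path family.
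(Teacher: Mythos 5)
Your proof is correct and establishes the biconditional as literally stated, but via a genuinely different (and more economical) argument for the converse direction. The forward direction matches the paper in substance: both observe that a WS path meets each diagonal of $\mathcal{A}_t$ in at most one cell (so the complement is a transversal), both invoke Cohen--Macaulayness of $\langle\mathcal{A}_t\rangle$ to conclude that all minimal transversals share the size $(d-t+1)(\ell-t+1)$, and both carry out the same cardinality count; the only cosmetic difference is that the paper anchors the minimum size by exhibiting one explicit staircase family, whereas you read it off from $\operatorname{ht}(\operatorname{in}_\prec I_t(Y))=\operatorname{ht}(I_t(Y))$. For the converse, however, your two-line cardinality argument---minimality forces $|\mathcal{T}|=(d-t+1)(\ell-t+1)$, which forces $\bigl|\bigcup_i P_i\bigr|=\sum_i|P_i|$, which forces pairwise disjointness---is not what the paper does. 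The paper instead proves a strictly stronger realization statement: given an arbitrary minimal transversal $\mathcal{T}$ of $\mathcal{A}_t$, it \emph{constructs} (using Lemma~\ref{lem:staircase-pattern}) a non-intersecting family of WS paths whose complement is exactly $\mathcal{T}$. That surjectivity is what the rest of the section actually uses: it is needed to pass from the sum over minimal transversals in Theorem~\ref{thm:sum with multiplicities} to the sum over non-intersecting path families in Proposition~\ref{prop:counting lattice paths}. Your proof, while valid for the theorem as worded, does not show that every minimal transversal arises as the complement of some path family, so it cannot replace the paper's proof wholesale; the constructive alternative you mention at the end is precisely the step the paper's converse supplies, and it would still be required.
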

\begin{proof}
    Consider a family of of $t-1$ WS non-intersecting paths $P_1,\ldots,P_{t-1}$ on~$Y$. Toward a contradiction, assume that the complement $Y \setminus \cup_{i=1}^{t-1} P_i$ is not a transversal. Then there exists some $t$-minor of $Y$ with diagonal $\{y_{i_1,j_1},\ldots, y_{i_{t},j_{t}}\}$ such that for every $s \in [t]$, there is a path containing $y_{i_s,j_s}$. However, since all paths are West-South, each of them can contain at most one diagonal point for any diagonal. But we only have $t-1$ paths and $t$ diagonal points, a contradiction. Therefore, $Y \setminus \cup_{i=1}^{t-1} P_i$ is a transversal. 
    
    To show that it is also minimal, we note that $\langle \mathcal{A}_t\rangle$ is Cohen-Macaulay~\cite[Theorem 3.1]{arbarello2013geometry}, and therefore, all the components of the minimal prime decomposition of this ideal have the same number of minimal generators. But these minimal generators are in bijection with the minimal transversals of $\mathcal{A}_t$. So, the minimal transversals have to have the same size.  Now it is straightforward to show that all families of $t-1$ non-intersecting WS paths on $Y$ pass through exactly $d\ell - (d-t+1) (\ell-t+1)$ points of the matrix $Y$. Therefore, their complements will have the same size $(d-t+1)(\ell-t+1)$, and it suffices to show that the complement of one special family of $t-1$ non-intersecting paths forms a minimal transversal. 

    Consider the special collection of paths $P_1,\ldots, P_{t-1}$, where $P_i = \{y_{i,\ell}, y_{i,\ell-1},\ldots, y_{i,i}, y_{i+1,i},\ldots, y_{d, i}\}$ for all $i\in[t-1]$. Then the complement of $Y \setminus \cup_{i=1}^{t-1} P_i$ is a lower-right $(d - t+1)\times (\ell - t+1)$ submatrix $Y'$ of $Y$, which we have now shown is a transversal. Removing 
    any entry $Y'_{t + i-1, t + j-1}$ leaves the leading term of the minor $\det Y_{\{1\ldots,t-1, t+i-1\}, \{1\ldots,t-1, t+j-1\}}$ uncovered, hence  
    $Y'$ is minimal.

Let $\mathcal{T}\subseteq Y$ be a minimal transversal of $\mathcal{A}_t$. Let $P_0$ be the empty path. Assume we have constructed the WS paths $P_1, \ldots, P_{i-1}$. We now construct the WS path $P_i$ as follows:

Let $Y_{P_{i-1}}$ be the subgrid of $Y$ obtained by removing all entries that lie strictly to the left or strictly above any element of $P_{i-1}$. Then $P_i$ is defined recursively within $Y_{P_{i-1}}$ as follows:
starting at $P_i$, for each current position $y_{(a,b)}$ that is not on the last row of $Y_{P_{i-1}}$, let
\[
b^* = \max\{b' \mid b' \le b,\; y_{(a,b')} \in \partial Y_{P_{i-1}} \cup \mathcal{T}\},
\]
and set the next South location (vertex of the path $P_{i}$) to be $y_{(a,b^*+1)}$. By Lemma \ref{lem:staircase-pattern}, the entries of the transversal in $Y_{P_{i-1}}$ form a staircase pattern on the upper-left corner of the grid. Therefore, the resulting sequence of entries forms a valid WS path $P_i$ that terminates at $Q_i$.
\end{proof}

\begin{remark}
Theorem~\ref{thm:non-intersection-path} and Proposition~\ref{prop:dimension and degree for squarefree}(i)
yield an alternative proof of~\cite[Theorem~3.3]{paths}.
\end{remark}

Combining Proposition~\ref{thm:sum with multiplicities} and Theorem~\ref{thm:non-intersection-path}, we obtain the following.
\begin{proposition}\label{prop:counting lattice paths}
Let $Y_i$ denote the $i$th column of $Y$.
Then
\[
\deg V_{\emptyset}
=
\sum_{\substack{(P_1,\ldots,P_{t-1})\\ \text{non-intersecting WS 
}}}
\;
\prod_{i=1}^{\ell}
\bigl|(\bigcup_{j=1}^{t-1} P_j)\cap Y_i\bigr|.
\]
\end{proposition}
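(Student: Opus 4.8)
The plan is to combine the two results just established: Theorem~\ref{thm:sum with multiplicities}, which states
\[
\deg V_{\emptyset}
=
\sum_{A\in \min(\mathcal{A}_{t})}\prod_{i=1}^{\ell} (d - \size{A\cap Y_{i}}),
\]
and Theorem~\ref{thm:non-intersection-path}, which gives a bijection between families of $t-1$ non-intersecting WS paths $(P_1,\ldots,P_{t-1})$ on $Y$ and minimal transversals $A=Y\setminus\bigcup_{j=1}^{t-1}P_j$ of $\mathcal{A}_t$. So first I would re-index the sum in Theorem~\ref{thm:sum with multiplicities} over families of non-intersecting WS paths instead of over $\min(\mathcal{A}_t)$, using this bijection.

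The remaining point is purely a counting identity on each column: for the transversal $A = Y \setminus \bigcup_{j=1}^{t-1} P_j$, we have, for each fixed column index $i\in[\ell]$,
\[
\size{A\cap Y_i} = \size{Y_i} - \size{\Bigl(\bigcup_{j=1}^{t-1}P_j\Bigr)\cap Y_i} = d - \bigl|\bigl(\textstyle\bigcup_{j=1}^{t-1} P_j\bigr)\cap Y_i\bigr|,
\]
since $Y_i$ has exactly $d$ entries and $A$ is the complement of $\bigcup_j P_j$ inside $Y$. Substituting this into the product gives $d - \size{A\cap Y_i} = \bigl|\bigl(\bigcup_{j=1}^{t-1} P_j\bigr)\cap Y_i\bigr|$ for every $i$, and hence
\[
\prod_{i=1}^{\ell} \bigl(d - \size{A\cap Y_i}\bigr) = \prod_{i=1}^{\ell} \bigl|\bigl(\textstyle\bigcup_{j=1}^{t-1} P_j\bigr)\cap Y_i\bigr|.
\]
Summing over all non-intersecting families $(P_1,\ldots,P_{t-1})$ then yields exactly the claimed formula.

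There is essentially no obstacle here: the work has all been done in Theorems~\ref{thm:sum with multiplicities} and~\ref{thm:non-intersection-path}, and the only thing to check is the trivial complement-counting identity above together with the observation that the bijection of Theorem~\ref{thm:non-intersection-path} makes the index sets of the two sums match. The one small subtlety worth a sentence is that the bijection is between non-intersecting families and \emph{minimal} transversals (not all transversals), which is precisely what Theorem~\ref{thm:sum with multiplicities} sums over, so the re-indexing is legitimate. I would therefore keep the proof to three or four lines, stating the substitution and citing the two theorems.

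\begin{proof}
By Theorem~\ref{thm:non-intersection-path}, the assignment $(P_1,\ldots,P_{t-1})\mapsto A\coloneqq Y\setminus\bigcup_{j=1}^{t-1}P_j$ is a bijection between families of $t-1$ non-intersecting WS paths on $Y$ and minimal transversals of $\mathcal{A}_t$. For such a family and any column index $i\in[\ell]$, the column $Y_i$ has exactly $d$ entries, and $A\cap Y_i$ is the complement in $Y_i$ of $\bigl(\bigcup_{j=1}^{t-1}P_j\bigr)\cap Y_i$; hence
\[
d-\size{A\cap Y_i}=\bigl|\bigl(\textstyle\bigcup_{j=1}^{t-1}P_j\bigr)\cap Y_i\bigr|.
\]
Substituting this identity into the formula of Theorem~\ref{thm:sum with multiplicities} and re-indexing the sum over non-intersecting WS path families via the above bijection gives
\[
\deg V_{\emptyset}
=\sum_{A\in\min(\mathcal{A}_t)}\prod_{i=1}^{\ell}\bigl(d-\size{A\cap Y_i}\bigr)
=\sum_{\substack{(P_1,\ldots,P_{t-1})\\ \text{non-intersecting WS}}}\prod_{i=1}^{\ell}\bigl|\bigl(\textstyle\bigcup_{j=1}^{t-1}P_j\bigr)\cap Y_i\bigr|,
\]
as claimed.
\end{proof}
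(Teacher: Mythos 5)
Your proof is correct and follows exactly the route the paper takes: the paper states this proposition as an immediate combination of Theorem~\ref{thm:sum with multiplicities} and Theorem~\ref{thm:non-intersection-path}, and your argument simply makes the implicit bijection and column-complement counting explicit. Nothing to add.
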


\subsection{Lattice paths and the degree formula}\label{sub:lattice_path}

In the previous subsections, we have reduced the problem of computing the degree of $V_\Delta$ to counting lattice paths with multiplicities, as described in Proposition~\ref{prop:counting lattice paths}. Here, we count these paths and provide an explicit formula for the degree of $V_\Delta$.

\begin{proposition}
Let the generating function $\mathcal{G}_{d,\ell,t}(z_1,\dotsc,z_\ell)$
for $t-1$ lattice paths in the $d \times \ell$ matrix $Y = (y_{ij})$,
be defined by 
\begin{equation}
\mathcal{G}_{d,\ell,t}(z_1,\dotsc,z_\ell) \coloneqq 
\sum_{\substack{\mathbf{P}=(P_1,\dotsc,P_{t-1})\\\text{non-intersecting}}} \prod_{j=1}^{\ell} z_j^{|C_j \cap \mathbf{P}|}.
\end{equation}
Then 
\begin{equation}\label{eq:vertexLGV}
\mathcal{G}_{d,\ell,t}(z_1,\dotsc,z_\ell) =
\left(\prod_{j=1}^\ell z_j^{\min(j,t-1)} \right) \; \cdot \;
\det \left( 
h_{d-i}(z_j,z_{j+1},\dotsc,z_\ell)
\right)_{1\leq i,j \leq t-1},
\end{equation}
where $h_i$ is the complete homogeneous symmetric polynomial of degree $j$.
 
\end{proposition}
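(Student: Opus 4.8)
The plan is to recognize the sum over non-intersecting West-South path families as an instance of the Lindström–Gessel–Viennot (LGV) lemma with a vertex-weighted (rather than edge-weighted) generating function, and then to evaluate the resulting determinant. First I would set up the LGV apparatus: by Definition~\ref{def: WS paths}, the $i$-th path $P_i$ runs from the source $y_{i,\ell}$ to the sink $y_{d,i}$, moving only South (increasing row) or West (decreasing column). Since the sources $y_{1,\ell},\dots,y_{t-1,\ell}$ and sinks $y_{d,1},\dots,y_{d,t-1}$ are arranged so that the only non-crossing matching of sources to sinks is the identity permutation $i \mapsto i$ (the sources are ordered by row in column $\ell$ and the sinks are ordered by column in row $d$), the LGV lemma gives that the signed sum over all path systems equals the sum over non-intersecting systems, with no cancellation and all signs $+1$. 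Hence
\[
\mathcal{G}_{d,\ell,t}(z_1,\dotsc,z_\ell) = \det\bigl( W_{i,j} \bigr)_{1\le i,j\le t-1},
\]
where $W_{i,j}$ is the weight-generating function $\sum_{P} \prod_{m} z_{c(v_m)}$ over all single WS paths $P$ from $y_{i,\ell}$ to $y_{d,j}$, $v_m$ ranging over the vertices of $P$ and $c(v)$ denoting the column index of $v$.

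Next I would compute $W_{i,j}$ explicitly. A WS path from $y_{i,\ell}$ to $y_{d,j}$ consists of $d-i$ South steps and $\ell-j$ West steps in some order; its vertex set visits, in each column $m$ with $j \le m \le \ell$, exactly as many cells as the number of consecutive rows the path spends in column $m$. Writing the path as a lattice path, the number of vertices it has in column $m$ equals $1$ plus the number of South steps taken while in column $m$; summing, the total vertex weight is $z_j z_{j+1}\cdots z_\ell$ (one vertex per column, forced) times $\prod$ of the $z_m$ raised to the number of "extra" South steps in column $m$. The multiset of columns in which the $d-i$ South steps occur is an arbitrary weakly decreasing (reading the path from source to sink) sequence of values in $\{j, j+1,\dots,\ell\}$, and summing $\prod z$ over all such multisets of size $d-i$ is exactly the complete homogeneous symmetric polynomial $h_{d-i}(z_j,z_{j+1},\dots,z_\ell)$. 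Therefore
\[
W_{i,j} = z_j z_{j+1}\cdots z_\ell \cdot h_{d-i}(z_j, z_{j+1},\dots, z_\ell).
\]
Pulling the factor $\prod_{m=j}^{\ell} z_m$ out of column $j$ of the determinant for each $j = 1,\dots,t-1$ produces the scalar prefactor $\prod_{j=1}^{t-1}\bigl(\prod_{m=j}^{\ell} z_m\bigr) = \prod_{m=1}^{\ell} z_m^{\min(m,t-1)}$, since column $m$ of $Y$ is counted once for each $j \le \min(m, t-1)$. This matches the claimed prefactor $\prod_{j=1}^{\ell} z_j^{\min(j,t-1)}$ and leaves the determinant $\det\bigl(h_{d-i}(z_j,\dots,z_\ell)\bigr)_{1\le i,j\le t-1}$, completing the identity.

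The main obstacle I anticipate is the careful bookkeeping in the vertex-weight computation for a single path: one must be precise about whether endpoints of a path are double-counted when paths are concatenated inside the determinant's combinatorial model, and about the exact correspondence between "South steps in a given column" and the monomials appearing in $h_{d-i}$. In particular, verifying that the forced "one vertex per column" contribution aggregates correctly to $\min(j,t-1)$ across the determinant expansion — rather than overcounting shared lattice points between consecutive paths $P_i, P_{i+1}$ — requires invoking that on non-intersecting systems the vertex sets are genuinely disjoint, so that $|C_j \cap \mathbf{P}| = \sum_i |C_j \cap P_i|$, which is exactly what makes the product $\prod_j z_j^{|C_j\cap\mathbf{P}|}$ factor through the determinant. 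I would handle this by first proving the single-path weight formula as a standalone lemma (a routine stars-and-bars / RSK-free argument identifying South-step column multisets with monomials of $h_{d-i}$), and only then assembling the determinant via LGV, so that the disjointness is used exactly once and cleanly.
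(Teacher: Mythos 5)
Your proposal is correct and takes essentially the same approach as the paper: apply the Lindström--Gessel--Viennot lemma to the grid with sources on the East border and sinks on the South border, identify the single-path (South-step-weighted) generating function as $h_{d-i}(z_j,\dots,z_\ell)$, and account for the vertex/edge-weight discrepancy by a factor of $z_m^{\min(m,t-1)}$. The only difference is bookkeeping: the paper first obtains the edge-weighted determinant and then applies the correction factor globally using the observation that in a non-intersecting family exactly $\min(m,t-1)$ paths visit column $m$, whereas you absorb the correction into each entry $W_{i,j}=\bigl(\prod_{m=j}^\ell z_m\bigr)h_{d-i}(z_j,\dots,z_\ell)$ and factor the column prefactors out of the determinant afterward.
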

\begin{proof}
We use the Lindström--Gessel--Viennot lemma for 
computing the generating function of non-intersecting lattice paths, see \cite{lindstrom1973vector, gessel1985binomial}. 
We consider the $d {\times} \ell$ grid where there 
are $t-1$ starting vertices
on the East border and $t-1$ end vertices 
on the South border, see Figure~\ref{fig:lgvPaths}.
We consider SW-paths with $t-1$ starting vertices, and $t-1$
ending vertices, given by 
\[
\left\{ (j,\ell) \right\}_{j=1}^{t-1} \qquad \text{ and } \qquad
\left\{ (d,i) \right\}_{i=1}^{t-1},
\]
respectively. Note, vertices in the grid 
are indexed using (row, column) convention.

A path from the $i$th starting vertex to the $j$th ending vertex 
consists of $d+1-i$ South steps and $\ell+1-j$ East steps.
If we weight such a path with $z_j$ for each South step $(k,j) \to (k+1,j)$, then such a path is constructed 
by choosing (with repetition) $d-i$
South steps in the columns $\{j,j+1,\dotsc,\ell\}$.
Thus, the total (weighted) sum of all possible paths
is given by the complete homogeneous symmetric function
$h_{d-i}(z_j,z_{j+1},\dotsc,z_\ell)$.
The Lindström--Gessel--Viennot lemma now gives that
the total sum over all non-intersecting $(t-1)$-tuples of paths
is given by the determinant 
\[
\det \left( 
h_{d-i}(z_j,z_{j+1},\dotsc,z_\ell)
\right)_{1\leq i,j \leq t-1.}
\]
However, we weight paths by the occupied \emph{vertices} in each column.
We need two observations: 
\begin{itemize}
\item the first $\min(j,t-1)$ paths intersect column $j$, whenever $1 \leq j \leq n$.
\item the number of vertices covered by a path in column $j$,
is given by the number of paths intersecting this column,
plus the total number of South steps in this column.
\end{itemize}
These observations now implies \eqref{eq:vertexLGV}.
For example, $P_1$, $P_2$ and $P_3$ in Figure~\ref{fig:lgvPaths}
have vertex weights 
\[
P_1:  z_1^4 z_2 z_3^2 z_4  z_5^3 z_6 z_7   z_8 z_9, \quad
P_2:        z_2^3 z_3 z_4 z_5^2 z_6^2  z_7^2 z_8 z_9 , \quad
P_3:              z_3 z_4^2 z_5 z_6    z_7^3 z_8^2 z_9.   \qedhere
\]\end{proof}
\vspace{-2em}
\begin{figure}[!ht]
\centering
\scalebox{.75}{
\def\StackHeight{0.35\textheight}

\begin{minipage}[c]{0.35\linewidth}
  \centering
  \includegraphics[height=0.20\textheight,keepaspectratio,page=2]{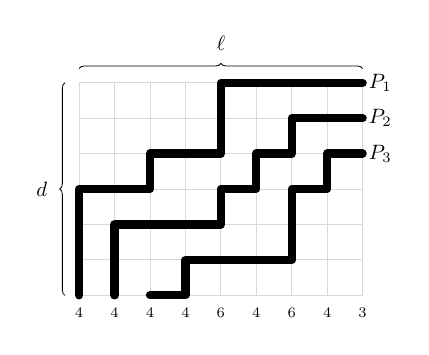}

 \vspace{-0.02\textheight}

  \includegraphics[height=0.20\textheight,keepaspectratio,page=3]{lattice-paths}
\end{minipage}\hfill
\begin{minipage}[c]{0.6\linewidth}
  \centering
  \makebox[\linewidth][c]{%
    \includegraphics[height=\StackHeight,keepaspectratio,page=1]{lattice-paths}%
  }
\end{minipage}}

\caption{(Left) Two families of WS non-intersecting paths from Remark~\ref{rem:two_paths}. (Right) Three lattice paths in the case $\ell=9$, $d=7$ and $t-1=3$. 
    The path $P_1$ has edge weight $z_1^3 z_3 z_5^2$, $P_2$ has weight $z_2^2 z_5 z_6 z_7$ and finally $P_3$ has weight $z_4 z_7^2 z_8$.}
\label{fig:lgvPaths}

\end{figure}
\begin{proposition}\label{cor:total_transversal}
The 
degree of $V_{\emptyset}$ is equal to
\[
  \left.\frac{\partial}{\partial z_1} \dotsb \frac{\partial}{\partial z_\ell}
  \mathcal{G}_{d,\ell,t}(z_1,\dotsc,z_\ell) \right\vert_{z_1=\dotsb = z_\ell =1.}
\]
\end{proposition}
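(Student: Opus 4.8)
The strategy is to show that the iterated mixed partial derivative, evaluated at $z_1 = \dots = z_\ell = 1$, precisely extracts the quantity appearing in Proposition~\ref{prop:counting lattice paths}, namely
\[
\deg V_{\emptyset}
=
\sum_{\substack{(P_1,\ldots,P_{t-1})\\ \text{non-intersecting WS}}}
\;
\prod_{i=1}^{\ell}
\bigl|(\bigcup_{j=1}^{t-1} P_j)\cap Y_i\bigr|.
\]
First I would write out $\mathcal{G}_{d,\ell,t}$ in the monomial form $\sum_{\mathbf{P}} \prod_{j=1}^{\ell} z_j^{c_j(\mathbf{P})}$, where the sum is over non-intersecting $(t-1)$-tuples $\mathbf{P} = (P_1, \dots, P_{t-1})$ and $c_j(\mathbf{P}) = |C_j \cap \mathbf{P}| = |(\bigcup_{i=1}^{t-1} P_i) \cap Y_j|$ is the number of grid vertices in column $j$ covered by the path family. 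This is the defining expression of $\mathcal{G}_{d,\ell,t}$, so no computation is needed beyond recalling it.

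Next, the key observation is the elementary calculus identity: for a monomial $z_1^{c_1} \cdots z_\ell^{c_\ell}$ with all $c_j \geq 1$, one has
\[
\left.\frac{\partial}{\partial z_1}\cdots\frac{\partial}{\partial z_\ell}\, z_1^{c_1}\cdots z_\ell^{c_\ell}\right|_{z_1=\dots=z_\ell=1}
= \prod_{j=1}^{\ell} c_j,
\]
since $\frac{\partial}{\partial z_j} z_j^{c_j} = c_j z_j^{c_j - 1}$ and evaluating at $z_j = 1$ gives $c_j$; the partials in distinct variables commute and act independently on the factors. By linearity of differentiation, applying $\frac{\partial}{\partial z_1}\cdots\frac{\partial}{\partial z_\ell}$ to $\mathcal{G}_{d,\ell,t}$ and evaluating at $\mathbf{1}$ therefore yields $\sum_{\mathbf{P}} \prod_{j=1}^{\ell} c_j(\mathbf{P})$, which is exactly the right-hand side of the displayed formula in Proposition~\ref{prop:counting lattice paths}.

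The one point that genuinely requires justification — and which I expect to be the only real obstacle — is the hypothesis $c_j(\mathbf{P}) \geq 1$ for every column $j$ and every non-intersecting family $\mathbf{P}$ in the sum; if some $c_j(\mathbf{P}) = 0$, then $\frac{\partial}{\partial z_j}$ kills that term, which would be the wrong behaviour (though in that case $\prod c_j = 0$ as well, so it is actually consistent — but one should check this rather than assume it). Here I would invoke the structural fact, already used in the proof of the generating-function formula~\eqref{eq:vertexLGV}, that the first $\min(j, t-1) \geq 1$ paths intersect column $j$ for every $1 \leq j \leq \ell$; hence every WS path family covers at least one vertex in each column, so $c_j(\mathbf{P}) \geq 1$ always, and the derivative identity applies verbatim to each monomial. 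Combining this with Proposition~\ref{prop:counting lattice paths} completes the proof. (Alternatively, one can sidestep the hypothesis entirely by noting that $\frac{\partial}{\partial z_j} z_j^{c_j}|_{z_j=1} = c_j$ holds for $c_j = 0$ too, both sides being $0$, so the identity $\frac{\partial}{\partial z_1}\cdots\frac{\partial}{\partial z_\ell} z_1^{c_1}\cdots z_\ell^{c_\ell}|_{\mathbf 1} = \prod_j c_j$ is valid for all nonnegative exponents, making the argument unconditional.)
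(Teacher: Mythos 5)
Your proposal is correct and takes essentially the same approach as the paper's proof, which is a one-liner: the paper cites Proposition~\ref{prop:counting lattice paths} and observes that the iterated partial derivatives produce exactly the column-multiplicities $\prod_i \bigl|\mathbf{P}\cap Y_i\bigr|$. You fill in the elementary calculus identity explicitly and address the (ultimately vacuous, as you yourself note in your final parenthetical) concern about zero exponents; both are reasonable elaborations of the same argument.
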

\begin{proof}
By Proposition~\ref{prop:counting lattice paths}, the degree of $J_{\emptyset}$ is equal to the number of families of $t-1$ WS non-intersecting paths $\{P_1,\ldots, P_{t-1}\}$, counted with multiplicity
$\size{(\bigcup_{j=1}^{t-1}{P_j}) \cap Y_{i}}$.
Taking partial derivatives of $\mathcal{G}_{d,\ell,t}$ produces these multiplicities.
\end{proof}
For example, in Figure~\ref{fig:lgvPaths}, the total weight of the three lattice paths is $z_1^4 z_2^4 z_3^4 z_4^4 z_5^6 z_6^4 z_7^6 z_8^4 z_9^3$.
The partial derivatives then multiplies this monomial with the factor 
$4\cdot 4 \cdot 4 \dotsm 3$, which is exactly the multiplicity~$m(A)$.

\begin{remark}\label{rem:two_paths}
Note that some coefficients might be larger than $1$ in $\mathcal{G}_{d,\ell,t}(z_1,\dotsc,z_\ell)$. 
For example, when $d=4$, $\ell=5$, $t=3$ and $(s_1,\dotsc,s_5)=(3,3,3,3,2)$, 
there are two distinct families of non-intersecting paths, as illustrated in Figure~\ref{fig:lgvPaths} (right).
\end{remark}

\begin{corollary}\label{cor:count}
Suppose $d=t$. Then the degree of $V_{\emptyset}$ is given by
\[
\deg(V_{\emptyset})
\;=\;
d^{\,d-1}(d-1)^{\,\ell-d+1}\binom{\ell}{d-1}.
\]
\end{corollary}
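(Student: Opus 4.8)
The plan is to evaluate the formula of Theorem~\ref{thm:sum with multiplicities}, namely $\deg(V_\emptyset)=\sum_{A\in\min(\mathcal A_t)}\prod_{i=1}^{\ell}\bigl(d-\lvert A\cap Y_i\rvert\bigr)$, in the special case $d=t$; everything reduces to describing the minimal transversals of $\mathcal A_t$ when $Y$ has exactly $t$ rows. The structural claim I would prove is: \emph{when $d=t$, the assignment $E\mapsto \mathcal T_E:=\bigl\{\,y_{\,1+\lvert E\cap[c-1]\rvert,\ c}\ :\ c\in[\ell]\setminus E\,\bigr\}$ is a bijection between the $(t-1)$-subsets $E\subseteq[\ell]$ and the minimal transversals of $\mathcal A_t$, and each $\mathcal T_E$ meets every column of $Y$ in at most one entry.} Granting this, the computation is immediate: for $A=\mathcal T_E$ one has $\lvert A\cap Y_i\rvert=1$ for the $\ell-t+1$ indices $i\notin E$ and $\lvert A\cap Y_i\rvert=0$ for the $t-1$ indices $i\in E$, so
\[
\prod_{i=1}^{\ell}\bigl(d-\lvert A\cap Y_i\rvert\bigr)=(d-1)^{\ell-t+1}\,d^{\,t-1},\qquad\text{and summing over the }\binom{\ell}{t-1}\text{ sets }E\text{ gives }\ \deg(V_\emptyset)=\binom{\ell}{d-1}(d-1)^{\ell-d+1}\,d^{\,d-1}.
\]

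It remains to establish the structural claim. That $\mathcal T_E$ is a transversal of $\mathcal A_t$ is short: for a diagonal $D_J$ with $J=\{j_1<\dots<j_t\}$, the function $c\mapsto\lvert J\cap[c-1]\rvert-\lvert E\cap[c-1]\rvert$ is $0$ at $c=1$, changes by at most $1$ at each step, and equals $\lvert J\rvert-\lvert E\rvert=1$ at $c=\ell+1$; hence its largest zero $c^\ast$ satisfies $c^\ast\in J\setminus E$, and the entry of $D_J$ in column $c^\ast$ lies in row $1+\lvert J\cap[c^\ast-1]\rvert=1+\lvert E\cap[c^\ast-1]\rvert$, i.e.\ it is the entry of $\mathcal T_E$ in column $c^\ast$. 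Moreover $\mathcal T_E$ is minimal: removing its entry $y_{\sigma,c}$ with $\sigma=1+\lvert E\cap[c-1]\rvert$ leaves uncovered the diagonal $D_{E\cup\{c\}}$, whose only entry in a column outside $E$ is exactly $y_{\sigma,c}$. Injectivity of $E\mapsto\mathcal T_E$ is clear, since $E$ is recovered as the set of columns avoided by $\mathcal T_E$.

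For surjectivity I would argue in two steps. \textbf{Column-simplicity.} Every minimal transversal $A$ satisfies $\lvert A\cap Y_c\rvert\le1$ for all $c$; equivalently, by Theorem~\ref{thm:non-intersection-path}, the complement $A^{c}$ is a union of $t-1$ non-intersecting WS paths $P_1,\dots,P_{t-1}$ with $P_i$ running from $(i,\ell)$ to $(t,i)$, and the point is that these paths cover at least $t-1$ of the $t$ cells in every column. Since non-intersection forces $P_1,\dots,P_{t-2}$ to step West immediately out of column $\ell$, the statement holds in column $\ell$, and it then propagates column by column via the staircase description of the complement in Lemma~\ref{lem:staircase-pattern}. \textbf{Rigidity.} Once $A$ is column-simple, Cohen--Macaulayness of $\langle\mathcal A_t\rangle$ (used already in the proof of Theorem~\ref{thm:non-intersection-path}) forces $\lvert A\rvert=\ell-t+1=\operatorname{codim}(I_t(Y))$, so $A$ occupies exactly $\ell-t+1$ columns, say those outside a $(t-1)$-set $E$; testing $A$ against the diagonal $D_{E\cup\{c\}}$ for each $c\notin E$ — which meets no column of $[\ell]\setminus E$ other than $c$ — forces the row of $A$ in column $c$ to be $1+\lvert E\cap[c-1]\rvert$, whence $A=\mathcal T_E$.

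The main obstacle is the column-simplicity step: it is the one place that genuinely uses $d=t$ rather than $d\ge t$, and it needs a careful analysis of how $t-1$ non-intersecting WS paths fit into a grid with only $t$ rows. An alternative that sidesteps this combinatorics is to prove the flagged Jacobi--Trudi identity $\det\bigl(h_{t-i}(z_j,\dots,z_\ell)\bigr)_{1\le i,j\le t-1}=\bigl(\prod_{j=1}^{t-2}z_j^{\,t-1-j}\bigr)\,e_{t-1}(z_1,\dots,z_\ell)$, where $e_{t-1}$ is the elementary symmetric polynomial of degree $t-1$, by the column operations $C_j\mapsto C_j-C_{j+1}$ together with induction on $t$; this collapses the generating function of the previous subsection to $\mathcal G_{d,\ell,t}(z)=\bigl(\prod_{j=1}^{\ell}z_j^{\,d-1}\bigr)\,e_{d-1}(z_1,\dots,z_\ell)$ when $d=t$, after which Proposition~\ref{cor:total_transversal} gives $\deg(V_\emptyset)=\partial_{z_1}\cdots\partial_{z_\ell}\,\mathcal G_{d,\ell,t}\big|_{z=1}$, and each of the $\binom{\ell}{d-1}$ monomials of $e_{d-1}$ contributes $d^{\,d-1}(d-1)^{\ell-d+1}$ after differentiation.
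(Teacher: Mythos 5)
Your proposal is correct and takes a genuinely different route from the paper's. The paper reaches the formula through Proposition~\ref{cor:total_transversal}: it argues that for $t=d$ every monomial of $\mathcal{G}_{d,\ell,t}$ has exponent vector in $\{d-1,d\}^{\ell}$ with exactly $d-1$ coordinates equal to $d$, extracts the common factor $d^{d-1}(d-1)^{\ell-d+1}$ from the partial derivatives, and then evaluates the residual LGV determinant by citing a binomial-determinant identity of Bacher. Your primary route instead starts from Theorem~\ref{thm:sum with multiplicities} and builds an explicit bijection $E\mapsto\mathcal{T}_E$ between $(d-1)$-subsets of $[\ell]$ and minimal transversals of $\mathcal{A}_t$, which at once yields the count $\binom{\ell}{d-1}$ and the per-transversal weight, sidestepping the determinant evaluation altogether. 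Your transversal check (tracking the sign change of $c\mapsto\lvert J\cap[c-1]\rvert-\lvert E\cap[c-1]\rvert$), the minimality check via the witness diagonal $D_{E\cup\{c\}}$, and injectivity are all tight.

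You are right to flag column-simplicity --- that every minimal transversal meets each column in at most one cell, equivalently that each column carries at least $d-1$ of the $d$ possible path vertices --- as the crux, and your sketch for it (start at column $\ell$ and propagate via Lemma~\ref{lem:staircase-pattern}) is not spelled out for the columns $j<d-1$, where only $j$ of the $d-1$ paths are present and must make up the deficit via south steps forced by non-intersection. It is worth noting that the paper's own proof asserts the same fact (``each column contributes at least $d-1$ occupied vertices'') without elaborating on the small-$j$ case, so this is a shared soft spot rather than a defect of your approach relative to the paper. Your alternative via the flagged Jacobi--Trudi identity $\det\bigl(h_{d-i}(z_j,\dots,z_\ell)\bigr)_{1\le i,j\le d-1}=\bigl(\prod_{j=1}^{d-2}z_j^{\,d-1-j}\bigr)e_{d-1}(z_1,\dots,z_\ell)$ is the cleanest way to close this: once proved (the column operations $C_j\mapsto C_j-C_{j+1}$ together with $h_k(z_j,\dots,z_\ell)-h_k(z_{j+1},\dots,z_\ell)=z_jh_{k-1}(z_j,\dots,z_\ell)$ and induction do it), the prefactor $\prod_j z_j^{\min(j,d-1)}$ combines with the determinant to give $\mathcal{G}_{d,\ell,d}=\bigl(\prod_{j=1}^{\ell}z_j^{d-1}\bigr)e_{d-1}(z_1,\dots,z_\ell)$, from which the exponent structure, the count $\binom{\ell}{d-1}$, and the multiplicity all fall out simultaneously. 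Fleshing out that induction would give a complete proof that is arguably cleaner than both your primary route and the paper's.
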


\begin{proof}
We first reduce the statement to a purely combinatorial enumeration.
By Proposition~\ref{prop:counting lattice paths} and Corollary~\ref{cor:total_transversal}, 
the degree $\deg(V_{\emptyset})$
is computed by the contribution of the
non-intersecting path families encoded by the generating function $\mathcal{G}_{d,\ell,t}(z_1,\dots,z_\ell)$. 

To make this reduction explicit, fix a monomial $\prod_{j=1}^{\ell} z_j^{e_j}$ of $\mathcal{G}_{d,\ell,t}(z_1,\dots,z_\ell)$
corresponding to a non-intersecting family of $(d-1)$ paths.
For $t=d$, each column contributes at least $d-1$ occupied vertices,
and any additional contribution arises from a south step in that column.
Non-intersection implies that at most one south step may occur in any column.
Hence $e_j\in\{d-1,d\}$ for all $j$, and since the total number of south steps
equals $\ell-d+1$, exactly $\ell-d+1$ indices satisfy $e_j=d-1$.

It follows that every partial derivative $\partial_{z_1}\cdots\partial_{z_\ell}$ 
contributes the same factor 
$d^{\,d-1}(d-1)^{\,\ell-d+1}$. 
Evaluating at $z_1=\cdots=z_\ell=1$, the number of minimal transversals equals
\[
d^{\,d-1}(d-1)^{\,\ell-d+1}
\cdot
\left|
\binom{(d-j)+\ell-i}{d-j}
\right|_{1\le i,j\le d-1}.
\]

Reindexing gives
\[
\left|
\binom{(d-j)+\ell-i}{d-j}
\right|_{1\le i,j\le d-1}
=
\left|
\binom{i+j+1+(\ell-d+1)}{j+1}
\right|_{0\le i,j\le d-2}.
\]
By \cite[Theorem~1.1]{Bacher2002}, this determinant evaluates to $\binom{\ell}{d-1}$, which completes the proof. 
\end{proof}

\section{Connections to 
quasi-products of matroids}\label{sec:quasi_product}

In this final section, we place our results in a broader matroid-theoretic context.
Although this paper focuses on the case $s=2$, the general conditional independence varieties $V_{\Delta^{s,t}}$ admit a natural interpretation in terms of quasi-products of matroids when $d \le s+t-3$.

We briefly recall the general setup in order to make this connection precise.
Let $s \le k$, $t \le \ell$, and $d$ be positive integers, and let $\mathcal{Y}$ denote the $k \times \ell$ matrix of integers defined in~\eqref{matrix}.  
Consider the hypergraph
$\Delta^{s,t}
\;\coloneqq \;
\bigcup_{i \in [k]} \binom{R_i}{t}
\;\cup\;
\bigcup_{j \in [\ell]} \binom{C_j}{s}.$
on the vertex set $[k\ell]$. The associated CI variety is
\[
V_{\Delta^{s,t}}
\; \coloneqq \;
\bigl\{
Y \in \CC^{d \times k\ell}
:
\operatorname{rank}(Y_F) < |F|
\text{ for all } F \in \Delta^{s,t}
\bigr\},
\]
where $Y_F$ denotes the submatrix of $Y$ consisting of the columns indexed by $F$.

The following result from~\cite{clarke2021matroid} makes this connection precise by describing
$V_{\Delta^{s,t}}$ in matroid-theoretic terms.

\begin{theorem}\label{thm:dleq s+t-3}
Let $s \le k$, $t \le \ell$, and $d$ be positive integers with $d \le s+t-3$. Then
\begin{equation}\label{circuits}
\mathcal{C}
\;=\;
\min (
\Delta^{s,t}
\;\cup\;
\textstyle\binom{[k\ell]}{d+1}
)
\end{equation}
defines the collection of circuits of a matroid $M_{k,\ell}^{s,t,d}$ of rank $d$ on $[k\ell]$, where $\min$ denotes the collection of inclusion-minimal subsets.
\end{theorem}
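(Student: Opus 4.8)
The plan is to verify that $\mathcal{C}$ as defined in~\eqref{circuits} satisfies the circuit axioms for a matroid, namely: (C1) $\emptyset \notin \mathcal{C}$; (C2) no member of $\mathcal{C}$ is properly contained in another (which is automatic since $\mathcal{C}$ consists of inclusion-minimal sets); and (C3) the circuit elimination axiom: if $C_1, C_2 \in \mathcal{C}$ are distinct and $e \in C_1 \cap C_2$, then there exists $C_3 \in \mathcal{C}$ with $C_3 \subseteq (C_1 \cup C_2)\setminus\{e\}$. Axioms (C1) and (C2) are immediate, so the entire content lies in (C3), plus one should separately confirm the rank of the resulting matroid equals $d$ (which will follow from the presence of the $\binom{[k\ell]}{d+1}$ sets, forcing every $(d+1)$-subset to be dependent, together with the existence of an independent $d$-subset). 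I would first record the sizes of the circuits: every member of $\binom{R_i}{t}$ has size $t$, every member of $\binom{C_j}{s}$ has size $s$, every member of $\binom{[k\ell]}{d+1}$ has size $d+1$, and since $d \le s+t-3$ we have $s, t \le d+1$, so all circuits have size at most $d+1$.

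The key case analysis for (C3) proceeds by the types of $C_1$ and $C_2$. If either $C_1$ or $C_2$ is a $(d+1)$-set, say $|C_1| = d+1$, then $|(C_1 \cup C_2)\setminus\{e\}| \ge d$; if it is exactly $d$ then $(C_1\cup C_2)\setminus\{e\}$ is a $d$-subset and we need a circuit inside it — but here I would argue that $C_2 \setminus \{e\}$ already has size $\le d$ and, if $C_2$ is of row/column type, we can find within $(C_1\cup C_2)\setminus\{e\}$ a set of type $\binom{R_i}{t}$ or $\binom{C_j}{s}$ or else the whole $d$-set together with one more element is a $(d+1)$-circuit; more carefully, if $|(C_1\cup C_2)\setminus\{e\}| \ge d+1$ it contains a $(d+1)$-subset hence a circuit, and if it equals $d$ one shows the symmetric difference structure still yields a minimal dependent set. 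The genuinely delicate subcase is when both $C_1$ and $C_2$ come from $\Delta^{s,t}$. Here the crucial point is the hypothesis $d \le s+t-3$: I would show that whenever $C_1, C_2$ are two row-circuits, two column-circuits, or one of each, with $e \in C_1\cap C_2$, the set $(C_1\cup C_2)\setminus\{e\}$ has size at most $s+t-2 \le d+1$, and — if it has size exactly $d+1$ — is itself a member of $\binom{[k\ell]}{d+1}$ hence contains a circuit; if it has size $\le d$ one must exhibit either a smaller row/column circuit inside it, using that two distinct rows meet in at most one entry and a row and a column meet in at most one entry, so $|C_1 \cap C_2| \le 1$ forces $C_1 = C_2$ in the same-row or same-column case (contradiction) and in the mixed case gives $|(C_1\cup C_2)\setminus\{e\}| = s+t-2$, which is $\le d+1$ but could be $< d+1$ — and in that strict case I would need to produce a genuine circuit, which is where one invokes minimality of the sets in $\mathcal{C}$ and possibly the observation that a set of size $\le d$ that is not dependent cannot be forced, so one checks that $s+t-2 \ge d+1$ is in fact forced, i.e. that equality $d = s+t-3$ is the only nontrivial regime.

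I would also handle the degenerate configurations: two row-circuits in the \emph{same} row $R_i$ (where $C_1, C_2 \in \binom{R_i}{t}$, so $C_1 \cup C_2 \subseteq R_i$ and the standard uniform-matroid elimination on $\binom{R_i}{t}$ applies directly, producing another $t$-subset of $R_i$), and similarly two column-circuits in the same column (uniform matroid $U_{s-1,k}$ on $C_j$). The remaining configurations (different rows, different columns, or row-vs-column) all have $|C_1 \cap C_2| \le 1$; since $e \in C_1 \cap C_2$ this means $C_1 \cap C_2 = \{e\}$, so $(C_1 \cup C_2)\setminus\{e\} = (C_1\setminus\{e\}) \sqcup (C_2\setminus\{e\})$ has size $|C_1| + |C_2| - 2 \le (d+1)+(d+1)-2 = 2d$, but more usefully $\le s+t-2$ when both are from $\Delta^{s,t}$, and then $\le s+t-2 \le d+1$; if this size is $\ge d+1$ it contains a $(d+1)$-subset which lies in $\binom{[k\ell]}{d+1} \subseteq \mathcal{C}$-generating family, hence contains a circuit. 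So the only potential gap is size exactly between the relevant bounds, which the hypothesis $d \le s+t-3$ is precisely designed to close.

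The main obstacle I anticipate is the bookkeeping in the mixed row/column subcase of (C3) when $(C_1 \cup C_2)\setminus\{e\}$ has size strictly less than $d+1$: one must argue it still contains some member of $\mathcal{C}$, and this requires carefully using $d \le s+t-3$ to rule out the problematic intermediate sizes, together with the geometry that a row and a column of $\mathcal{Y}$ share exactly one cell. I expect the proof to reduce, after the easy cases, to checking a small number of inequalities among $s, t, d$ and confirming in each that the union-minus-$e$ set is large enough to swallow a $(d+1)$-circuit or is itself of row/column type. I would present this as a clean case split: (i) at least one of $C_1, C_2$ a big circuit; (ii) both row-type in the same row, or both column-type in the same column; (iii) the remaining "transversal" cases where $|C_1 \cap C_2| = 1$; verifying (C3) in each, and separately noting $\rank(M^{s,t,d}_{k,\ell}) = d$ because all $(d+1)$-sets are dependent while a suitably chosen $d$-set avoiding every row $t$-subset and column $s$-subset is independent (possible since $s, t \le d$ would be needed — here one uses $t \le \ell$, $s \le k$ and $d \le s+t-3 < s+t$ to pick $d$ columns hitting each $R_i$ in fewer than $t$ cells and each $C_j$ in fewer than $s$ cells).
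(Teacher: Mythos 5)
The paper does not prove Theorem~\ref{thm:dleq s+t-3}; it is cited from~\cite{clarke2021matroid}, so I assess your proposal on its own.

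Your framework---verify the circuit axioms with all the content in (C3), plus a separate check that the rank equals $d$---is correct, and your case split (at least one $C_i$ of size $d+1$; both row-type in the same row or both column-type in the same column; mixed row/column) is essentially exhaustive once you observe that two distinct rows $R_i,R_{i'}$ of $\mathcal{Y}$ are in fact \emph{disjoint} (they are distinct residue classes mod $k$), and likewise for distinct columns, so the only nonempty intersection between $\Delta^{s,t}$-circuits in different rows/columns is the single cell $R_i\cap C_j$. However, you have the pivotal inequality backwards. From $d\le s+t-3$ one gets $s+t-2\ge d+1$, not $s+t-2\le d+1$ as you write. This inverted sign is exactly what makes your ``genuinely delicate subcase'' discussion confused: in the mixed case $(C_1\cup C_2)\setminus\{e\}$ has size exactly $s+t-2\ge d+1$, so it contains a $(d+1)$-subset, and a short descent (a $(d+1)$-subset either is inclusion-minimal in $\Delta^{s,t}\cup\tbinom{[k\ell]}{d+1}$ or properly contains a smaller member of $\Delta^{s,t}$, and one iterates) produces a member of $\mathcal{C}$ inside it. There is no regime of ``size strictly less than $d+1$'' to close; the hypothesis already rules it out. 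Your speculation about needing to ``produce a genuine circuit'' in a set of size $\le d$, and that $d=s+t-3$ is ``the only nontrivial regime,'' stems from the flipped sign and should be removed.

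The rank-$d$ argument is also too loose as written. You need a $d$-subset $I\subset[k\ell]$ with $|I\cap R_i|\le t-1$ and $|I\cap C_j|\le s-1$ for all $i,j$. This is a bipartite degree-constrained packing; ``pick $d$ columns'' is not a construction and ``$d\le s+t-3<s+t$'' is not a justification. One does need a short argument (e.g.\ greedy row-filling or a flow/K\"onig bound), using $s\le k$, $t\le\ell$, and $d\le s+t-3$ to deduce $d\le k(t-1)$ and $d\le\ell(s-1)$ (for $s,t\ge2$), showing such an $I$ exists and is hence independent of size $d$.
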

As a consequence, when $d \le s+t-3$ we have
\[
V_{\Delta^{s,t}}
=
\bigl\{
\gamma = (\gamma_1,\ldots,\gamma_{k\ell}) \in (\CC^d)^{k\ell}
:
\mathcal{M}_\gamma \le M_{k,\ell}^{s,t,d}
\bigr\},
\]
where $\le$ denotes the weak partial order on matroids.  
This characterization naturally connects $V_{\Delta^{s,t}}$ to the notion of \emph{quasi-products of matroids}, introduced in~\cite{las1981products}.

\begin{definition}
Let $M$ and $N$ be matroids on $[k]$ and $[\ell]$, respectively. A matroid $Q$ on $[k\ell]$ is called a \emph{quasi-product} of $M$ and $N$ if, for each $i \in [k]$ and $j \in [\ell]$, the restrictions
$Q\mid R_i$
and
$Q\mid C_j$
are isomorphic to $N$ and $M$, respectively, via the natural bijections.  
If, in addition,
$\rank(Q)=\rank(M)\,\rank(N),$
then $Q$ is called a \emph{tensor product} of $M$ and $N$.
\end{definition}

In general, tensor products of matroids need not exist or be unique.  
As a direct consequence of the circuit description of $M_{k,\ell}^{s,t,d}$ in~\eqref{circuits}, we obtain the following corollary.
Here, $U_{d,n}$ denotes the uniform matroid on $[n]$, whose independent sets are all subsets of size at most~$d$.

\begin{corollary}
If $d \le s+t-3$, then the matroid $M_{k,\ell}^{s,t,d}$ is a quasi-product of 
$U_{s-1,k}$ and $U_{t-1,\ell}$.
\end{corollary}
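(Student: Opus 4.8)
The plan is to verify the corollary directly from the defining property of a quasi-product: writing $Q\coloneqq M_{k,\ell}^{s,t,d}$, I must show that $Q\mid R_i\cong U_{t-1,\ell}$ for every $i\in[k]$ and $Q\mid C_j\cong U_{s-1,k}$ for every $j\in[\ell]$, in both cases via the natural bijections induced by the labelling of $\mathcal{Y}$ in~\eqref{matrix}. Since the circuits of a restriction $Q\mid T$ are exactly the circuits of $Q$ that lie inside $T$, and since $U_{m-1,n}$ is the matroid on $[n]$ whose circuits are precisely the $m$-element subsets, the whole statement reduces to the two set-theoretic identities $\mathcal{C}(Q)\cap 2^{R_i}=\binom{R_i}{t}$ and $\mathcal{C}(Q)\cap 2^{C_j}=\binom{C_j}{s}$. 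By Theorem~\ref{thm:dleq s+t-3} we have the explicit description $\mathcal{C}(Q)=\min\bigl(\Delta^{s,t}\cup\binom{[k\ell]}{d+1}\bigr)$, so everything becomes a combinatorial comparison within that family. (Throughout I use $s-1\le d$ and $t-1\le d$; these are implicit in the setup and are in fact necessary, since outside this range some restriction $Q\mid R_i$ or $Q\mid C_j$ would be uniform of rank $d$ instead of rank $t-1$, resp.\ $s-1$.)

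For the row identity, the key observation is that $|R_i\cap C_j|=1$ for every $j$, so — using $s\ge 2$ — no subset of $R_i$ contains an $s$-subset of a column; hence the only members of $\Delta^{s,t}$ contained in $R_i$ are the $t$-subsets of $R_i$. I would then check both inclusions: (a) every $T\in\binom{R_i}{t}$ is minimal in $\Delta^{s,t}\cup\binom{[k\ell]}{d+1}$, because a proper subset $T'\subsetneq T$ has $|T'|\le t-1$ and is therefore neither a $t$-subset of a row, nor an $s$-subset of a column, nor a $(d+1)$-subset (here $|T'|\le t-1<t\le d+1$ uses $d\ge t-1$); (b) conversely, if $C\in\mathcal{C}(Q)$ with $C\subseteq R_i$, then either $C\in\Delta^{s,t}$, forcing $C\in\binom{R_i}{t}$ by the observation above, or $C\in\binom{[k\ell]}{d+1}$, in which case $|C|=d+1\ge t$ so $C$ contains some $T\in\binom{R_i}{t}\subseteq\Delta^{s,t}$, and minimality of $C$ forces $C=T$. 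This gives $\mathcal{C}(Q)\cap 2^{R_i}=\binom{R_i}{t}$, hence $Q\mid R_i\cong U_{t-1,\ell}$ under the natural bijection $R_i\to[\ell]$. The column identity is proved by the same argument with the roles of rows/columns and of $(s,k)/(t,\ell)$ interchanged, now invoking $t\ge 2$ and $d\ge s-1$; combining the two identities, $Q$ satisfies the definition of a quasi-product of $U_{s-1,k}$ and $U_{t-1,\ell}$.

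The only real content, and the step I would single out as the main obstacle, is the second inclusion (b) above: ruling out that a set from $\binom{[k\ell]}{d+1}$ produces an extra, smaller circuit inside a single row or column. This is precisely where the inequalities $t-1\le d$ (for rows) and $s-1\le d$ (for columns) enter, since they guarantee $d+1\ge t$ and $d+1\ge s$, so that any $(d+1)$-subset of a row or column already contains a member of $\Delta^{s,t}$ and hence cannot be minimal in $\Delta^{s,t}\cup\binom{[k\ell]}{d+1}$. Everything else is routine bookkeeping with the natural index bijections of~\eqref{matrix}.
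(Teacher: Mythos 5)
Your proof is correct, and it is essentially the natural unfolding of what the paper asserts: the paper offers no argument of its own, merely labelling the corollary ``a direct consequence of the circuit description'' from Theorem~\ref{thm:dleq s+t-3}. Your verification — reducing the quasi-product condition to the two circuit identities $\mathcal{C}(Q)\cap 2^{R_i}=\binom{R_i}{t}$ and $\mathcal{C}(Q)\cap 2^{C_j}=\binom{C_j}{s}$ and checking both inclusions against the explicit $\min(\Delta^{s,t}\cup\binom{[k\ell]}{d+1})$ description — is precisely what that phrase invites, so there is no genuinely different route here to compare.

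One point worth highlighting as a genuine value-add: you correctly notice that the stated hypothesis $d\le s+t-3$ is not by itself enough, and that the inequalities $d\ge t-1$ and $d\ge s-1$ (equivalently, in the presence of $d\le s+t-3$, the bounds $s\ge 2$ and $t\ge 2$) are needed for the restrictions to have the right rank. Your justification is exactly right: if, say, $d<t-1$, then every $(d+1)$-subset of $R_i$ lies in $\binom{[k\ell]}{d+1}$ and is strictly smaller than any $t$-subset of $R_i$, so the $t$-subsets fail to be minimal and $Q\mid R_i \cong U_{d,\ell}\neq U_{t-1,\ell}$. The paper leaves these inequalities implicit; your flagging of them is a useful precision rather than a flaw.
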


Moreover, $M_{k,\ell}^{s,t,d}$ is the unique maximal matroid among all quasi-products of $U_{s-1,k}$ and $U_{t-1,\ell}$ of rank~$d$.

Tensor products of matroids play an important role in rigidity theory, and it was conjectured in~\cite{mason1981glueing} that, whenever a tensor product of two matroids exists, there is a unique maximal one in the weak order.
From this viewpoint, the decomposition of the CI variety $V_{\Delta^{s,t}}$ 
can be viewed as an algebraic-geometric analogue of these matroid-theoretic questions, a systematic study of which is left for future work.

\noindent\textbf{Acknowledgements.}
We thank Tibor Jordán and Kristóf Bérczi for insightful discussions on quasi-products of matroids, which motivated Section~\ref{sec:quasi_product}. We are grateful to Aldo Conca for pointing us to the references \cite{arbarello2013geometry,paths}.~F.M. gratefully acknowledges the hospitality of the Department of Mathematics at 
Stockholm University and thanks Samuel Lundqvist and the Wenner-Gren Foundation for their support during her research visit.
This work was supported by the FWO grants G0F5921N (Odysseus), G023721N, and 1126125N, and by the KU Leuven grant iBOF/23/064.~P.S.~was supported by the Vanier Canada Graduate Scholarship and the British Columbia Graduate Scholarship.

\bibliographystyle{plain}
\bibliography{Citation}

\medskip
{\small\noindent {\bf Authors' addresses}}
\medskip

\noindent{Per Alexandersson, Stockholm University} \hfill {\tt per.alexandersson@math.su.se}
\\
\noindent{Yulia Alexandr,  UCLA}\hfill {\tt yulia@math.ucla.edu}
\\ 
\noindent{Emiliano Liwski, KU Leuven} \hfill {\tt emiliano.liwski@kuleuven.be}
\\
\noindent{Fatemeh Mohammadi, 
KU Leuven} \hfill {\tt fatemeh.mohammadi@kuleuven.be}
\\ 
\noindent{Pardis Semnani, University of British Columbia}\hfill {\tt  psemnani@math.ubc.ca} 
\\

\end{document}